%%%%%%%%%%%%%%%%%%%%%%%%%%%%%%%%%%%%%%%%%%%%%%%%%%%%%%%%%%%%%%%%%%%%%%%%%
%%%%%%%%%%%%%%%%%%%%%%%%%%%%%%%%%%%%%%%%%%%%%%%%%%%%%%%%%%%%%%%%%%%%%%%%
%%% This is "Symmetries and stabilization for sheaves of vanishing   %%%
%%%          cycles",  by Christopher Brav, Vittoria Bussi,          %%%
%%% Delphine Dupont, Dominic Joyce, and Balazs Szendroi,             %%%
%%% The Mathematical Institute, Oxford.                              %%%
%%% Journal of Singularities 11 (2015), 85-151.                      %%%
%%% arXiv:1211.3259, version 4, June 2015.                           %%%
%%%%%%%%%%%%%%%%%%%%%%%%%%%%%%%%%%%%%%%%%%%%%%%%%%%%%%%%%%%%%%%%%%%%%%%%
%%%%%%%%%%%%%%%%%%%%%%%%%%%%%%%%%%%%%%%%%%%%%%%%%%%%%%%%%%%%%%%%%%%%%%%%
\documentclass{article}
\usepackage{amsmath, amssymb, amsthm, graphicx, cite, comment}
\usepackage[arrow,curve,matrix,arc,2cell]{xy}
\UseAllTwocells
\DeclareFontFamily{U}{rsfs}{} \DeclareFontShape{U}{rsfs}{n}{it}{<->
rsfs10}{} \DeclareSymbolFont{mscr}{U}{rsfs}{n}{it}
\DeclareSymbolFontAlphabet{\scr}{mscr}
\def\mathscr{\scr}
\begin{document}
%%%%%%%%%%%%%%%%%%%%%%%%%%%%%%%%%%%%%%%%%%%%%%%%%%%%%%%%%%%%%%%%%%%%%%%%
%%%%%%%%%%%%%%%%%%%%%%%%%%     Macros      %%%%%%%%%%%%%%%%%%%%%%%%%%%%%
%%%%%%%%%%%%%%%%%%%%%%%%%%%%%%%%%%%%%%%%%%%%%%%%%%%%%%%%%%%%%%%%%%%%%%%%
\def\e#1\e{\begin{equation}#1\end{equation}}
\def\ea#1\ea{\begin{align}#1\end{align}}
\def\eq#1{{\rm(\ref{#1})}}
\theoremstyle{plain}% default
\newtheorem{thm}{Theorem}[section]
\newtheorem{prop}[thm]{Proposition}
\newtheorem{lem}[thm]{Lemma}
\newtheorem{cor}[thm]{Corollary}
\newtheorem{quest}[thm]{Question}
\theoremstyle{definition}
\newtheorem{dfn}[thm]{Definition}
\newtheorem{ex}[thm]{Example}
\newtheorem{rem}[thm]{Remark}
\numberwithin{equation}{section}
\def\dim{\mathop{\rm dim}\nolimits}
\def\supp{\mathop{\rm supp}\nolimits}
\def\cosupp{\mathop{\rm cosupp}\nolimits}
\def\rank{\mathop{\rm rank}}
\def\coh{\mathop{\rm coh}\nolimits}
\def\id{\mathop{\rm id}\nolimits}
\def\Hess{\mathop{\rm Hess}\nolimits}
\def\Crit{\mathop{\rm Crit}}
\def\GL{\mathop{\rm GL}}
\def\SO{\mathop{\rm SO}\nolimits}
\def\Ker{\mathop{\rm Ker}}
\def\Im{\mathop{\rm Im}}
\def\Aut{\mathop{\rm Aut}}
\def\End{\mathop{\rm End}}
\def\Hom{\mathop{\rm Hom}\nolimits}
\def\Perv{\mathop{\rm Perv}\nolimits}
\def\Perf{\mathop{\rm Perf}\nolimits}
\def\Perfis{\mathop{\text{\rm Perf-is}}\nolimits}
\def\Spec{\mathop{\rm Spec}\nolimits}
\def\Lbis{\mathop{\text{\rm Lb-is}}\nolimits}
\def\red{{\rm red}}
\def\an{{\rm an}}
\def\alg{{\rm alg}}
\def\bs{\boldsymbol}
\def\ge{\geqslant}
\def\le{\leqslant\nobreak}
\def\bD{{\mathbin{\mathbb D}}}
\def\bH{{\mathbin{\mathbb H}}}
\def\bL{{\mathbin{\mathbb L}}}
\def\bP{{\mathbin{\mathbb P}}}
\def\A{{\mathbin{\mathcal A}}}
\def\PP{{\mathbin{\mathcal P}}}
\def\O{{\mathbin{\cal O}}}
\def\cA{{\mathbin{\cal A}}}
\def\cB{{\mathbin{\cal B}}}
\def\cC{{\mathbin{\cal C}}}
\def\cD{{\mathbin{\scr D}}}
\def\cE{{\mathbin{\cal E}}}
\def\cF{{\mathbin{\cal F}}}
\def\cG{{\mathbin{\cal G}}}
\def\cH{{\mathbin{\cal H}}}
\def\cL{{\mathbin{\cal L}}}
\def\cM{{\mathbin{\cal M}}}
\def\cO{{\mathbin{\cal O}}}
\def\cP{{\mathbin{\cal P}}}
\def\cS{{\mathbin{\cal S}}}
\def\cSz{{\mathbin{\cal S}\kern -0.1em}^{\kern .1em 0}}
\def\PV{{\mathbin{\cal{PV}}}}
\def\TS{{\mathbin{\cal{TS}}}}
\def\cQ{{\mathbin{\cal Q}}}
\def\cW{{\mathbin{\cal W}}}
\def\C{{\mathbin{\mathbb C}}}
\def\CP{{\mathbin{\mathbb{CP}}}}
\def\bA{{\mathbin{\mathbb A}}}
\def\K{{\mathbin{\mathbb K}}}
\def\Q{{\mathbin{\mathbb Q}}}
\def\R{{\mathbin{\mathbb R}}}
\def\Z{{\mathbin{\mathbb Z}}}
\def\al{\alpha}
\def\be{\beta}
\def\ga{\gamma}
\def\de{\delta}
\def\io{\iota}
\def\ep{\epsilon}
\def\la{\lambda}
\def\ka{\kappa}
\def\th{\theta}
\def\ze{\zeta}
\def\up{\upsilon}
\def\vp{\varphi}
\def\si{\sigma}
\def\om{\omega}
\def\De{\Delta}
\def\La{\Lambda}
\def\Si{\Sigma}
\def\Tau{{\rm T}}
\def\Th{\Theta}
\def\Om{\Omega}
\def\Ga{\Gamma}
\def\Up{\Upsilon}
\def\pd{\partial}
\def\db{{\bar\partial}}
\def\ts{\textstyle}
\def\st{\scriptstyle}
\def\sst{\scriptscriptstyle}
\def\w{\wedge}
\def\sm{\setminus}
\def\bu{\bullet}
\def\op{\oplus}
\def\ot{\otimes}
\def\otL{{\kern .1em\mathop{\otimes}\limits^{\sst L}\kern .1em}}
\def\boxtL{{\kern .2em\mathop{\boxtimes}\limits^{\sst L}\kern .2em}}
\def\boxtT{{\kern .1em\mathop{\boxtimes}\limits^{\sst T}\kern .1em}}
\def\ov{\overline}
\def\ul{\underline}
\def\bigop{\bigoplus}
\def\bigot{\bigotimes}
\def\iy{\infty}
\def\es{\emptyset}
\def\ra{\rightarrow}
\def\Ra{\Rightarrow}
\def\Longra{\Longrightarrow}
\def\ab{\allowbreak}
\def\longra{\longrightarrow}
\def\hookra{\hookrightarrow}
\def\dashra{\dashrightarrow}
\def\t{\times}
\def\ci{\circ}
\def\ti{\tilde}
\def\d{{\rm d}}
\def\ha{{\ts\frac{1}{2}}}
\def\md#1{\vert #1 \vert}
\def\ms#1{\vert #1 \vert^2}
\def\HM{\mathop{\rm HM}\nolimits}
\def\MHM{\mathop{\rm MHM}\nolimits}
\def\rat{\mathop{\bf rat}\nolimits}
\def\Rat{\mathop{\bf Rat}\nolimits}
\def\HV{{\mathbin{\cal{HV}}}}
\def\compact{{\rm c}}
\renewcommand{\Re}{\mathop{\rm Re}}
%%%%%%%%%%%%%%%%%%%%%%%%%%%%%%%%%%%%%%%%%%%%%%%%%%%%%%%%%%%%%%%%%%%%%%%%
%%%%%%%%%%%%%%%%%%%%%% Text of paper %%%%%%%%%%%%%%%%%%%%%%%%%%%%%%%%%%%
%%%%%%%%%%%%%%%%%%%%%%%%%%%%%%%%%%%%%%%%%%%%%%%%%%%%%%%%%%%%%%%%%%%%%%%%
\title{Symmetries and stabilization \\ for sheaves of vanishing cycles}
\author{Christopher Brav, Vittoria Bussi, Delphine Dupont, \\
Dominic Joyce, and Bal\'azs Szendr\H oi, \\[6pt] 
with an Appendix by J\"org Sch\"urmann}
\date{}
\maketitle

\begin{abstract}
We study symmetries and stabilization properties of perverse sheaves
of vanishing cycles $\PV_{U,f}^\bu$ of a regular function $f:U\ra
\C$ on a smooth $\C$-scheme $U$, with critical locus $X=\Crit(f)$.
We prove four main results:

\smallskip

\noindent (a) If $\Phi:U\ra U$ is an isomorphism fixing $X$ and
compatible with $f$, then the action of $\Phi_*$ on $\PV_{U,f}^\bu$
is multiplication by $\det\bigl(\d\Phi\vert_{X^\red}\bigr)=\pm 1$.

\smallskip

\noindent (b) $\PV_{U,f}^\bu$ depends up to canonical isomorphism
only on $(X^{(3)},f^{(3)})$, for $X^{(3)}$ the third-order
thickening of $X$ in $U$, and
$f^{(3)}=f\vert_{X^{(3)}}:X^{(3)}\ra\C$.

\smallskip

\noindent (c) If $U,V$ are smooth $\C$-schemes, $f:U\ra\C$,
$g:V\ra\C$ are regular, $X=\Crit(f)$, $Y=\Crit(g)$, and $\Phi:U\ra
V$ is an embedding with $f=g\ci\Phi$ and $\Phi\vert_X:X\ra Y$ an
isomorphism, there is a natural isomorphism
$\Th_\Phi\!:\!\PV_{U,f}^\bu \!\ra\!\Phi\vert_X^*(\PV_{V,g}^\bu)
\!\ot_{\Z/2\Z}\!P_\Phi$, for $P_\Phi$ a principal $\Z/2\Z$-bundle
on~$X$.

\smallskip

\noindent (d) If $(X,s)$ is an {\it oriented d-critical locus\/} in
the sense of Joyce \cite{Joyc}, there is a natural perverse sheaf
$P_{X,s}^\bu$ on $X$, such that if $(X,s)$ is locally modelled on
$\Crit(f:U\ra\C)$ then $P_{X,s}^\bu$ is locally modelled
on~$\PV_{U,f}^\bu$.

\smallskip

We also generalize our results to replace $U,X$ by 
complex analytic spaces, and $\PV_{U,f}^\bu$ by $\cD$-modules or
mixed Hodge modules.

We discuss applications of (d) to categorifying Donaldson--Thomas
invariants of Calabi--Yau 3-folds, and to defining a `Fukaya
category' of Lagrangians in a complex symplectic manifold using
perverse sheaves.
\end{abstract}

\setcounter{tocdepth}{2}
\tableofcontents

\section{Introduction}
\label{sm1}

Let $U$ be a smooth $\C$-scheme and $f:U\ra\C$ a regular function,
and write $X=\Crit(f)$, as a $\C$-subscheme of $U$. Then one can
define the {\it perverse sheaf of vanishing cycles\/}
$\PV_{U,f}^\bu$ on $X$. Formally, $X=\coprod_{c\in f(X)}X_c$, where
$X_c\subseteq X$ is the open and closed $\C$-subscheme of points
$x\in X$ with $f(x)=c$, and $\PV_{U,f}^\bu\vert_{X_c}=
\phi_{f-c}^p(A_U[\dim U])\vert_{X_c}$ for each $c\in f(X)$, where
$A_U[\dim U]$ is the constant perverse sheaf on $U$ over a base ring
$A$, and $\phi_{f-c}^p:\Perv(U)\ra\Perv(f^{-1}(c))$ is the vanishing
cycle functor for $f-c:U\ra\C$. See \S\ref{sm2} for an introduction
to perverse sheaves, and an explanation of this notation.

This paper will prove four main results, Theorems \ref{sm3thm},
\ref{sm4thm}, \ref{sm5thm2} and \ref{sm6thm6}. The first three give
properties of the $\PV_{U,f}^\bu$, which we may summarize as
follows:
\begin{itemize}
\setlength{\itemsep}{0pt}
\setlength{\parsep}{0pt}
\item[(a)] Let $U,f,X$ be as above, and write $X^\red$ for the
reduced $\C$-subscheme of $X$. Suppose $\Phi:U\ra U$ is an
isomorphism with $f\ci\Phi=f$ and $\Phi\vert_X=\id_X$. Then
$\Phi$ induces a natural isomorphism $\Phi_*:\PV_{U,f}^\bu\ra
\PV_{U,f}^\bu$.

Theorem \ref{sm3thm} implies that
$\d\Phi\vert_{TU\vert_{X^\red}}:TU\vert_{X^\red}\ra
TU\vert_{X^\red}$ has determinant
$\det\bigl(\d\Phi\vert_{X^\red}\bigr):X^\red\ra\C\sm\{0\}$ which
is a locally constant map $X^\red\ra\{\pm 1\}$, and
$\Phi_*:\PV_{U,f}^\bu\ra\PV_{U,f}^\bu$ is multiplication
by~$\det\bigl(\d\Phi\vert_{X^\red}\bigr)$.

In fact Theorem \ref{sm3thm} proves a more complicated
statement, which only requires $\Phi$ to be defined \'etale
locally on $U$.

\item[(b)] Let $U,f,X$ be as above, and write $I_X\subseteq
\O_U$ for the sheaf of ideals of regular functions $U\ra\C$
vanishing on $X$. For each $k=1,2,\ldots,$ write $X^{(k)}$ for
the $k^{\rm th}$ {\it order thickening of\/ $X$ in\/} $U$, that
is, $X^{(k)}$ is the closed $\C$-subscheme of $U$ defined by the
vanishing of the sheaf of ideals $I_X^k$ in $\O_U$. Write
$f^{(k)}:=f\vert_{\smash{X^{(k)}}}:X^{(k)}\ra\C$.

Theorem \ref{sm4thm} says that the perverse sheaf
$\PV_{U,f}^\bu$ depends only on the third-order thickenings
$(X^{(3)},f^{(3)})$ up to canonical isomorphism.

As in Remark \ref{sm4rem}, \'etale locally, $\PV_{U,f}^\bu$
depends only on $(X^{(2)},f^{(2)})$ up to non-canonical
isomorphism, with isomorphisms natural up to sign.
\item[(c)] Let $U,V$ be smooth $\C$-schemes, $f:U\ra\C,$
$g:V\ra\C$ be regular, and $X=\Crit(f)$, $Y=\Crit(g)$ as
$\C$-subschemes of $U,V$. Let $\Phi:U\hookra V$ be a closed
embedding of $\C$-schemes with $f=g\ci\Phi:U\ra\C$, and suppose
$\Phi\vert_X:X\ra Y$ is an isomorphism. Then Theorem
\ref{sm5thm2} constructs a natural isomorphism of perverse
sheaves on $X$:
\e
\Th_\Phi:\PV_{U,f}^\bu\longra\Phi\vert_X^*\bigl(\PV_{V,g}^\bu\bigr)
\ot_{\Z/2\Z}P_\Phi,
\label{sm1eq1}
\e
where $\pi_\Phi:P_\Phi\ra X$ is a certain principal
$\Z/2\Z$-bundle on $X$. Writing $N_{\sst UV}$ for the normal
bundle of $U$ in $V$, then the Hessian $\Hess g$ induces a
nondegenerate quadratic form $q_{\sst UV}$ on $N_{\sst
UV}\vert_X$, and $P_\Phi$ parametrizes square roots
of~$\det(q_{\sst UV}):K_U^2\vert_X\ra \Phi\vert_X^*(K_V^2)$.

Theorem \ref{sm5thm2} also shows that the $\Th_\Phi$ in
\eq{sm1eq1} are functorial in a suitable sense under
compositions of embeddings $\Phi:U\hookra V$, $\Psi:V\hookra W$.
\end{itemize}
Here (c) is proved by showing that \'etale locally there exist
equivalences $V\simeq U\t\C^n$ identifying $\Phi(U)$ with $U\t\{0\}$
and $g:V\ra\C$ with $f\boxplus z_1^2+\cdots+z_n^2:U\t\C^n\ra\C$, and
applying \'etale local isomorphisms of perverse sheaves
\begin{equation*}
\PV_{U,f}^\bu\cong\PV_{U,f}^\bu\boxtL
\PV_{\C^n,z_1^2+\cdots+z_n^2}^\bu\cong\PV_{U\t\C^n,f\boxplus
z_1^2+\cdots+z_n^2}^\bu\cong \PV_{V,g}^\bu,
\end{equation*}
using $\PV_{\C^n,z_1^2+\cdots+z_n^2}^\bu\cong A_{\{0\}}$ in the
first step, and the Thom--Sebastiani Theorem for perverse sheaves in
the second.

Passing from $f:U\ra\C$ to $g=f\boxplus z_1^2+\cdots+
z_n^2:U\t\C^n\ra\C$ is an important idea in singularity theory, as
in Arnold et al.\ \cite{AGV} for instance. It is known as {\it
stabilization}, and $f$ and $g$ are called {\it stably equivalent}.
So, Theorem \ref{sm5thm2} concerns the behaviour of perverse sheaves
of vanishing cycles under stabilization.

Our fourth main result, Theorem \ref{sm6thm6}, concerns a new class
of geometric objects called {\it d-critical loci}, introduced in
Joyce \cite{Joyc}, and explained in \S\ref{sm61}. An (algebraic)
d-critical locus $(X,s)$ over $\C$ is a $\C$-scheme $X$ with a
section $s$ of a certain natural sheaf $\cSz_X$ on $X$. A d-critical
locus $(X,s)$ may be written Zariski locally as a critical locus
$\Crit(f:U\ra\C)$ of a regular function $f$ on a smooth $\C$-scheme
$U$, and $s$ records some information about $U,f$ (in the notation
of (b) above, $s$ remembers $f^{(2)}$). There is also a complex
analytic version.

Algebraic d-critical loci are classical truncations of the {\it
derived critical loci\/} (more precisely, $-1$-{\it shifted
symplectic derived schemes\/}) introduced in derived algebraic
geometry by Pantev, To\"en, Vaqui\'e and Vezzosi \cite{PTVV}.
Theorem \ref{sm6thm6} roughly says that if $(X,s)$ is an algebraic
d-critical locus over $\C$ with an `orientation', then we may define
a natural perverse sheaf $P_{X,s}^\bu$ on $X$, such that if $(X,s)$
is locally modelled on $\Crit(f:U\ra\C)$ then $P_{X,s}^\bu$ is
locally modelled on $\PV_{U,f}^\bu$. The proof uses
Theorem~\ref{sm5thm2}.

These results have exciting applications in the categorification of
Donaldson--Thomas theory on Calabi--Yau 3-folds, and in defining a
new kind of `Fukaya category' of complex Lagrangians in complex
symplectic manifold, which we will discuss at length in Remarks
\ref{sm6rem2} and~\ref{sm6rem3}.

Although we have explained our results only for $\C$-schemes and
perverse sheaves upon them, the proofs are quite general and work in
several contexts:
\begin{itemize}
\setlength{\itemsep}{0pt}
\setlength{\parsep}{0pt}
\item[(i)] Perverse sheaves on $\C$-schemes or complex analytic
spaces with coefficients in any well-behaved commutative ring
$A$, such as $\Z,\Q$ or $\C$.
\item[(ii)] $\cD$-modules on $\C$-schemes or complex analytic
spaces.
\item[(iii)] Saito's mixed Hodge modules on $\C$-schemes or
complex analytic spaces.
\end{itemize}
We discuss all these in \S\ref{sm2}, before proving our four main
results in \S\ref{sm3}--\S\ref{sm6}. Appendix \ref{smA}, by J\"org Sch\"urmann, proves two compatibility results between duality and Thom--Sebastiani type isomorphisms needed in the main text.

This is one of six linked papers \cite{BBBJ,BBJ,Buss,BJM,Joyc}, with
more to come. The best logical order is that the first is Joyce
\cite{Joyc} defining d-critical loci, and the second Bussi, Brav and
Joyce \cite{BBJ}, which proves Darboux-type theorems for the
$k$-shifted symplectic derived schemes of Pantev et al.\
\cite{PTVV}, and defines a truncation functor from $-1$-shifted
symplectic derived schemes to algebraic d-critical loci.

This paper is the third in the sequence. Combining our results with
\cite{Joyc,PTVV} gives new results on categorifying
Donaldson--Thomas invariants of Calabi--Yau 3-folds, as in Remark
\ref{sm6rem2}. In the fourth paper Bussi, Joyce and Meinhardt
\cite{BJM} will generalize the ideas of this paper to motivic Milnor
fibres (we explain the relationship between the motivic and cohomological approaches
below in Remark~\ref{sm6rem1}), and deduce new results on motivic Donaldson--Thomas
invariants using \cite{Joyc,PTVV}. In the fifth, Ben-Bassat, Brav,
Bussi and Joyce \cite{BBBJ} generalize \cite{BBJ,BJM} and this paper
from (derived) schemes to (derived) Artin stacks.

Sixthly, Bussi \cite{Buss} will show that if $(S,\om)$ is a complex
symplectic manifold, and $L,M$ are complex Lagrangians in $S$, then
the intersection $X=L\cap M,$ as a complex analytic subspace of $S$,
extends naturally to a complex analytic d-critical locus $(X,s)$. If
the canonical bundles $K_L,K_M$ have square roots $K_L^{1/2},
K_M^{1/2}$ then $(X,s)$ is oriented, and so Theorem \ref{sm6thm6}
below defines a perverse sheaf $P_{L,M}^\bu$ on $X$, which Bussi
also constructs directly. As in Remark \ref{sm6rem3}, we hope in
future work to define a `Fukaya category' of complex Lagrangians in
$(X,\om)$ in which~$\Hom(L,M)\cong\bH^{-n}(P_{L,M}^\bu)$.
\medskip

\noindent{\bf Conventions.} All $\C$-schemes are
assumed separated and of finite type. All complex analytic
spaces are Hausdorff and locally of finite type.
\medskip

\noindent{\bf Acknowledgements.} We would like to thank Oren Ben-Bassat, Alexandru Dimca, Young-Hoon Kiem, Jun Li, Kevin McGerty, Sven Meinhardt, Pierre Schapira, and especially Morihiko Saito and J\"org Sch\"urmann for useful conversations and correspondence, and J\"org Sch\"urmann for a very careful reading of our manuscript, leading to many improvements, as well as providing the Appendix. This research was supported by EPSRC Programme Grant EP/I033343/1.

\section{Background on perverse sheaves}
\label{sm2}

Perverse sheaves, and the related theories of $\cD$-modules and
mixed Hodge modules, make sense in several contexts, both algebraic
and complex analytic:
\begin{itemize}
\setlength{\itemsep}{0pt}
\setlength{\parsep}{0pt}
\item[(a)] Perverse sheaves on $\C$-schemes with coefficients
in a ring $A$ (usually $\Z,\Q$ or $\C$), as in Beilinson,
Bernstein and Deligne \cite{BBD} and Dimca \cite{Dimc}.
\item[(b)] Perverse sheaves on complex analytic spaces with
coefficients in a ring $A$ (usually $\Z,\Q$ or $\C$), as in
Dimca~\cite{Dimc}.
\item[(c)] $\cD$-modules on $\C$-schemes, as in Borel \cite{Bore} 
in the smooth case, and Saito \cite{Sait4} in general. 
\item[(d)] $\cD$-modules on complex manifolds as in
Bj\"ork \cite{Bjor}, and on complex analytic spaces as in Saito \cite{Sait4}.
\item[(e)] Mixed Hodge modules on $\C$-schemes, as in
Saito~\cite{Sait1,Sait3}.
\item[(f)] Mixed Hodge modules on complex analytic spaces, as in
Saito~\cite{Sait1,Sait3}.
\end{itemize}

All our main results and proofs work, with minor modifications, in
all six settings (a)--(f). As (a) is arguably the simplest and
most complete theory, we begin in \S\ref{sm21}--\S\ref{sm24} with a
general introduction to constructible complexes and perverse sheaves
on $\C$-schemes, the nearby and vanishing cycle functors, and
perverse sheaves of vanishing cycles $PV_{U,f}^\bu$ on $\C$-schemes,
following Dimca~\cite{Dimc}.

Several important properties of perverse sheaves in (a) either do
not work, or become more complicated, in settings (b)--(f). Section
\ref{sm25} lists the parts of \S\ref{sm21}--\S\ref{sm24} that we
will use in proofs in this paper, so the reader can check that they
do work in (b)--(f). Then \S\ref{sm26}--\S\ref{sm210} give brief
discussions of settings (b)--(f), focussing on the differences with
(a) in~\S\ref{sm21}--\S\ref{sm24}.

A good introductory reference on perverse sheaves on $\C$-schemes
and complex analytic spaces is Dimca \cite{Dimc}. Three other books
are Kashiwara and Schapira \cite{KaSc1}, Sch\"urmann \cite{Schu},
and Hotta, Tanisaki and Takeuchi \cite{HTT}. Massey \cite{Mass2} and
Rietsch \cite{Riet} are surveys on perverse sheaves, and Beilinson,
Bernstein and Deligne \cite{BBD} is an important primary source, who
cover both $\Q$-perverse sheaves on $\C$-schemes as in (a), and
$\Q_l$-perverse sheaves on $\K$-schemes as in~(g) below.

\begin{rem} Two further possible settings, in which not all the results
we need are available in the literature, are the following. 
\begin{itemize}
\setlength{\itemsep}{0pt}
\setlength{\parsep}{0pt}
\item[(g)] Perverse sheaves on $\K$-schemes with coefficients
in $\Z/l^n\Z$, $\Z_l$, $\Q_l$, or $\bar\Q_l$ for
$l\ne\mathop{\rm char}\K\ne 2$ a prime, as in Beilinson et
al.~\cite{BBD}.
\item[(h)] $\cD$-modules on $\K$-schemes for
$\K$ an algebraically closed field, as in Borel~\cite{Bore}.
\end{itemize}

The issue is that the Thom--Sebastiani theorem is not available in these contexts in the
generality we need it. Once an appropriate form of this result becomes available, our main theorems 
will hold also in these two contexts, sometimes under the further assumption that ${\rm char}\ \K=0$,
needed for the results quoted from \cite{PTVV,BBJ}. We leave the details to the interested
reader. 
\label{sm2rem1}
\end{rem}

\subsection{Constructible complexes on $\C$-schemes}
\label{sm21}

We begin by discussing constructible complexes, following
Dimca~\cite[\S 2--\S 4]{Dimc}.

\begin{dfn} Fix a well-behaved commutative base ring $A$
(where `well-behaved' means that we need assumptions on $A$ such as
$A$ is regular noetherian, of finite global dimension or finite
Krull dimension, a principal ideal domain, or a Dedekind domain, at
various points in the theory), to study sheaves of $A$-modules. For
some results $A$ must be a field. Usually we take $A=\Z,\Q$ or~$\C$.

Let $X$ be a $\C$-scheme, always assumed of finite type. Write
$X^\an$ for the set of $\C$-points of $X$ with the complex analytic
topology. Consider sheaves of $A$-modules $\cS$ on $X^\an$. A sheaf
$\cS$ is called ({\it algebraically\/}) {\it constructible\/} if all
the stalks $\cS_x$ for $x\in X^\an$ are finite type $A$-modules, and
there is a finite stratification $X^\an=\coprod_{j\in
J}X^\an_j$ of $X^\an$, where $X_j\subseteq X$ for $j\in J$ are
$\C$-subschemes of $X$ and $X_j^\an\subseteq X^\an$ the
corresponding subsets of $\C$-points, such that $\cS\vert_{X_j^\an}$
is an $A$-local system for all~$j\in J$.

Write $D(X)$ for the derived category of complexes $\cC^\bu$ of
sheaves of $A$-modules on $X^\an$. Write $D^b_c(X)$ for the full
subcategory of bounded complexes $\cC^\bu$ in $D(X)$ whose
cohomology sheaves $\cH^m(\cC^\bu)$ are constructible for all
$m\in\Z$. Then $D(X),D^b_c(X)$ are triangulated categories. An
example of a constructible complex on $X$ is the {\it constant
sheaf\/} $A_X$ on $X$ with fibre $A$ at each point.

Grothendieck's ``six operations on sheaves'' $f^*,f^!,Rf_*,Rf_!,
{\cal RH}om,\smash{\otL}$ act on $D(X)$ preserving the subcategory
$D^b_c(X)$. That is, if $f:X\ra Y$ is a morphism of $\C$-schemes,
then we have two different pullback functors $f^*,f^!:D(Y)\ra D(X)$,
which also map $D^b_c(Y)\ra D^b_c(X)$. Here $f^*$ is called the {\it
inverse image\/} \cite[\S 2.3]{Dimc}, and $f^!$ the {\it exceptional
inverse image\/} \cite[\S 3.2]{Dimc}.

We also have two different pushforward functors $Rf_*,Rf_!:D(X)\ra
D(Y)$ mapping $D^b_c(X)\ra D^b_c(Y)$, where $Rf_*$ is called the
{\it direct image\/} \cite[\S 2.3]{Dimc} and is right adjoint to
$f^*:D(Y)\ra D(X)$, and $Rf_!$ is called the {\it direct image with
proper supports\/} \cite[\S 2.3]{Dimc} and is left adjoint to
$f^!:D(Y)\ra D(X)$. We need the assumptions from \S\ref{sm1} that
$X,Y$ are separated and of finite type for $Rf_*,Rf_!:D^b_c(X)\ra
D^b_c(Y)$ to be defined for arbitrary morphisms~$f:X\ra Y$.

For $\cB^\bu,\cC^\bu$ in $D^b_c(X)$, we may form their {\it derived
Hom\/} ${\cal RH}om(\cB^\bu,\cC^\bu)$ \cite[\S 2.1]{Dimc}, and {\it
left derived tensor product\/} $\cB^\bu\otL\cC^\bu$ in $D^b_c(X)$,
\cite[\S 2.2]{Dimc}. Given $\cB^\bu\in D^b_c(X)$ and $\cC^\bu\in
D^b_c(Y)$, we define $\cB^\bu\smash{\boxtL}\cC^\bu=
\pi_X^*(\cB^\bu)\otL\pi_Y^*(\cC^\bu)$ in $D^b_c(X\t Y)$, where
$\pi_X:X\t Y\ra X$, $\pi_Y:X\t Y\ra Y$ are the projections.

If $X$ is a $\C$-scheme, there is a functor $\bD_X:D^b_c(X)\ra
D^b_c(X)^{\rm op}$ with $\bD_X\ci\bD_X\cong\id:D^b_c(X)\ra
D^b_c(X)$, called {\it Verdier duality}. It reverses shifts, that
is, $\bD_X\bigl(\cC^\bu[k]\bigr)=\bigl(\bD_X(\cC^\bu)\bigr)[-k]$ for
$\cC^\bu$ in $D^b_c(X)$ and~$k\in\Z$.
\label{sm2def1}
\end{dfn}

\begin{rem} Note how Definition \ref{sm2def1} mixes the complex
analytic and the complex algebraic: we consider sheaves on $X^\an$
in the {\it analytic\/} topology, which are constructible with
respect to an {\it algebraic\/} stratification~$X=\coprod_jX_j$.
\label{sm2rem2}
\end{rem}

Here are some properties of all these:

\begin{thm} In the following, $X,Y,Z$ are $\C$-schemes, and\/ $f,g$
are morphisms, and all isomorphisms\/ {\rm`$\cong$'} of functors or
objects are canonical.
\smallskip

\noindent{\bf(i)} For\/ $f:X\ra Y$ and\/ $g:Y\ra Z,$ there are
natural isomorphisms of functors
\begin{align*}
R(g\ci f)_*&\cong Rg_*\ci Rf_*,& R(g\ci f)_!&\cong Rg_!\ci Rf_!,\\
(g\ci f)^*&\cong f^*\ci g^*,& (g\ci f)^!&\cong f^!\ci g^!.
\end{align*}

\noindent{\bf(ii)} If\/ $f:X\ra Y$ is proper then~$Rf_*\cong Rf_!$.
\smallskip

\noindent{\bf(iii)} If\/ $f:X\ra Y$ is \'etale then\/~$f^*\cong
f^!$. More generally, if\/ $f:X\ra Y$ is smooth of relative
(complex) dimension\/ $d,$ then\/ $f^*[d]\cong f^![-d],$ where
$f^*[d], f^![-d]$ are the functors $f^*,f^!$ shifted by\/~$\pm d$.
\smallskip

\noindent{\bf(iv)} If\/ $f:X\ra Y$ then $Rf_!\cong \bD_Y\ci
Rf_*\ci\bD_X$ and~$f^!\cong \bD_X\ci f^*\ci\bD_Y$.
\smallskip

\noindent{\bf(v)} If\/ $U$ is a smooth\/ $\C$-scheme
then\/~$\bD_U(A_U)\cong A_U[2\dim U]$.
\label{sm2thm1}
\end{thm}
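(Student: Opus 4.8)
\noindent The plan is to deduce all five parts from the foundational theory of Grothendieck's six operations on constructible complexes, for which we take Dimca \cite{Dimc} as the basic reference, supplemented by Kashiwara--Schapira \cite{KaSc1}; no genuinely new argument is required, but it is worth recording the logical structure and isolating the one statement that carries real content.

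First I would treat (i). The pullback identity $(g\ci f)^*\cong f^*\ci g^*$ is immediate, since $f^*$, $g^*$ are the ordinary exact inverse image functors on sheaves, which compose strictly. The identity $R(g\ci f)_*\cong Rg_*\ci Rf_*$ is the Grothendieck (Leray) spectral sequence for composition of derived functors, using that $f_*$ carries injective (or flabby) sheaves to $g_*$-acyclic ones \cite[\S 2.3]{Dimc}. The identity $R(g\ci f)_!\cong Rg_!\ci Rf_!$ follows from the corresponding fact for $Rf_!$, which in the constructible setting one reads off from the definition of $Rf_!$ via a compactification $f=\bar f\ci j$ with $j$ an open immersion and $\bar f$ proper, together with the analogous statements for $R\bar f_*$ and $j_!$; then $(g\ci f)^!\cong f^!\ci g^!$ follows by passing to right adjoints, since $f^!$ is right adjoint to $Rf_!$ and the right adjoint of a composite is the composite of right adjoints in reversed order.

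Next, (ii) is essentially definitional: when $f$ is proper one takes the trivial compactification $j=\id$ in the construction of $Rf_!$, giving $Rf_!\cong Rf_*$ at once \cite[\S 2.3]{Dimc}. For (iv), the isomorphisms $Rf_!\cong\bD_Y\ci Rf_*\ci\bD_X$ and $f^!\cong\bD_X\ci f^*\ci\bD_Y$ are the standard compatibilities of Verdier duality with the four functors \cite[\S 3.2]{Dimc}; indeed in one common development $Rf_!$ and $f^!$ are \emph{defined} by these formulas once $Rf_*$, $f^*$ and $\bD$ are available, in which case (iv) is a tautology and it is instead (i) and (ii) that need proof. For (v), I would apply (iii) to the structure morphism $a_U:U\ra\Spec\C$, smooth of relative dimension $\dim U$: this gives the dualizing complex $\omega_U=a_U^!(A)\cong a_U^*(A)[2\dim U]=A_U[2\dim U]$, and since $A_U$ is the monoidal unit, $\bD_U(A_U)={\cal RH}om(A_U,\omega_U)\cong\omega_U\cong A_U[2\dim U]$.

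The part carrying real content is therefore (iii): for $f:X\ra Y$ smooth of relative dimension $d$ one needs the purity isomorphism $f^![-d]\cong f^*[d]$, equivalently $f^!\cong f^*[2d]$, the \'etale case $d=0$ being the special case $f^!\cong f^*$. In the general six-functor formalism this is proved by using the local structure of smooth morphisms to reduce to the projection $Y\t\bA^d\ra Y$, then by induction to $Y\t\bA^1\ra Y$, and finally computing $f^!$ there explicitly from the open--closed decomposition and the exact triangle linking $f^*$ and $f^!$; see \cite[\S 3.2]{Dimc} and \cite{KaSc1}. This is the only step where I expect to invoke a substantial theorem rather than assemble formal consequences of adjunctions and the definition of $Rf_!$. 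The canonicity claim then reduces to checking that the isomorphisms produced at each stage are the standard ones and are compatible with composition, which is routine bookkeeping within the formalism.
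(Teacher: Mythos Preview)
Your proposal is correct, and in fact more detailed than what the paper itself provides: the paper states this theorem purely as background material without proof, citing Dimca \cite{Dimc} as the general reference for \S\ref{sm21}. Your outline of how each part sits within the six-functor formalism, and your identification of (iii) as the only part requiring substantive input (smooth purity), is accurate and appropriate.
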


If $X$ is a $\C$-scheme and $\cC^\bu\in D^b_c(X)$, the {\it
hypercohomology\/} $\bH^*(\cC^\bu)$ and {\it compactly-supported
hypercohomology\/} $\bH^*_\compact(\cC^\bu)$, both graded
$A$-modules, are
\e
\bH^k(\cC^\bu)=H^k(R\pi_*(\cC^\bu))\quad\text{and}\quad \bH^k_\compact(\cC^\bu)=H^k(R\pi_!(\cC^\bu))\quad\text{for $k\in\Z$,}
\label{sm2eq1}
\e
where $\pi:X\ra *$ is projection to a point. If $X$ is proper then
$\bH^*(\cC^\bu)\cong\bH^*_\compact(\cC^\bu)$ by Theorem
\ref{sm2thm1}(ii). They are related to usual cohomology by
$\bH^k(A_X)\cong H^k(X;A)$ and $\bH^k_\compact(A_X)\cong H^k_\compact(X;A)$. 
If $A$ is a field then under Verdier duality we
have~$\bH^k(\cC^\bu)\cong \bH^{-k}_\compact(\bD_X(\cC^\bu))^*$.

\subsection{Perverse sheaves on $\C$-schemes}
\label{sm22}

Next we review perverse sheaves, following Dimca~\cite[\S 5]{Dimc}.

\begin{dfn} Let $X$ be a $\C$-scheme, and for each $x\in X^\an$,
let $i_x:*\ra X$ map $i_x:*\mapsto x$. If $\cC^\bu\in D^b_c(X)$,
then the {\it support\/} $\supp^m\cC^\bu$ and {\it cosupport\/}
$\cosupp^m\cC^\bu$ of $\cH^m(\cC^\bu)$ for $m\in\Z$ are
\begin{align*}
\supp^m\cC^\bu&=\overline{\bigl\{x\in X^\an:\cH^m(i_x^*(\cC^\bu))
\ne 0\bigr\}},\\
\cosupp^m\cC^\bu&=\overline{\bigl\{x\in X^\an:\cH^m(i_x^!(\cC^\bu))
\ne 0\bigr\}},
\end{align*}
where $\overline{\{\cdots\}}$ means the closure in $X^\an$. If $A$
is a field then $\cosupp^m\cC^\bu=\supp^{-m}\bD_X(\cC^\bu)$. We call
$\cC^\bu$ {\it perverse}, or a {\it perverse sheaf}, if
$\dim_{\sst\C}\supp^{-m}\cC^\bu\!\le\!m$ and
$\dim_{\sst\C}\cosupp^m\cC^\bu\!\le\! m$  for all $m\in\Z$, where by
convention $\dim_{\sst\C}\es=-\iy$. Write $\Perv(X)$ for the full
subcategory of perverse sheaves in $D^b_c(X)$. Then $\Perv(X)$ is an
abelian category, the heart of a t-structure on~$D^b_c(X)$.
\label{sm2def2}
\end{dfn}

Perverse sheaves have the following properties:

\begin{thm}{\bf(a)} If\/ $A$ is a field then $\Perv(X)$ is
noetherian and artinian.
\smallskip

\noindent{\bf(b)} If\/ $A$ is a field then $\bD_X:D^b_c(X)\ra
D^b_c(X)$ maps $\Perv(X)\ra\Perv(X)$.
\smallskip

\noindent{\bf(c)} If\/ $i:X\hookra Y$ is inclusion of a closed\/
$\C$-subscheme, then $Ri_*,Ri_!$ (which are naturally isomorphic)
map $\Perv(X)\ra\Perv(Y)$.

Write $\Perv(Y)_X$ for the full subcategory of objects in $\Perv(Y)$
supported on $X$. Then $Ri_*\cong Ri_!$ are equivalences of
categories $\Perv(X)\,{\buildrel\sim\over\longra}\, \Perv(Y)_X$. The
restrictions $i^*\vert_{\Perv(Y)_X},i^!\vert_{\Perv(Y)_X}$ map\/
$\Perv(Y)_X\ra \Perv(X),$ are naturally isomorphic, and are
quasi-inverses for\/~$Ri_*,Ri_!:\Perv(X)\ra\Perv(Y)_X$.
\smallskip

\noindent{\bf(d)} If\/ $f:X\ra Y$ is \'etale then $f^*$ and\/ $f^!$
(which are naturally isomorphic) map $\Perv(Y)\ra\Perv(X)$. More
generally, if\/ $f:X\ra Y$ is smooth of relative dimension\/ $d,$
then\/ $f^*[d]\cong f^![-d]$ map $\Perv(Y)\ra\Perv(X)$.
\smallskip

\noindent{\bf(e)} $\smash{\boxtL}:D^b_c(X)\!\t\! D^b_c(Y)\!\ra\!
D^b_c(X\!\t\! Y)$ maps\/~$\Perv(X)\!\t\!\Perv(Y)\!\ra\!\Perv(X\!\t\!
Y)$.

\smallskip

\noindent{\bf(f)} Let\/ $U$ be a smooth\/ $\C$-scheme. Then
$A_U[\dim U]$ is perverse, where $A_U$ is the constant sheaf on $U$
with fibre $A,$ and\/ $[\dim U]$ means shift by $\dim U$ in the
triangulated category $D^b_c(X)$. Note that Theorem\/
{\rm\ref{sm2thm1}(v)} gives a canonical
isomorphism\/~$\bD_U\bigl(A_U[\dim U]\bigr)\cong A_U[\dim U]$.
\label{sm2thm2}
\end{thm}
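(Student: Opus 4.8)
The plan is to prove parts (b)--(f) directly from Definition \ref{sm2def2}, by tracking how the dimensions of $\supp^m\cC^\bu$ and $\cosupp^m\cC^\bu$ behave under each of the six operations, and to deduce part (a) from the structure theory of perverse sheaves over a field, as in Beilinson--Bernstein--Deligne \cite[\S 4]{BBD} and recalled in Dimca \cite[\S 5]{Dimc}. Parts (b)--(f) are then essentially bookkeeping with supports; the genuine content, and the one step where an induction rather than a dimension count is needed, is the finite-length statement underlying~(a).

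For (b), I would use the identity $\cosupp^m\cC^\bu=\supp^{-m}\bD_X(\cC^\bu)$ already noted in Definition \ref{sm2def2} (valid since $A$ is a field, using $\bD_X\ci\bD_X\cong\id$, that $\bD_X$ reverses shifts, and that it interchanges $i_x^*$ with $i_x^!$ up to Verdier duality on the point). Applying this to $\bD_X(\cC^\bu)$ shows that the two perversity inequalities for $\bD_X(\cC^\bu)$ are precisely those for $\cC^\bu$ with their roles exchanged, so $\bD_X$ preserves $\Perv(X)$. For (c), a closed embedding $i:X\hookra Y$ is proper, so $Ri_*\cong Ri_!$ by Theorem \ref{sm2thm1}(ii),(iv); for $y=i(x)$ one has $i_y^*Ri_*(\cC^\bu)\cong i_x^*(\cC^\bu)$ and $i_y^!Ri_!(\cC^\bu)\cong i_x^!(\cC^\bu)$, while the (co)stalks vanish for $y\notin i(X^\an)$, so $\supp^m$ and $\cosupp^m$ are just the images under the closed embedding $X^\an\hookra Y^\an$, which changes neither dimension nor the perversity inequalities. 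This gives $Ri_*,Ri_!:\Perv(X)\ra\Perv(Y)$ with image the objects supported on $X$; that $i^*|_{\Perv(Y)_X}$ and $i^!|_{\Perv(Y)_X}$ are quasi-inverse then follows from the adjunctions $(i^*,Ri_*)$, $(Ri_!,i^!)$, since the unit and counit are isomorphisms on objects supported on~$X$.

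For (d), $f:X\ra Y$ smooth of relative dimension $d$ satisfies $f^*[d]\cong f^![-d]$ by Theorem \ref{sm2thm1}(iii); for $x\in X^\an$ with $y=f(x)$ one has $i_x^*(f^*(\cC^\bu)[d])\cong i_y^*(\cC^\bu)[d]$, and since each fibre of $f$ has dimension $d$, if $\cC^\bu$ is perverse then $\dim_\C\supp^{-m}(f^*(\cC^\bu)[d])\le\dim_\C\supp^{d-m}(\cC^\bu)+d\le(m-d)+d=m$, with the cosupport bound read off from $f^![-d]$ the same way (or deduced from (b) via $f^!\cong\bD_X\ci f^*\ci\bD_Y$). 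Part (e) is the parallel K\"unneth computation: for $\cB^\bu\in\Perv(X)$, $\cC^\bu\in\Perv(Y)$ one has $\supp^k(\cB^\bu\boxtL\cC^\bu)=\bigcup_{p+q=k}\supp^p\cB^\bu\t\supp^q\cC^\bu$, of complex dimension $\le\max_{p+q=k}(\dim_\C\supp^p\cB^\bu+\dim_\C\supp^q\cC^\bu)$, so the two perversity inequalities for $\cB^\bu,\cC^\bu$ add to give those for $\cB^\bu\boxtL\cC^\bu$; constructibility is checked via product stratifications, using that external products of $A$-local systems are $A$-local systems. For (f), on a smooth $U$ of dimension $n$ the complex $A_U[n]$ has a single nonzero cohomology sheaf $\cH^{-n}(A_U[n])=A_U$, so $\supp^{-n}(A_U[n])=U^\an$ has dimension $n$ and all other $\supp^m$ are $\es$, giving $\dim_\C\supp^{-m}(A_U[n])\le m$ for all $m$; the cosupport inequality is then free from Theorem \ref{sm2thm1}(v), which gives $\bD_U(A_U[n])\cong A_U[n]$, combined with~(b).

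Finally, for (a) I would invoke the classical structure theory: over a field every object of $\Perv(X)$ has finite length, its simple constituents being the intersection cohomology complexes $\mathrm{IC}_{\ov Z}(\cL)$ of irreducible $A$-local systems $\cL$ on smooth irreducible locally closed $\C$-subschemes $Z\subseteq X$; both the noetherian and the artinian conditions follow at once from finite length. The proof of finite length, which is the real obstacle, proceeds by induction on $\dim X$, using the recollement triangle for a decomposition of $X$ into an open smooth dense stratum $j:V\hookra X$ and its closed complement $i:Z\hookra X$, the fact (a special case of (d)) that perverse sheaves lisse along a smooth connected $V$ of dimension $d$ are exactly the shifts $\cL[d]$ of $A$-local systems, and that such local systems form a noetherian and artinian category because $A$ is a field. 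This is precisely \cite[\S 4.3]{BBD}. I would close by recording which of (a)--(f) survive in the settings (b)--(h) of \S\ref{sm2}: (a) and (b) require a field of coefficients, while (c)--(f) are formal and carry over, as will be used freely in~\S\ref{sm3}--\S\ref{sm6}.
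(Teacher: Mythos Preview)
The paper does not prove this theorem at all: it is stated as background material quoted from the literature (Dimca \cite[\S 5]{Dimc} and Beilinson--Bernstein--Deligne \cite{BBD}), immediately after the sentence ``Next we review perverse sheaves, following Dimca \cite[\S 5]{Dimc}.'' There is no proof environment following the statement; the paper simply uses these facts and moves on.

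Your proposal is a correct and reasonably complete sketch of the standard proofs of (a)--(f) as they appear in those references, and in that sense it goes beyond what the paper does. The dimension-counting arguments you give for (b)--(f) are exactly the textbook ones, and your outline for (a) via finite length, recollement, and the classification of simple objects as intersection cohomology complexes is the argument of \cite[\S 4.3]{BBD}, which the paper in fact alludes to in the paragraph immediately after the theorem (``the simple objects in $\Perv(X)$ admit a complete description: they are all isomorphic to `intersection cohomology complexes' \ldots''). One small caveat: your cosupport argument in (f) invokes (b), which as stated requires $A$ to be a field, whereas (f) is asserted for a general well-behaved base ring $A$; the paper sidesteps this by not giving a proof, but if you want (f) in full generality you should either compute $i_x^!(A_U[\dim U])$ directly on a smooth $U$, or note that the relevant duality identity $\bD_U(A_U[\dim U])\cong A_U[\dim U]$ from Theorem \ref{sm2thm1}(v) already gives the cosupport bound without passing through~(b).
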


When $A=\Q$, so that $\Perv(X)$ is noetherian and artinian by
Theorem \ref{sm2thm2}(a), the simple objects in $\Perv(X)$ admit a
complete description: they are all isomorphic to intersection
cohomology complexes $IC_{\smash{\bar V}}(\cL)$ for $V\subseteq X$
a smooth locally closed $\C$-subscheme and $\cL\ra V$ an irreducible
$\Q$-local system, \cite[\S 5.4]{Dimc}. Furthermore, if $f:X\ra Y$
is a proper morphism of $\C$-schemes, then the Decomposition Theorem
\cite[6.2.5]{BBD}, \cite[Th.~5.4.10]{Dimc}, \cite[Cor.~3]{Sait1}
says that, in case $IC_{\smash{\bar V}}(\cL)$ is of geometric origin, 
$Rf_*(IC_{\smash{\bar V}}(\cL))$ is isomorphic
to a finite direct sum of shifts of simple objects $IC_{\smash{\bar
V'}}(\cL')$ in $\Perv(Y)$. 

The next theorem is proved by Beilinson et al.\ \cite[Cor.~2.1.23,
\S 2.2.19, \& Th.~3.2.4]{BBD}. The analogue for $D^b_c(X)$ or $D(X)$
rather than $\Perv(X)$ is false. One moral is that perverse sheaves
behave like sheaves, rather than like complexes.

Theorem \ref{sm2thm3}(i) will be used throughout
\S\ref{sm3}--\S\ref{sm6}. Theorem \ref{sm2thm3}(ii) will be used
only once, in the proof of Theorem \ref{sm6thm6} in \S\ref{sm63},
and we only need Theorem \ref{sm2thm3}(ii) to hold in the Zariski
topology, rather than the \'etale topology.

\begin{thm} Let\/ $X$ be a $\C$-scheme. Then perverse sheaves on $X$
form a \begin{bfseries}stack\end{bfseries} (a kind of sheaf of
categories) on $X$ in the \'etale topology.

Explicitly, this means the following. Let $\{u_i:U_i\ra X\}_{i\in
I}$ be an \begin{bfseries}\'etale open cover\end{bfseries} for $X,$
so that\/ $u_i:U_i\ra X$ is an \'etale morphism of\/ $\C$-schemes
for\/ $i\in I$ with\/ $\coprod_iu_i$ surjective. Write
$U_{ij}=U_i\t_{u_i,X,u_j}U_j$ for $i,j\in I$ with projections
\begin{equation*}
\pi_{ij}^i:U_{ij}\longra U_i,\quad \pi_{ij}^j:U_{ij}\longra U_j,\quad
u_{ij}\!=\!u_i\ci\pi_{ij}^i\!=\!u_j\ci\pi_{ij}^j:U_{ij}\!\longra\! X.
\end{equation*}
Similarly, write $U_{ijk}=U_i\!\t_X\!U_j\!\t_X\!U_k$ for $i,j,k\in
I$ with projections
\begin{gather*}
\pi_{ijk}^{ij}:U_{ijk}\longra U_{ij},\quad
\pi_{ijk}^{ik}:U_{ijk}\longra
U_{ik},\quad \pi_{ijk}^{jk}:U_{ijk}\longra U_{jk},\\
\pi_{ijk}^i:U_{ijk}\ra U_i,\;\> \pi_{ijk}^j:U_{ijk}\ra U_j,\;\>
\pi_{ijk}^k:U_{ijk}\ra U_k, \;\> u_{ijk}:U_{ijk}\ra X,
\end{gather*}
so that\/ $\pi_{ijk}^i=\pi_{ij}^i\ci\pi_{ijk}^{ij},$
$u_{ijk}=u_{ij}\ci\pi_{ijk}^{ij}=u_i\ci\pi_{ijk}^i,$ and so on. All
these morphisms $u_i,\pi_{ij}^i,\ldots,u_{ijk}$ are \'etale, so by
Theorem\/ {\rm\ref{sm2thm2}(d)} $u_i^*\cong u_i^!$ maps
$\Perv(X)\ra\Perv(U_i),$ and similarly for $\pi_{ij}^i,\ldots,
u_{ijk}$. With this notation:
\smallskip

\noindent{\bf(i)} Suppose $\cP^\bu,\cQ^\bu\in\Perv(X),$ and we are
given $\al_i:u_i^*(\cP^\bu)\ra u_i^*(\cQ^\bu)$ in $\Perv(U_i)$ for
all\/ $i\in I$ such that for all\/ $i,j\in I$ we have
\begin{equation*}
(\pi_{ij}^i)^*(\al_i)= (\pi_{ij}^i)^*(\al_j):u_{ij}^*(\cP^\bu)\longra
u_{ij}^*(\cQ^\bu).
\end{equation*}
Then there is a unique $\al:\cP^\bu\ra\cQ^\bu$ in $\Perv(X)$ with\/
$\al_i=u_i^*(\al)$ for all\/~$i\in I$.
\smallskip

\noindent{\bf(ii)} Suppose we are given objects\/ $\cP^\bu_i\in
\Perv(U_i)$ for all\/ $i\in I$ and isomorphisms
$\al_{ij}:(\pi_{ij}^i)^*(\cP^\bu_i)\ra (\pi_{ij}^j)^*(\cP^\bu_j)$ in
$\Perv(U_{ij})$ for all\/ $i,j\in I$ with\/ $\al_{ii}=\id$ and\/
\begin{equation*}
(\pi_{ijk}^{jk})^*(\al_{jk})\ci (\pi_{ijk}^{ij})^*(\al_{ij})=
(\pi_{ijk}^{ik})^*(\al_{ik}):(\pi_{ijk}^i)^*(\cP_i)\longra
(\pi_{ijk}^k)^*(\cP_k)
\end{equation*}
in $\Perv(U_{ijk})$ for all\/ $i,j,k\in I$. Then there exists\/
$\cP^\bu$ in $\Perv(X),$ unique up to canonical isomorphism, with
isomorphisms $\be_i:u_i^*(\cP^\bu)\ra\cP^\bu_i$ for each\/ $i\in I,$
satisfying $\al_{ij}\ci(\pi_{ij}^i)^*(\be_i)=(\pi_{ij}^j)^*(\be_j):
u_{ij}^*(\cP^\bu)\ra(\pi_{ij}^j)^*(\cP^\bu_j)$ for all\/~$i,j\in I$.
\label{sm2thm3}
\end{thm}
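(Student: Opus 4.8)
The plan is to deduce that perverse sheaves form a stack in the \'etale topology from two ingredients: first, that perverse sheaves already form a stack in the \emph{analytic} topology on the complex analytic spaces $X^\an$; and second, that the \'etale topology on $X$ is, locally, refined by the analytic topology on $X^\an$. The starting point is the standard fact (see \cite[\S 8.1]{KaSc1} or \cite[\S 10.2]{KaSc2}, and \cite[\S 5]{Dimc}) that constructible complexes, and hence perverse sheaves with their $t$-structure, glue on analytic open covers: $\Perv({-})$ is a stack on any complex analytic space in the classical topology. Part (i) of the theorem is the $1$-categorical shadow of this --- morphisms of perverse sheaves are determined and can be glued from an open cover --- and part (ii) is the $2$-categorical statement about gluing objects subject to a cocycle condition.

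First I would prove part (i). Given $\cP^\bu,\cQ^\bu\in\Perv(X)$ and local morphisms $\al_i:u_i^*\cP^\bu\to u_i^*\cQ^\bu$ agreeing on overlaps, I want a unique $\al:\cP^\bu\to\cQ^\bu$ with $u_i^*(\al)=\al_i$. The key point is that $\cHom(\cP^\bu,\cQ^\bu)$ --- the sheaf $U\mapsto\Hom_{\Perv(U)}(\cP^\bu|_U,\cQ^\bu|_U)$, or more precisely $\cH^0$ of ${\cal RH}om(\cP^\bu,\cQ^\bu)$ --- is a sheaf on the small \'etale site of $X$. Indeed ${\cal RH}om(\cP^\bu,\cQ^\bu)$ is an object of $D^b_c(X)$, and for an \'etale morphism $u:U\to X$ one has $u^*\cong u^!$ (Theorem \ref{sm2thm2}(d)), so ${\cal RH}om$ commutes with \'etale pullback; the Hom-groups are then $\bH^0$ of this complex on each \'etale open, and $\bH^0$ of a fixed constructible complex is a sheaf for the \'etale topology because it is a sheaf for the finer analytic topology and \'etale maps are analytic-locally isomorphisms. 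Here one uses that a perverse sheaf has no negative self-$\Ext$, so that $\Hom_{\Perv}=\Hom_{D^b_c}=\bH^0\,{\cal RH}om$; equivalently, that $\Perv$ is the heart of a $t$-structure, so $\Hom$ out of a perverse object sees only $\cH^{\ge0}$. Sheafiness of $\cH^0{\cal RH}om(\cP^\bu,\cQ^\bu)$ gives exactly the statement of (i), with uniqueness coming from the separatedness part of the sheaf axiom.

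Next, part (ii): this is descent for objects. Given $\cP^\bu_i\in\Perv(U_i)$ and a cocycle of isomorphisms $\al_{ij}$ on the double overlaps $U_{ij}$, I want to build $\cP^\bu\in\Perv(X)$. The natural approach is to observe that perversity is a condition checkable on an \'etale (indeed analytic) cover: $\supp^m$ and $\cosupp^m$ commute with \'etale pullback up to the relevant shift (they involve stalks $i_x^*,i_x^!$, which are preserved because $u^*\cong u^!$), and dimensions can be computed locally; so it suffices to glue the $\cP^\bu_i$ as objects of $D^b_c$ and then note the result is automatically perverse. For the gluing in $D^b_c$ one can either invoke that $D^b_c$ satisfies \'etale descent --- via the equivalence $D^b_c(X)\simeq D^b_c(X^\an)$ and analytic-topology descent for bounded constructible complexes, where the \v Cech/homotopy-limit argument converges because everything is bounded --- or, more elementarily in the perverse setting, realize $\cP^\bu$ as $\bigl(\ker\to\bigr)$ of the two pullback maps $\prod_i (u_i)_!u_i^*\cP^\bu\rightrightarrows\prod_{i,j}(u_{ij})_!u_{ij}^*\cP^\bu$ type diagram inside the \emph{abelian} category $\Perv(X)$, using the cocycle to identify the terms; abelian-category gluing then produces $\cP^\bu$ together with the $\be_i$, and part (i) already proved guarantees the $\be_i$ are unique with the stated compatibility, hence $\cP^\bu$ is unique up to canonical isomorphism. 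The condition $\al_{ii}=\id$ plus the cocycle identity on triple overlaps $U_{ijk}$ is exactly what is needed for the gluing data to be consistent.

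The main obstacle is the passage between the \'etale and analytic topologies in a way that is careful about the two shifts and the two inverse-image functors. One must check that $u^*\cong u^!$ for \'etale $u$ is used consistently so that $i_x^*$, $i_x^!$, ${\cal RH}om$, and the support/cosupport conditions all behave well under the \'etale pullbacks appearing in the statement; and one must ensure the homotopy-limit/\v Cech gluing for bounded constructible complexes genuinely converges and descends --- this is where boundedness and constructibility (so that the analytic site can be replaced by a finite stratified picture locally) are essential. Once these compatibilities are in hand, both (i) and (ii) follow formally from the fact that $\Perv$ is an analytic stack plus the fact that an \'etale cover of $X$ becomes, analytic-locally on $X^\an$, a disjoint union of isomorphisms, so that analytic-local gluing immediately yields \'etale-local gluing. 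I would also remark, as the theorem's discussion does, that the analogue fails for $D^b_c(X)$ itself --- gluing of unbounded-cocycle data in the derived category requires more than the naive \v Cech data --- which is precisely why the statement is made for $\Perv$, the heart, where $\Hom$-groups have no negative part and ordinary sheaf-theoretic descent suffices.
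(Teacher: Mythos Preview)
The paper does not give its own proof of this theorem; it simply cites Beilinson--Bernstein--Deligne \cite[Cor.~2.1.23, \S 2.2.19, \& Th.~3.2.4]{BBD} and moves on. So there is no in-paper argument to compare against directly, and your proposal must be judged on its own merits and against what BBD actually do.

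Your treatment of part (i) is essentially correct and close in spirit to \cite[Cor.~2.1.23]{BBD}: the key is that $\Hom_{\Perv}$ is computed as $\bH^0$ of $\mathcal{RH}om$, that $\mathcal{RH}om$ commutes with \'etale pullback (using $u^*\cong u^!$), and that $\bH^{<0}$ vanishes on every \'etale open because $\Perv$ is the heart of a $t$-structure; a \v Cech spectral sequence then gives the sheaf property.

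Part (ii), however, has real gaps. First, the equivalence $D^b_c(X)\simeq D^b_c(X^{\an})$ you invoke is false: as the paper notes in \S\ref{sm26}, $D^b_c(X)^{\alg}$ is a proper full subcategory of $D^b_c(X^{\an})^{\an}$, since algebraic constructibility is strictly stronger than analytic. So even if you glue analytically you still owe an argument that the glued object is algebraically constructible (this can be done, using that \'etale images of constructible sets are constructible, but it is not automatic). Second, your ``abelian-category'' alternative is not well-formed: the display $\prod_i (u_i)_! u_i^*\cP^\bu \rightrightarrows \cdots$ presupposes the very object $\cP^\bu$ you are trying to build, and in any case $(u_i)_!$ for \'etale non-proper $u_i$ is only right $t$-exact, not $t$-exact, so these terms do not lie in $\Perv(X)$. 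Third, you yourself correctly observe at the end that naive 1-cocycle descent fails for $D^b_c$; this undercuts your first option, which is precisely such a descent.

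The argument in \cite{BBD} is different in character: it does not pass through the analytic topology. Rather, for a smooth (in particular \'etale) surjection $u$ one shows that $u^*[d]$ is $t$-exact and conservative, and then runs an effective-descent argument entirely inside the abelian heart, using the recollement formalism and perverse truncations. This sidesteps both the algebraic/analytic constructibility issue and the need for higher homotopy coherences.
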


We will need the following proposition in \S\ref{sm33} to prove
Theorem \ref{sm3thm}(b). Most of it is setting up notation, only the
last part $\al\vert_{X'}=\be\vert_{X'}$ is nontrivial.

\begin{prop} Let\/ $W,X$ be $\C$-schemes, $x\in X,$ and\/
$\pi_\C:W\ra\C,$ $\pi_X:W\ra X,$ $\io:\C\ra W$ morphisms, such
that\/ $\pi_\C\t\pi_X:W\ra\C\t X$ is \'etale, and\/
$\pi_\C\ci\io=\id_\C:\C\ra\C,$ and\/ $\pi_X\ci\io(t)=x$ for all\/
$t\in\C$. Write $W_t=\pi_\C^{-1}(t)\subset W$ for each\/ $t\in\C,$
and\/ $j_t:W_t\hookra W$ for the inclusion. Then
$\pi_X\vert_{W_t}=\pi_X\ci j_t:W_t\ra X$ is \'etale, and\/
$\io(t)\in W_t$ with\/ $\pi_X\vert_{W_t}(\io(t))=x,$ so we may think
of\/ $W_t$ for $t\in\C$ as a $1$-parameter family of \'etale open
neighbourhoods of\/ $x$ in~$X$.

Let\/ $\cP^\bu,\cQ^\bu\in\Perv(X),$ so that by Theorem\/
{\rm\ref{sm2thm2}(d)} as $\pi_X$ is smooth of relative dimension $1$
and\/ $\pi_X\vert_{W_t}$ is \'etale, we have
$\pi_X^*[1](\cP^\bu)\in\Perv(W)$ and\/ $\pi_X\vert_{W_t}^*(\cP^\bu)
=j_t^*[-1]\bigl(\pi_X^*[1](\cP^\bu)\bigr)\in\Perv(W_t),$ and
similarly for\/~$\cQ^\bu$.

Suppose $\al,\be:\cP^\bu\ra\cQ^\bu$ in $\Perv(X)$ and\/
$\ga:\pi_X^*[1](\cP^\bu)\ra\pi_X^*[1](\cQ^\bu)$ in $\Perv(W)$ are
morphisms such that\/ $\pi_X\vert_{W_0}^*(\al)=j_0^*[-1](\ga)$ in
$\Perv(W_0)$ and\/ $\pi_X\vert_{W_1}^*(\be)=j_1^*[-1](\ga)$ in
$\Perv(W_1)$. Then there exists a Zariski open neighbourhood\/ $X'$
of\/ $x$ in $X$ such that\/~$\al\vert_{X'}=\be\vert_{X'}:
\cP^\bu\vert_{X'}\ra\cQ^\bu\vert_{X'}$.
\label{sm2prop}
\end{prop}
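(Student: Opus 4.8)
The plan is to reduce the statement to a simple connectedness/deformation argument about a family of étale open neighbourhoods, using the fact (Theorem~\ref{sm2thm3}(i)) that morphisms of perverse sheaves form a sheaf in the étale topology. First I would observe that the claim is Zariski-local around $x$, and is really a statement about the difference $\delta=\al-\be:\cP^\bu\ra\cQ^\bu$ in the abelian group $\Hom_{\Perv(X)}(\cP^\bu,\cQ^\bu)$: I want a Zariski open $X'\ni x$ with $\delta\vert_{X'}=0$. Pulling back by the smooth morphism $\pi_X$ (which is exact up to the shift $[1]$ on $\Perv$), the hypotheses say that $\pi_X\vert_{W_0}^*(\delta)=0$ in $\Perv(W_0)$, since both $\pi_X\vert_{W_0}^*(\al)$ and $j_0^*[-1](\ga)$, $\pi_X\vert_{W_1}^*(\be)$ and $j_1^*[-1](\ga)$ agree there — wait, more precisely the hypotheses give $\pi_X\vert_{W_0}^*(\al)=j_0^*[-1](\ga)$ and $\pi_X\vert_{W_1}^*(\be)=j_1^*[-1](\ga)$, so $\delta$ pulls back to $0$ on the fibre $W_0$ but a priori not on $W_1$. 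So the strategy must instead exploit that $\ga$ is a \emph{single} morphism over all of $W$, restricting compatibly to every fibre $W_t$.

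The key idea is to study the function $t\mapsto j_t^*[-1](\pi_X^*[1](\delta)-\ga_\delta)$ where I set $\ga_\al=\pi_X^*[1](\al)$, etc. Concretely: consider the morphism $\eta:=\pi_X^*[1](\al)-\ga:\pi_X^*[1](\cP^\bu)\ra\pi_X^*[1](\cQ^\bu)$ in $\Perv(W)$. Restricting to the fibre $W_0$, $j_0^*[-1](\eta)=\pi_X\vert_{W_0}^*(\al)-j_0^*[-1](\ga)=0$ by hypothesis. Now I would like to conclude that $\eta$ vanishes on a Zariski neighbourhood of $W_0$ in $W$, hence by pushing/pulling around the section $\io$ deduce $\al=\be$ near $x$. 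For this I would use that $\eta$ lives in $\Hom_{\Perv(W)}(\pi_X^*[1](\cP^\bu),\pi_X^*[1](\cQ^\bu))$, which is a module over $\Gamma(W,\cO_W)$ (or at least behaves like a constructible object whose vanishing locus is Zariski-open): the locus where a section of a constructible sheaf vanishes is open, and here $\Hom$ between perverse sheaves on the nice space $W$ behaves analogously — restriction of $\eta$ to the open $W\sm\supp$ of its ``support'' is zero. Since $j_0^*[-1](\eta)=0$ and $W_0$ is a Cartier divisor, I expect the support of $\eta$ (as a section of the internal-Hom complex, which is constructible) to be a closed subset of $W$ disjoint from $W_0$, hence contained in $\pi_X^{-1}$ of some closed set avoiding... no — more carefully, $\eta$ need not vanish on a neighbourhood of $W_0$ just because it vanishes on $W_0$. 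The correct move is to use the section $\io:\C\ra W$: pull $\eta$ back by $\io$ to get $\io^*[-1](\eta):\io^*[\text{-}1]\pi_X^*[1](\cP^\bu)\ra\io^*[\text{-}1]\pi_X^*[1](\cQ^\bu)$, and since $\pi_X\ci\io$ is constant with value $x$, this is a morphism between the constant-ish pullbacks $(\pi_X\io)^*[0](\cP^\bu_x)$ along $\C$, i.e.\ lives in a $\Gamma(\C,\cO)=\C[t]$-module that is finitely generated (as $\cP^\bu_x$ has finite-dimensional stalk cohomology); its value at $t=0$ is $\io^*$ of $j_0^*$ stuff $=0$, and its value at $t=1$ is the analogous expression for $\be$.

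Here is the cleaner execution I would actually write down. Set $\epsilon:=\pi_X^*[1](\al)-\pi_X^*[1](\be)=\pi_X^*[1](\delta)\in\Hom(\pi_X^*[1]\cP^\bu,\pi_X^*[1]\cQ^\bu)$. On $W_0$: $j_0^*[-1](\epsilon)=\pi_X\vert_{W_0}^*(\delta)=\pi_X\vert_{W_0}^*(\al)-\pi_X\vert_{W_0}^*(\be)$; the first term is $j_0^*[-1](\ga)$, but the second is \emph{not} controlled. So $\epsilon$ is the wrong object. Instead work with $\eta:=\pi_X^*[1](\al)-\ga$ and $\zeta:=\pi_X^*[1](\be)-\ga$; then $j_0^*[-1](\eta)=0$, $j_1^*[-1](\zeta)=0$, and $\eta-\zeta=\pi_X^*[1](\delta)$. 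I would show that the set $S_\eta=\{t\in\C:j_t^*[-1](\eta)\ne 0\}$ is a closed subset of $\C$ (as $\C$ is a curve, it is finite or all of $\C$): this follows because $R(\pi_\C\ci\pi_X... )$—rather, because the ``support'' of the $\Hom$-object, computed via the constructible complex $\cRH om(\pi_X^*[1]\cP^\bu,\pi_X^*[1]\cQ^\bu)$ and its derived pushforward along $\pi_\C$, is constructible, so the non-vanishing locus of the induced section of the zeroth cohomology is a constructible, in particular locally closed, hence finite-or-cofinite subset of the curve $\C$. Since $0\notin S_\eta$ and $1\notin S_\zeta$, and $S_\eta,S_\zeta$ are finite (they can't be all of $\C$), their union omits a Zariski-dense open set of $\C$; pick $t_0\in\C\sm(S_\eta\cup S_\zeta)$. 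Then $j_{t_0}^*[-1](\eta)=j_{t_0}^*[-1](\zeta)=0$, so $j_{t_0}^*[-1](\pi_X^*[1](\delta))=0$, i.e.\ $\pi_X\vert_{W_{t_0}}^*(\delta)=0$ in $\Perv(W_{t_0})$. Finally, $\pi_X\vert_{W_{t_0}}:W_{t_0}\ra X$ is étale and its image is a Zariski open $X'\subseteq X$ containing $\pi_X\vert_{W_{t_0}}(\io(t_0))=x$; since $\delta$ pulls back to zero under the étale cover $W_{t_0}\ra X'$, Theorem~\ref{sm2thm3}(i) (uniqueness of morphisms, applied to the étale cover $\{W_{t_0}\ra X'\}$ and $\al\vert_{X'},\be\vert_{X'}$, which have the same étale-local pullback) gives $\al\vert_{X'}=\be\vert_{X'}$, as required.

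The main obstacle is the claim that $t\mapsto j_t^*[-1](\eta)$ has closed (equivalently, finite-or-all) vanishing-\emph{complement} locus in $\C$ — i.e.\ that ``being zero on the fibre $W_t$'' is a constructible condition in $t$. I would establish this by reinterpreting $\eta$ as an element of $H^0\bigl(\bH^0\cRH om_W(\pi_X^*[1]\cP^\bu,\pi_X^*[1]\cQ^\bu)\bigr)$, noting $\cRH om_W(\pi_X^*[1]\cP^\bu,\pi_X^*[1]\cQ^\bu)\cong\pi_X^*\cRH om_X(\cP^\bu,\cQ^\bu)$ up to the relevant shifts (compatibility of $\cRH om$ with the smooth pullback $\pi_X^*$, using Theorem~\ref{sm2thm1}), so that $\cRH om_W(\cdots)=\pi_X^*(\cR)$ for a fixed constructible complex $\cR=\cRH om_X(\cP^\bu,\cQ^\bu)\in D^b_c(X)$, and $j_t^*[-1](\eta)$ corresponds to restricting a global section of (a cohomology sheaf of) $\pi_X^*(\cR)$ to the fibre $W_t$. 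Stratifying $X$ so that the relevant cohomology sheaf $\cH^0(\cR)$ is a local system on each stratum, the non-vanishing of the section's restriction to $W_t$ becomes a condition cut out by the vanishing of finitely many regular functions (the components of the section in local trivialisations) composed with $\io$, hence is Zariski-closed in $t\in\C$; as $\C$ is an irreducible curve this locus is finite unless it is all of $\C$, and it is not all of $\C$ since $t=0$ (resp.\ $t=1$ for $\zeta$) is excluded. I expect this constructibility/stratification bookkeeping to be the only real content; the rest is formal manipulation with the six functors and an application of Theorem~\ref{sm2thm3}(i).
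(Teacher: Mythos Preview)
The paper does not actually prove this proposition --- it states it and follows it only with the heuristic remark that morphisms of perverse sheaves are ``discrete'' (take $A=\Z$), so a family parametrized by $\C$ should be locally constant near~$x$. So there is no paper-proof to compare against; I can only assess your argument on its own terms.

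Your overall strategy --- study $\eta=\pi_X^*[1](\al)-\ga$ and $\zeta=\pi_X^*[1](\be)-\ga$, find a $t_0$ where both fibre-restrictions vanish, then descend via Theorem~\ref{sm2thm3}(i) --- is reasonable, but there is a genuine gap at the decisive step. You argue that $S_\eta=\{t\in\C:j_t^*[-1](\eta)\ne0\}$ is constructible in $\C$, hence finite or cofinite, and then write ``$S_\eta,S_\zeta$ are finite (they can't be all of $\C$)''. This conflates \emph{cofinite} with \emph{equal to $\C$}: nothing you have said rules out $S_\eta=\C\sm\{0\}$ and $S_\zeta=\C\sm\{1\}$, in which case $S_\eta\cup S_\zeta=\C$ and no $t_0$ exists. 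You need $S_\eta$ \emph{closed}, not merely constructible. The last paragraph's attempt (``the non-vanishing \ldots\ cut out by the vanishing of finitely many regular functions'') confuses local systems with algebraic vector bundles --- a section of a local system has no regular-function components --- and in any case $\pi_\C(\supp\eta)$ is only constructible without properness.

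What is missing is exactly the ``discreteness'' heuristic made rigorous. Because $\pi_\C\t\pi_X:W\ra\C\t X$ is \'etale and the perverse sheaves are pulled back from $X$, \'etale-locally (or analytically) around $\io(t_0)$ one can identify a neighbourhood with a product $D\t V_X$ for a disc $D\ni t_0$ and an open $V_X\ni x$ in $X$; on such a product K\"unneth and contractibility of $D$ force every morphism $\pi_X^*[1]\cP^\bu\ra\pi_X^*[1]\cQ^\bu$ to be $\id\boxtimes\ga_0$ for some $\ga_0:\cP^\bu\vert_{V_X}\ra\cQ^\bu\vert_{V_X}$, so its fibre-restrictions $j_t^*[-1]$ are literally independent of $t\in D$. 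This local constancy along $\io(\C)$ is what links $t=0$ to $t=1$ and lets the final descent step go through; your constructibility bound alone does not supply it.
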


Here we should think of $j_t^*[-1](\ga)$ for $t\in\C$ as a family of
perverse sheaf morphisms $\cP^\bu\ra\cQ^\bu$, defined near $x$ in
$X$ locally in the \'etale topology. But morphisms of perverse
sheaves are discrete (to see this, note that we can take $A=\Z$), so
as $j_t^*[-1](\ga)$ depends continuously on $t$, it should be
locally constant in $t$ near $x$, in a suitable sense. The
conclusion $\al\vert_{X'}=\be\vert_{X'}$ essentially says that
$j_0^*[-1](\ga)=j_1^*[-1](\ga)$ near $x$.

If $P\ra X$ is a principal $\Z/2\Z$-bundle on a $\C$-scheme $X$, and
$\cQ^\bu\in\Perv(X)$, we will define a perverse sheaf
$\cQ^\bu\ot_{\Z/2\Z}P$, which will be important
in~\S\ref{sm5}--\S\ref{sm6}.

\begin{dfn} Let $X$ be a $\C$-scheme. A {\it principal\/
$\Z/2\Z$-bundle\/} $P\ra X$ is a proper, surjective, \'etale
morphism of $\C$-schemes $\pi:P\ra X$ together with a free
involution $\si:P\ra P$, such that the orbits of $\Z/2\Z=\{1,\si\}$
are the fibres of $\pi$. We will use the ideas of {\it
isomorphism\/} of principal bundles $\io:P\ra P'$, {\it section\/}
$s:X\ra P$, {\it tensor product\/} $P\ot_{\Z/2\Z}P'$, and {\it
pullback\/} $f^*(P)\ra W$ under a $\C$-scheme morphism $f:W\ra X$,
all of which are defined in the obvious ways.

Let $P\ra X$ be a principal $\Z/2\Z$-bundle. Write $\cL_P\in
D^b_c(X)$ for the rank one $A$-local system on $X$ induced from $P$
by the nontrivial representation of $\Z/2\Z\cong\{\pm 1\}$ on $A$.
It is characterized by $\pi_*(A_P)\cong A_X\op\cL_P$. For each
$\cQ^\bu\in D^b_c(X)$, write $\cQ^\bu\ot_{\Z/2\Z}P\in D^b_c(X)$ for
$\cQ^\bu\otL\cL_P$, and call it $\cQ^\bu$ {\it twisted by\/} $P$. If
$\cQ^\bu$ is perverse then $\cQ^\bu\ot_{\Z/2\Z}P$ is perverse.

Perverse sheaves and complexes twisted by principal $\Z/2\Z$-bundles
have the obvious functorial behaviour. For example, if $P\ra X$,
$P'\ra X$ are principal $\Z/2\Z$-bundles and $\cQ^\bu\in D^b_c(X)$
there is a canonical isomorphism $(\cQ^\bu\ot_{\Z/2\Z}P)
\ot_{\Z/2\Z}P'\cong\cQ^\bu\ot_{\Z/2\Z}(P\ot_{\Z/2\Z}P')$, and if
$f:W\ra X$ is a $\C$-scheme morphism there is a canonical
isomorphism $f^*(\cQ^\bu\ot_{\Z/2\Z}P)\cong f^*(\cQ^\bu)\ot_{\Z/2\Z}
f^*(P)$.
\label{sm2def3}
\end{dfn}

\subsection{Nearby cycles and vanishing cycles on $\C$-schemes}
\label{sm23}

We explain nearby cycles and vanishing cycles, as in Dimca \cite[\S
4.2]{Dimc}. The definition is complex analytic,
$\widetilde{X_*^\an},\widetilde{\C^*}$ in \eq{sm2eq2} do not come
from $\C$-schemes.

\begin{dfn} Let $X$ be a $\C$-scheme, and $f:X\ra\C$ a
regular function. Define $X_0=f^{-1}(0)$, as a $\C$-subscheme of
$X$, and $X_*=X\sm X_0$. Consider the commutative diagram of complex
analytic spaces:
\e
\begin{gathered}
\xymatrix@R=15pt@C=30pt{ X_0^\an \ar[r]_i \ar[d]^(0.45)f & X^\an
\ar[d]^(0.45)f & X_*^\an \ar[l]^j \ar[d]^(0.45)f &
{\widetilde{X_*^\an}\phantom{.}} \ar[l]^p \ar@/_.7pc/[ll]_\pi
\ar[d]^(0.45){\ti f} \\ \{0\} \ar[r] & \C & \C^* \ar[l] &
{\widetilde{\C^*}.} \ar[l]_\rho}
\end{gathered}
\label{sm2eq2}
\e
Here $X^\an,X_0^\an,X_*^\an$ are the complex analytic spaces
associated to the $\C$-schemes $X_0,X,X_*$, and $i:X_0^\an\hookra
X^\an$, $j:X_*^\an\hookra X^\an$ are the inclusions,
$\rho:\widetilde{\C^*}\ra\C^*$ is the universal cover of
$\C^*=\C\sm\{0\}$, and $\widetilde{X_*^\an}=X_*^\an\t_{f,\C^*,\rho}
\widetilde{\C^*}$ the corresponding cover of $X_*^\an$, with
covering map $p:\widetilde{X_*^\an}\ra X_*^\an$, and $\pi=j\ci p$.

As in \S\ref{sm26}, the triangulated categories $D(X),D^b_c(X)$ and
six operations $f^*,f^!,Rf_*,Rf_!,{\cal RH}om,\otL$ also make sense
for complex analytic spaces. So we can define the {\it nearby cycle
functor\/} $\psi_f:D^b_c(X)\ra D^b_c(X_0)$ to be $\psi_f=i^*\ci
R\pi_*\ci \pi^*$. Since this definition goes via
$\widetilde{X_*^\an}$ which is not a $\C$-scheme, it is not obvious
that $\psi_f$ maps to (algebraically) constructible complexes
$D^b_c(X_0)$ rather than just to $D(X_0)$, but it does
\cite[p.~103]{Dimc}, \cite[p.~352]{KaSc1}.

There is a natural transformation $\Xi:i^*\Ra\psi_f $ between the
functors $i^*,\psi_f:D^b_c(X)\ra D^b_c(X_0)$. The {\it vanishing
cycle functor\/} $\phi_f:D^b_c(X)\ra D^b_c(X_0)$ is a functor such
that for every $\cC^\bu$ in $D^b_c(X)$ we have a distinguished
triangle
\begin{equation*}
\smash{\xymatrix@C=40pt{i^*(\cC^\bu) \ar[r]^{\Xi(\cC^\bu)} & \psi_f
(\cC^\bu) \ar[r] & \phi_f (\cC^\bu) \ar[r]^{[+1]} & i^*(\cC^\bu)}}
\end{equation*}
in $D^b_c(X_0)$. Following Dimca \cite[p.~108]{Dimc}, we write
$\psi_f^p,\phi_f^p$ for the shifted functors~$\psi_f[-1],\ab
\phi_f[-1]:D^b_c(X)\ra D^b_c(X_0)$.

The generator of $\Z=\pi_1(\C^*)$ on $\widetilde{\C^*}$ induces a
deck transformation $\de_{\smash{\C^*}}:\widetilde{\C^*}\ra
\widetilde{\C^*}$ which lifts to a deck transformation
$\de_{\smash{X^*}}:\widetilde{X^*}\ra\widetilde{X^*}$ with
$p\ci\de_{\smash{X^*}}=p$ and $\ti f\ci\de_{\smash{X^*}}=
\de_{\smash{\C^*}}\ci\ti f$. As in \cite[p.~103, p.~105]{Dimc}, we
can use $\de_{\smash{X^*}}$ to define natural transformations
$M_{X,f}:\psi^p_f\Ra\psi^p_f$ and $M_{X,f}:\phi^p_f\Ra\phi^p_f$,
called {\it monodromy}.
\label{sm2def4}
\end{dfn}

Alternative definitions of $\psi_f,\phi_f$ in terms of
specialization and microlocalization functors are given by Kashiwara
and Schapira \cite[Prop.~8.6.3]{KaSc1}. Here are some properties of
nearby and vanishing cycles. Parts (i),(ii) can be found in Dimca
\cite[Th.~5.2.21 \& Prop.~4.2.11]{Dimc}. Part (iv) is proved by
Massey \cite{Mass3}; compare also Proposition \ref{smAprop1}
in the Appendix. 

\begin{thm}{\bf(i)} If\/ $X$ is a $\C$-scheme and\/ $f:X\ra\C$ is
regular, then\/ $\psi_f^p,\phi_f^p:D^b_c(X)\ra D^b_c(X_0)$ both
map\/~$\Perv(X)\ra\Perv(X_0)$.
\smallskip

\noindent{\bf(ii)} Let\/ $\Phi:X\ra Y$ be a proper morphism of\/
$\C$-schemes, and\/ $g:Y\ra\C$ be regular. Write
$f=g\ci\Phi:X\ra\C,$ $X_0=f^{-1}(0)\subseteq X,$
$Y_0=g^{-1}(0)\subseteq Y,$ and\/ $\Phi_0=\Phi\vert_{X_0}:X_0\ra
Y_0$. Then we have natural isomorphisms
\e
R(\Phi_0)_*\ci\psi^p_f\cong \psi^p_g\ci
R\Phi_*\quad\text{and\/}\quad R(\Phi_0)_*\ci\phi^p_f\cong
\phi^p_g\ci R\Phi_*.
\label{sm2eq3}
\e
Note too that\/ $R\Phi_*\cong R\Phi_!$ and\/ $R(\Phi_0)_*\cong
R(\Phi_0)_!,$ as $\Phi,\Phi_0$ are proper.
\smallskip

\noindent{\bf(iii)} Let\/ $\Phi:X\ra Y$ be an \'etale morphism of\/
$\C$-schemes, and\/ $g:Y\ra\C$ be regular. Write
$f=g\ci\Phi:X\ra\C,$ $X_0=f^{-1}(0)\subseteq X,$
$Y_0=g^{-1}(0)\subseteq Y,$ and\/ $\Phi_0=\Phi\vert_{X_0}:X_0\ra
Y_0$. Then we have natural isomorphisms
\e
\Phi_0^*\ci\psi^p_f\cong\psi^p_g\ci\Phi^*\quad\text{and\/}\quad
\Phi_0^*\ci\phi^p_f\cong\phi^p_g\ci\Phi^*.
\label{sm2eq4}
\e
Note too that\/ $\Phi^*\cong\Phi^!$ and\/ $\Phi_0^*\cong\Phi_0^!,$
as $\Phi,\Phi_0$ are \'etale.

More generally, if\/ $\Phi:X\ra Y$ is smooth of relative (complex)
dimension $d$ and $g,f,X_0,Y_0,\Phi_0$ are as above then we have
natural isomorphisms
\e
\Phi_0^*[d]\ci\psi^p_f\cong\psi^p_g\ci\Phi^*[d]\quad\text{and\/}\quad
\Phi_0^*[d]\ci\phi^p_f\cong\phi^p_g\ci\Phi^*[d].
\label{sm2eq5}
\e
Note too that\/ $\Phi^*[d]\cong\Phi^![-d]$
and\/~$\Phi_0^*[d]\cong\Phi_0^![-d]$.
\smallskip

\noindent{\bf(iv)} If\/ $X$ is a $\C$-scheme and\/ $f:X\ra\C$ is
regular, then there are natural isomorphisms $\psi_f^p\ci\bD_X\cong
\bD_{X_0}\ci\psi_f^p$ and\/~$\phi_f^p\ci\bD_X\cong
\bD_{X_0}\ci\phi_f^p$.
\label{sm2thm4}
\end{thm}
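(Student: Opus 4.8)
The plan is as follows. As noted just above the statement, part~(i) is \cite[Th.~5.2.21]{Dimc} and part~(iv) is Massey~\cite{Mass3}, so I would simply quote these; the real work is in~(ii) and~(iii), which I would deduce directly from the diagram~\eq{sm2eq2} using the standard base change theorems. Throughout I use the formula $\psi_f=i_X^*\ci R(\pi_X)_*\ci\pi_X^*$ of Definition~\ref{sm2def4}, treat $\phi_f$ via the functorial distinguished triangle relating $i_X^*,\psi_f,\phi_f$, and note that every isomorphism produced comes out natural automatically, being a composite of the natural transformations occurring in base change, in $\Xi$, and in that triangle.

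For~(ii), let $\Phi:X\ra Y$ be proper and $f=g\ci\Phi$. From $X_0=f^{-1}(0)=\Phi^{-1}(Y_0)$ and $X_*=\Phi^{-1}(Y_*)$ one obtains cartesian squares exhibiting $i_X,j_X$ as the base changes of $i_Y,j_Y$ along $\Phi$, and a cartesian square $\widetilde{X_*^\an}=X^\an\t_{Y^\an}\widetilde{Y_*^\an}$ with induced proper map $\widetilde\Phi$ satisfying $\pi_Y\ci\widetilde\Phi=\Phi\ci\pi_X$. Proper base change for the square with vertical maps $\pi_X,\pi_Y$ gives $\pi_Y^*\ci R\Phi_*\cong R\widetilde\Phi_*\ci\pi_X^*$; functoriality of $R(-)_*$ turns $R(\pi_Y)_*\ci R\widetilde\Phi_*$ into $R\Phi_*\ci R(\pi_X)_*$; and proper base change for the square with vertical maps $\Phi,\Phi_0$ gives $i_Y^*\ci R\Phi_*\cong R(\Phi_0)_*\ci i_X^*$. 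Composing these isomorphisms yields $\psi_g\ci R\Phi_*\cong R(\Phi_0)_*\ci\psi_f$. The statement for $\phi$ then follows from the functorial triangle: the last isomorphism above matches the first terms of the two triangles, compatibly with $\Xi$ (which is built from adjunction units, hence commutes with base change), so we get a morphism of distinguished triangles that is an isomorphism on two terms, hence on all three. Shifting by $[-1]$ gives the theorem for $\psi^p,\phi^p$.

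Part~(iii) runs identically with ``smooth'' in place of ``proper''. For $\Phi:X\ra Y$ smooth of relative dimension $d$ (in particular \'etale, with $d=0$) and $f=g\ci\Phi$, the same fibre products give a cartesian square $\widetilde{X_*^\an}=X^\an\t_{Y^\an}\widetilde{Y_*^\an}$ with $\widetilde\Phi$ smooth of relative dimension $d$ and $\pi_Y\ci\widetilde\Phi=\Phi\ci\pi_X$. This time I would move $\Phi^*$, rather than $R\Phi_*$, through the construction: smooth base change along the smooth morphism $\Phi$ gives $\Phi^*\ci R(\pi_Y)_*\cong R(\pi_X)_*\ci\widetilde\Phi^*$, while $\pi_X^*\ci\Phi^*=\widetilde\Phi^*\ci\pi_Y^*$ and $i_X^*\ci\Phi^*=\Phi_0^*\ci i_Y^*$ are mere functoriality of $(-)^*$. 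Composing gives $\psi_f\ci\Phi^*\cong\Phi_0^*\ci\psi_g$, and the vanishing-cycle statement again follows from the functorial triangle, now using $i_X^*\ci\Phi^*=\Phi_0^*\ci i_Y^*$ on the first terms. No shift is produced, since $\pi_Y$ enters only through $*$-pullback; the shifts $[d]$ displayed in the smooth case cancel across the isomorphism.

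The step I expect to be the genuine obstacle is~(iv). ``$\psi_f$ commutes with $\bD$'' is false as it stands, and even with the normalisation $\psi_f^p=\psi_f[-1]$ one has to identify the Verdier dual of the analytic construction $\widetilde{X_*^\an}\ra X_*^\an\hookra X^\an$ and verify its compatibility with the monodromy $M_{X,f}$; this is the one part of the theorem that is not a formal manipulation of the six operations, and it is exactly what Massey~\cite{Mass3} supplies. (In the $\cD$-module and mixed Hodge module settings (iv) comes instead from the behaviour of the $V$-filtration under duality in the work of Kashiwara and Saito; and one could alternatively bootstrap~(i) from~(iv), since $\psi_f^p$ is right $t$-exact by the standard support estimates for nearby cycles, and right $t$-exactness together with compatibility with $\bD$ forces $t$-exactness.) Since every tool used---the six operations, proper and smooth base change, Verdier duality, the functorial triangle for $\phi_f$, and the references \cite{Dimc,Mass3}---is available in all eight settings~(a)--(h), the same argument applies throughout.
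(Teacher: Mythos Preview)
Your proposal is correct. The paper itself does not prove this theorem: it simply records, in the paragraph preceding the statement, that parts~(i) and~(ii) can be found in Dimca \cite[Th.~5.2.21 \& Prop.~4.2.11]{Dimc} and that part~(iv) is due to Massey~\cite{Mass3}; part~(iii) is left as standard and unattributed. So your treatment of~(i) and~(iv) matches the paper exactly.

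Where you differ is that you actually supply the proofs of~(ii) and~(iii), by unpacking the definition $\psi_f=i^*\ci R\pi_*\ci\pi^*$ and pushing the Cartesian square $\widetilde{X_*^\an}=X^\an\t_{Y^\an}\widetilde{Y_*^\an}$ through proper (respectively smooth) base change. This is precisely the argument underlying the cited Dimca reference, so you are not taking a different route so much as filling in what the paper leaves to the literature. Your deduction of the $\phi$-statements from the $\psi$-statements via the functorial distinguished triangle and compatibility of $\Xi$ with base change is also the standard one, and your remark that the shifts~$[d]$ in the smooth case are inert (present only to keep the functors $t$-exact) is correct. The parenthetical observation that~(i) could be bootstrapped from~(iv) plus right $t$-exactness is a nice aside, though neither the paper nor Dimca argues that way.
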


\subsection{Perverse sheaves of vanishing cycles on $\C$-schemes}
\label{sm24}

We can now define the main subject of this paper, the perverse sheaf
of vanishing cycles $\PV^\bu_{U,f}$ for a regular
function~$f:U\ra\C$.

\begin{dfn} Let $U$ be a smooth $\C$-scheme, and $f:U\ra\C$ a
regular function. Write $X=\Crit(f)$, as a closed $\C$-subscheme of
$U$. Then as a map of topological spaces, $f\vert_X:X\ra\C$ is
locally constant, with finite image $f(X)$, so we have a
decomposition $X=\coprod_{c\in f(X)}X_c$, for $X_c\subseteq X$ the
open and closed $\C$-subscheme with $f(x)=c$ for each
$\C$-point~$x\in X_c$.

(Note that if $X$ is non-reduced, then $f\vert_X:X\ra\C$ need not be
locally constant as a morphism of $\C$-schemes, but
$f\vert_{X^\red}:X^\red\ra\C$ is locally constant, where $X^\red$ is
the reduced $\C$-subscheme of $X$. Since $X,X^\red$ have the same
topological space, $f\vert_X:X\ra\C$ is locally constant on
topological spaces.)

For each $c\in\C$, write $U_c=f^{-1}(c)\subseteq U$. Then as in
\S\ref{sm23}, we have a vanishing cycle functor
$\phi_{f-c}^p:\Perv(U)\ra\Perv(U_c)$. So we may form
$\phi_{f-c}^p(A_U[\dim U])$ in $\Perv(U_c)$, since $A_U[\dim
U]\in\Perv(U)$ by Theorem \ref{sm2thm2}(f). One can show
$\phi_{f-c}^p(A_U[\dim U])$ is supported on the closed subset
$X_c=\Crit(f)\cap U_c$ in $U_c$, where $X_c=\es$ unless $c\in f(X)$.
That is, $\phi_{f-c}^p(A_U[\dim U])$ lies in~$\Perv(U_c)_{X_c}$.

But Theorem \ref{sm2thm2}(c) says $\Perv(U_c)_{X_c}$ and
$\Perv(X_c)$ are equivalent categories, so we may regard
$\phi_{f-c}^p(A_U[\dim U])$ as a perverse sheaf on $X_c$. That is,
we can consider $\phi_{f-c}^p(A_U[\dim U])\vert_{X_c}=i_{X_c,U_c}^*
\bigl(\phi_{f-c}^p(A_U[\dim U])\bigr)$ in $\Perv(X_c)$, where
$i_{X_c,U_c}:X_c\ra U_c$ is the inclusion morphism.

As $X=\coprod_{c\in f(X)}X_c$ with each $X_c$ open and closed in
$X$, we have $\Perv(X)=\bigop_{c\in f(X)}\Perv(X_c)$. Define the
{\it perverse sheaf of vanishing cycles\/ $\PV_{U,f}^\bu$ of\/}
$U,f$ in $\Perv(X)$ to be $\PV_{U,f}^\bu=\bigop_{c\in
f(X)}\phi_{f-c}^p(A_U[\dim U])\vert_{X_c}$. That is, $\PV_{U,f}^\bu$
is the unique perverse sheaf on $X=\Crit(f)$ with
$\PV_{U,f}^\bu\vert_{X_c}=\phi_{f-c}^p(A_U[\dim U])\vert_{X_c}$ for
all~$c\in f(X)$.

Under Verdier duality, we have $A_U[\dim U]\cong\bD_U(A_U[\dim U])$
by Theorem \ref{sm2thm2}(f), so $\phi_{f-c}^p(A_U[\dim U])
\cong\bD_{U_c}\bigl(\phi_{f-c}^p(A_U[\dim U])\bigr)$ by Theorem
\ref{sm2thm4}(iv). Applying $i_{X_c,U_c}^*$ and using $\bD_{X_c}\ci
i_{X_c,U_c}^*\cong i_{X_c,U_c}^!\ci\bD_{U_c}$ by Theorem
\ref{sm2thm1}(iv) and $i_{X_c,U_c}^!\cong i_{X_c,U_c}^*$ on
$\Perv(U_c)_{X_c}$ by Theorem \ref{sm2thm2}(c) also gives
\begin{equation*}
\phi_{f-c}^p(A_U[\dim U])\vert_{X_c}\cong \bD_{X_c}\bigl(
\phi_{f-c}^p(A_U[\dim U])\vert_{X_c}\bigr).
\end{equation*}
Summing over all $c\in f(X)$ yields a canonical isomorphism
\e
\si_{U,f}:\PV_{U,f}^\bu\,{\buildrel\cong\over\longra}\,\bD_X(\PV_{U,f}^\bu).
\label{sm2eq6}
\e

For $c\in f(X)$, we have a monodromy operator
$M_{U,f-c}:\phi_{f-c}^p(A_U[\dim U])\ab\ra \phi_{f-c}^p(A_U[\dim
U])$, which restricts to $\phi_{f-c}^p(A_U[\dim U])\vert_{X_c}$.
Define the {\it twisted monodromy operator\/}
$\tau_{U,f}:\PV_{U,f}^\bu\ra\PV_{U,f}^\bu$ by
\e
\begin{split}
\tau_{U,f}\vert_{X_c}&=(-1)^{\dim U}M_{U,f-c}\vert_{X_c}:\\
&\phi_{f-c}^p(A_U[\dim U])\vert_{X_c}\longra \phi_{f-c}^p(A_U[\dim
U])\vert_{X_c},
\end{split}
\label{sm2eq7}
\e
for each $c\in f(X)$. Here `twisted' refers to the sign $(-1)^{\dim
U}$ in \eq{sm2eq7}. We include this sign change as it makes
monodromy act naturally under transformations which change dimension
--- without it, equation \eq{sm5eq15} below would only commute up to
a sign $(-1)^{\dim V-\dim U}$, not commute --- and it normalizes the
monodromy of any nondegenerate quadratic form to be the identity, as
in \eq{sm2eq13}. The sign $(-1)^{\dim U}$ also corresponds to the
twist `$(\ha\dim U)$' in the definition \eq{sm2eq24} of the mixed
Hodge module of vanishing cycles $\HV_{U,f}^{\bu}$ in~\S\ref{sm210}.

\label{sm2def5}
\end{dfn}

The (compactly-supported) hypercohomology $\bH^*(\PV_{U,f}^\bu),
\bH^*_\compact(\PV_{U,f}^\bu)$ from \eq{sm2eq1} is an important
invariant of $U,f$. If $A$ is a field then the isomorphism
$\si_{U,f}$ in \eq{sm2eq6} implies that $\bH^k(\PV_{U,f}^\bu)
\cong\bH^{-k}_\compact(\PV_{U,f}^\bu)^*$, a form of Poincar\'e
duality.

We defined $\smash{\PV_{U,f}^\bu}$ in perverse sheaves over a base
ring $A$. Writing $\PV_{U,f}^\bu(A)$ to denote the base ring, one
can show that $\PV_{U,f}^\bu(A)\cong \PV_{U,f}^\bu(\Z)\otL_\Z A$.
Thus, we may as well take $A=\Z$, or $A=\Q$ if we want $A$ to be a
field, since the case of general $A$ contains no more information.

There is a Thom--Sebastiani Theorem for perverse sheaves, due to
Massey \cite{Mass1} and Sch\"urmann \cite[Cor.~1.3.4]{Schu}. Applied
to $\PV_{U,f}^\bu$, it yields:

\begin{thm} Let\/ $U,V$ be smooth\/ $\C$-schemes and\/ $f:U\ra\C,$
$g:V\ra\C$ be regular, so that\/ $f\boxplus g:U\t V\ra\C$ is regular
with\/ $(f\boxplus g)(u,v):=f(u)+g(v)$. Set\/ $X=\Crit(f)$ and\/
$Y=\Crit(g)$ as $\C$-subschemes of\/ $U,V,$ so that\/
$\Crit(f\boxplus g)=X\t Y$. Then there is a natural isomorphism
\e
\smash{\TS_{U,f,V,g}:\PV_{U\t V,f\boxplus g}^\bu\longra
\PV_{U,f}^\bu\boxtL \PV_{V,g}^\bu}
\label{sm2eq8}
\e
in $\Perv(X\t Y),$ such that the following diagrams commute:
\ea
\begin{gathered}
{}\!\!\!\!\!\!\!\!\!\!\!\!\!\!\! \xymatrix@!0@C=58pt@R=50pt{
*+[r]{\PV_{U\t V,f\boxplus g}^\bu} \ar[rrrrr]_{\si_{U\t V,f\boxplus
g}} \ar[d]^(0.4){\TS_{U,f,V,g}} &&&&&
*+[l]{\bD_{X\t Y}(\PV_{U\t V,f\boxplus g}^\bu)} \\
*+[r]{\raisebox{25pt}{\quad\,\,\,$\begin{subarray}{l}\ts\PV_{U,f}^\bu\boxtL \\
\ts\PV_{V,g}^\bu\end{subarray}$}}
\ar[rr]^(0.57){\si_{U,f}\boxtL\si_{V,g}} &&
*+[r]{\raisebox{25pt}{${}\,\,\,\begin{subarray}{l}\ts
\bD_X(\PV_{U,f}^\bu)\boxtL\!\!\!\!\!{} \\
\ts \bD_Y(\PV_{V,g}^\bu)\end{subarray}$}} \ar[rrr]^(0.33)\cong &&&
*+[l]{\bD_{X\t Y}\bigl(\PV_{U,f}^\bu\boxtL \PV_{V,g}^\bu\bigr),\!\!{}}
\ar[u]^{\bD_{X\t Y}(\TS_{U,f,V,g})} }\!\!\!\!\!{}
\end{gathered}
\label{sm2eq9}
\\[-10pt]
\begin{gathered}
{}\!\!\!\!\xymatrix@!0@C=290pt@R=40pt{ *+[r]{\PV_{U\t V,f\boxplus
g}^\bu} \ar[r]_{\tau_{U\t V,f\boxplus g}} \ar[d]^{\TS_{U,f,V,g}} &
*+[l]{\PV_{U\t V,f\boxplus g}^\bu} \ar[d]_{\TS_{U,f,V,g}} \\
*+[r]{\PV_{U,f}^\bu\boxtL \PV_{V,g}^\bu} \ar[r]^{\tau_{U,f}\boxtL\tau_{V,g}}
& *+[l]{\PV_{U,f}^\bu\boxtL \PV_{V,g}^\bu.\!\!{}} }\!\!\!\!\!{}
\end{gathered}
\label{sm2eq10}
\ea
\label{sm2thm5}
\end{thm}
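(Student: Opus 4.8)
The plan is to deduce Theorem \ref{sm2thm5} from the general Thom--Sebastiani isomorphism for vanishing cycles of Massey \cite{Mass1} and Sch\"urmann \cite[Cor.~1.3.4]{Schu}, by specializing it to the constant sheaves $A_U[\dim U]$, $A_V[\dim V]$ and reassembling over the decomposition of the critical locus by critical values. Recall that for regular $f:U\ra\C$, $g:V\ra\C$ and $\cF^\bu\in D^b_c(U)$, $\cG^\bu\in D^b_c(V)$, writing $U_0=f^{-1}(0)$, $V_0=g^{-1}(0)$, this isomorphism is a canonical isomorphism $\phi^p_{f\boxplus g}(\cF^\bu\boxtL\cG^\bu)\vert_{U_0\t V_0}\cong\phi^p_f(\cF^\bu)\boxtL\phi^p_g(\cG^\bu)$ in $\Perv(U_0\t V_0)$, natural in $\cF^\bu,\cG^\bu$, compatible with Verdier duality, and intertwining the monodromy operator $M_{U\t V,f\boxplus g}$ with $M_{U,f}\boxtL M_{V,g}$.

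First I would carry out the reduction. By Definition \ref{sm2def5} we have $\Perv(X\t Y)=\bigoplus_c\Perv((X\t Y)_c)$ with $(X\t Y)_c=\coprod_{a+b=c}X_a\t Y_b$, so it suffices to construct, for each $a\in f(X)$ and $b\in g(Y)$, a canonical isomorphism between the restrictions of $\PV^\bu_{U\t V,f\boxplus g}$ and of $\PV^\bu_{U,f}\boxtL\PV^\bu_{V,g}$ to the clopen piece $X_a\t Y_b$; summing over $(a,b)$ then defines $\TS_{U,f,V,g}$ in \eqref{sm2eq8}. Using $(f-a)\boxplus(g-b)=(f\boxplus g)-(a+b)$ and $A_U[\dim U]\boxtL A_V[\dim V]\cong A_{U\t V}[\dim U+\dim V]$, Definition \ref{sm2def5} identifies $\PV^\bu_{U\t V,f\boxplus g}\vert_{X_a\t Y_b}$ with $\phi^p_{(f-a)\boxplus(g-b)}(A_{U\t V}[\dim U+\dim V])\vert_{X_a\t Y_b}$, and $(\PV^\bu_{U,f}\boxtL\PV^\bu_{V,g})\vert_{X_a\t Y_b}$ with $\phi^p_{f-a}(A_U[\dim U])\vert_{X_a}\boxtL\phi^p_{g-b}(A_V[\dim V])\vert_{Y_b}$. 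Now apply the general Thom--Sebastiani isomorphism to the functions $f-a$, $g-b$ and the constant sheaves: it gives a canonical isomorphism $\phi^p_{(f-a)\boxplus(g-b)}(A_{U\t V}[\dim U+\dim V])\vert_{(f-a)^{-1}(0)\t(g-b)^{-1}(0)}\cong\phi^p_{f-a}(A_U[\dim U])\boxtL\phi^p_{g-b}(A_V[\dim V])$, and since both sides are supported on $X_a\t Y_b$, restricting there produces exactly the desired isomorphism. Canonicity of each piece yields canonicity of $\TS_{U,f,V,g}$.

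For the Verdier-duality square \eqref{sm2eq9}, I would observe that $\si_{U,f}$, $\si_{V,g}$ and $\si_{U\t V,f\boxplus g}$ are all built from the self-duality $\bD_U(A_U[\dim U])\cong A_U[\dim U]$ (Theorem \ref{sm2thm2}(f)), the fact that $\phi^p$ commutes with Verdier duality (Theorem \ref{sm2thm4}(iv)), and the duality identities $i_{X_c,U_c}^!\cong i_{X_c,U_c}^*$ on supported perverse sheaves and $\bD\ci i^*\cong i^!\ci\bD$ (Theorems \ref{sm2thm1}(iv), \ref{sm2thm2}(c)). Working piece by piece over $X_a\t Y_b$, the square then reduces to the compatibility of the general Thom--Sebastiani isomorphism with Verdier duality, together with the standard identification $\bD_{X\t Y}(\cP^\bu\boxtL\cQ^\bu)\cong\bD_X(\cP^\bu)\boxtL\bD_Y(\cQ^\bu)$ --- the unlabelled arrow in \eqref{sm2eq9} --- and chasing the naturality of the Thom--Sebastiani isomorphism in its two arguments yields commutativity. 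The monodromy square \eqref{sm2eq10} is easier: over $X_a\t Y_b$ one has $\tau_{U\t V,f\boxplus g}=(-1)^{\dim U+\dim V}M_{U\t V,(f\boxplus g)-(a+b)}$, which corresponds under $\TS_{U,f,V,g}$ to $(-1)^{\dim U+\dim V}\bigl(M_{U,f-a}\boxtL M_{V,g-b}\bigr)$ by the monodromy-multiplicativity of the general isomorphism, and this equals $\bigl((-1)^{\dim U}M_{U,f-a}\bigr)\boxtL\bigl((-1)^{\dim V}M_{V,g-b}\bigr)=\tau_{U,f}\boxtL\tau_{V,g}$ by \eqref{sm2eq7}; indeed the twist in \eqref{sm2eq7} was introduced precisely so that this multiplicativity holds.

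The step I expect to be the main obstacle is the Verdier-duality square \eqref{sm2eq9}: the commutation of vanishing cycles with $\bD$ and the behaviour of $\bD$ under $\boxtL$ are routine, but one needs the general Thom--Sebastiani isomorphism to be compatible with Verdier duality in a sufficiently precise, natural form to make the square commute on the nose, rather than merely up to sign or up to an unspecified automorphism. If the cited sources do not supply this in the form required, the fallback is to check commutativity \'etale-locally, using that morphisms of perverse sheaves form a sheaf (Theorem \ref{sm2thm3}(i)); but since the critical loci of $f$ and $g$ need not be isolated there is no convenient local normal form for $f\boxplus g$, so the cleaner route is to extract the duality-compatible Thom--Sebastiani statement directly from Sch\"urmann's construction.
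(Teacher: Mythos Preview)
Your proposal is correct and matches the paper's approach: the paper does not give a proof of Theorem \ref{sm2thm5} but simply states it as the specialization to $\PV_{U,f}^\bu$ of the general Thom--Sebastiani isomorphism of Massey \cite{Mass1} and Sch\"urmann \cite[Cor.~1.3.4]{Schu}, exactly as you outline. Your write-up is in fact more detailed than the paper's, which leaves the decomposition by critical values and the verification of the duality and monodromy diagrams implicit.
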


The next example will be important later.

\begin{ex} Define $f:\C^n\ra\C$ by
$f(z_1,\ldots,z_n)=z_1^2+\cdots+z_n^2$ for $n>1$. Then
$\Crit(f)=\{0\}$, so $\PV_{\C^n,z_1^2+\cdots+z_n^2}^\bu=
\phi_f^p(A_{\C^n}[n])\vert_{\{0\}}$ is a perverse sheaf on the point
$\{0\}$. Following Dimca \cite[Prop.~4.2.2, Ex.~4.2.3 \&
Ex.~4.2.6]{Dimc}, we find that there is a canonical isomorphism
\e
\PV_{\C^n,z_1^2+\cdots+z_n^2}^\bu\cong
H^{n-1}\bigl(MF_f(0);A\bigr)\ot_AA_{\{0\}},
\label{sm2eq11}
\e
where $MF_f(0)$ is the {\it Milnor fibre\/} of $f$ at 0, as in
\cite[p.~103]{Dimc}. Since $f(z)=z_1^2+\cdots+z_n^2$ is homogeneous,
we see that
\begin{equation*}
MF_f(0)\cong\bigl\{(z_1,\ldots,z_n)\in\C^n:
f(z_1,\ldots,z_n)=1\bigr\}\cong T^*{\cal S}^{n-1},
\end{equation*}
so that $H^{n-1}\bigl(MF_f(0);A\bigr)\cong H^{n-1}\bigl({\cal
S}^{n-1};A\bigr)\cong A$. Therefore we have
\e
\PV_{\C^n,z_1^2+\cdots+z_n^2}^\bu\cong A_{\{0\}}.
\label{sm2eq12}
\e

This isomorphism \eq{sm2eq12} is {\it natural up to sign} (unless
the base ring $A$ has characteristic 2, in which case \eq{sm2eq12}
is natural), as it depends on the choice of isomorphism
$H^{n-1}({\cal S}^{n-1},A) \cong A$, which corresponds to an
orientation for ${\cal S}^{n-1}$. This uncertainty of signs will be
important in~\S\ref{sm5}--\S\ref{sm6}.

We can also use Milnor fibres to compute the monodromy operator on
$\PV_{\C^n,z_1^2+\cdots+z_n^2}^\bu$. There is a monodromy map
$\mu_f:MF_f(0)\ra MF_f(0)$, natural up to isotopy, which is the
monodromy in the Milnor fibration of $f$ at 0. Under the
identification $MF_f(0)\cong T^*{\cal S}^{n-1}$ we may take $\mu_f$
to be the map $\d(-1): T^*{\cal S}^{n-1}\ra T^*{\cal S}^{n-1}$
induced by $-1:{\cal S}^{n-1}\ra {\cal S}^{n-1}$ mapping
$-1:(x_1,\ldots,x_n)\mapsto (-x_1,\ldots,-x_n)$. This multiplies
orientations on ${\cal S}^{n-1}$ by $(-1)^n$. Thus,
$\mu_{f*}:H^{n-1}({\cal S}^{n-1},A)\ra H^{n-1}({\cal S}^{n-1},A)$
multiplies by $(-1)^n$.

By \cite[Prop.~4.2.2]{Dimc}, equation \eq{sm2eq11} identifies the
action of the monodromy operator $M_{\C^n,f}\vert_{\{0\}}$ on
$\PV_{\C^n,z_1^2+\cdots+z_n^2}^\bu$ with the action of $\mu_{f*}$ on
$H^{n-1}({\cal S}^{n-1},A)$. So $M_{\C^n,f}\vert_{\{0\}}$ is
multiplication by $(-1)^n$. Combining this with the sign change
$(-1)^{\dim U}$ in \eq{sm2eq7} for $U=\C^n$ shows that the twisted
monodromy is
\e
\tau_{\C^n,z_1^2+\cdots+z_n^2}=\id:
\PV_{\C^n,z_1^2+\cdots+z_n^2}^\bu \longra
\PV_{\C^n,z_1^2+\cdots+z_n^2}^\bu.
\label{sm2eq13}
\e

Equations \eq{sm2eq12}--\eq{sm2eq13} also hold for $n=0,1$, though
\eq{sm2eq11} does not.

Note also that these results are compatible with the Thom--Sebastiani Theorem~\ref{sm2thm5}, 
and can be deduced from it and the case $n=1$.
\label{sm2ex1}
\end{ex}

We introduce some notation for pullbacks of $\PV_{V,g}^\bu$ by
\'etale morphisms.

\begin{dfn} Let $U,V$ be smooth $\C$-schemes, $\Phi:U\ra V$ an
\'etale morphism, and $g:V\ra\C$ a regular function. Write
$f=g\ci\Phi:U\ra\C$, and $X=\Crit(f)$, $Y=\Crit(g)$ as
$\C$-subschemes of $U,V$. Then $\Phi\vert_X:X\ra Y$ is \'etale.
Define an isomorphism
\e
\PV_\Phi:\PV_{U,f}^\bu\longra\Phi\vert_X^*\bigl(\PV_{V,g}^\bu
\bigr)\quad \text{in $\Perv(X)$}
\label{sm2eq14}
\e
by the commutative diagram for each $c\in f(X)\subseteq g(Y)$:
\e
\begin{gathered}
\xymatrix@C=145pt@R=17pt{
*+[r]{\PV_{U,f}^\bu\vert_{X_c}\!=\!\phi_{f-c}^p(A_U[\dim U])
\vert_{X_c}} \ar[r]_(0.53)\al \ar@<2ex>[d]^{\PV_\Phi\vert_{X_c}} &
*+[l]{\phi_{f-c}^p\ci\Phi^*(A_V[\dim V]))\vert_{X_c}}
\ar@<-2ex>[d]_\be \\
*+[r]{\Phi\vert_{X_c}^*\bigl(\PV_{V,g}^\bu\bigr)} \ar@{=}[r] &
*+[l]{\Phi_0^*\ci\phi_{g-c}^p\ci(A_V[\dim V]))\vert_{X_c}.\!\!{}}}\!\!\!\!{}
\end{gathered}
\label{sm2eq15}
\e
Here $\al$ is $\phi_{f-c}^p$ applied to the canonical isomorphism
$A_U\ra\Phi^*(A_V)$, noting that $\dim U=\dim V$ as $\Phi$ is
\'etale, and $\be$ is induced by \eq{sm2eq4}.

By naturality of the isomorphisms $\al,\be$ in \eq{sm2eq15} we find
the following commute, where $\si_{U,f},\tau_{U,f}$ are as
in~\eq{sm2eq6}--\eq{sm2eq7}:
\ea
\begin{gathered}
\xymatrix@!0@C=140pt@R=35pt{ *+[r]{\PV_{U,f}^\bu}
\ar[rr]_{\si_{U,f}} \ar[d]^{\PV_\Phi} &&
*+[l]{\bD_X(\PV_{U,f}^\bu)} \\
*+[r]{\Phi\vert_X^*\bigl(\PV_{V,g}^\bu\bigr)}
\ar[r]^{\Phi\vert_X^*(\si_{V,g})} &
{\Phi\vert_X^*\bigl(\bD_Y(\PV_{V,g}^\bu)\bigr)} \ar[r]^(0.37)\cong &
*+[l]{\bD_X\bigl(\Phi\vert_X^*(\PV_{V,g}^\bu)\bigr),\!\!{}}
\ar[u]^{\bD_X(\PV_\Phi)} }
\end{gathered}
\label{sm2eq16}\\
\begin{gathered}
\xymatrix@!0@C=280pt@R=35pt{ *+[r]{\PV_{U,f}^\bu}
\ar[r]_{\tau_{U,f}} \ar[d]^{\PV_\Phi} &
*+[l]{\PV_{U,f}^\bu} \ar[d]_{\PV_\Phi} \\
*+[r]{\Phi\vert_X^*(\PV_{V,g}^\bu)} \ar[r]^{\Phi\vert_X^*(\tau_{V,g})}
& *+[l]{\Phi\vert_X^*(\PV_{V,g}^\bu).\!\!{}} }
\end{gathered}
\label{sm2eq17}
\ea

If $U=V$, $f=g$ and $\Phi=\id_U$
then~$\PV_{\id_U}=\id_{\PV_{U,f}^\bu}$.

If $W$ is another smooth $\C$-scheme, $\Psi:V\ra W$ is \'etale, and
$h:W\ra\C$ is regular with $g=h\ci\Psi:V\ra\C$, then composing
\eq{sm2eq15} for $\Phi$ with $\Phi\vert_{X_c}^*$ of \eq{sm2eq15} for
$\Psi$ shows that
\e
\PV_{\Psi\ci\Phi}=\Phi\vert_X^*(\PV_\Psi)\ci\PV_\Phi:
\PV_{U,f}^\bu\longra(\Psi\ci\Phi)\vert_X^*\bigl(\PV_{W,h}^\bu\bigr).
\label{sm2eq18}
\e
That is, the isomorphisms $\PV_\Phi$ are functorial.
\label{sm2def6}
\end{dfn}

\begin{ex} In Definition \ref{sm2def6}, set $U=V=\C^n$ and
$f(z_1,\ldots,z_n)=g(z_1,\ldots,z_n)=z_1^2+\cdots+z_n^2$, so that
$Y=Z=\{0\}\subset\C^n$. Let $M\in{\rm O}(n,\C)$ be an orthogonal
matrix, so that $M:\C^n\ra\C^n$ is an isomorphism with $f=g\ci M$
and $M\vert_{\{0\}}=\id_{\{0\}}$. As $M\vert_Y=\id_Y$, Definition
\ref{sm2def6} defines an isomorphism
\e
\PV_M:\PV_{\C^n,z_1^2+\cdots+z_n^2}^\bu \longra
\PV_{\C^n,z_1^2+\cdots+z_n^2}^\bu.
\label{sm2eq19}
\e

Equation \eq{sm2eq11} describes $\PV_{\C^n,z_1^2+\cdots+z_n^2}^\bu$
in terms of $MF_f(0)\cong T^*{\cal S}^{n-1}$. Now
$M\vert_{MF_f(0)}:MF_f(0)\ra MF_f(0)$ multiplies orientations on
${\cal S}^{n-1}$ by $\det M$, so
$(M\vert_{MF_f(0)})_*:H^{n-1}\bigl(MF_f(0);A\bigr)\ra
H^{n-1}\bigl(MF_f(0); A\bigr)$ is multiplication by $\det M$. Thus
\eq{sm2eq11} implies that $\PV_M$ in \eq{sm2eq19} is multiplication
by~$\det M=\pm 1$.

\label{sm2ex2}
\end{ex}

\subsection{Summary of the properties we use in this paper}
\label{sm25}

Since parts of \S\ref{sm21}--\S\ref{sm24} do not work for the other
kinds of perverse sheaves, $\cD$-modules and mixed Hodge modules in
\S\ref{sm26}--\S\ref{sm210}, we list what we will need for
\S\ref{sm3}--\S\ref{sm6}, to make it easy to check they are also
valid in the settings of~\S\ref{sm26}--\S\ref{sm210}.
\begin{itemize}
\setlength{\itemsep}{0pt}
\setlength{\parsep}{0pt}
\item[(i)] There should be an $A$-linear abelian category
$\PP(X)$ of $\PP$-objects defined for each scheme or complex
analytic space $X$, over a fixed, well-behaved base ring $A$. We
do not require $A$ to be a field.
\item[(ii)] There should be a Verdier duality functor $\bD_X$
with $\bD_X\ci\bD_X\cong\id$, defined on a suitable subcategory
of $\PP$-objects on $X$ which includes the objects we are
interested in. We do not need $\bD_X$ to be defined on all
objects in~$\PP(X)$.
\item[(iii)] If $U$ is a smooth scheme or complex manifold,
then there should be a canonical object $A_U[\dim U]\in\PP(U)$,
with a canonical isomorphism
\begin{equation*}
\bD_U(A_U[\dim U])\cong A_U[\dim U].
\end{equation*}
\item[(iv)] Let $f:X\hookra Y$ be a closed embedding of schemes
or complex analytic spaces; this implies $f$ is proper. Then
$f_*,f_!:\PP(X)\ra \PP(Y)$ should exist, inducing an equivalence
of categories $\PP(X)\,{\buildrel\sim\over\longra}\, \PP_X(Y)$
as in Theorem \ref{sm2thm2}(c), where $\PP_X(Y)$ is the full
subcategory of objects in $\PP(Y)$ supported on $X$.
\item[(v)] Let $f:X\ra Y$ be an \'etale morphism. Then the
pullbacks $f^*,f^!:\PP(Y)\ra \PP(X)$ should exist. More
generally, if $f:X\ra Y$ is smooth of relative dimension $d$,
then there should be pullbacks $f^*[d],f^![-d]$ mapping
$\PP(Y)\ra\PP(X)$. If $X,Y$ are smooth, there should be a
canonical isomorphism $f^*[d](A_Y[\dim Y])\cong A_X[\dim X]$. We
do not need pullbacks to exist for general morphisms $f:X\ra Y$,
though see (xi) below.
\item[(vi)] An external tensor product $\smash{\boxtL}:\PP(X)\t
\PP(Y)\ra \PP(X\t Y)$ should exist for all $X,Y$.
\item[(vii)] If $X$ is a scheme or complex analytic space,
$P\ra X$ a principal $\Z/2\Z$-bundle, and $\cQ^\bu\in\PP(X)$,
the twisted perverse sheaf $\cQ^\bu\ot_{\Z/2\Z}P\in\PP(X)$
should make sense as in Definition \ref{sm2def3}, and have the
obvious functorial properties.
\item[(viii)] A vanishing cycle functor $\phi_f^p:\PP(U)\ra
\PP(U_0)$ and monodromy transformation $M_{U,f}:\phi^p_f
\Ra\phi^p_f$ in \S\ref{sm24} should exist for all smooth $U$ and
regular/holomorphic $f:U\ra\bA^1$.
\item[(ix)] The functors $\bD_X,f^*,f^!,f_*,f_!,\phi_f^p$
should satisfy the natural isomorphisms in Theorems
\ref{sm2thm1} and \ref{sm2thm4}, provided they exist. They
should have the obvious compatibilities with $\boxtL$, and
restriction to (Zariski) open sets.
\item[(x)] There should be suitable subcategories of
$\PP$-objects which form a stack in the \'etale or complex
analytic topologies, as in Theorem \ref{sm2thm3}. In the
algebraic case we only need Theorem \ref{sm2thm3}(ii) to hold
for Zariski open covers, not \'etale open covers.
\item[(xi)] Proposition \ref{sm2prop} must hold. This involves
pullbacks $j_t^*$ by a morphism $j_t:W_t\hookra W$ which is not
\'etale or smooth, as in (v) above. But on objects we only
consider $j_t^*\bigl(\pi_X^*(\cP^\bu)\bigr)=\pi_X\vert_{W_t}^*
(\cP^\bu)$ which exists in $\PP(W_t)$ by (v) as
$\pi_X\vert_{W_t}$ is \'etale, so $j_t^*$ is defined on the
objects we need.
\item[(xii)] There should be a Thom--Sebastiani Theorem for
$\PP$-objects, so that the analogue of Theorem~\ref{sm2thm5}
holds.
\end{itemize}

\begin{rem} The existence of a (bounded) derived category of
$\PP$-objects will not be assumed, or used, in this paper. On the
other hand, in all the cases we consider, there will be a
realization functor from the category of $\PP$-objects to an
appropriate category of constructible complexes, and the notation
used above reflects this. So in (iii),(v) above, $[1]$ does not
stand for a shift in any derived category; the notation means a
$\PP$-object or morphism whose realization is the appropriate
constructible object or morphism. See Remark~\ref{sm2rem4} below.
\label{sm2rem3}
\end{rem}

\subsection{Perverse sheaves on complex analytic spaces}
\label{sm26}

Next we discuss perverse sheaves on complex analytic spaces, as in
Dimca \cite{Dimc}. The theory follows \S\ref{sm21}--\S\ref{sm24},
replacing (smooth) $\C$-schemes by complex analytic spaces (complex
manifolds), and regular functions by holomorphic functions.

Let $X$ be a complex analytic space, always assumed locally of
finite type (that is, locally embeddable in $\C^n$). In the analogue
of Definition \ref{sm2def1}, we fix a well-behaved commutative ring
$A$, and consider sheaves of $A$-modules $\cS$ on $X$ in the complex
analytic topology. A sheaf $\cS$ is called ({\it analytically\/})
{\it constructible\/} if all the stalks $\cS_x$ for $x\in X$ are
finite type $A$-modules, and there is a locally finite
stratification $X=\coprod_{j\in J}X_j$ of $X$, where now
$X_j\subseteq X$ for $j\in J$ are complex analytic subspaces of $X$,
such that $\cS\vert_{X_j}$ is an $A$-local system for all $j\in J$.

Write $D(X)$ for the derived category of complexes $\cC^\bu$ of
sheaves of $A$-modules on $X$, exactly as in \S\ref{sm21}, and
$D^b_c(X)$ for the full subcategory of bounded complexes $\cC^\bu$
in $D(X)$ whose cohomology sheaves $\cH^m(\cC^\bu)$ are analytically
constructible for all $m\in\Z$. Then $D(X),D^b_c(X)$ are
triangulated categories.

When we wish to distinguish the complex algebraic and complex
analytic theories, we will write $D^b_c(X)^\alg,\Perv(X)^\alg$ for
the algebraic versions in \S\ref{sm21}--\S\ref{sm22} with $X$ a
$\C$-scheme, and $D^b_c(X)^\an,\Perv(X)^\an$ for the analytic
versions.

Here are the main differences between the material of
\S\ref{sm21}--\S\ref{sm24} for perverse sheaves on $\C$-schemes and
on complex analytic spaces:
\begin{itemize}
\setlength{\itemsep}{0pt}
\setlength{\parsep}{0pt}
\item[(a)] If $f:X\ra Y$ is an arbitrary morphism of
$\C$-schemes, then as in \S\ref{sm21} the pushforwards
$Rf_*,Rf_!:D(X)\ra D(Y)$ also map $D^b_c(X)^\alg\ra
D^b_c(Y)^\alg$.

However, if $f:X\ra Y$ is a morphism of complex analytic spaces,
then $Rf_*,Rf_!:D(X)\ra D(Y)$ {\it need not\/} map
$D^b_c(X)^\an\ra D^b_c(Y)^\an$ without extra assumptions on $f$,
for example, if $f:X\ra Y$ is proper.
\item[(b)] The analogue of Theorem \ref{sm2thm3} says that perverse
sheaves on a complex analytic space $X$ form a stack in the
complex analytic topology. This is proved in the subanalytic context in
\cite[Th.~10.2.9]{KaSc1}; the analytic case follows upon noting that a sheaf is complex 
analytically constructible if and only if is locally at all points, as proved
in \cite[Prop.~4.1.13]{Dimc}. See also~\cite[Prop.~8.1.26]{HTT}.
\end{itemize}
The analogues of (i)--(xii) in \S\ref{sm25} work for complex
analytic perverse sheaves, and so our main results hold in this
context.

If $X$ is a $\C$-scheme, and $X^\an$ the corresponding complex
analytic space, then $D(X)$ in \S\ref{sm21} for $X$ a $\C$-scheme
coincides with $D(X^\an)$ for $X^\an$ a complex analytic space, and
$D^b_c(X)^\alg\subset D^b_c(X^\an)^\an$,
$\Perv(X)^\alg\subset\Perv(X^\an)^\an$ are full subcategories, and
the six functors $f^*,f^!,Rf_*,Rf_!, {\cal RH}om,\otL$ for
$\C$-scheme morphisms $f:X\ra Y$ agree in the algebraic and analytic
cases.

\subsection{$\cD$-modules on $\C$-schemes and complex analytic spaces}
\label{sm27}

{\it $\cD$-modules\/} on a smooth $\C$-scheme or smooth complex analytic space $X$
are sheaves of modules over a certain sheaf of rings of differential
operators $\cD_X$ on $X$. Some books on them are Borel et al.\
\cite{Bore}, Coutinho \cite{Cout}, and Hotta, Takeuchi and Tanisaki
\cite{HTT} in the $\C$-scheme case, and Bj\"ork \cite{Bjor} and
Kashiwara \cite{Kash2} in the complex analytic case. For a singular
complex $\C$-scheme or complex analytic space $X$,
the definition of a well-behaved category of $\cD$-modules
is given by Saito \cite{Sait4}, via locally embedding $X$ into a
smooth scheme or space.

The analogue of perverse sheaves on $X$ are called {\it regular
holonomic\/ $\cD$-modules}, which form an abelian category
$\mathop{\rm Mod}_{\rm rh}(\cD_X)$, the heart in the derived category
$D^b_{\rm rh}(\mathop{\rm Mod}(\cD_X))$ of bounded complexes of $\cD_X$-modules with 
regular holonomic cohomology modules. The whole package of
\S\ref{sm21}--\S\ref{sm24} works for $\cD$-modules. Our next theorem
is known as the {\it Riemann--Hilbert correspondence\/} \cite[\S
V.5]{Bjor}, \cite[Th.~7.2.1]{HTT}, see Borel \cite[\S
14.4]{Bore} for $\C$-schemes, Kashiwara \cite{Kash1} for complex
manifolds, and Saito \cite[\S 6]{Sait4} for complex analytic spaces, and
also Maisonobe and Mekhbout~\cite{MaMe}. 

\begin{thm} Let\/ $X$ be a $\C$-scheme or complex analytic space.
Then there is a \begin{bfseries}de Rham functor\end{bfseries} ${\rm
DR}:D^b_{\rm rh}(\mathop{\rm Mod}(\cD_X))\,{\buildrel\sim\over\longra}\, D^b_c(X,\C),$ which is an
equivalence of categories, restricts to an equivalence $\mathop{\rm
Mod}_{\rm rh}(\cD_X)\,{\buildrel\sim \over\longra}\,\Perv(X,\C),$
and commutes with\/ $f^*,f^!,Rf_*,\ab Rf_!,\ab {\cal
RH}om,\ab\otL$, and also with $\psi_f^p,\phi_f^p$ for $X$ smooth. 
Here $D^b_c(X,\C),\Perv(X,\C)$ are constructible complexes and perverse sheaves over the base
ring\/~$A=\C$.
\label{sm2thm6}
\end{thm}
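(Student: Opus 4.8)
The plan is to follow the classical route to the Riemann--Hilbert correspondence, proving it first for complex analytic $X$ and then transferring it to $\C$-schemes by analytification, as in the work of Kashiwara, Mebkhout and Saito. First I would reduce to the case $X$ smooth: for singular $X$ one locally embeds $X$ as a closed analytic subspace of a complex manifold $Y$ and \emph{defines} $\mathop{\rm Mod}_{\rm rh}(\cD_X)$ to be the full subcategory of $\mathop{\rm Mod}_{\rm rh}(\cD_Y)$ of modules supported on $X$; Kashiwara's equivalence for closed embeddings shows this is independent of $Y$ and glues over an atlas, so that both the de Rham functor and the claimed equivalence with $\Perv(X,\C)$ descend from the smooth case via Theorem \ref{sm2thm2}(c). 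So assume $X$ is a complex manifold of dimension $n$; for a bounded complex $M^\bu$ of $\cD_X$-modules set ${\rm DR}(M^\bu)=\Omega_X^\bu\otimes_{\cO_X}M^\bu[n]$, the de Rham complex in its perverse normalization, and let ${\rm Sol}(M^\bu)=R\,{\cal H}om_{\cD_X}(M^\bu,\cO_X)$ be the solution complex.

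The next step is to show ${\rm DR}$ carries $D^b(\mathop{\rm Mod}_{\rm rh}(\cD_X))$ into $D^b_c(X,\C)$ and is t-exact for the perverse t-structures, hence restricts to an exact functor $\mathop{\rm Mod}_{\rm rh}(\cD_X)\ra\Perv(X,\C)$. This rests on Kashiwara's constructibility theorem: one analyses the characteristic variety ${\rm Ch}(M)$, a conic Lagrangian in $T^*X$ for holonomic $M$, shows that ${\rm DR}(M)$ and ${\rm Sol}(M)$ are locally constant along the strata of a Whitney stratification adapted to the image of ${\rm Ch}(M)$ in $X$, and reads the support and cosupport estimates of Definition \ref{sm2def2} off the dimensions of those strata, using compatibility of ${\rm DR}$ with duality to interchange $\supp$ and $\cosupp$.

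Then I would prove full faithfulness and essential surjectivity of ${\rm DR}:\mathop{\rm Mod}_{\rm rh}(\cD_X)\ra\Perv(X,\C)$. For full faithfulness one builds a quasi-inverse candidate ${\rm RH}$, the functor of temperate (moderate-growth) holomorphic solutions, and shows the adjunction unit $M\ra{\rm RH}({\rm DR}(M))$ is an isomorphism for regular holonomic $M$; by d\'evissage along a stratification and the recollement triangles attached to a closed subspace and its open complement, this reduces to a regular meromorphic connection on a punctured disc, i.e.\ to classical Fuchs theory. Essential surjectivity --- the Riemann--Hilbert problem proper --- runs the other way: given $P^\bu\in\Perv(X,\C)$, use the perverse t-structure and intermediate extensions to reduce to intersection complexes of irreducible local systems on locally closed smooth strata, realize each such local system as ${\rm DR}$ of Deligne's canonical extension with regular singular points, and glue, using that $\mathop{\rm Mod}_{\rm rh}(\cD_X)$ and $\Perv(X,\C)$ are stacks (the analytic analogue of Theorem \ref{sm2thm3}) and resolution of singularities to present each stratum boundary as a normal crossings divisor. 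I expect this d\'evissage-and-glue step --- equivalently, the stability of regularity under the six operations, and most delicately under $Rf_*$ for non-proper $f$ --- to be the main obstacle.

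Finally, compatibility of ${\rm DR}$ with $f^*,f^!,Rf_*,Rf_!,{\cal RH}om,\otL$ is checked one operation at a time, reducing each to the cases of open immersions, closed immersions, smooth morphisms and proper morphisms, where it follows by matching the base change and projection formulas for $\cD$-modules against the corresponding formulas for constructible complexes in Theorem \ref{sm2thm1}. Compatibility with $\psi_f^p$ and $\phi_f^p$ uses the Kashiwara--Malgrange $V$-filtration to \emph{define} the nearby and vanishing cycles of a regular holonomic $\cD$-module along $f$, together with the theorem of Malgrange and Kashiwara identifying them, and their monodromy, with the topological functors of \S\ref{sm23}. The passage to $\C$-schemes is then a GAGA-type comparison: analytification is exact and fully faithful on regular holonomic $\cD$-modules, identifies $D^b_c(X)^\alg$ with the subcategory of $D^b_c(X^\an)^\an$ of complexes constructible along some algebraic stratification, and commutes with all the functors in question, as recorded at the end of \S\ref{sm26}.
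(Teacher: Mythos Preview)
The paper does not prove this theorem. Theorem \ref{sm2thm6} is stated as background material from the literature: the sentence immediately preceding it attributes the result to Borel \cite[\S 14.4]{Bore} for $\C$-schemes, Kashiwara \cite{Kash1} for complex manifolds, and Saito \cite[\S 6]{Sait4} for complex analytic spaces, and no further argument is given in the paper.

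Your outline is a reasonable high-level sketch of the classical Kashiwara--Mebkhout route to the Riemann--Hilbert correspondence, and is broadly consistent with what one finds in those references. So there is nothing to compare against in the paper itself; you have supplied a proof plan where the paper simply cites one.
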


Because of the Riemann--Hilbert correspondence, all our results on
perverse sheaves of vanishing cycles on $\C$-schemes and complex
analytic spaces in \S\ref{sm3}--\S\ref{sm6} over a well-behaved base
ring $A$, translate immediately when $A=\C$ to the corresponding
results for $\cD$-modules of vanishing cycles, with no extra work.

\subsection{Mixed Hodge modules: basics}
\label{sm28}

We write this section in the minimal generality needed for our
applications. The statements made work equally well in the category
of (algebraic) $\C$-schemes and the category of complex analytic
spaces. By space, we will mean an object in either of these
categories. The theory of mixed Hodge modules works with reduced
spaces; should a space $X$ be non-reduced, the following
constructions are taken by definition on its reduction.

For a space $X$, let $\HM(X)$ denote Saito's category \cite{Sait1}
of polarizable pure Hodge modules, (locally) 
a direct sum of subcategories
$\HM(X)^w$ of pure Hodge modules of fixed weight $w$. On a smooth
$X$, a pure Hodge module $M^\bu$ consists of a triple of data: a
filtered holonomic $\cD$-module $(M, F)$, a $\Q$-perverse sheaf, and a comparison map 
identifying the former with the complexification of the latter
under the Riemann--Hilbert correspondence; see \cite[\S 5.1.1,
p.~952]{Sait1} and \cite[\S 4]{Sait3}. This triple has to satisfy
many other properties; in particular, the underlying holonomic $\cD$-module is automatically regular,
and algebraic Hodge modules are asked to be extendable
to an algebraic compactification. Thus there is a forgetful functor 
$\HM(X)\ra \mathop{\rm Mod}_{\rm rh}(\cD_X)$ 
from Hodge modules to regular holonomic (algebraic) $\cD$-modules.
Hodge modules on singular spaces are defined,
similarly to $\cD$-modules, via embeddings into smooth varieties;
see Saito \cite{Sait3} and also Maxim, Saito and
Sch\"urmann~\cite[\S 1.8]{MSS}.

There is a duality functor $\bD^H_X: \HM(X)\ra \HM(X)$. Pure Hodge
modules also admit a Tate twist functor $M^\bu\mapsto M^\bu(1)$, see
\cite[\S 5.1.3, p.~952]{Sait1}. This functor shifts the filtration and
rotates the rational structure on the underlying perverse sheaf: the $\cD$-module filtration $(M, F)$ is
shifted to $(M, F[n])$ with $(F[n])_i = F_{i-n}$; 
the underlying perverse sheaf is tensored by $\Z(n) = (2 \pi i)^n \Z \subset \C$, as in 
\cite[(2.0.2), p.~876]{Sait1}.

A polarization of weight $w$ on a
pure Hodge module $M^\bu\in\HM(X)^w$ is a morphism of pure Hodge
modules
\begin{equation*}
\si: M^\bu\longra \bD^H_X(M^\bu)(-w),
\end{equation*}
satisfying the extra conditions using vanishing cycles described on 
\cite[(5.1.6.2) on p.~956 and (5.2.10.2) on p.~968]{Sait1}, as well as the
condition that on points it should correspond to the classical
notion of a polarization of a pure Hodge structure (including
positive definiteness).

Next, let $\MHM(X)$ denote the category of graded polarizable mixed Hodge
modules \cite{Sait1,Sait3}. A graded polarizable mixed Hodge module carries a functorial
weight filtration $W$, with graded pieces being polarizable pure Hodge
modules, see \cite[\S 5.2.10, p.~967-8]{Sait1}. 
The forgetful functor $\rat: \MHM(X) \ra \Perv(X)$ to the
appropriate category of perverse $\Q$-sheaves on $X$ is faithful and
exact; faithfulness in particular means that a morphism in $\MHM(X)$
is uniquely determined by the underlying morphism of perverse
sheaves. The Tate twist functor extends to $\MHM(X)$; under 
this functor, the weight filtration
$W$ of the mixed Hodge module is changed to $W[−2n]$ with $W[−2n]_i =
W_{i+2n}$ as on \cite[p.~855]{Sait1}. The duality
functor $\bD^H_X$ also extends to $\MHM(X)$ and is compatible with
Verdier duality on the perverse realization. There is also a forgetful functor 
$\MHM(X)\ra \mathop{\rm Mod}_{\rm rh}(\cD_X)$ to regular holonomic $\cD$-modules, even 
for singular spaces.

\begin{thm} The categories of graded polarizable mixed Hodge modules
have the following properties:
\begin{itemize}
\setlength{\itemsep}{0pt}
\setlength{\parsep}{0pt}
\item[{\bf(i)}] By\/ {\rm\cite[Th.~3.9, p.~288]{Sait3},} the
category of mixed Hodge modules for $X$ a point is canonically
equivalent to Deligne's category of graded polarizable mixed Hodge
structures.
\item[{\bf(ii)}] For a smooth space $U,$ we have a canonical
object of weight\/ $\dim U$
\begin{equation*}
\Q^H_U[\dim U]\in\HM(U)\subset\MHM(U),
\end{equation*}
which by\/ {\rm\cite[Prop.~5.2.16, p.~971]{Sait1}} possesses a
canonical polarization $\si: \Q^H_U[\dim U]\ra
\bD^H_U\Q^H_U[\dim U](-\dim U)$.
\item[{\bf(iii)}] For an open inclusion $f: Y\hookra X$ of
spaces, there is a pullback functor $f^*=f^!:\MHM(X)\ra
\MHM(Y)$. More generally, by {\rm\cite[Prop.~2.19,
p.~258]{Sait3},} for an arbitrary morphism $f:Y\ra X,$ there
exist cohomological pullback functors $L^jf^*,L^j f^!:
\MHM(X)\ra\MHM(Y)$ compatible with (perverse) cohomological
pullback on the perverse sheaf level.
\item[{\bf(iv)}] For a closed embedding $i: X\hookra Y,$ there
is a pushforward functor
\begin{equation*}
\smash{i_*=i_!:\MHM(X)\longra \MHM(Y),}
\end{equation*}
whose essential image is the full subcategory $\MHM_X(Y)$ of
objects in\/ $\MHM(Y)$ supported on $X$. Its inverse is
$i^*\!=\!i^!: \MHM_X(Y)\!\ra\! \MHM(X)$. More generally, by
{\rm\cite[Th.~5.3.1, p.~977]{Sait1}} and\/ {\rm\cite[Th.~2.14,
p.~252]{Sait3},} for a \begin{bfseries}projective\end{bfseries}
map $f:X\ra Y$ there are cohomological pushforward functors
\begin{equation*}
\smash{R^jf_*: \MHM(X)\longra \MHM(Y).}
\end{equation*}
\item[{\bf(v)}] There is an external tensor product functor
\begin{equation*}
\smash{\boxtL: \MHM(X)\t \MHM(Y)\longra \MHM(X\t Y),}
\end{equation*}
which is compatible with duality in the sense that for
$M^\bu\in\MHM(X)$ and $N^\bu\in\MHM(Y),$ there is a natural
isomorphism
\begin{equation*}
\smash{\bD^H_X M^\bu \boxtL \bD^H_Y N^\bu \cong
\bD^H_{X\t Y}(M^\bu \boxtL N^\bu).}
\end{equation*}
\end{itemize}
\label{sm2thm7}
\end{thm}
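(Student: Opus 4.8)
The plan is to treat Theorem \ref{sm2thm7} as a dictionary entry: each of (i)--(v) is already contained in Saito's papers \cite{Sait1,Sait3} (with (i) also resting on Deligne's theory of mixed Hodge structures), so the work is to extract them in exactly the form stated and to flag the one point where the algebraic and complex analytic settings must be handled together. The unifying tool throughout will be that the forgetful functor $\rat:\MHM(X)\ra\Perv(X)$ is faithful and exact, which lets me reduce every compatibility assertion to the corresponding statement for perverse $\Q$-sheaves from \S\ref{sm22} and then invoke Saito only for the Hodge-theoretic enhancement.

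First I would dispatch (i) by quoting \cite[Th.~3.9]{Sait3}, observing that for $X$ a point the perverse-sheaf datum is a $\Q$-vector space and the filtered $\cD$-module datum is a bifiltered $\C$-vector space, so that a mixed Hodge module on a point is precisely a polarizable mixed Hodge structure in Deligne's sense. For (ii), on a smooth space $U$ I would exhibit $\Q^H_U[\dim U]$ as the pure Hodge module built from the filtered $\cD_U$-module $(\O_U,F)$, with $F$ trivial in degree $0$, together with the perverse sheaf $\Q_U[\dim U]$; its weight is $\dim U$ by construction, and the canonical polarization $\si:\Q^H_U[\dim U]\ra\bD^H_U\Q^H_U[\dim U](-\dim U)$ is \cite[Prop.~5.2.16]{Sait1}. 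On non-reduced $X$ one passes to $X^\red$, as stipulated just before the theorem.

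Next, for (iii) and (iv) I would first recall the perverse-sheaf statements --- $f^*\cong f^!$ for open inclusions, and the equivalence $\Perv(X)\simeq\Perv_X(Y)$ via $i_*\cong i_!$ with quasi-inverse $i^*\cong i^!$ for a closed embedding $i:X\hookra Y$ --- which are Theorem \ref{sm2thm2}(c),(d). To lift these to $\MHM$ I would use that a closed immersion is finite, so the $\cD$-module pushforward, hence $i_*$ on $\MHM$, is exact and concentrated in degree $0$; thus $i_*=i_!$ as a single functor, and exactness plus faithfulness of $\rat$ identify its essential image with $\MHM_X(Y)$ and show $i^*=i^!$ is a quasi-inverse. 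The general cohomological pullbacks $L^jf^*,L^jf^!$ for arbitrary $f$ I would take from \cite[Prop.~2.19]{Sait3}, and the cohomological pushforwards $R^jf_*$ for projective $f$ from \cite[Th.~5.3.1]{Sait1} and \cite[Th.~2.14]{Sait3}; their compatibility with the perverse cohomological pullback and pushforward on the perverse-sheaf level is part of those statements. For (v), I would build $\boxtL$ on $\MHM$ from the external products of the filtered $\cD$-module and perverse-sheaf data, which match under Riemann--Hilbert, and deduce $\bD^H_XM^\bu\boxtL\bD^H_YN^\bu\cong\bD^H_{X\t Y}(M^\bu\boxtL N^\bu)$ by reduction to the K\"unneth compatibility of Verdier duality for perverse sheaves, enhanced by Saito's duality $\bD^H$ on $\MHM$ and faithfulness of $\rat$.

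The only genuine obstacle I anticipate is not mathematical depth but uniformity across settings: each cited result in \cite{Sait1,Sait3} is phrased for algebraic $\C$-schemes or for complex analytic spaces, and I would have to check that the constructions in \cite{Sait3}, which handle singular complex analytic spaces via local embeddings into smooth ones, specialize correctly to $\C$-schemes, and conversely that the algebraic statements globalize analytically. This is bookkeeping rather than a real difficulty, but it is the one step where one cannot simply copy a line from Saito.
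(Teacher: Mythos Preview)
Your proposal is correct and matches the paper's approach exactly: the paper gives no separate proof of this theorem at all, treating it as a survey statement whose justification is entirely contained in the citations to \cite{Sait1,Sait3} embedded in the statement itself. Your write-up is in fact more detailed than what the paper provides, but the underlying strategy---extract each item from Saito and use faithfulness of $\rat$ for compatibilities---is precisely the intended reading.
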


\begin{rem} We will not need to use any derived category
$D^?\MHM(X)$ of mixed Hodge modules in this paper, which is just as
well since on singular analytic $X$, the appropriate boundedness
conditions do not appear to be well understood, and the general
pullback and pushforward functors of Theorem \ref{sm2thm7}(iii),(iv)
are not known to exist as derived functors outside of the algebraic context of
\cite[\S 4]{Sait3}. Hence, in part (ii)
above, $[1]$ does not stand for a shift in the derived category;
$\Q^H_U[\dim U]$ just denotes a mixed Hodge module whose realization
is the perverse sheaf $\Q_U[\dim U]$ on $U$. Compare Remark
\ref{sm2rem3} above.

\label{sm2rem4}
\end{rem}

Using the functors above, we can now define the twist of a mixed
Hodge module by a principal $\Z/2\Z$-bundle. In the setup of
Definition \ref{sm2def3}, given a $\Z/2\Z$-bundle $\pi: P\ra X$, and
an object $M^\bu\in\MHM(X)$ on a space $X$, we have a natural map
$M^\bu\ra \pi_*\pi^*M^\bu$, which is an injection by faithfulness of
the realization functor and the fact that it is an injection on the
perverse sheaf level. The quotient object will be denoted, by abuse
of notation, by $M^\bu\ot_{\Z/2\Z}P$ in~$\MHM(X)$.

\subsection{Monodromic mixed Hodge modules}
\label{sm29}

To discuss nearby and vanishing cycle functors in a way
consistent with monodromy, we need an extension of the category
of mixed Hodge modules. For a space $X$, following Saito \cite[\S
4.2]{Sait5} denote by $\MHM(X;T_s,N)$ the category of mixed Hodge
modules $M^\bu$ on $X$ with commuting actions of a finite order
operator $T_s:M^\bu\ra M^\bu$ and a locally nilpotent operator
$N:M^\bu\ra M^\bu(-1)$. There is an embedding of categories
$\MHM(X)\ra \MHM(X;T_s,N)$ defined by setting $T=\id$ and~$N=0$.
As proved by \cite[(4.6.2)]{Sait5}, the category $\MHM(X;T_s,N)$ is equivalent 
to the category $\MHM(X \t \C)_{\rm mon,!}$ of monodromic mixed Hodge modules 
on $X\t\C^*$ extended by zero to $X \t \C$; compare also~\cite[\S 4.2]{KoSo2}.

Every object $M^\bu\in \MHM(X;T_s,N)$ decomposes into a direct sum
$M^\bu=M^\bu_1\op M^\bu_{\neq 1}$ of the $T_s$-invariant part and
its $T_s$-equivariant complement. The Tate twist, and appropriate
cohomological pullback and pushforward functors continue to exist.
There is a duality functor
\begin{equation*}
\bD_X^T: \MHM(X;T_s,N)\longra \MHM(X;T_s,N)
\end{equation*}
defined by
\begin{equation*}
\bD_X^T(M^\bu) = \bD_X^H(M^\bu_1) \op \bD_T^H(M^\bu_{\neq 1})(1),
\end{equation*}
equipped with the finite-order operator $\bD_X(T_s)^{-1}$ and the nilpotent operator
$-\bD_X(N)$. This duality functor still satisfies $\bD_X^T\circ \bD_X^T=\id$. 

Saito \cite[\S 5.1]{Sait5} also defines an external tensor product
\begin{equation*}
\boxtT : \MHM(X_1;T_s,N) \t \MHM(X_2;T_s,N)\longra \MHM(X_1\t X_2;T_s,N).
\end{equation*}
defined on the monodromic category as follows. The addition map on fibres
\begin{equation*}
\pi: (X_1\t\C) \t (X_2\t\C) \ra (X_1\t X_2)\t \C
\end{equation*}
induces the additive convolution 
\begin{equation*}
\pi_*(-\boxtimes -): \MHM(X_1 \t \C)_{\rm mon,!}\t \MHM(X_2 \t \C)_{\rm mon,!} \ra \MHM(X_1 \t x_2 \t \C)_{\rm mon,!}.
\end{equation*}

One can translate this external tensor product $\boxtT$ to the $\MHM(X;T_s,N)$ defined by 
concrete data $(M^\bu, T_s, N)$. On the underlying $\cD$-modules and perverse sheaves, it is just
the usual product $\boxtimes$. The operators are defined by $T_s = T_s \boxtimes T_s$ and
$N = N \boxtimes {\rm id} + {\rm id}\boxtimes N$. However, the Hodge and weight filtrations on the underlying 
$\cD$-modules and perverse sheaves are shifted using the finite order endomorphisms $T_s$; for details, see \cite[(5.1.1)--(5.1.2)]{Sait5}. Note that as a consequence of these definitions, 
the forgetful functors $\MHM(-;T_s,N)\ra\MHM(-)$ do not map $\boxtT$ to $\boxtL$.
Twisted duality and the twisted tensor product commute in the sense
that given $M^\bu\in\MHM(X;T_s,N)$ and $N^\bu\in \MHM(Y;T_s,N)$, we
have a natural isomorphism in $\MHM(X\t Y;T_s,N)$
\e
\bD_X^T(M^\bu)\boxtT\bD_Y^T(N^\bu)\cong\bD_{X\t Y}^T(M^\bu\boxtT
N^\bu).
\label{sm2eq20}
\e

For an object $M^\bu\in\MHM(X;T_s,N)$ whose weight filtration is a (suitable shifted) monodromy 
filtration of the nilpotent morphism $N$, there is a stronger notion of
polarization which will be useful for us. A {\it strong
polarization of weight\/} $w$ of such an object $M^\bu$ is a morphism $\si: M^\bu \ra
\bD^T_X(M^\bu)(-w)$ in $\MHM(X)$, compatible with
$T_s$ and $N$, such that $\si$ defines polarizations on the
$N$-primitive parts of $M^\bu$, compatible with Hodge filtrations;
for precise conditions, see \cite[p.~855]{Sait1}.
A polarization on a pure Hodge module is a strong polarization (with
$N=0$); a strongly polarized mixed Hodge module is graded polarizable.
The partial twist in the definition of $\bD_X^T$ implies that $M^\bu$ is of weight $w$ if and only if
$M^\bu_1$, respectively $M^\bu_{\neq 1}$ are of weights $w,w-1$ in the sense of \cite[p.~855]{Sait1}. 

Given strongly polarized mixed Hodge modules $M_i^\bu\in\MHM(X_i;T_s,N)$ of weight $w_i$ for $i=1,2$, polarized by $\si_i: M_i^\bu \ra \bD^T_{X_i}(M_i^\bu)(-w_i)$, 
there is an induced morphism $\si$ in a commutative diagram
\begin{equation*}
\xymatrix@C=140pt@R=11pt{ *+[r]{M_1^\bu\boxtT M_2^\bu} \ar[dr]_(0.3)\si \ar[r] & 
*+[l]{\bD_{X_1}^T(M_1^\bu)(-w_1)\boxtT \bD_{X_2}^T(M_2^\bu)(-w_2)} \ar[d] \\
& *+[l]{\bD_{X_1\t X_2}^T(M_1^\bu\boxtT M_2^\bu)(-w_1-w_2),} }
\end{equation*}
where the top map is $\si_1\boxtT \si_2$ and the right is the
isomorphism \eq{sm2eq20}. In general, it is {\em not clear} 
whether this morphism is a strong polarization of the 
tensor product $M_1^\bu\boxtT M_2^\bu$; this result is not available in the literature. 
However, in this paper we only use this construction in cases
where one of the monodromic mixed Hodge modules is essentially trivial, 
living on $X_1={\rm pt}$ with $N=0$, in which case it is easy to check that
the resulting $\si$ is a strong polarization. 

Note also that if $M^\bu$ is strongly
polarized by $\si: M^\bu \ra \bD^T_X(M^\bu)(-w)$, then its Tate
twist is also strongly polarized by the composition
\e
\smash{\xymatrix@C=27pt{ M^\bu(1) \ar[r]^(0.35){\si(1)} & \bD^T_X(M^\bu)(-w+1)
\ar[r]^(0.45)\sim & \bD^T_X(M^\bu(1))(-w+2). }}
\label{sm2eq21}
\e

The notion of strong polarization leads to gluing, in the following
way.

\begin{thm} Let\/ $X=\bigcup_i U_i$ be an open cover of a space
$X,$ in any of the Zariski, \'etale or complex analytic topologies.
Then:
\begin{itemize}
\setlength{\itemsep}{0pt}
\setlength{\parsep}{0pt}
\item[{\bf(i)}] Suppose we are given mixed Hodge modules $M^\bu,
N^\bu\in\MHM(X),$ with morphisms $f_i: M^\bu\vert_{U_{i}}\ra
N^\bu\vert_{U_{i}}$ in $\MHM(U_i)$ which agree on overlaps
$U_{ij}$. Then there is a unique
$f\in\Hom_{\MHM(X)}(M^\bu,N^\bu)$ with\/~$f\vert_{U_{i}}=f_i$.
\item[{\bf(ii)}] Suppose we are given mixed Hodge modules
$M_i^\bu\in\MHM(U_i; T_s, N),$ each equipped with a strong
polarization $\si_i$. Suppose also that we are given
isomorphisms $\al_{ij}:M_i^\bu\vert_{U_{ij}}\ra
M_j^\bu\vert_{U_{ij}}$ on intersections, commuting with the
restrictions of the maps $T_{si},N_i$ and\/ $\si_i,$ with\/
$\al_{jk}\vert_{U_{ijk}}\ci
\al_{ij}\vert_{U_{ijk}}=\al_{ik}\vert_{U_{ijk}}$ on triple
intersections. Then there is a strongly polarized mixed Hodge
module $M^\bu\in\MHM(X; T_s, N),$ restricting to $M_i^\bu$ on $U_i$.
\end{itemize}
\label{sm2thm8}
\end{thm}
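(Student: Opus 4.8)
Both parts reduce to two ingredients already in hand. The first is descent: perverse $\Q$-sheaves form a stack in the Zariski, \'etale and complex analytic topologies (Theorem \ref{sm2thm3} and its analytic analogue), and the underlying regular holonomic filtered $\cD$-modules likewise descend, being sheaves of modules over a sheaf of rings equipped with locally defined good filtrations. The second, implicit in Saito's inductive definition of mixed Hodge modules via nearby and vanishing cycles of \emph{locally defined} functions, is that all the Hodge-theoretic data --- the filtered $\cD$-module, the rational comparison isomorphism, the weight filtration $W$, the operators $T_s, N$, and even the condition of being an object of $\MHM(X;T_s,N)$ --- are of a local nature on $X$. Strong polarizations are in the hypotheses precisely because polarizability of a sheaf glued from local pieces is not automatic; it must be supplied, and the requirement that the transition isomorphisms $\al_{ij}$ intertwine the $\si_i$ is exactly the cocycle compatibility needed to glue the polarization forms.

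\textbf{Part (i).} Uniqueness is immediate: $\rat:\MHM(X)\ra\Perv(X,\Q)$ is faithful, so $f$ is determined by $\rat(f)$, which is in turn determined by its restrictions $\rat(f)\vert_{U_i}=\rat(f_i)$ since $U\mapsto\Hom_{\Perv(U,\Q)}(\rat M^\bu\vert_U,\rat N^\bu\vert_U)$ is a sheaf. For existence, the $\rat(f_i)$ agree on the overlaps $U_{ij}$, hence glue to a morphism $g:\rat(M^\bu)\ra\rat(N^\bu)$ in $\Perv(X,\Q)$ by the stack property. On each $U_i$ one has $g\vert_{U_i}=\rat(f_i)$, so $g\vert_{U_i}$ is strict for the Hodge and weight filtrations and compatible with the filtered $\cD$-module structure; these being local conditions, they hold on $X$, so $g=\rat(f)$ for a unique $f\in\Hom_{\MHM(X)}(M^\bu,N^\bu)$, and $\rat(f\vert_{U_i})=g\vert_{U_i}=\rat(f_i)$ gives $f\vert_{U_i}=f_i$ by faithfulness.

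\textbf{Part (ii).} After a standard patching argument reducing to finite covers, one glues in layers. The underlying perverse sheaves $\rat(M_i^\bu)$ with the $\al_{ij}$ are descent data, effective by the stack property, yielding $\cP\in\Perv(X,\Q)$ with $\cP\vert_{U_i}\cong\rat(M_i^\bu)$ compatibly with the $\al_{ij}$. Likewise the underlying filtered $\cD$-modules glue to a global filtered $\cD$-module with rational comparison isomorphism, giving an object $M^\bu$ over $X$; since each $\al_{ij}$ is an $\MHM$-isomorphism it preserves weight filtrations, so the local $W$ glue to a global finite increasing filtration $W$ of $M^\bu$, and the operators $T_s$ and $N: M^\bu\ra M^\bu(-1)$ glue as well (the latter using compatibility of Tate twist with restriction). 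Finally, since the $\al_{ij}$ intertwine the strong polarizations, the morphisms $\si_i:M_i^\bu\ra\bD_{U_i}^T(M_i^\bu)(-w)$ agree on overlaps, so by the argument of Part (i) they glue to a global morphism $\si:M^\bu\ra\bD_X^T(M^\bu)(-w)$. Uniqueness of the result up to canonical isomorphism then follows by applying Part (i) to any two such gluings and their comparison isomorphisms.

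\textbf{The main obstacle.} What remains, and where the real work lies, is to verify that $(M^\bu,W,T_s,N)$ is genuinely an object of $\MHM(X;T_s,N)$ and that $\si$ is a strong polarization: this is not a formal consequence of carrying the correct linear-algebraic data, and is the crux of the proof. One checks it stratumwise and inductively, following Saito's definition. The graded piece $\mathrm{gr}^W_k M^\bu$ restricts on $U_i$ to $\mathrm{gr}^W_k M_i^\bu$, a polarizable pure Hodge module of weight $k$ carrying the polarization induced by $\si_i$ via the $N$-primitive decomposition; these local polarizations are glued by $\si$, and since purity and polarizability of a filtered $\cD$-module with rational structure can be tested on an open cover once a compatible polarization is given --- through the strict support decomposition of pure Hodge modules and the characterization of polarizable variations of Hodge structure along strata --- each $\mathrm{gr}^W_k M^\bu$ is a polarizable pure Hodge module of weight $k$ on $X$. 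The remaining admissibility conditions on the mixed object and on $\si$ (relative monodromy weight filtrations, compatibility with $N$, behaviour under the nearby- and vanishing-cycle functors for locally defined functions $g$) are all formulated via \emph{local} functions, so holding on every $U_i$ forces them on $X$. Hence $M^\bu\in\MHM(X;T_s,N)$ with strong polarization $\si$, restricting to the $M_i^\bu$. The genuinely non-formal point is thus the locality of purity, polarizability and admissibility, for which one invokes Saito's structure theory \cite{Sait1,Sait3,Sait5} rather than descent alone.
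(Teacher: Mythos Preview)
Your proposal is correct and follows essentially the same strategy as the paper: glue the underlying filtered $\cD$-module and $\Q$-perverse sheaf data using the stack property of each, glue the polarization as a morphism, and then observe that Saito's axioms for (mixed) Hodge modules are formulated via locally defined functions and vanishing cycles, hence are local and hold by induction on dimension. The only organizational difference is that the paper treats the pure case first (where $T_s=\id$, $N=0$) and then says the mixed case is similar, whereas you handle the mixed case directly and pass to graded pieces; your version is in fact more detailed than the paper's rather terse argument.
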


\begin{proof} To prove (i), it is enough to note that the $f_i$
glue on the perverse sheaf and $\cD$-module levels, respecting
filtrations.

To prove (ii), we begin with the case of pure Hodge modules of fixed
weight $w$. The data of a pure Hodge module consists of a pair of a
filtered holonomic $\cD$-module and a $\Q$-perverse sheaf, with an
identification of the former with the complexification of the latter
under the Riemann--Hilbert correspondence. Since both filtered
holonomic $\cD$-modules and $\Q$-perverse sheaves form stacks (both
in the algebraic and the analytic case), this data glues over $X$.
As for the (strong) polarization, $T_{si}=\id$ and $N_i=0$ glue to
$T_s=\id$ and $N=0$, whereas the map $\si$ on the perverse sheaf
level glues from the maps $\si_i$ once again from the stack property
(now for morphisms) of perverse sheaves.

The conditions \cite[\S 5.1.6, p.~955]{Sait1} which make such a pair
a pure Hodge module come from local conditions as well as conditions
on vanishing cycles; the latter glue by induction on the dimension.
So strongly polarized pure Hodge modules form a stack. The case of
mixed Hodge modules is similar: we need to glue filtrations and
polarizations, as well as the maps $T_{si}, N_i$ and $\si_i$, first
on the level of perverse sheaves, and then checking the axioms,
which are local or follow by induction.
\end{proof}

\begin{rem} Given a projective $\C$-scheme $X$, and a polarizable
mixed Hodge module $M^\bu\in\MHM(X)$ on it, the second part of
Theorem \ref{sm2thm7}(iv) applied to $f:X\ra{\rm pt}$ shows that the
hypercohomology ${\mathbb H}^*(X, M^\bu)$ carries a mixed Hodge
structure. In particular, it carries a weight filtration and
therefore has a weight polynomial, which will be useful in
refinements of Donaldson--Thomas theory, see the discussion in
Remark \ref{sm6rem2} below. So we need to glue polarizable objects
from local data. On the other hand, graded polarizable mixed Hodge modules
may fail to form a stack in the analytic category unless the
polarizations glue. This is the reason for using the stronger form
of polarization, which allows for gluing as shown above.
\label{sm2rem5}
\end{rem}

\subsection{Mixed Hodge modules of vanishing cycles}
\label{sm210}

By Saito's work \cite{Sait1,Sait2,Sait3}, for a regular function
$f:U\ra\C$ on a smooth space $U$, the perverse nearby and vanishing
cycle functors $\psi_f^p,\phi_f^p$ defined on perverse sheaves in
\S\ref{sm23} lift to functors
\begin{equation*}
\psi_f^{H},\phi_f^{H}:\MHM(U)\longra\MHM(U_0;T_s,N),
\end{equation*}
where $U_0=f^{-1}(0)$. The actions of the finite order and nilpotent
operators $T_s,N$ are given by the semisimple part of the monodromy
operator, and the logarithm of its unipotent part. The analogue 
\begin{equation*}
\phi_f^{H} \circ \bD_U^T \cong  \bD_{U_0}^T \circ  \phi_f^{H}
\end{equation*}
of Theorem \ref{sm2thm4}(iv) is proved in \cite{Sait2}; 
to make this isomorphism work is the reason for the the twist in the definition 
of $\bD_U^T$. Note also that \cite[Th.~1.6]{Sait2} fits with the convention that $T_s$ 
and $N$ are defined on dual objects as $\bD_U(T_s)^{-1}$ and $−\bD_U(N)$, respectively.

By \cite[\S 5.2]{Sait1}, if $M^\bu\in\HM(U)$ is a pure Hodge module, then a
polarization of $M^\bu$ induces a strong polarization on the (mixed)
Hodge module of vanishing cycles $\phi_f^{H}(M^\bu)$, of the same
weight. In particular, if $M^\bu=\Q^H_U[\dim U]$ is the canonical
object with its canonical polarization from Theorem
\ref{sm2thm7}(ii), then $\phi_f^{H}\bigl(\Q^H_U[\dim U]\bigr)\in
\MHM(\Crit(f); T_s, N)$ is a strongly polarized mixed Hodge module
on the critical locus of $f$, with polarization
\e
\si: \phi_f^{H}\bigl(\Q^H_U[\dim U]\bigr)\longra
\bD_{\Crit(f)}^T\ci\phi_f^{H} \bigl(\Q^H_U[\dim U]\bigr)(-\dim U).
\label{sm2eq22}
\e

\begin{ex} Define $f:\C\ra\C$ by $f(z)=z^2$. Then $\Crit(f)=\{0\}$,
and we obtain an object $\phi_f^{H}\bigl(\Q_\C^H[1]\bigr)$ in
$\MHM({\rm pt}; T_s, N)$, a one-dimensional polarized mixed Hodge
structure with monodromy acting by $T_s=-{\rm id}$ and $N=0$. For $g:\C^2\ra\C$ given by
$g(z_1, z_2)=z_1^2 + z_2^2$, it is well known that
\begin{equation*}
\phi_{z_1^2+z_2^2}^{H}\bigl(\Q_{\C^2}^H[2]\bigr)\cong \Q(-1),
\end{equation*}
with trivial monodromy action. Applying the Thom--Sebastiani formula
for mixed Hodge modules \cite[Th.~5.4]{Sait5}, we see that
\begin{equation*}
\phi_{z^2}^{H}\bigl(\Q_\C^H[1]\bigr)\boxtT \phi_{z^2}^{H}
\bigl(\Q_\C^H[1]\bigr)\cong \Q(-1)
\end{equation*}
in the category $\MHM({\rm pt}; T_s, N)$. The objects $\Q(1)$ and
$\Q(-1)$ thus admit square roots under $\boxtT$ in this category,
which we will denote by $\Q(\ha)$ and $\Q(-\ha)$, where
\e
\phi_{z^2}^{H}\bigl(\Q_\C^H[1]\bigr)=\Q(-\ha).
\label{sm2eq23}
\e
More explicitly, we have
\begin{equation*}
\Q(-\ha)= (\Q(0), -{\rm id}, 0)
\end{equation*}
and
\begin{equation*}
\Q(\ha)= (\Q(1), -{\rm id}, 0).
\end{equation*}

Define an object $\Q(\frac{n}{2})\in\MHM({\rm pt};T_s,N)$ for each
$n\in\Z$ by $\Q(\frac{n}{2})=\Q(\ha)^{\boxtT{}^n}$ for $n\ge 0$, and
$\Q(\frac{n}{2})=\Q(-\ha)^{\boxtT{}^{-n}}$ for $n<0$. For any space $X$
with structure morphism $\pi:X\ra{\rm pt}$, and any $M^\bu\in
D^b\MHM(X;T_s,N)$, we define the $\frac{n}{2}$ {\it twist of\/}
$M^\bu$ to be $M^\bu(\frac{n}{2})=M^\bu\boxtT
\bigl(\Q(\frac{n}{2})\bigr)$. If $M^\bu$ is strongly polarized, then
this tensor product is also strongly polarized by the tensor
polarization by our comments above.
\label{sm2ex3}
\end{ex}

Let $U$ be a smooth space, $f:U\ra\C$ a regular function, and
$X=\Crit(f)$ its critical locus, as a subspace of $U$. The perverse
sheaf of vanishing cycles $\PV^\bu_{U,f}\in\Perv(X)$ from
\S\ref{sm24} has a lift to a mixed Hodge module $\HV_{U,f}^{\bu}$ in
$\MHM(X;T_s,N)$, defined for each $c\in f(X)$ by
\e
\HV_{U,f}^{\bu}\vert_{X_c}=\phi_{f-c}^{H}\bigl(\Q^H_U[\dim
U]\bigr)\big\vert{}_{X_c}\bigl(\ha\dim U\bigr)\in\MHM(X_c;T_s,N).
\label{sm2eq24}
\e
This mixed Hodge module inherits a strong polarization of weight $0$
(compare \eq{sm2eq21} and \eq{sm2eq22})
\e
\si^H_{U,f}: \HV_{U,f}^{\bu}\longra \bD_X^T\bigl(\HV_{U,f}^{\bu}\bigr).
\label{sm2eq25}
\e
The twist $(\ha\dim U)$ in \eq{sm2eq24}, using the notation of
Example \ref{sm2ex3}, is included for the same reason as the
$(-1)^{\dim U}$ in the definition \eq{sm2eq7} of $\tau_{U,f}$. It
makes $\HV_{U,f}^{\bu}$ act naturally under transformations which
change dimension --- without it, the mixed Hodge module version of
\eq{sm5eq15} below would have to include a twist $(\ha n)$ for
$n=\dim V-\dim U$. Then $\HV_{U,f}^{\bu}$,
$T_s:\HV_{U,f}^{\bu}\ra\HV_{U,f}^{\bu}$,
$N:\HV_{U,f}^{\bu}\ra\HV_{U,f}^{\bu}(-1)$ and $\si^H_{U,f}:
\HV_{U,f}^{\bu}\ra \bD_X^T\bigl(\HV_{U,f}^{\bu}\bigr)$ are related
to $\PV_{U,f}^{\bu}$, $\tau_{U,f}:\PV_{U,f}^{\bu}\ra
\PV_{U,f}^{\bu}$ and $\si_{U,f}:\PV_{U,f}^\bu
\,{\buildrel\cong\over\longra}\,\bD_X(\PV_{U,f}^\bu)$ in
\S\ref{sm24} by
\begin{equation*}
\PV_{U,f}^{\bu}=\rat\bigl(\HV_{U,f}^{\bu}\bigr), \;\>
\tau_{U,f}=\rat(T_s)\ci\exp\bigl(2 \pi i \rat(N)\bigr), \;\>
\si_{U,f}=\rat(\si^H_{U,f});
\end{equation*}
for the last statement, see Proposition \ref{smAprop1} in the Appendix. 

The following Thom--Sebastiani type result is the analogue of
Theorem~\ref{sm2thm5}.

\begin{thm} Let\/ $U,V$ be smooth spaces and\/ $f:U\ra\C,$
$g:V\ra\C$ be regular functions, so that\/ $f\boxplus g:U\t V\ra\C$
is given by\/ $(f\boxplus g)(u,v):=f(u)+g(v)$. Set\/ $X=\Crit(f)$
and\/ $Y=\Crit(g)$ as subspaces of\/ $U,V,$ so that\/
$\Crit(f\boxplus g)=X\t Y$. Then there is a natural isomorphism
\e
\smash{\TS^H_{U,f,V,g}:\HV_{U\t V,f\boxplus
g}^\bu\,{\buildrel\cong\over\longra}\, \HV_{U,f}^\bu\boxtT
\HV_{V,g}^\bu} \;\>\text{in\/ $\MHM(X\t Y;T_s,N),$}
\label{sm2eq26}
\e
so that the following diagram commutes:
\ea
\begin{gathered}
{}\!\!\!\!\!\!\!\!\!\!\!\!\!\!\! \xymatrix@!0@C=58pt@R=50pt{
*+[r]{\HV_{U\t V,f\boxplus g}^\bu} \ar[rrrrr]_{\si_{U\t V,f\boxplus
g}} \ar[d]^(0.4){\TS^H_{U,f,V,g}} &&&&&
*+[l]{\bD^T_{X\t Y}(\HV_{U\t V,f\boxplus g}^\bu)} \\
*+[r]{\raisebox{25pt}{\quad\,\,\,$\begin{subarray}{l}\ts\HV_{U,f}^\bu\boxtT \\
\ts\HV_{V,g}^\bu\end{subarray}$}}
\ar[rr]^(0.57){\si_{U,f}\boxtT\si_{V,g}} &&
*+[r]{\raisebox{25pt}{${}\,\,\,\begin{subarray}{l}\ts
\bD^T_X(\HV_{U,f}^\bu)\boxtT\!\!\!\!\!{} \\
\ts \bD^T_Y(\HV_{V,g}^\bu)\end{subarray}$}} \ar[rrr]^(0.33)\cong &&&
*+[l]{\bD^T_{X\t Y}\bigl(\HV_{U,f}^\bu\boxtT \HV_{V,g}^\bu\bigr).\!\!{}}
\ar[u]^{\bD^T_{X\t Y}(\TS^H_{U,f,V,g})} }\!\!\!\!\!{}
\end{gathered}
\label{sm2eq27}
\ea

\label{sm2thm9}
\end{thm}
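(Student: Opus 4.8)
The plan is to deduce Theorem~\ref{sm2thm9} from Saito's Thom--Sebastiani theorem for mixed Hodge modules \cite[Th.~5.4]{Sait5}, which is a statement about $\phi^{pH}$ of exactly the form from which $\HV^\bu$ is built, followed by bookkeeping of shifts, Tate twists, and the decompositions $X=\coprod_c X_c$, $Y=\coprod_d Y_d$. First I would recall Saito's theorem in the needed shape: for $M^\bu\in\MHM(U)$ and $N^\bu\in\MHM(V)$ there is a natural isomorphism relating $\phi_{f\boxplus g}^{pH}(M^\bu\boxtL N^\bu)$ and $\phi_f^{pH}(M^\bu)\boxtT\phi_g^{pH}(N^\bu)$ (the latter pushed to $(f\boxplus g)^{-1}(0)$) in $\MHM(-;T_s,N)$, where $\boxtT$ is the external product of \S\ref{sm29} --- whose very definition, as remarked there, is chosen so that this holds --- and which under $\rat$ recovers the perverse Thom--Sebastiani underlying Theorem~\ref{sm2thm5}. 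Specializing to $M^\bu=\Q^H_U[\dim U]$, $N^\bu=\Q^H_V[\dim V]$ and using the canonical isomorphism $\Q^H_U[\dim U]\boxtL\Q^H_V[\dim V]\cong\Q^H_{U\t V}[\dim U+\dim V]$ of the canonical objects on smooth spaces (Theorem~\ref{sm2thm7}(ii),(v)) gives a natural isomorphism between $\phi_{f\boxplus g}^{pH}(\Q^H_{U\t V}[\dim U+\dim V])$ and $\phi_f^{pH}(\Q^H_U[\dim U])\boxtT\phi_g^{pH}(\Q^H_V[\dim V])$.

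Next I would assemble $\TS^H_{U,f,V,g}$ piece by piece. Since $\Crit(f\boxplus g)=X\t Y$ with $(f\boxplus g)(x,y)=f(x)+g(y)$, the open and closed piece $X_c\t Y_d$ of $\Crit(f\boxplus g)$ lies in $(f\boxplus g)^{-1}(c+d)$, and $(f\boxplus g)-(c+d)=(f-c)\boxplus(g-d)$. Applying the isomorphism above to $f-c$, $g-d$ and restricting to $X_c\t Y_d$, and combining with $\Q(\ha(\dim U+\dim V))\cong\Q(\ha\dim U)\boxtT\Q(\ha\dim V)$ --- which follows from the definition of $\Q(\tfrac{n}{2})$ in Example~\ref{sm2ex3} and the associativity and commutativity of $\boxtT$ --- the definition \eq{sm2eq24} yields $\HV_{U\t V,f\boxplus g}^\bu\vert_{X_c\t Y_d}\cong(\HV_{U,f}^\bu\vert_{X_c})\boxtT(\HV_{V,g}^\bu\vert_{Y_d})$. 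The $X_c\t Y_d$ are open and closed and exhaust $X\t Y$, and their contributions are the restrictions of $\HV_{U,f}^\bu\boxtT\HV_{V,g}^\bu$, so summing over $(c,d)$ (equivalently, applying Theorem~\ref{sm2thm8}(i) to this disjoint cover) yields $\TS^H_{U,f,V,g}$ globally, with naturality and commutation with $T_s,N$ inherited from Saito's isomorphism and the functoriality of $\boxtT$.

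For the polarization compatibility \eq{sm2eq27} I would use that Saito's Thom--Sebastiani isomorphism is compatible with the polarizations induced on vanishing cycles by polarizations of $M^\bu,N^\bu$ (part of, or a direct consequence of, \cite[Th.~5.4]{Sait5} together with \cite[\S 5.2]{Sait1}): under the isomorphism, the canonical polarization \eq{sm2eq22} on $\phi_{f\boxplus g}^{pH}(\Q^H_{U\t V}[\dim U+\dim V])$ corresponds to the tensor-product polarization built from the two factor polarizations via \eq{sm2eq20}, and carrying this through the $(\ha\dim U)$- and $(\ha\dim V)$-twists by means of \eq{sm2eq21} is precisely the commutativity of \eq{sm2eq27}. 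The main obstacle will be exactly this last step --- not any new geometry, but pinning down Saito's conventions tightly enough (reduced versus unreduced $\phi$, the normalizations of $\boxtT$ and of the weight and monodromy filtrations, the direction of the polarization pairing) to confirm that after the $(\ha\dim U)$-twists his compatibility statement is literally \eq{sm2eq27}, with no residual sign or leftover Tate twist; the remaining steps are formal bookkeeping with the functorial properties of $\boxtL$, $\boxtT$, $\bD^T_X$ and $\rat$ recorded above.
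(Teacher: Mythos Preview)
Your construction of the isomorphism $\TS^H_{U,f,V,g}$ is essentially the paper's: both invoke Saito's Thom--Sebastiani theorem for mixed Hodge modules \cite[Th.~5.4]{Sait5} applied to the canonical objects, and the bookkeeping with the $(\ha\dim U)$-twists and the decomposition over critical values is routine, as you say.

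The difference is in how commutativity of \eq{sm2eq27} is established. You propose to trace Saito's polarization compatibility through the twists by hand, and you correctly flag the main obstacle as pinning down all of Saito's conventions precisely enough to rule out a stray sign or twist. The paper avoids this entirely. It observes that the diagram \eq{sm2eq27} \emph{exists} by the general compatibility \eq{sm2eq20} of $\bD^T$ with $\boxtT$, and then uses the fact (recorded in \S\ref{sm29}) that the forgetful functor $\rat:\MHM(-)\to\Perv(-)$ is \emph{faithful}: a morphism in $\MHM$ is determined by its underlying perverse-sheaf morphism. Hence \eq{sm2eq27} commutes if and only if its image under $\rat$ commutes, and that image is precisely the perverse Thom--Sebastiani duality square \eq{sm2eq9} of Theorem~\ref{sm2thm5}, which is already known.

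So your route would work, but the faithfulness-of-$\rat$ trick is both shorter and sidesteps exactly the convention-chasing you were worried about. It is worth internalizing: whenever a diagram of mixed Hodge modules is in question and its perverse-sheaf shadow is already known to commute, faithfulness of $\rat$ finishes the argument immediately.
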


\begin{proof} The existence of the isomorphism \eq{sm2eq26}
follows from the Thom--Seb\-ast\-iani Theorem for mixed Hodge
modules due to Saito \cite[Th.~5.4]{Sait5}, applied to
$\HV_{U,f}^\bu$. The diagram \eq{sm2eq27} exists by \eq{sm2eq20};
its commutativity can be checked on the level of the underlying
perverse sheaves which is \eq{sm2eq9}, in light 
of Propositions \ref{smAprop1}--\ref{smAprop2}
in the Appendix. Note that \eq{sm2eq26} also
includes the analogue of \eq{sm2eq10} in Theorem \ref{sm2thm5},
according to which we have a matching of the monodromy actions
$\tau_{U\t V,f\boxplus g}\cong\tau_{U,f}\boxtL\tau_{V,g}$,
as \eq{sm2eq26} holds in $\MHM(X\t Y;T_s,N)$ rather than just
$\MHM(X\t Y)$.
\end{proof}

In this paper we will only ever apply Theorem \ref{sm2thm9} when
$V=\C^n$, $g=z_1^2+\cdots+z_n^2$ and $Y=\{0\}$. Combining
\eq{sm2eq23} and \eq{sm2eq24} shows that
\begin{equation*}
\HV_{\C,z^2}^\bu=\bigl(\Q(-\ha)\bigr)(\ha)\cong\Q(0)\cong\Q^H_{\{0\}}.
\end{equation*}
Thus, by Theorem \ref{sm2thm9},
$\Q^H_{\{0\}}\boxtT\Q^H_{\{0\}}\cong\Q^H_{\{0\}}$, and induction on
$n$, we see that
\begin{equation*}
\HV_{\C^n,z_1^2+\cdots+z_n^2}^\bu\cong\Q^H_{\{0\}}.
\end{equation*}
As for \eq{sm2eq12}, this isomorphism is natural up to sign,
depending on a choice of orientation for the complex Euclidean space
$(\C^n,\d z_1^2+\cdots+\d z_n^2)$.

\section{Action of symmetries on vanishing cycles}
\label{sm3}

Here is our first main result.

\begin{thm} Let\/ $U,V$ be smooth\/ $\C$-schemes, $\Phi,\Psi:U\ra
V$ \'etale morphisms, and\/ $f:U\ra\C,$ $g:V\ra\C$ regular functions
with\/ $g\ci\Phi=f=g\ci\Psi$. Write $X=\Crit(f)$ and\/ $Y=\Crit(g)$
as $\C$-subschemes of\/ $U,V,$ so that\/ $\Phi\vert_X,\Psi\vert_X:
X\ra Y$ are \'etale morphisms. Suppose $\Phi\vert_X=\Psi\vert_X$.
Then:
\begin{itemize}
\setlength{\itemsep}{0pt}
\setlength{\parsep}{0pt}
\item[{\bf(a)}] As $\Phi,\Psi$ are \'etale, $\d\Phi:TU\ra \Phi^*(TV),$
$\d\Psi:TU\ra\Psi^*(TV)$ are isomorphisms of vector bundles.
Restricting to the reduced\/ $\C$-subscheme $X^\red$ of\/ $X,$
and using $\Phi\vert_{X^\red}=\Psi\vert_{X^\red}$ as
$\Phi\vert_X=\Psi\vert_X,$ gives isomorphisms
\begin{gather*}
\d\Phi\vert_{X^\red},\d\Psi\vert_{X^\red}:TU\vert_{X^\red}\longra
\Phi\vert_{X^\red}^*(TV),\\
\text{and thus\/}\quad \d\Psi\vert_{X^\red}^{-1}\ci
\d\Phi\vert_{X^\red}:TU\vert_{X^\red}\longra TU\vert_{X^\red}.
\end{gather*}
Hence $\det\bigl(\d\Psi\vert_{X^\red}^{-1}\ci \d\Phi
\vert_{X^\red}\bigr):X^\red\ra\C\sm\{0\}$ is a regular function.
Then $\det\bigl(\d\Psi\vert_{X^\red}^{-1}\ci \d\Phi
\vert_{X^\red}\bigr)$ is a locally constant map $X^\red\ra\{\pm
1\}\subset\C\sm\{0\}$.

\item[{\bf(b)}] Definition\/ {\rm\ref{sm2def6}} defines isomorphisms
$\PV_\Phi,\PV_\Psi:\PV_{U,f}^\bu\ra\Phi\vert_X^*\bigl(\PV_{V,g}^\bu
\bigr)$ in $\Perv(X)$. These are related by
\e
\PV_\Phi=\det\bigl(\d\Psi\vert_{X^\red}^{-1}\ci \d\Phi
\vert_{X^\red}\bigr)\cdot \PV_\Psi,
\label{sm3eq1}
\e
regarding\/ $\det\bigl(\d\Psi\vert_{X^\red}^{-1}\ci \d\Phi
\vert_{X^\red}\bigr):X\ra\{\pm 1\}$ as a locally constant map of
topological spaces, where\/ $X,X^\red$ have the same topological
space.
\end{itemize}

The analogues of these results also hold for $\cD$-modules and mixed
Hodge modules on $\C$-schemes, and (with\/ $\Phi,\Psi$ local
biholomorphisms and\/ $f,g$ analytic functions) for perverse
sheaves, $\cD$-modules and mixed Hodge modules on complex analytic
spaces, as in\/~{\rm\S\ref{sm26}--\S\ref{sm210}}.
\label{sm3thm}
\end{thm}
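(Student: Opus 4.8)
The plan is to reduce everything to the local model and to the computation of Example~\ref{sm2ex2}. First I would treat part~(a): since $\Phi,\Psi$ are \'etale, $\d\Phi\vert_{X^\red},\d\Psi\vert_{X^\red}$ are isomorphisms, so $\lambda:=\det\bigl(\d\Psi\vert_{X^\red}^{-1}\ci\d\Phi\vert_{X^\red}\bigr)$ is a nowhere-zero regular function on $X^\red$. To see it takes values in $\{\pm 1\}$, I would work \'etale locally near a point $x\in X^\red$, choose \'etale coordinates, and use that $g\ci\Phi=f=g\ci\Psi$ together with $\Phi\vert_X=\Psi\vert_X$ forces $\Phi$ and $\Psi$ to agree to first order along $X$ up to an endomorphism of the normal directions preserving the Hessian $\Hess g$; more precisely, along $X^\red$ the linear map $\d\Psi^{-1}\ci\d\Phi$ preserves the nondegenerate quadratic form induced by $\Hess f$ on $TU\vert_{X^\red}$ (this is the content of a Morse-type lemma; one can invoke the formal/\'etale-local structure theory as in \cite{Joyc,BBJ}, or argue directly from the second-order Taylor expansion of $g\ci\Phi=g\ci\Psi$). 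Hence $\d\Psi^{-1}\ci\d\Phi$ lies in the orthogonal group of that form, so its determinant is $\pm 1$; being a regular $\{\pm1\}$-valued function it is locally constant.

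For part~(b), the key observation is that the claimed identity \eq{sm3eq1} is an equality of morphisms of perverse sheaves, hence can be checked \'etale locally on $X$ by the stack property, Theorem~\ref{sm2thm3}(i). So I would fix $x\in X$ and pass to an \'etale neighbourhood on which, by the local structure of \'etale maps compatible with $f$ (again the Morse-type normal form: \'etale locally $V\simeq U\t\C^n$, $g=f\boxplus(z_1^2+\cdots+z_n^2)$, with $\Phi,\Psi$ corresponding to two embeddings $U\hookra U\t\C^n$ that agree on $X\t\{0\}$), both $\PV_\Phi$ and $\PV_\Psi$ are identified, via $\PV_\Phi$-type isomorphisms and Thom--Sebastiani (Theorem~\ref{sm2thm5}), with the isomorphism $\PV_M$ of Example~\ref{sm2ex2} attached to the orthogonal matrix $M=\d\Psi^{-1}\ci\d\Phi\vert_{X^\red}$ acting on the $\C^n$-factor. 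The crucial input at this point is the \emph{rigidity} statement, Proposition~\ref{sm2prop}: the family of orthogonal matrices interpolating between $\id$ (for $\Psi$) and $M$ (for $\Phi$) — more precisely a path in $\mathrm{O}(n,\C)$, or rather the straightening homotopy — yields a continuous family of perverse-sheaf morphisms $j_t^*[-1](\ga)$, and Proposition~\ref{sm2prop} shows these are locally constant in $t$, so $\PV_\Phi$ and $\PV_\Psi$ differ exactly by the discrete invariant $\det M=\lambda(x)$. Combining with Example~\ref{sm2ex2}, which computes $\PV_M=\det M\cdot\id$, gives \eq{sm3eq1} near~$x$; globalizing via Theorem~\ref{sm2thm3}(i) finishes it.

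The main obstacle, and where the real work lies, is the \'etale-local normal form together with the rigidity argument: one must (i) produce, \'etale locally, the splitting $V\simeq U\t\C^n$ and the simultaneous straightening of $\Phi$ and $\Psi$ to linear-in-the-normal-directions embeddings, keeping careful track of which isomorphisms are canonical and which are only defined up to sign (as in the uncertainty of \eq{sm2eq12}), and (ii) correctly set up the 1-parameter family $W_t$ of Proposition~\ref{sm2prop} so that the two endpoints genuinely recover $\PV_\Phi$ and $\PV_\Psi$, so that the proposition applies and kills any possible continuous discrepancy. Everything else — compatibility with $\si_{U,f}$, $\tau_{U,f}$, and functoriality — follows formally from the commuting diagrams \eq{sm2eq16}--\eq{sm2eq18} in Definition~\ref{sm2def6}.

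Finally, for the last sentence of the theorem: the proof above uses only the properties (i)--(xii) listed in~\S\ref{sm25} — the abelian category of $\PP$-objects, Verdier duality, the vanishing cycle functor and its compatibilities, Thom--Sebastiani (property~(xii)), the stack property (property~(x)), and the analogue of Proposition~\ref{sm2prop} (property~(xi)) — all of which hold, as discussed in~\S\ref{sm26}--\S\ref{sm210}, for $\cD$-modules and mixed Hodge modules on $\C$-schemes, for $l$-adic perverse sheaves and $\cD$-modules on $\K$-schemes, and for the complex analytic versions (with $\Phi,\Psi$ local biholomorphisms and $f,g$ holomorphic). Hence the same argument applies verbatim in all these settings, with the only caveat that in the mixed Hodge module case one works in $\MHM(-;T_s,N)$ and uses $\boxtT$ and $\TS^H$ in place of $\boxtL$ and $\TS$.
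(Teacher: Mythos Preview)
Your outline for part~(a) has the right shape but is imprecise at a key point: the Hessian $\Hess_xf$ is \emph{degenerate} on $T_xU$ (its kernel is $T_xX$), so $\d\Psi^{-1}\ci\d\Phi$ does not lie in an orthogonal group of a nondegenerate form on all of $TU\vert_{X^\red}$. The paper fixes this by choosing a complement $N_x$ to $T_xX$ in $T_xU$, observing that $\Phi\vert_X=\Psi\vert_X$ forces $\d\Psi\vert_x^{-1}\ci\d\Phi\vert_x$ to be block upper-triangular $\bigl(\begin{smallmatrix}\id & A\\ 0 & B\end{smallmatrix}\bigr)$ with $B$ preserving the nondegenerate form $\Hess'_xf$ on $N_x$, and then $\det(\d\Psi^{-1}\ci\d\Phi)\vert_x=\det B=\pm 1$.

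For part~(b) there is a genuine gap. First, your local model ``$V\simeq U\t\C^n$ with $\Phi,\Psi$ two embeddings $U\hookra U\t\C^n$'' is wrong: $\Phi,\Psi$ are \'etale, so $\dim U=\dim V$ and there is no extra $\C^n$-factor in $V$. The $\C^n$ that appears in the paper comes from splitting $U$ (and equally $V$) \'etale-locally as $S\t\C^n$ with $f\sim e\boxplus z_1^2+\cdots+z_n^2$, where $n$ is the rank of the Hessian. Second, and more seriously, your ``path in $\mathrm O(n,\C)$ from $\id$ to $M$'' does not exist when $\det M=-1$, since $\mathrm O(n,\C)$ is disconnected; so Proposition~\ref{sm2prop} cannot be applied to such a path, and in any case a path of matrices is not yet a family of \'etale morphisms of schemes to which the proposition applies. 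The paper handles this by a two-step reduction you have not anticipated: it first proves the special case $(\d\Psi\vert_x^{-1}\ci\d\Phi\vert_x-\id)^2=0$ (Proposition~\ref{sm3prop2}), for which an explicit interpolating scheme $W\to\C$ with $W_0$ recovering $\Phi$ and $W_1$ recovering $\Psi$ is constructed by hand (Proposition~\ref{sm3prop1}, a substantial coordinate computation), and only then invokes Proposition~\ref{sm2prop}. For the general case it uses the local splitting $R\to S\t\C^n$ to form a fibre product $P=R\t_{\de\t(M\ci\ep),S\t\C^n,\de\t\ep}R$ with two projections $\pi_1,\pi_2$, checks that $\Phi\ci\ga\ci\pi_1$ and $\Psi\ci\ga\ci\pi_2$ now satisfy the nilpotency hypothesis (the $B$-block is absorbed by $M$, leaving only the off-diagonal $A$), applies Proposition~\ref{sm3prop2} to these, and extracts the factor $\det M$ from the comparison $\PV_{\pi_2}=\det M\cdot\PV_{\pi_1}$ via Example~\ref{sm2ex2}. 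The nilpotency trick and the explicit construction of $W$ are the missing ideas in your proposal.
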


By taking $U=V$, $f=g$, $\Phi$ an isomorphism and $\Psi=\id_U$, we
deduce a result on the action of symmetries on perverse sheaves of
vanishing cycles:

\begin{cor} Let\/ $U$ be a smooth\/ $\C$-scheme, $\Phi:U\ra U$ an
isomorphism, and\/ $f:U\ra\C$ be regular with\/ $f\ci\Phi=f$. Write
$X=\Crit(f)$ as a $\C$-subscheme of\/ $U$ and\/ $X^\red$ for its
reduced\/ $\C$-subscheme, and suppose $\Phi\vert_X=\id_X$. Then
$\det\bigl(\d\Phi\vert_{X^\red}:TU\vert_{X^\red}\ra
TU\vert_{X^\red}\bigr)$ is a locally constant map $X^\red\ra\{\pm
1\},$ and\/ $\PV_\Phi:\PV_{U,f}^\bu\,{\buildrel\cong\over\longra}\,
\PV_{U,f}^\bu$ in $\Perv(X)$ from Definition\/ {\rm\ref{sm2def6}} is
multiplication by $\det\bigl(\d\Phi \vert_{X^\red}\bigr)=\pm 1$. The
analogues hold in the settings
of\/~{\rm\S\ref{sm26}--\S\ref{sm210}}.
\label{sm3cor1}
\end{cor}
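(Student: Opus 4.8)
The plan is to prove Theorem~\ref{sm3thm}, from which Corollary~\ref{sm3cor1} is the special case $U=V$, $f=g$, $\Phi$ an isomorphism, $\Psi=\id_U$. \textbf{Part (a)} is checked at each $\C$-point $x$ of $X^\red$. Since $\Phi\vert_X=\Psi\vert_X$, the differentials agree on $T_xX\subseteq T_xU$, so the automorphism $A_x:=\d\Psi\vert_x^{-1}\ci\d\Phi\vert_x$ of $T_xU$ restricts to the identity on $T_xX$. As $g\ci\Phi=g\ci\Psi=f$ and $x$, $\Phi(x)=\Psi(x)$ are critical, the chain rule for Hessians at critical points gives $(\d\Phi\vert_x)^*\Hess_{\Phi(x)}g\,\d\Phi\vert_x=\Hess_xf=(\d\Psi\vert_x)^*\Hess_{\Phi(x)}g\,\d\Psi\vert_x$, whence $A_x^*\,\Hess_xf\,A_x=\Hess_xf$, i.e.\ $A_x$ is an isometry of the quadratic form $\Hess_xf$ on $T_xU$. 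Because $X=\Crit(f)$ is cut out by $\d f=0$, the radical of $\Hess_xf$ is exactly $T_xX$, so $\Hess_xf$ descends to a nondegenerate form on $T_xU/T_xX$, on which $A_x$ acts isometrically and hence with determinant $\pm1$; together with $A_x\vert_{T_xX}=\id$ this gives $\det A_x=\pm1$. The function $x\mapsto\det A_x$ is regular on $X^\red$ with values in the discrete set $\{\pm1\}$, so it is locally constant.

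For \textbf{Part (b)}, both sides of \eq{sm3eq1} are morphisms $\PV_{U,f}^\bu\ra\Phi\vert_X^*(\PV_{V,g}^\bu)$ in $\Perv(X)$, so by the stack property (Theorem~\ref{sm2thm3}(i)) it suffices to verify \eq{sm3eq1} on a Zariski neighbourhood of an arbitrary $x\in X$, where by (a) the factor $\det(\cdots)$ is a constant $\varepsilon\in\{\pm1\}$. Using that $\Phi,\Psi$ are étale and agree on $X$, together with the functoriality \eq{sm2eq18}, one reduces (étale-locally near $x$) to the setting of Corollary~\ref{sm3cor1}: an étale $\Theta:U\ra U$ with $f\ci\Theta=f$ and $\Theta\vert_X=\id_X$ --- obtained roughly as $\Theta=\Psi^{-1}\ci\Phi$ after making $\Phi,\Psi$ into isomorphisms onto a common open set --- for which one must show $\PV_\Theta$ is multiplication by $\varepsilon=\det\bigl(\d\Theta\vert_{X^\red}\bigr)$ near $x$, since $\d\Theta\vert_{X^\red}=\d\Psi\vert_{X^\red}^{-1}\ci\d\Phi\vert_{X^\red}$.

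To prove this, I would first put $f$ into stabilized normal form étale-locally near $x$: $U\cong T\t\C^k$ and $f\cong h\boxplus(z_1^2+\cdots+z_k^2)$ with $\Crit(f)=\Crit(h)\t\{0\}$ and $\Hess_xh=0$, so that by Thom--Sebastiani (Theorem~\ref{sm2thm5}) and Example~\ref{sm2ex1} there is a local identification $\PV_{U,f}^\bu\cong\PV_{T,h}^\bu\boxtL A_{\{0\}}$, the reflection $R:(t,z_1,z_2,\dots,z_k)\mapsto(t,-z_1,z_2,\dots,z_k)$ preserves $f$ and fixes $X$, and $\PV_R=\id_{\PV_{T,h}^\bu}\boxtL(-\id)=-\id$ by Example~\ref{sm2ex2}. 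The geometric input (using $\mathop{\rm char}\K\ne 2$ and $\K$ algebraically closed, as in \cite[Prop.~2.23]{Joyc}) is that any étale germ $\Theta$ with $f\ci\Theta=f$ and $\Theta\vert_X=\id$ can be joined, through a $1$-parameter family $\{\Theta_\la\}_{\la\in\C}$ of germs of the same type, to $\id$ if $\det\d\Theta\vert_{X^\red}=1$ and to $R$ if $\det\d\Theta\vert_{X^\red}=-1$ --- the two cases reflecting that the relevant germ-symmetry group has two connected components detected by $\det$ through the action of $\mathrm{O}(k,\C)$ on the $\C^k$-factor. Assembling such a family into data $(W,\pi_\C,\pi_X,\io,\ga)$ as in Proposition~\ref{sm2prop}, with $\ga$ the perverse-sheaf morphism induced by the total family over $W$, that proposition yields $\PV_{\Theta_0}=\PV_{\Theta_1}$: when $\varepsilon=1$ this is $\PV_\Theta=\id$, and when $\varepsilon=-1$, applying it to $\Theta\ci R$ gives $\PV_\Theta\ci\PV_R=\id$, hence $\PV_\Theta=\PV_R=-\id$. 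The statements for $\cD$-modules, mixed Hodge modules, $l$-adic sheaves and complex analytic spaces follow verbatim, since only the properties listed in \S\ref{sm25} are used.

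The main obstacle is the construction and use of the connecting family $\{\Theta_\la\}$. One must build it \emph{within} the class of $f$-preserving, $X$-fixing étale germs --- this is where the splitting/normal-form argument and the component structure of $\mathrm{O}(k,\C)$ genuinely enter --- and, more delicately, organise the \emph{pointwise} family $\{\Theta_\la\}$ into a single morphism $\ga$ over the total space $W$ whose restrictions $j_0^*[-1](\ga)$ and $j_1^*[-1](\ga)$ are exactly $\PV_{\Theta_0}$ and $\PV_{\Theta_1}$. Proposition~\ref{sm2prop} exists precisely to upgrade such a ``discrete in $\la$'' family to an honest equality of morphisms near $x$, and checking its hypotheses --- essentially, the compatibility of the construction of $\PV_{(-)}$ in Definition~\ref{sm2def6} with families of étale maps --- is where most of the work lies.
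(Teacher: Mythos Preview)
Your Part (a) is essentially the paper's argument: the automorphism $A_x=\d\Psi\vert_x^{-1}\ci\d\Phi\vert_x$ fixes $T_xX=\Ker\Hess_xf$ and preserves $\Hess_xf$, so acts orthogonally on the quotient, giving $\det A_x=\pm1$.

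For Part (b) your overall architecture is right --- Proposition~\ref{sm2prop} plus a one-parameter family, combined with the orthogonal-group calculation of Example~\ref{sm2ex2} --- but there is a real gap in the step you flag as ``the main obstacle''. You assert that any $f$-preserving, $X$-fixing germ $\Theta$ can be joined by a one-parameter family of the same type to $\id$ or to the reflection $R$, citing \cite[Prop.~2.23]{Joyc}. That proposition (Theorem~\ref{sm5thm1}(i) here) only gives the stabilized normal form $f\cong h\boxplus\Sigma z_j^2$; it says nothing about the connectedness of the germ-symmetry group, and there is no reason $\Theta$ should respect the splitting $T\t\C^k$, so the path to $\id$ or $R$ is not at all obvious. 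This is precisely the hard part, and the paper does \emph{not} attempt it directly.

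Instead the paper proceeds in two stages. First, Proposition~\ref{sm3prop1} constructs the interpolating family (the scheme $W$ with $\pi_\C,\pi_U,\pi_V,\io$) only under the extra hypothesis $\bigl(\d\Psi\vert_x^{-1}\ci\d\Phi\vert_x-\id\bigr)^2=0$ --- the ``unipotent'' case --- where explicit equations for $W$ can be written down (\S\ref{sm31}); Proposition~\ref{sm3prop2} then feeds this into Proposition~\ref{sm2prop} to get $\PV_\Phi=\PV_\Psi$ locally. Second, for general $\Phi,\Psi$, \S\ref{sm33} uses the stabilized form to find $M\in{\rm O}(n,\C)$ with $\det M=\det A_x$, builds an auxiliary fibre product $P=R\t_{\de\t(M\ci\ep),S\t\C^n,\de\t\ep}R$, and shows that after twisting by $M$ the pair $(\Phi\ci\ga\ci\pi_1,\Psi\ci\ga\ci\pi_2)$ satisfies the unipotent hypothesis at $p$; applying the first stage plus $\PV_M=\det M\cdot\id$ then yields \eq{sm3eq1}. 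So the paper separates the ``semisimple'' orthogonal part (handled explicitly) from the ``unipotent'' part (handled by the family), rather than building a single family to $\id$ or $R$. Your reduction to a single $\Theta=\Psi^{-1}\ci\Phi$ is also imprecise since $\Psi$ is only \'etale, though this is fixable via fibre products; the paper simply keeps $\Phi,\Psi$ separate throughout.
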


\begin{ex} Let $U=V=\C^n$ and $f(z_1,\ldots,z_n)=
g(z_1,\ldots,z_n)=z_1^2+\cdots+z_n^2$, so that
$X=Y=\{0\}\subset\C^n$. Let $\Phi,\Psi\in{\rm O}(n,\C)$ be
orthogonal matrices, so that $\det\Phi,\det\Psi\in\{\pm 1\}$ and
$\Phi,\Psi:\C^n\ra\C^n$ are isomorphisms with $f=g\ci\Phi=g\ci\Phi$
and $\Phi\vert_{\{0\}}=\Psi\vert_{\{0\}}=\id_{\{0\}}$. In Theorem
\ref{sm3thm}(a) we have
\begin{equation*}
\d\Psi\vert_{X^\red}^{-1}\ci \d\Phi\vert_{X^\red}=\Psi^{-1}\ci\Phi:
\C^n\longra\C^n,
\end{equation*}
so that $\det\bigl(\d\Psi\vert_{X^\red}^{-1}\ci \d\Phi
\vert_{X^\red}\bigr)=\det\Psi^{-1}\det\Phi=\pm 1$.

For Theorem \ref{sm3thm}(b), Example \ref{sm2ex2} shows that
$\PV_\Phi,\PV_\Psi:A_{\{0\}}\ra A_{\{0\}}$ are multiplication by
$\det\Phi,\det\Psi$, so $\PV_\Phi=(\det\Psi^{-1}\det\Phi)
\cdot\PV_\Psi$, as in~\eq{sm3eq1}.
\label{sm3ex1}
\end{ex}

The proof of Theorem \ref{sm3thm}(b) uses the following proposition.
To interpret it, pretend for simplicity that the \'etale morphisms
$\pi_U\vert_{W_t}:W_t\ra U$ in (b) are invertible. Then
$\Th_t:=\pi_V\vert_{W_t}\ci \pi_U\vert_{W_t}^{-1}$ for $t\in\C$ are
a 1-parameter family of morphisms $U\ra V$, which satisfy
$f=g\ci\Th_t$ and $\Th_t\vert_X=\Phi\vert_X=\Psi\vert_X$ for
$t\in\C$, with $\Th_0=\Phi$ and $\Th_1=\Psi$. Thus, modulo taking
\'etale covers of $U$, the family $\{\Th_t:t\in\C\}$ interpolates
between $\Phi$ and $\Psi$.

\begin{prop} Let\/ $U,V$ be smooth\/ $\C$-schemes, $\Phi,\Psi:U\ra
V$ \'etale morphisms, and\/ $f:U\ra\C,$ $g:V\ra\C$ regular functions
with\/ $g\ci\Phi=f=g\ci\Psi$. Write $X=\Crit(f)$ and\/ $Y=\Crit(g)$
as $\C$-subschemes of\/ $U,V,$ so that\/ $\Phi\vert_X,\Psi\vert_X:
X\ra Y$ are \'etale. Suppose $\Phi\vert_X=\Psi\vert_X,$ and\/ $x\in
X$ such that\/ $\d\Psi\vert_x^{-1}\ci\d\Phi\vert_x:T_xU\ra T_xU$
satisfies $\bigl(\d\Psi\vert_x^{-1}\ci \d\Phi\vert_x -
\id_{T_xU}\bigr){}^2=0$. Then there exist a smooth\/ $\C$-scheme $W$
and morphisms $\pi_\C:W\ra\C,$ $\pi_U:W\ra U,$ $\pi_V:W\ra V$ and\/
$\io:\C\ra W$ such that:
\begin{itemize}
\setlength{\itemsep}{0pt}
\setlength{\parsep}{0pt}
\item[{\bf(a)}] $\pi_\C\ci\io(t)=t,$ $\pi_U\ci\io(t)=x$ and\/
$\pi_V\ci \io(t)=\Phi(x)$ for all\/~$t\in\C;$
\item[{\bf(b)}] $\pi_\C\!\t\pi_U:W\!\ra\!\C\!\t U$ and\/
$\pi_\C\!\t\pi_V:W\!\ra\!\C\!\t V$ are \'etale. Thus,
$W_t:=\!\pi_\C^{-1}(t)$\break is a smooth\/ $\C$-scheme for
each\/ $t\in\C,$ and\/ $\pi_U\vert_{W_t}:W_t\!\ra\! U,$
$\pi_V\vert_{W_t}:W_t\!\ra\! V$ are \'etale, and\/ $\io(t)\in
W_t$ with\/ $\pi_U:\io(t)\mapsto x,$ $\pi_V:\io(t)\mapsto
\Phi(x);$
\item[{\bf(c)}] $h:=f\ci\pi_U=g\ci\pi_V:W\ra\C$. Thus,
$(\pi_\C\t\pi_U)\vert_Z:Z\ra\C\t X$ and\/
$(\pi_\C\t\pi_V)\vert_Z:Z\ra\C\t Y$ are \'etale, where
$Z:=\Crit(h);$
\item[{\bf(d)}] $\Phi\vert_X\ci\pi_U\vert_Z=\Psi\vert_X
\ci\pi_U\vert_Z=\pi_V\vert_Z:Z\ra Y\subseteq V;$ and\/
\item[{\bf(e)}] $\Phi\ci\pi_U\vert_{W_0}=\pi_V\vert_{W_0}$ and\/
$\Psi\ci\pi_U\vert_{W_1}=\pi_V\vert_{W_1},$ for $W_0,W_1$ as
in\/~{\bf(b)}.
\end{itemize}
\label{sm3prop1}
\end{prop}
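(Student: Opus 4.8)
The plan is to build the family $W$ by interpolating between $\Phi$ and $\Psi$ over $\C$, using the hypothesis that $\de:=\d\Psi\vert_x^{-1}\ci\d\Phi\vert_x$ is \emph{unipotent of index $\le 2$}, i.e.\ $(\de-\id)^2=0$. The key point of the hypothesis is that it lets us write a one-parameter family of linear isomorphisms $T_xU\ra T_xU$ connecting $\id$ to $\de$, namely $\de_t:=\id+t(\de-\id)$, which is invertible for \emph{all} $t\in\C$ precisely because $(\de-\id)^2=0$ (its inverse is $\id-t(\de-\id)$). This is the reason the quadratic nilpotence is assumed rather than general invertibility --- it guarantees the interpolating family stays inside the open locus of isomorphisms over the whole affine line, so that the resulting $\pi_U\t\pi_\C$, $\pi_V\t\pi_\C$ are \'etale over $\C\t U$, $\C\t V$ rather than just over a subset.

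The construction I would carry out: first, replace $U,V$ by Zariski (or \'etale) open neighbourhoods of $x$ and $\Phi(x)$, and choose \'etale coordinates so that locally $U\subseteq\C^m$, $V\subseteq\C^m$ (since $\Phi$ \'etale forces $\dim U=\dim V$), with $x=0$. Because $\Phi,\Psi$ agree on $X=\Crit(f)$ and $f=g\ci\Phi=g\ci\Psi$, the difference $\Psi-\Phi$ (in coordinates) vanishes on $X$, i.e.\ lies in $I_X\cdot\O_U^{\oplus m}$. Define a candidate map $\Th:\C\t U\ra V$ by $\Th(t,u)=\Phi(u)+t\cdot(\Psi(u)-\Phi(u))$, so $\Th(0,\cdot)=\Phi$, $\Th(1,\cdot)=\Psi$, and $\Th(t,\cdot)\vert_X=\Phi\vert_X$ for all $t$. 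One checks $g\ci\Th(t,\cdot)=f$ to \emph{first order along $X$} automatically, but not identically; so $\Th$ is only an approximate interpolation. To fix this, set $W:=\{(t,u)\in\C\t U:\Th(t,u)\in V\}$ with $\pi_\C,\pi_U$ the projections and $\pi_V=\Th$, and $\io(t)=(t,x)$; then shrink: the derivative $\d_u\Th(t,u)$ at $u=x$ is $\d\Phi\vert_x\ci\de_t$ (up to the identification via $\d\Psi\vert_x$), which by the index-$\le 2$ hypothesis is invertible for all $t$, so by openness of the \'etale locus we may pass to a Zariski-open $W$ on which $\pi_\C\t\pi_U$ and $\pi_\C\t\pi_V$ are both \'etale, retaining $\io(\C)\subseteq W$. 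This gives (a), (b). For (c): $g\ci\pi_V$ and $f\ci\pi_U$ are two regular functions on $W$ agreeing on $W_0$ and $W_1$ and, more importantly, agreeing to infinite order along $Z':=\pi_U^{-1}(X)$ because $\Psi-\Phi$ vanishes on $X$; here one must actually arrange $g\ci\Th(t,u)=f(u)$ \emph{identically}, which requires modifying $\Th$ by a correction term valued in $I_X^2$ chosen so that $g\ci(\Th+\text{correction})=f$ --- solvable because $g$ near $V$ looks, after a further \'etale change of coordinates on $V$, like $f\boxplus(\text{quadratic})$ by Theorem \ref{sm5thm1}-type normal forms, or more directly by a formal/Artin-approximation argument using that $\d\Th$ is an isomorphism. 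Once $h:=f\ci\pi_U=g\ci\pi_V$, the identification $\Crit(h)=(\pi_\C\t\pi_U)^{-1}(\C\t X)$ follows from $\pi_\C\t\pi_U$ \'etale, giving (c) and (d), and (e) holds by $\Th(0,\cdot)=\Phi$, $\Th(1,\cdot)=\Psi$.

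The main obstacle is exactly the passage from the \emph{approximate} interpolation $\Th(t,u)=\Phi(u)+t(\Psi(u)-\Phi(u))$, which only satisfies $g\ci\Th=f$ modulo $I_X^2$ (or modulo higher powers, after being careful), to an \emph{exact} interpolation with $g\ci\pi_V=f\ci\pi_U$ on the nose, which is what (c) demands. I expect to handle this by an inverse-function-theorem / implicit-function argument in the \'etale topology: fixing coordinates in which $g=\tilde f\boxplus q$ with $q$ a nondegenerate quadratic form transverse to $\Phi(U)$ (possible \'etale-locally by the standard Morse-type splitting underlying the whole paper), the condition $g\ci\pi_V=h$ becomes solvable for $\pi_V$ as a graph over $U\t\C$ because the normal Hessian $q$ is nondegenerate --- the same mechanism that powers Theorem \ref{sm5thm2}. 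Everything else --- \'etaleness, the identities on $\io(\C)$, the critical-locus identifications --- is routine once the interpolating family is in hand. I would remark that one may always reduce to the hypothesis $(\de-\id)^2=0$ when proving Theorem \ref{sm3thm}(b) because a general $\de\in\GL(T_xU)$ with $\det\de=\pm1$ can be connected to $\pm\id$ through such "index-two-unipotent times the previous" steps, stratifying the argument; this is why the proposition is phrased with that hypothesis rather than in full generality.
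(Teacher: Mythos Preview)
Your architecture matches the paper's: naive linear interpolation between $\Phi$ and $\Psi$, with the nilpotence hypothesis guaranteeing that the interpolated differential is invertible for \emph{all} $t\in\C$, followed by a correction to force $g\ci\pi_V=f\ci\pi_U$ exactly. The paper does exactly this, working in $\C\times U'\times V'$ with \'etale coordinates $x_i'=z_i\ci\Phi\ci\pi_U$, $y_i'=z_i\ci\Psi\ci\pi_U$, $z_i'=z_i\ci\pi_V$, and defining the naive interpolation locus by $(1-t')x_i'+t'y_i'-z_i'=0$.

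Your proposed correction mechanism, however, does not work here. You invoke a Morse-type splitting $g\simeq\tilde f\boxplus q$ with $q$ a nondegenerate quadratic form in the normal directions, as in Theorem~\ref{sm5thm1}. But that theorem is about closed embeddings $\Phi:U\hookra V$ with $\dim V>\dim U$; in the present proposition $\Phi,\Psi$ are \'etale, so $\dim U=\dim V$ and there is no normal bundle. The alternative you float, an implicit-function argument solving $g(\Th+\ep)=f\ci\pi_U$ for a perturbation $\ep$, fails for the same reason the naive interpolation fails: $\d g$ vanishes along $Y=\Crit(g)$, so one cannot invert it to solve for $\ep$ near the critical locus.

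The paper's actual fix is algebraic and explicit. After expanding $f\ci\pi_U-g\ci\pi_V$ (equation \eq{sm3eq28}) as
\[
\textstyle\sum_i I_i\cdot\bigl((1-t')x_i'+t'y_i'-z_i'\bigr)+t'(1-t')\sum_{ij}K_{ij}I_iI_j+\sum_{ij}L_{ij}\cdot(\cdots)_i(\cdots)_j,
\]
where the $I_i$ generate $\pi_U^{-1}(I_X)$, they introduce auxiliary variables $r_{ij}$ and define $W\subset S\times\C^{n^2}$ by the equations $(1-t')x_i'+t'y_i'-z_i'=\sum_j r_{ij}I_j$ and $r_{ij}+t(1-t)K_{ij}+\sum_{kl}L_{kl}r_{ki}r_{lj}=0$. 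Substituting the first into the displayed expression and using the second kills $f\ci\pi_U-g\ci\pi_V$ identically. The point you miss is that the nilpotence hypothesis is used a \emph{second} time here, not only for \'etaleness: via the identities $B_{ij}+B_{ji}=0$ and $\sum_k B_{ki}B_{kj}=0$ (equation \eq{sm3eq10}, which needs the inverse matrix formula $(\de_{ij}+B_{ij})^{-1}=(\de_{ij}-B_{ij})$), one gets $K_{ij}(t,x,\Phi(x))=0$ for all $t$. This is what allows $\io(t)=(t,x,\Phi(x),0)$ to lie in $W$ and the defining equations to be transverse there, so that $W$ is smooth of the correct dimension containing $\io(\C)$.
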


We will prove Proposition \ref{sm3prop1} in \S\ref{sm31}, and
Theorem \ref{sm3thm} in~\S\ref{sm32}--\S\ref{sm34}.

\subsection{Proof of Proposition \ref{sm3prop1}}
\label{sm31}

Let $U,V,\Phi,\Psi,f,g,X,Y,x$ be as in Proposition \ref{sm3prop1}.
Choose a Zariski open neighbourhood $V'$ of $\Phi(x)=\Psi(x)$ in $V$
and \'etale coordinates $(z_1,\ldots,z_n):V'\ra\C^n$ on $V'$, with
$z_1=\cdots=z_n=0$ at $\Phi(x)$. Let $m$ be the rank of the
symmetric matrix $\bigl(\frac{\pd^2g}{\pd z_i\pd
z_j}\vert_{\Phi(x)}\bigr){}_{i,j=1}^n$, so that
$m\in\{0,\ldots,n\}$. By
applying an element of $\GL(n,\C)$ to the coordinates
$(z_1,\ldots,z_n)$ we can suppose that
\e
\frac{\pd^2g}{\pd z_i\pd z_j}\Big\vert_{\Phi(x)}=\begin{cases} 1, &
i=j\in\{1,\ldots,m\}, \\
0, & \text{otherwise.} \end{cases}
\label{sm3eq2}
\e
Then $\ha\frac{\pd g}{\pd z_i}$ agrees with $z_i$ to first order at
$\Phi(x)$ for $i=1,\ldots,m$, so replacing $z_i$ by $\ha\frac{\pd
g}{\pd z_i}$ for $i=1,\ldots,m$ and making $V'$ smaller, we can
suppose \eq{sm3eq2} holds and $z_1,\ldots,z_m$ lie in the ideal
$\bigl(\frac{\pd g}{\pd z_i},\; i=1,\ldots,n\bigr)$ in $\O_{V'}$.
Thus we may write
\e
z_i=\ts\sum\limits_{j=1}^nA_{ij}\cdot\frac{\pd g}{\pd z_j}, \qquad
i=1,\ldots,m,
\label{sm3eq3}
\e
where $A_{ij}:V'\ra\bA^1$ are regular functions for $i=1,\ldots,m$
and $j=1,\ldots,n$. Taking $\frac{\pd}{\pd z_j}$ of \eq{sm3eq3} for
$j=1,\ldots,m$ and using \eq{sm3eq2} gives
\e
A_{ij}\vert_{\Phi(x)}=\begin{cases} 1, &
i=j,\;\> i,j\in\{1,\ldots,m\}, \\
0, & i\ne j,\;\> i,j\in\{1,\ldots,m\}.\end{cases}
\label{sm3eq4}
\e

Set $U'=\Phi^{-1}(V')\cap\Psi^{-1}(V')$, so that $U'$ is a Zariski
open neighbourhood of $x$ in $U$. Define \'etale coordinates
$(x_1,\ldots,x_n):U'\ra\C^n$ and $(y_1,\ldots,y_n):U'\ra\C^n$ by
$x_i=z_i\ci\Phi$ and $y_i=z_i\ci\Psi$, so that $x_1=\cdots=x_n=
y_1=\cdots=y_n=0$ at $x$. Since $f=g\ci\Phi=g\ci\Psi$ we have
$\frac{\pd f}{\pd x_j}=\frac{\pd g}{\pd z_j}\ci\Phi$ and $\frac{\pd
f}{\pd y_j}=\frac{\pd g}{\pd z_j}\ci\Psi$. Thus \eq{sm3eq2} and
\eq{sm3eq3} imply that
\ea
\frac{\pd^2f}{\pd x_i\pd x_j}\Big\vert_x&= \frac{\pd^2f}{\pd y_i\pd
y_j}\Big\vert_x=\begin{cases} 1, & i=j\in\{1,\ldots,m\}, \\
0, & \text{otherwise,} \end{cases}
\label{sm3eq5}\\
x_i=\ts\sum\limits_{j=1}^n\bigl(A_{ij}\ci\Phi\bigr)\cdot\frac{\pd
f}{\pd x_j}, \quad y_i&=\ts\sum\limits_{j=1}^n\bigl(A_{ij}\ci
\Psi\bigr)\cdot\frac{\pd f}{\pd y_j}, \quad i=1,\ldots,m.
\label{sm3eq6}
\ea

Now $\d\Phi\vert_x:T_xU\ra T_{\Phi(x)}V$ maps $\frac{\pd}{\pd
x_j}\mapsto\frac{\pd}{\pd z_j}$, as $x_j=z_j\ci\Phi$, and
$\d\Psi\vert_x:T_xU\ra T_{\Phi(x)}V$ maps $\frac{\pd}{\pd
y_j}\mapsto\frac{\pd}{\pd z_j}$. Hence $\d\Psi\vert_x^{-1}\ci
\d\Phi\vert_x: T_xU\ra T_xU$ maps $\frac{\pd}{\pd
x_j}\mapsto\frac{\pd}{\pd y_j}=\sum_{i=1}^n\frac{\pd x_i}{\pd
y_j}\cdot \frac{\pd}{\pd x_i}$. Define $B_{ij}\in\C$ for
$i,j=1,\ldots,n$ by
\e
\de_{ij}+B_{ij}=\ts\frac{\pd x_i}{\pd y_j}\big\vert_x.
\label{sm3eq7}
\e
Then $\bigl(\de_{ij}+B_{ij}\bigr){}_{i,j=1}^n$ is the matrix of
$\d\Psi\vert_x^{-1}\ci\d\Phi\vert_x$ w.r.t.\ the basis
$\frac{\pd}{\pd x_1},\ldots,\frac{\pd}{\pd x_n}$, and
$\bigl(B_{ij}\bigr){}_{i,j=1}^n$ is the matrix of
$\d\Psi\vert_x^{-1}\ci \d\Phi\vert_x - \id_{T_xU}$, so by assumption
$\bigl(B_{ij}\bigr){}^2=0$. Therefore the inverse matrix of
$\bigl(\de_{ij}+B_{ij}\bigr)$ is $\bigl(\de_{ij}-B_{ij}\bigr)$, so
\eq{sm3eq7} gives
\e
\de_{ij}-B_{ij}=\ts\frac{\pd y_i}{\pd x_j}\big\vert_x.
\label{sm3eq8}
\e
More generally, $\bigl(\de_{ij}+tB_{ij}\bigr)$ is invertible for
$t\in\C$, with inverse~$\bigl(\de_{ij}-tB_{ij}\bigr)$.

Now $\Phi\vert_X=\Psi\vert_X$ implies that $\d\Psi\vert_x^{-1}\ci
\d\Phi\vert_x$ is the identity on $T_xX\subseteq T_xU$, and
$T_xX=\Ker\bigl(\Hess_xf\bigr)=\langle \frac{\pd}{\pd
x_{m+1}},\ldots,\frac{\pd}{\pd x_n}\rangle$ by \eq{sm3eq5}, so
\e
B_{ij}=0\quad\text{for all $i=1,\ldots,n$ and $j=m+1,\ldots,n$.}
\label{sm3eq9}
\e
We have $\frac{\pd^2f}{\pd y_i\pd y_l}\big\vert_x=\sum_{j,k}
\frac{\pd x_j}{\pd y_i}\frac{\pd^2f}{\pd x_j\pd x_k}\frac{\pd
x_k}{\pd y_l}\big\vert_x$, $\frac{\pd^2f}{\pd x_i\pd
x_l}\big\vert_x=\sum_{j,k} \frac{\pd y_j}{\pd x_i}\frac{\pd^2f}{\pd
y_j\pd y_k}\frac{\pd y_k}{\pd x_l}\big\vert_x$, which by \eq{sm3eq5}
and \eq{sm3eq7}--\eq{sm3eq9} give equations equivalent to
\e
B_{ij}+B_{ji}=\ts\sum\limits_{k=1}^nB_{ki}B_{kj}=0\quad \text{for
all $i,j=1,\ldots,m$.}
\label{sm3eq10}
\e

Define regular $t',x_1',\ldots,x_n', y_1',\ldots,y_n',
z_1',\ldots,z_n':\C\t U'\t V'\ra\C$ by
\begin{align*}
t'(t,u,v)&=t, & x_i'(t,u,v)&=x_i(u)=z_i\ci\Phi(u),\\
y_i'(t,u,v)&=y_i(u)=z_i\ci\Psi(u), & z_i'(t,u,v)&=z_i(v).
\end{align*}
Then $(t',y_1',\ldots,y_n',z_1',\ldots,z_n')$ are \'etale
coordinates on~$\C\t U'\t V'$.

Let $S$ be an affine Zariski open neighbourhood of $\C\t(x,\Phi(x))$
in $\C\t U'\t V'$, satisfying a series of smallness conditions we
will give during the proof. Regard $(t',y_1',\ldots,y_n',
z_1',\ldots,z_n')$ as \'etale coordinates on $S$, and write
$\pi_\C:S\ra\C$, $\pi_U:S\ra U$, $\pi_V:S\ra V$ for the projections.
We will work with (sheaves of) ideals in $\O_S$, using notation
$\bigl(x_i'-z_i',\;i=1,\ldots,n\bigr)$ to denote the ideal generated
by the functions $x_1'-z_1',\ldots,x_n'-z_n'$, and
$f\ci\pi_U-g\ci\pi_V\in\bigl(x_i'-z_i',\;i=1,\ldots,n\bigr)$ to mean
that $f\ci\pi_U-g\ci\pi_V\in H^0(\O_S)$ is a section of the ideal
$\bigl(x_i'-z_i',\;i=1,\ldots,n\bigr)$. Write $I_X\subset\O_U$,
$I_Y\subset\O_V$ for the ideals of functions on $U,V$ vanishing on
$X,Y$, and $\pi_U^{-1}(I_X),\pi_V^{-1}(I_Y)\subset\O_S$ for the
preimage ideals.

Since $x_i=z_i\ci\Phi$, the functions $x_i'-z_i'$ for $i=1,\ldots,n$
vanish on the smooth, closed $\C$-subscheme
$\bigl(\C\t(\id\t\Phi)(U)\bigr)\cap S$ in $S$, and locally these
functions cut out this $\C$-subscheme. So making $S$ smaller we can
suppose $\bigl(\C\t(\id\t\Phi)(U)\bigr)\cap S$ is the $\C$-subscheme
$x_1'-z_1'=\cdots=x_n'-z_n'=0$ in $S$. As $f=g\ci\Phi$, the function
$f\ci\pi_U-g\ci\pi_V$ is zero on $\bigl(\C\t(\id\t\Phi)(U)\bigr)\cap
S$. Hence
\e
f\ci\pi_U-g\ci\pi_V\in \bigl(x_i'-z_i',
\;i=1,\ldots,n\bigr)\subset\O_S.
\label{sm3eq11}
\e

Lifting \eq{sm3eq11} from $\bigl(x_i'-z_i', \;i=1,\ldots,n\bigr)$ to
$\bigl(x_i'-z_i', \;i=1,\ldots,n\bigr)^2$, making $S$ smaller if
necessary, we may choose regular $C_i:S\ra\C$ for $i=1,\ldots,n$
with
\e
f\ci\pi_U-g\ci\pi_V-\ts\sum\limits_{i=1}^nC_i\cdot(x_i'-z_i')\in
\bigl(x_i'-z_i', \;i=1,\ldots,n\bigr)^2.
\label{sm3eq12}
\e
Apply $\frac{\pd}{\pd z_i'}$ to \eq{sm3eq12}, using the \'etale
coordinates $(t',y_1',\ldots,y_n',z_1',\ldots,z_n')$ on $S$. Since
$\frac{\pd}{\pd z_i'}\bigl(g\ci\pi_V\bigr)=\frac{\pd g}{\pd
z_i}\ci\pi_V$ and $\frac{\pd}{\pd z_i'}(f\ci\pi_U)=0=\frac{\pd
x_j'}{\pd z_i'}$, this gives
\begin{equation*}
C_i-\ts\frac{\pd g}{\pd z_i}\ci\pi_V\in \bigl(x_i'-z_i',
\;i=1,\ldots,n\bigr).
\end{equation*}

Combining this with \eq{sm3eq12} yields
\begin{equation*}
f\ci\pi_U-g\ci\pi_V-\ts\sum\limits_{i=1}^n\bigl(\frac{\pd g}{\pd
z_i}\ci\pi_V\bigr)\cdot(x_i'-z_i')\in \bigl(x_i'-z_i',
\;i=1,\ldots,n\bigr)^2.
\end{equation*}
So making $S$ smaller we can choose regular $D_{ij}:S\ra\C$ for
$i,j=1,\ldots,n$ with $D_{ij}=D_{ji}$ and
\e
f\ci\pi_U-g\ci\pi_V=\ts\sum\limits_{i=1}^n\bigl(\frac{\pd g}{\pd
z_i}\ci\pi_V\bigr)\cdot(x_i'-z_i')+
\sum\limits_{i,j=1}^nD_{ij}\cdot(x_i'-z_i')(x_j'-z_j').
\label{sm3eq13}
\e
Similarly, starting from $y_i=z_i\ci\Psi$ and $f=g\ci\Psi$ we may
choose regular $E_{ij}:S\ra\C$ for $i,j=1,\ldots,n$ with
$E_{ij}=E_{ji}$ and
\e
f\ci\pi_U-g\ci\pi_V=\ts\sum\limits_{i=1}^n\bigl(\frac{\pd g}{\pd
z_i}\ci\pi_V\bigr)\cdot(y_i'-z_i')+
\sum\limits_{i,j=1}^nE_{ij}\cdot(y_i'-z_i')(y_j'-z_j').
\label{sm3eq14}
\e

Applying $\frac{\pd^2}{\pd z_i'\pd z_j'}$ to \eq{sm3eq13} and
\eq{sm3eq14}, restricting to $(t,x,\Phi(x))$ for $t\in\C$, noting
that $x_i'=y_i'=z_i'=0$ at $(t,x,\Phi(x))$, and using \eq{sm3eq2},
we deduce that
\e
\begin{split}
D_{ij}(t,x,\Phi(x))=E_{ij}(t,x,\Phi(x))&=\ts\ha\frac{\pd^2g}{\pd
z_i\pd z_j}\Big\vert_{\Phi(x)}\\
& =\begin{cases} \ha, &
i=j\in\{1,\ldots,m\}, \\
0, & \text{otherwise.} \end{cases}
\end{split}
\label{sm3eq15}
\e

Summing $1-t'$ times \eq{sm3eq13} with $t'$ times \eq{sm3eq14} and
rearranging yields
\ea
&f\ci\pi_U-g\ci\pi_V=\ts\sum\limits_{i=1}^n\begin{aligned}[t]
\ts\Bigl[ \frac{\pd g}{\pd z_i}\ci\pi_V
\!+\!2t'(1\!-\!t')\sum\limits_{j=1}^n
(D_{ij}\!-\!E_{ij})(x_j'\!-\!y_j')\Bigr]&\\
{}\cdot\bigl((1-t')x_i'+t'y_i'-z_i'\bigr)&\end{aligned}
\nonumber\\
&\quad+
\ts\sum\limits_{i,j=1}^n\bigl[(1-t')D_{ij}\!+\!t'E_{ij}\bigr]\cdot
\bigl((1\!-\!t')x_i'\!+\!t'y_i'\!-\!z_i'\bigr)
\bigl((1\!-\!t')x_j'\!+\!t'y_j'\!-\!z_j'\bigr)
\nonumber\\
&\quad+
\ts\sum\limits_{i,j=1}^nt'(1-t')\bigl[t'D_{ij}+(1\!-\!t')E_{ij}
\bigr]\cdot(x_i'-y_i')(x_j'-y_j').
\label{sm3eq16}
\ea

Since $x_i'-y_i'=(x_i-y_i)\ci\pi_U$, and $(x_i-y_i)\vert_X=
z_i\ci\Phi\vert_X-z_i\ci\Psi\vert_X=0$ as $\Phi\vert_X=\Psi\vert_X$,
we see that $x_i'-y_i'\in\pi_U^{-1}(I_X)$. Thus making $S$ smaller
if necessary, we may choose regular $F_{ij}:S\ra\C$ such that
\e
x_i'-y_i'=\ts\sum_{j=1}^nF_{ij}\cdot \bigl(\frac{\pd f}{\pd
y_j}\ci\pi_U\bigr)\quad\text{for $i=1,\ldots,n$.}
\label{sm3eq17}
\e
Furthermore, by \eq{sm3eq6} when $i=1,\ldots,m$ we may take
\begin{equation*}
F_{ij}=\Bigl(\ts\sum\limits_{k=1}^n(A_{ik}\ci\Phi)\cdot\frac{\pd
y_j}{\pd x_k}-A_{ij}\ci \Psi\Bigr)\ci\pi_U.
\end{equation*}
Restricting to $(t,x,\Phi(x))$ and using \eq{sm3eq4}, \eq{sm3eq8},
\eq{sm3eq9} and $\Phi(x)=\Psi(x)$ gives
\e
F_{ij}(t,x,\Phi(x))=-B_{ji} \quad\text{for $i=1,\ldots,m$ and
$j=1,\ldots,n$.}
\label{sm3eq18}
\e

Applying $\frac{\pd}{\pd x_i'}$ to equation \eq{sm3eq13} shows that
\begin{equation*}
\ts\frac{\pd g}{\pd z_i}\ci\pi_V-\frac{\pd f}{\pd x_i}\ci\pi_U\in \bigl(x_j'-z_j',\;j=1,\ldots,n\bigr).
\end{equation*}
Thus we may write
\e
\ts\frac{\pd g}{\pd z_i}\ci\pi_V=\sum_{j=1}^n\bigl(\frac{\pd y_j}{\pd x_i}\ci\pi_U\bigr)\cdot\bigl(\frac{\pd f}{\pd y_j}\ci\pi_U\bigr)+\sum_{j=1}^nG_{ij}\cdot(x_j'-z_j'),
\label{sm3eq19}
\e
where $G_{ij}:S\ra\C$ are regular. Applying $\frac{\pd}{\pd z_j'}$ to \eq{sm3eq19}, restricting to $(t,x,\Phi(x))$ and using \eq{sm3eq2} yields 
\e
G_{ij}(t,x,\Phi(x))=\begin{cases} -1, &
i=j\in\{1,\ldots,m\}, \\
0, & \text{otherwise.} \end{cases}
\label{sm3eq20}
\e
From \eq{sm3eq17} and \eq{sm3eq19} we see that
\e
\ts\frac{\pd g}{\pd z_i}\ci\pi_V=\sum\limits_{j=1}^nH_{ij}\cdot
\bigl(\frac{\pd f}{\pd y_j}\ci\pi_U\bigr)+\sum_{j=1}^nG_{ij}\cdot
\bigl((1-t')x_j'+t'y_j'-z_j'\bigr),
\label{sm3eq21}
\e
where $H_{ij}=\frac{\pd y_j}{\pd x_i}\ci\pi_U+t'\sum_{k=1}^nG_{ik}F_{kj}$, so that from equations \eq{sm3eq8}, \eq{sm3eq9}, \eq{sm3eq18} and \eq{sm3eq20} we deduce that
\e
H_{ij}(t,x,\Phi(x))=\de_{ij}-(1-t)B_{ji}.
\label{sm3eq22}
\e
 
Combining \eq{sm3eq16}, \eq{sm3eq17} and \eq{sm3eq21} gives
\ea
&f\ci\pi_U-g\ci\pi_V-\ts\sum\limits_{i=1}^n
I_i\cdot\bigl((1-t')x_i'+t'y_i'-z_i'\bigr)
\nonumber\\
&\;\>-\ts\sum\limits_{i,j,k,l=1}^nt'(1\!-\!t')
\bigl[t'D_{ij}\!+\!(1\!-\!t')E_{ij} \bigr] F_{ik}F_{jl}\cdot
\bigl(\frac{\pd f}{\pd y_k}\ci\pi_U\bigr)\bigl(\frac{\pd f}{\pd
y_l}\ci\pi_U\bigr)
\nonumber\\
&\;\>\in\bigl((1-t')x_i'+t'y_i'-z_i',\;i=1,\ldots,n\bigr)^2,
\label{sm3eq23}\\
&\text{where}\quad I_i= \ts
\sum\limits_{j=1}^n
\Bigl[H_{ij}\!+\!2t'(1\!-\!t')\sum\limits_{k=1}^n
(D_{ik}\!-\!E_{ik})F_{kj}\Bigr]\cdot \bigl(\frac{\pd f}{\pd
y_j}\ci\pi_U\bigr).
\label{sm3eq24}
\ea

Consider the matrix of functions $[H_{ij}+\cdots]_{i,j=1}^n$
appearing in \eq{sm3eq24}. Equations \eq{sm3eq15} and \eq{sm3eq22}
imply that at $(t,x,\Phi(x))$ this reduces to
$(\de_{ij}-(1-t)B_{ji})$, which is invertible from above. Thus,
making $S$ smaller, we can suppose that $[H_{ij}+\cdots]_{i,j=1}^n$
in \eq{sm3eq24} is an invertible matrix on $S$. Write
$[J_{ij}]_{i,j=1}^n$ for the inverse matrix. Then we have
\ea
&\ts\sum\limits_{i,j,k,l=1}^nt'(1\!-\!t')
\bigl[t'D_{ij}\!+\!(1\!-\!t')E_{ij} \bigr] F_{ik}F_{jl}\cdot
\ts\bigl(\frac{\pd f}{\pd y_k}\ci\pi_U\bigr)\bigl(\frac{\pd f}{\pd
y_l}\ci\pi_U\bigr)
\nonumber\\
&\qquad=t'(1-t')\ts\sum\limits_{i,j=1}^nK_{ij}\cdot I_iI_j,\quad
\text{where}
\label{sm3eq25}\\
&K_{ij}=\ts\sum\limits_{k,l,p,q=1}^nt'(1\!-\!t')
\bigl[t'D_{kl}\!+\!(1\!-\!t')E_{kl} \bigr] F_{kp}F_{lq}J_{pi}J_{qj}.
\nonumber
\ea
Using \eq{sm3eq9}, \eq{sm3eq10}, \eq{sm3eq15} and \eq{sm3eq18} we
find that
\e
K_{ij}(t,x,\Phi(x))=0\quad\text{for all $t\in\C$.}
\label{sm3eq26}
\e

Combining \eq{sm3eq23} and \eq{sm3eq25}, making $S$ smaller if
necessary we may write
\ea
&f\ci\pi_U\!-\!g\ci\pi_V\!=\!\ts\sum\limits_{i=1}^n
I_i\cdot\bigl((1\!-\!t')x_i'\!+\!t'y_i'\!-\!z_i'\bigr)
+t'(1\!-\!t')\ts\sum\limits_{i,j=1}^nK_{ij}\cdot I_iI_j
\nonumber\\
&\quad+\ts\sum\limits_{i,j=1}^nL_{ij}\cdot\bigl((1-t')x_i'+
t'y_i'-z_i'\bigr)\bigl((1-t')x_j'+t'y_j'-z_j'\bigr),
\label{sm3eq27}
\ea
for regular $L_{ij}:S\ra\C$ for $i,j=1,\ldots,n$.

Write $(r_{ij})_{i,j=1}^n$ for the coordinates on $\C^{n^2}$. Let
$T$ be a Zariski open neighbourhood of
$\C\t\bigl(x,\Phi(x),(0)_{i,j=1}^n\bigr)$ in $S\t\C^{n^2}$ to be
chosen shortly, and let $W$ be the closed $\C$-subscheme of $T$
defined by
\ea
W=\bigl\{&\bigl(t,u,v,(r_{ij})_{i,j=1}^n\bigr)\in T\subseteq
S\t\C^{n^2}\subseteq\C\t U\t V\t\C^{n^2}:
\nonumber\\
&\bigl((1\!-\!t')x_i'\!+\!t'y_i'\!-\!z_i'\bigr)
(t,u,v)=\ts\sum\limits_{j=1}^n r_{ij}\cdot I_j(t,u,v), \;\>
i\!=\!1,\ldots,n,
\label{sm3eq28}\\
&r_{ij}+t(1-t)K_{ij}(t,u,v)+\ts\sum\limits_{k,l=1}^nL_{kl}(t,u,v)
\cdot r_{ki}r_{lj} =0,\;\> i,j=1,\ldots,n\bigr\}.
\nonumber
\ea
Define $\C$-scheme morphisms $\pi_\C:W\ra\C$, $\pi_U:W\ra U$,
$\pi_V:W\ra V$ to map $\bigl(t,u,v,(r_{ij})_{i,j=1}^n\bigr)$ to
$t,u,v$, respectively.

At $(t,x,\Phi(x))\in S$ for $t\in\C$ we have $x_i'=y_i'=z_i'=0$, and
$I_i=0$ by \eq{sm3eq24} as $\frac{\pd f}{\pd y_j}\big\vert_x=0$, and
$K_{ij}=0$ by \eq{sm3eq26}. Hence
$\bigl(t,x,\Phi(x),(0)_{i,j=1}^n\bigr)$ satisfies the equations of
\eq{sm3eq28}, and lies in $W$. Define $\io:\C\ra W$ by
\e
\io(t)=\bigl(t,x,\Phi(x),(0)_{i,j=1}^n\bigr).
\label{sm3eq29}
\e

Now $T\subseteq \C\t U\t V\t\C^{n^2}$ is smooth of dimension
$1+n+n+n^2$, and in \eq{sm3eq28} we impose $n+n^2$ equations, so the
expected dimension of $W$ is $(1+2n+n^2)-(n+n^2)=n+1$. The
linearizations of the $n+n^2$ equations in \eq{sm3eq28} at
$\bigl(t,u,v, (r_{ij})_{i,j=1}^n\bigr)=\bigl(t,x,
\Phi(x),(0)_{i,j=1}^n\bigr)=\io(t)$ are
\e
\begin{split}
\d y_i\vert_x(\de u)-\d z_i\vert_{\Phi(x)}(\de v)=0,\quad
i=1,\ldots,n,\\
\de r_{ij}+\d K_{ij}\vert_{(t,x,\Phi(x))}(\de t\op\de u\op\de v)=0,
\quad i,j=1,\ldots,n,
\end{split}
\label{sm3eq30}
\e
for $\de t\in T_t\C$, $\de u\in T_xU$, $\de v\in T_{\Phi(x)}V$, and
$(\de r_{ij})_{i,j=1}^n\in T_{(0)_{i,j=1}^n}\C^{n^2}$, where we have
used $\d x_i'=\d y_i'$ and $I_j=K_{ij}=0$ at $(t,x,\Phi(x))$. As $\d
y_1\vert_x,\ldots,\d y_n\vert_x$ are a basis for $T_x^*U$, equations
\eq{sm3eq30} are transverse, so $W$ is smooth of dimension $n+1$
near $\io(t)$. Hence, taking $T$ small enough, we can suppose $W$ is
smooth.

It remains to prove Proposition \ref{sm3prop1}(a)--(e). Part (a) is
immediate from \eq{sm3eq29}. For (b), the vector space of solutions
$\bigl(\de t,\de u,\de v,(\de r_{ij})_{i,j=1}^n\bigr)$ to
\eq{sm3eq30} is $T_{\io(t)}W$, where
$\d(\pi_\C\t\pi_U)\vert_{\io(t)}:T_{\io(t)}W\ra T_{(t,x)}(\C\!\t
\!U)$ and $\d(\pi_\C\t\pi_V)\vert_{\io(t)}:T_{\io(t)}W\ra
T_{(t,\Phi(x))} (\C\!\t\! V)$ map $\bigl(\de t,\de u,\de v,(\de
r_{ij})_{i,j=1}^n\bigr)$ to $(\de t,\de u)$ and $(\de t,\de v)$. By
\eq{sm3eq30}, these are isomorphisms, so $\pi_\C\t\pi_U$ and
$\pi_\C\t\pi_V$ are \'etale near $\io(\C)$. Making $T,W$ smaller, we
can suppose $\pi_\C\t\pi_U$ and $\pi_\C\t\pi_V$ are \'etale.

For (c), we have
\begin{align*}
\bigl(f\ci&\pi_W\!-\!g\ci\pi_U\bigr)\bigl(t,u,v,(r_{ij})_{i,j=1}^n\bigr)
\!=\!f(u)\!-\!g(v)\!=\!(f\ci\pi_U\!-\!g\ci\pi_V)(t,u,v)\\
&=\ts\sum\limits_{i=1}^n I_i\cdot\bigl((1-t')x_i'+t'y_i'
-z_i'\bigr)+t'(1-t')\ts\sum\limits_{i,j=1}^nK_{ij}I_iI_j\\
&+\ts\sum\limits_{i,j=1}^nL_{ij}\cdot\bigl((1-t')x_i'+t'y_i'-z_i'\bigr)
\bigl((1-t')x_j'+t'y_j'-z_j'\bigr)
\\
&=\ts\sum\limits_{i=1}^n I_i\cdot
\Bigl(\ts\sum\limits_{j=1}^n r_{ij}\cdot I_j\Bigr)
+t'(1-t')\ts\sum\limits_{i,j=1}^nK_{ij}I_iI_j
\\
&+\ts\sum\limits_{i,j=1}^nL_{ij}\cdot
\Bigl(\ts\sum\limits_{k=1}^n
r_{ik}\cdot I_k\Bigr)\Bigl(\ts\sum\limits_{l=1}^n r_{jl}\cdot I_l\Bigr)
\\
&=\ts\sum\limits_{i,j=1}^n I_iI_j\cdot
\Bigl[r_{ij}+t'(1-t')K_{ij}+\ts\sum\limits_{k,l=1}^nL_{kl}
\cdot r_{ki}r_{lj}\Bigr]=0,
\end{align*}
using \eq{sm3eq27} in the third step, the first equation of
\eq{sm3eq28} in the fourth, rearranging and exchanging labels $i,k$
and $j,l$ in the fifth, and the second equation of \eq{sm3eq28} in
the sixth. Hence $f\ci\pi_U-g\ci\pi_V=0:W\ra\C$, proving (c).

For (d), from \eq{sm3eq28} we can show that $\bigl(\C\t(\id\t\Phi)
(X)\t\C^{n^2}\bigr)\cap W$ is open and closed in $Z=\Crit(h)$, and
contains $\io(\C)$. So making $T,W$ smaller we can take
$Z=\bigl(\C\t(\id\t\Phi)(X)\t\C^{n^2}\bigr)\cap W$, and then (d)
follows as $\Phi\vert_X=\Psi\vert_X$.

For (e), observe that when $t=0$ in \eq{sm3eq28}, the second
equation reduces to $r_{ij}=0$ near $\io(\C)$ as
$t(1-t)K_{ij}(t,u,v)=0$, so making $T,W$ smaller gives
\begin{align*}
W_0&=\bigl\{\bigl(0,u,v,(0)_{i,j=1}^n\bigr)\in T:
\bigl(x_i'-\!z_i'\bigr) (0,u,v)=0,\; i=1,\ldots,n\bigr\}\\
&=\bigl\{\bigl(0,u,v,(0)_{i,j=1}^n\bigr)\in T: v=\Phi(u)\bigr\}.
\end{align*}
Hence $\Phi\ci\pi_W\vert_{W_0}=\pi_U\vert_{W_0}$. Similarly, when
$t=1$ we have
\begin{align*}
W_1&=\bigl\{\bigl(1,u,v,(0)_{i,j=1}^n\bigr)\in T:
\bigl(y_i'-\!z_i'\bigr) (0,u,v)=0,\; i=1,\ldots,n\bigr\}\\
&=\bigl\{\bigl(0,u,v,(0)_{i,j=1}^n\bigr)\in T: v=\Psi(u)\bigr\},
\end{align*}
so that $\Psi\ci\pi_W\vert_{W_1}=\pi_U\vert_{W_1}$. This proves (e),
and Proposition~\ref{sm3prop1}.

\subsection[Theorem \ref{sm3thm}(a): $\det(\d\Psi\vert_{X^\red}^{-1}
\ci\d\Phi\vert_{X^\red})=\pm 1$]{Part (a):
$\det(\d\Psi\vert_{X^\red}^{-1}\ci \d\Phi\vert_{X^\red})=\pm 1$}
\label{sm32}

We work in the situation of Theorem \ref{sm3thm}. For each $x\in
X\subseteq U$, consider the diagram of linear maps of vector spaces:
\e
\begin{gathered}
\xymatrix@C=20pt@R=25pt{ 0\ar[r] & T_xX
\ar[d]_{\d(\Phi\vert_X)\vert_x} \ar[r] & T_xU \ar[d]_{\d\Phi\vert_x
} \ar[rr]_{\Hess_xf} && T_x^*U \ar[r]
\ar[d]^{(\d\Phi\vert_x^{-1})^*} & T_x^*X
\ar[d]^{(\d(\Phi\vert_X)\vert_x^{-1})^*} \ar[r] & 0 \\
0\ar[r] & T_{\Phi(x)}Y \ar[r] & T_{\Phi(x)}V
\ar[rr]^{\Hess_{\Phi(x)}g} && T_{\Phi(x)}^*V \ar[r] & T_{\Phi(x)}^*Y
\ar[r] & 0, }
\end{gathered}
\label{sm3eq31}
\e
where $T_xX$ is the Zariski tangent space of $X$, and
$\Hess_xf=(\pd^2f)\vert_x$ the Hessian of $f$ at $x$. The rows of
\eq{sm3eq31} are exact, and the columns isomorphisms. The outer
squares of \eq{sm3eq31} clearly commute. We can show the central
square commutes by taking second derivatives of $f=g\ci\Phi$ to get
$\pd^2 f\vert_x=\pd^2 g\vert_{\Phi(x)}\ci(\d\Phi\vert_x
\ot\d\Phi\vert_x)$, and composing with $\id\ot\d\Phi\vert_x^{-1}$.
Thus \eq{sm3eq31} is commutative.

There is also an analogue of \eq{sm3eq31} for $\Psi$. Since
$\Psi(x)=\Phi(x)$, we may compose the columns of \eq{sm3eq31} for
$\Phi$ with the inverses of the columns of \eq{sm3eq31} for $\Psi$
to get a commutative diagram
\e
\begin{gathered}
\xymatrix@C=27pt@R=40pt{ 0\ar[r] & T_xX \ar[d]_{\begin{subarray}{l}
\d(\Psi\vert_X)\vert^{-1}_x\ci \\  \d(\Phi\vert_X)\vert_x\\
=\id_{T_xX}\end{subarray}} \ar[r] & T_xU \ar[d]_{\begin{subarray}{l}
\d\Psi\vert^{-1}_x\ci \\
\d\Phi\vert_x\end{subarray} } \ar[rr]_{\Hess_xf} && T_x^*U \ar[r]
\ar[d]^{\begin{subarray}{l} \d\Psi\vert_x^*\ci \\
(\d\Phi\vert_x^{-1})^*\end{subarray}} & T_x^*X
\ar[d]^{\begin{subarray}{l} (\d(\Psi\vert_X)\vert_x)^*\ci \\
(\d(\Phi\vert_X)\vert_x^{-1})^* \\ =\id_{T_x^*X}\end{subarray}}
\ar[r] & 0 \\
0\ar[r] & T_xX \ar[r] & T_xU \ar[rr]^{\Hess_xf} && T_x^*U \ar[r] &
T_x^*X \ar[r] & 0, }
\end{gathered}
\label{sm3eq32}
\e
where the outer morphisms are identities
as~$\Phi\vert_X=\Psi\vert_X$.

Choose a complementary vector subspace $N_x$ to $T_xX$ in $T_xU$,
which we think of as the normal to $X$ in $U$ at $x$, so that
$T_xU=T_xX\op N_x$. Write $\Hess_x'f$ for the restriction of
$\Hess_xf$ to a symmetric bilinear form on $N_x$. Since
$T_xX=\Ker(\Hess_xf)$, we see that $\Hess_{\smash{x}}'f$ is a
nondegenerate symmetric bilinear form on $N_x$. We may write
equation \eq{sm3eq32} as
\ea
\nonumber\\[-17pt]
\begin{gathered}
\xymatrix@C=11.3pt@R=68pt{ 0 \ar[r] & T_xX \ar[d]^\id
\ar[rr]_{\begin{pmatrix}\st \id \\ \st 0 \end{pmatrix}} &&
\raisebox{-6pt}{${\begin{subarray}{l}  \ts T_xX\op
\\[2pt] \ts \;\; N_x \end{subarray}}$}
\ar[d]^{
\begin{subarray}{l}
\d\Psi\vert^{-1}_x \\
\ci\d\Phi\vert_x=\end{subarray}
\begin{pmatrix}\st \id & \st A \\
\st 0 & \st B \end{pmatrix}}
\ar[rrrr]_{\begin{pmatrix}\st 0 & \st 0 \\
\st 0 & \st \Hess_{\smash{x}}'f \end{pmatrix}} &&&&
\raisebox{-7pt}{${\begin{subarray}{l} \ts T_x^*X\op
\\[1.5pt] \ts \;\; N_x^* \end{subarray}}$}
\ar[d]^(0.45){\begin{subarray}{l}\st \d\Psi\vert_x^*\ci
(\d\Phi\vert_x^{-1})^*=\\
\ts\begin{pmatrix}\st \id & \st -AB^{-1} \\
\st 0 & \st B^{-1} \end{pmatrix}\end{subarray}}
\ar[rrr]_{\begin{pmatrix}\st \id & \st 0
\end{pmatrix}} &&& T_x^*X \ar[d]^\id \ar[r] & 0\phantom{.}
\\
0 \ar[r] & T_xX \ar[rr]^{\begin{pmatrix}\st \id \\
\st 0 \end{pmatrix}} &&
\raisebox{6pt}{${\begin{subarray}{l} \ts T_xX\op
\\[2pt] \ts \;\; N_x \end{subarray}}$}
\ar[rrrr]^{\begin{pmatrix}\st 0 & \st 0 \\
\st 0 & \st \Hess_{\smash{x}}'f \end{pmatrix}} &&&&
\raisebox{6pt}{${\begin{subarray}{l}  \ts T_x^*X\op
\\[1.5pt] \ts \;\; N_x^* \end{subarray}}$}
\ar[rrr]^{\begin{pmatrix}\st \id & \st 0
\end{pmatrix}} &&& T_x^*X \ar[r] & 0, }
\end{gathered}
\label{sm3eq33}\\[-17pt]
\nonumber
\ea
for some linear $A:N_x\ra T_xX$ and $B:N_x\ra N_x$. Then
\eq{sm3eq33} commuting implies that $B$ preserves the nondegenerate
symmetric bilinear form $\Hess_{\smash{x}}'f$ on $N_x$, and $\det
B=\pm 1$. So $\det\bigl(\d\Psi\vert^{-1}_x\ci\d\Phi\vert_x\bigr)
\!=\!\det\bigl(\begin{smallmatrix} \id & A \\ 0 & B
\end{smallmatrix}\bigr)\!=\!\det B\!=\!\pm 1$ for~$x\in X$.

Thus, as a map of topological spaces,
$\det\bigl(\d\Psi\vert_{X^\red}^{-1}\ci\d\Phi
\vert_{X^\red}\bigr):X^\red\ra\C\sm\{0\}$ actually maps
$X^\red\ra\{\pm 1\}$. Since it is continuous, it is locally
constant. Now if $f,g:Y\ra Z$ are morphisms of $\C$-schemes with $Y$
reduced, then $f=g$ if and only if $f(y)=g(y)$ for each point $y\in
Y$. Applying this to compare
$\det\bigl(\d\Psi\vert_{X^\red}^{-1}\ci\d\Phi\vert_{X^\red}\bigr):
X^\red\ra\C\sm\{0\}$ locally with the constant maps $1$ or $-1$ on
$X^\red$ shows that $\det\bigl(\d\Psi\vert_{X^\red}^{-1}\ci
\d\Phi\vert_{X^\red}\bigr)$ is a locally constant map
$X^\red\ra\{\pm 1\}\subset\C\sm\{0\}$ as a $\C$-scheme morphism.
This proves Theorem~\ref{sm3thm}(a).

\subsection[Theorem \ref{sm3thm}(b): $\PV_\Phi=\det\bigl(\d\Psi
\vert_{X^\red}^{-1}\ci \d\Phi\vert_{X^\red}\bigr)\cdot
\PV_\Psi$]{Part (b): $\PV_\Phi=\det\bigl(\d\Psi\vert_{X^\red}^{-1}
\ci\d\Phi \vert_{X^\red}\bigr)\cdot \PV_\Psi$}
\label{sm33}

For Theorem \ref{sm3thm}(b), we begin with the following
proposition.

\begin{prop} Let\/ $U,V,\Phi,\Psi,f,g,X,Y$ be as in Theorem\/
{\rm\ref{sm3thm},} and suppose $x\in X$ with\/ $\bigl(\d\Psi
\vert_x^{-1}\ci \d\Phi\vert_x - \id_{T_xU}\bigr){}^2=0$. Then there
exists a Zariski open neighbourhood\/ $X'$ of\/ $x$ in $X$ such
that\/~$\PV_\Phi\vert_{X'}=\PV_\Psi\vert_{X'}$.
\label{sm3prop2}
\end{prop}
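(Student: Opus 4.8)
The plan is to deduce the statement from the geometric construction of Proposition \ref{sm3prop1} together with the ``local constancy'' result Proposition \ref{sm2prop}. First I would apply Proposition \ref{sm3prop1} to $U,V,\Phi,\Psi,f,g,X,Y,x$ --- its hypothesis $\bigl(\d\Psi\vert_x^{-1}\ci\d\Phi\vert_x-\id_{T_xU}\bigr){}^2=0$ is exactly what is assumed here --- obtaining a smooth $\C$-scheme $W$ with morphisms $\pi_\C:W\ra\C$, $\pi_U:W\ra U$, $\pi_V:W\ra V$, $\io:\C\ra W$, a regular function $h=f\ci\pi_U=g\ci\pi_V:W\ra\C$, and $Z=\Crit(h)$, satisfying (a)--(e) there. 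Write $W_t=\pi_\C^{-1}(t)$ and $Z_t=Z\cap W_t=\Crit(h\vert_{W_t})$ for $t\in\C$, and $j_t:Z_t\hookra Z$ for the inclusion. By Proposition \ref{sm3prop1}(c), $(\pi_\C\t\pi_U)\vert_Z:Z\ra\C\t X$ is \'etale, and by (a) we have $\io(\C)\subseteq Z$ with $\pi_\C\ci\io(t)=t$ and $\pi_U\ci\io(t)=x$ for all $t\in\C$, so $Z,\pi_\C\vert_Z,\pi_U\vert_Z,\io,x$ satisfy the hypotheses of Proposition \ref{sm2prop} (with ``$X$''$=X$ there).

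Set $\cP^\bu=\PV_{U,f}^\bu$ and $\cQ^\bu=\Phi\vert_X^*(\PV_{V,g}^\bu)=\Psi\vert_X^*(\PV_{V,g}^\bu)$; these agree since $\Phi\vert_X=\Psi\vert_X$, and lie in $\Perv(X)$ by Theorem \ref{sm2thm2}(d) as $\Phi\vert_X:X\ra Y$ is \'etale. Then $\PV_\Phi,\PV_\Psi:\cP^\bu\ra\cQ^\bu$ by Definition \ref{sm2def6}. By Proposition \ref{sm2prop} it now suffices to produce a morphism $\ga:\pi_U\vert_Z^*[1](\cP^\bu)\ra\pi_U\vert_Z^*[1](\cQ^\bu)$ in $\Perv(Z)$ with $\pi_U\vert_{Z_0}^*(\PV_\Phi)=j_0^*[-1](\ga)$ and $\pi_U\vert_{Z_1}^*(\PV_\Psi)=j_1^*[-1](\ga)$, since the conclusion of Proposition \ref{sm2prop} is precisely a Zariski open $X'\ni x$ in $X$ with $\PV_\Phi\vert_{X'}=\PV_\Psi\vert_{X'}$.

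I would build $\ga$ as a composite of three isomorphisms in $\Perv(Z)$: (i) the ``smooth'' base-change isomorphism $a:\pi_U\vert_Z^*[1](\PV_{U,f}^\bu)\,{\buildrel\cong\over\longra}\,\PV_{W,h}^\bu$, obtained by applying the isomorphisms \eq{sm2eq5} (for $\pi_U$, which is smooth of relative dimension $1$ with $h=f\ci\pi_U$) to $A_U[\dim U]$ together with $\pi_U^*[1](A_U[\dim U])\cong A_W[\dim W]$; (ii) the analogous isomorphism $b:\PV_{W,h}^\bu\,{\buildrel\cong\over\longra}\,\pi_V\vert_Z^*[1](\PV_{V,g}^\bu)$ (for $\pi_V$, using $h=g\ci\pi_V$); and (iii) the identity $\pi_V\vert_Z^*[1](\PV_{V,g}^\bu)=\pi_U\vert_Z^*[1]\bigl(\Phi\vert_X^*(\PV_{V,g}^\bu)\bigr)=\pi_U\vert_Z^*[1](\cQ^\bu)$, valid because $\pi_V\vert_Z=\Phi\vert_X\ci\pi_U\vert_Z$ by Proposition \ref{sm3prop1}(d). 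To check the restriction identities: by item (xi) of \S\ref{sm25}, $j_t^*[-1]\bigl(\pi_U\vert_Z^*[1](-)\bigr)=\pi_U\vert_{Z_t}^*(-)$ on the objects and morphisms at hand; and $\pi_U\vert_{W_t}:W_t\ra U$, $\pi_V\vert_{W_t}:W_t\ra V$ are \'etale by Proposition \ref{sm3prop1}(b), with $h\vert_{W_t}=f\ci\pi_U\vert_{W_t}=g\ci\pi_V\vert_{W_t}$. Using the naturality of the base-change transformations \eq{sm2eq4}--\eq{sm2eq5} for $\phi^p$ under the further base change to $W_t$, one finds $j_t^*[-1](a)=(\PV_{\pi_U\vert_{W_t}})^{-1}$ and $j_t^*[-1](b)=\PV_{\pi_V\vert_{W_t}}$ in the sense of Definition \ref{sm2def6}, while (iii) restricts to the identity coming from $\pi_V\vert_{W_t}=\Phi\vert_X\ci\pi_U\vert_{W_t}$ on $Z_t$; hence $j_t^*[-1](\ga)=\PV_{\pi_V\vert_{W_t}}\ci(\PV_{\pi_U\vert_{W_t}})^{-1}$. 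Finally Proposition \ref{sm3prop1}(e) gives $\pi_V\vert_{W_0}=\Phi\ci\pi_U\vert_{W_0}$ and $\pi_V\vert_{W_1}=\Psi\ci\pi_U\vert_{W_1}$, so the functoriality \eq{sm2eq18} yields $\PV_{\pi_V\vert_{W_0}}=\pi_U\vert_{Z_0}^*(\PV_\Phi)\ci\PV_{\pi_U\vert_{W_0}}$ and $\PV_{\pi_V\vert_{W_1}}=\pi_U\vert_{Z_1}^*(\PV_\Psi)\ci\PV_{\pi_U\vert_{W_1}}$, that is, $j_0^*[-1](\ga)=\pi_U\vert_{Z_0}^*(\PV_\Phi)$ and $j_1^*[-1](\ga)=\pi_U\vert_{Z_1}^*(\PV_\Psi)$, as required.

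I expect the main obstacle to be the compatibility check in the third paragraph: one must verify that the smooth base-change isomorphisms $a,b$ restrict, along the (neither smooth nor \'etale) closed embeddings $j_t$, to the \'etale isomorphisms $\PV_{\pi_U\vert_{W_t}},\PV_{\pi_V\vert_{W_t}}$ of Definition \ref{sm2def6}. This is a diagram chase with the base-change natural transformations of the vanishing cycle functor (Theorem \ref{sm2thm4}(iii)) composed with the extra base change to $W_t$, keeping careful track of the shifts $[\pm 1]$ and of the identifications $\pi_U^*[1](A_U[\dim U])\cong A_W[\dim W]$ --- routine but fiddly. Since only the properties collected in \S\ref{sm25} are used, the same argument applies verbatim in the settings of \S\ref{sm26}--\S\ref{sm210}.
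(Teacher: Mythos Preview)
Your proposal is correct and follows the same overall skeleton as the paper: apply Proposition~\ref{sm3prop1}, then feed the output into Proposition~\ref{sm2prop} with $\cP^\bu=\PV_{U,f}^\bu$, $\cQ^\bu=\Phi\vert_X^*(\PV_{V,g}^\bu)$, $\al=\PV_\Phi$, $\be=\PV_\Psi$, and a suitable $\ga$ on $Z$. The difference lies in how $\ga$ is built.

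The paper constructs $\ga$ via the \emph{\'etale} morphisms $\pi_\C\t\pi_U:W\ra\C\t U$ and $\pi_\C\t\pi_V:W\ra\C\t V$, using the isomorphisms $\PV_{\pi_\C\t\pi_U},\PV_{\pi_\C\t\pi_V}$ of Definition~\ref{sm2def6} together with the Thom--Sebastiani isomorphism $\TS_{\C,0,U,f}$ (resp.\ $\TS_{\C,0,V,g}$) and the identification $\PV_{\C,0}^\bu\cong A_\C[1]$; see the diagram~\eq{sm3eq35}. Your route instead builds $\ga$ directly from the smooth base-change isomorphism \eq{sm2eq5} for the \emph{smooth} morphisms $\pi_U,\pi_V:W\ra U,V$ of relative dimension~$1$, bypassing Thom--Sebastiani entirely. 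This is more economical (you do not need property~(xii) of \S\ref{sm25}), but it shifts the burden to the compatibility you flag as ``the main obstacle'': that $j_t^*[-1]$ applied to the smooth base-change for $\pi_U$ agrees with the \'etale base-change $\PV_{\pi_U\vert_{W_t}}$ of Definition~\ref{sm2def6}. In the paper's version the analogous compatibility is between two \'etale base-changes (for $\pi_\C\t\pi_U$ and its restriction to $W_0$), which is handled by the functoriality~\eq{sm2eq18} already established. Both checks are routine, but the paper's stays entirely within the $\PV_\Phi$-formalism it has set up, whereas yours requires unpacking the naturality of \eq{sm2eq5} one level further.
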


\begin{proof} Apply Proposition \ref{sm3prop1} to get
$W,\pi_\C,\pi_U,\pi_V,\io,h,Z$. Then apply Proposition \ref{sm2prop}
with $Z,X,x,\pi_\C\vert_Z,\pi_U\vert_Z,\io,\PV_{U,f}^\bu,
\Phi\vert_X^*\bigl(\PV_{V,g}^\bu\bigr),\PV_\Phi,\PV_\Psi$ in place
of $W,X,x,\pi_\C,\pi_X,\io,\cP^\bu, \cQ^\bu,\al,\be$, respectively,
and with $\ga$ defined by the commuting diagram of isomorphisms:
\e
\begin{gathered}
\xymatrix@C=130pt@R=13pt{ *+[r]{\PV_{W,h}^\bu}
\ar[d]^{\PV_{\pi_\C\t\pi_U}} \ar[r]_(0.45){\PV_{\pi_\C\t\pi_V}} &
*+[l]{(\pi_\C\!\t\!\pi_V)\vert_Z^*
\bigl(\PV_{\C\t V,0\boxplus g}^\bu\bigr)}
\ar@<-2ex>[d]_{(\pi_\C\t\pi_V)\vert_Z^*(\TS_{\C,0,V,g})} \\
*+[r]{(\pi_\C\t\pi_U)\vert_Z^*
\bigl(\PV_{\C\t U,0\boxplus f}^\bu\bigr)} \ar[d]^{
(\pi_\C\t\pi_V)\vert_Z^*(\TS_{\C,0,U,f})} &
*+[l]{(\pi_\C\t\pi_V)\vert_Z^*
\bigl(\PV_{\C,0}^\bu\boxtL\PV_{V,g}^\bu\bigr)} \ar@<-2ex>[d]_{\de'} \\
*+[r]{(\pi_\C\t\pi_U)\vert_Z^*
\bigl(\PV_{\C,0}^\bu\boxtL\PV_{U,f}^\bu\bigr)} \ar[d]^\de &
*+[l]{\bigl[\pi_\C\vert_Z^*\bigl(A_\C[1]\bigr)\bigr]\otL\bigl[\pi_V\vert_Z^*
\bigl(\PV_{V,g}^\bu\bigr)\bigr]} \ar@<-2ex>[d]_{\ep'} \\
*+[r]{\bigl[\pi_\C\vert_Z^*\bigl(A_\C[1]\bigr)\bigr]\otL\bigl[\pi_U\vert_Z^*
\bigl(\PV_{U,f}^\bu\bigr)\bigr]} \ar[d]^\ep &
*+[l]{\pi_V\vert_Z^*[1]\bigl(\PV_{V,g}^\bu\bigr)}
\ar@<-2ex>@{=}[d] \\
*+[r]{\pi_U\vert_Z^*[1]\bigl(\PV_{U,f}^\bu\bigr)}
\ar[r]^(0.45)\ga &
*+[l]{\pi_U\vert_Z^*[1]\bigl(\Phi\vert_X^*(\PV_{V,g}^\bu)\bigr).\!\!{}} }
\end{gathered}
\label{sm3eq34}
\e
Here $\TS_{\C,0,U,f},\TS_{\C,0,V,g}$ are as in \eq{sm2eq8},
$\de,\de'$ come from $\PV_{\C,0}\cong A_\C[1]$, and $\ep,\ep'$ from
$\pi_\C\vert_Z^*(A_\C)\cong A_Z$ and $A_Z\smash{\otL}\cP^\bu\cong
\cP^\bu$ for $\cP^\bu\in\Perv(Z)$.

Then the hypothesis $\pi_X\vert_{W_0}^*(\al)=j_0^*[-1](\ga)$  in
Proposition \ref{sm2prop} follows from comparing $j_0^*[-1]$ applied
to \eq{sm3eq34} with the commuting diagram
\begin{equation*}
\xymatrix@C=170pt@R=17pt{ *+[r]{j_0^*[-1]\bigl(\PV_{W,h}^\bu\bigr)}
\ar[d]^\cong \ar[r]_(0.35){j_0^*[-1](\PV_{\pi_\C\t\pi_V})} &
*+[l]{j_0^*[-1]\!\ci\!(\pi_\C\!\t\!\pi_V)\vert_Z^*
\bigl(\PV_{\C\t V,0\boxplus g}^\bu\bigr)}
\ar@<-2ex>[d]_\cong \\
*+[r]{\PV_{W_0,h\vert_{W_0}}^\bu}
\ar[d]^{\PV_{\pi_U\vert_{W_0}}}
\ar[r]^(0.35){\PV_{\pi_V\vert_{W_0}}} &
*+[l]{\pi_V\vert_{Z\cap W_0}^*\bigl(\PV_{V,g}^\bu\bigr)}
\ar@<-2ex>@{=}[d] \\
*+[r]{\pi_V\vert_{Z\cap W_0}^*\bigl(\PV_{U,f}^\bu\bigr)}
\ar[r]^(0.4){\pi_U\vert_{Z\cap W_0}^*(\PV_\Phi)} &
*+[l]{\pi_U\vert_{Z\cap
W_0}^*\ci\Phi\vert_X^*\bigl(\PV_{V,g}^\bu\bigr),\!\!{}} }
\end{equation*}
where $j_0:Z\cap W_0\hookra Z$ is the inclusion, and the bottom
square commutes by Proposition \ref{sm3prop1}(e) and \eq{sm2eq18}.
Similarly $\pi_X\vert_{W_1}^*(\be)=j_1^*[-1](\ga)$. Hence
Proposition \ref{sm2prop} gives Zariski open $x\in X'\subseteq X$
with~$\PV_\Phi\vert_{X'}=\PV_\Psi\vert_{X'}$.
\end{proof}

Now to prove Theorem \ref{sm3thm}(b), let $x\in X$ be arbitrary. As
in \S\ref{sm32}, we can choose a splitting $T_xU=T_xX\op N_x$ such
that
\e
\d\Psi\vert^{-1}_x \ci\d\Phi\vert_x=\begin{pmatrix} \id &  A \\
0 & B \end{pmatrix}: \begin{matrix} T_xX\op \\ N_x\end{matrix}
\longra\begin{matrix} T_xX\op \\ N_x\end{matrix}
\label{sm3eq35}
\e
for linear $A:N_x\ra T_xX$ and $B:N_x\ra N_x$, where $B$ preserves
the nondegenerate symmetric bilinear form $\Hess_{\smash{x}}'f$ on
$N_x$.

Choose a Zariski open neighbourhood $U'$ of $x$ in $U$ and a
splitting $TU'=E\op F$ for algebraic vector subbundles $E,F\subseteq
TU$ with $E\vert_x=T_xX$ and $F\vert_x=N_x$. Then $\d f\vert_{U'}=
\al\op\be$ for unique $\al\in H^0(E)$ and $\be\in H^0(F)$, and
$X\cap U'$ is defined by $\al=\be=0$. Since $\Hess_xf=\pd(\d
f)\vert_x$ is nondegenerate on $N_x$, we see that
$\nabla\be\vert_x:T_xU\ra F\vert_x$ induces an isomorphism $N_x\ra
F\vert_x$, so $\nabla\be\vert_x$ is surjective. Therefore
$S:=\be^{-1}(0)$ is a smooth $\C$-subscheme of $U'$ near $x$, and
making $U'$ smaller, we can suppose $S$ is smooth. Set
$e=f\vert_S:S\ra\C$. Then the isomorphism $T^*S\cong E\vert_S$
identifies $\d e\in H^0(T^*S)$ with $\al\vert_S\in H^0(E\vert_S)$.
Hence $\Crit(e:S\ra\C)=\Crit(f\vert_{U'}:U'\ra\C)=X\cap U'$, as
$\C$-subschemes of~$U$.

By \cite[Prop.~2.23]{Joyc} quoted in Theorem \ref{sm5thm1}(i) below,
there exist a smooth $\C$-scheme $R$, morphisms $\ga:R\!\ra\! U'$,
$\de:R\!\ra\! S$, $\ep:R\!\ra\!\C^n$ where $n=\dim U'-\dim S$, and
$r\in R$, such that $\ga(r)=x$, $\ga\vert_Q=\de\vert_Q$,
$f\ci\ga=e\ci\de+(z_1^2+\cdots+z_n^2)\ci\ep:R\ra\C$, and the
following commutes with horizontal morphisms \'etale:
\e
\begin{gathered}
\xymatrix@C=80pt@R=13pt{ *+[r]{S} \ar[d]^\subset & Q:=\ga^{-1}(S)
\ar[l]^{\ga\vert_Q} \ar[r]_{\ga\vert_Q} \ar[d]^\subset
& *+[l]{S} \ar[d]_{\id_S \t 0} \\
*+[r]{U'} & R  \ar[r]^{\de\t\ep} \ar[l]_\ga &
*+[l]{S\t\C^n.\!\!{}} }
\end{gathered}
\label{sm3eq36}
\e
Taking derivatives at $r\in Q\subseteq R$ in \eq{sm3eq36} gives a
commutative diagram
\begin{equation*}
\xymatrix@C=80pt@R=13pt{ *+[r]{T_xX=T_xS} \ar[d]^\subset & T_rQ
\ar[l]^(0.3){\d(\ga\vert_Q)\vert_r}_(0.3)\cong
\ar[r]_(0.3){\d(\ga\vert_Q)\vert_r}^(0.3)\cong
\ar[d]^\subset & *+[l]{T_xX=T_xS} \ar[d]_{\id \t 0} \\
*+[r]{T_xX\op N_x=T_xU} & T_rR \ar[l]_(0.3){\d\ga\vert_r}^(0.3)\cong
\ar[r]^(0.3){\d(\de\t\ep)\vert_r}_(0.3)\cong &
*+[l]{T_xX\op T_0\C^n.\!{}} }
\end{equation*}

Therefore $\d(\de\t\ep)\vert_r\ci\d\ga\vert_r^{-1}:T_xX\op N_x\ra
T_xX\op T_0\C^n$ is the identity on $T_xX$, and induces an
isomorphism $N_x\ra T_0\C^n$, which as
$f\ci\ga=e\ci\de+(z_1^2+\cdots+z_n^2)\ci\ep$ identifies
$\Hess_{\smash{x}}'f$ on $N_x$ with $\Hess_0( z_1^2+\cdots+z_n^2)=\d
z_1\ot\d z_1+\cdots+\d z_n\ot\d z_n$ on $T_0\C^n$. Thus, the linear
isomorphism $B:N_x\ra N_x$ above preserving $\Hess_{\smash{x}}'f$ is
identified with a linear isomorphism $M:\C^n\ra\C^n$ preserving $\d
z_1\ot\d z_1+\cdots+\d z_n\ot\d z_n$, that is, $M\in{\rm O}(n,\C)$
satisfies
\e
\begin{pmatrix} \st \id & \st 0 \\ \st 0 & \st B \end{pmatrix}\ci
\d\ga\vert_r\ci\d(\de\t\ep)\vert_r^{-1}=\d\ga\vert_r\ci
\d(\de\t\ep)\vert_r^{-1}\ci \begin{pmatrix} \st \id & \st 0
\\ \st  0 & \st M\end{pmatrix}.
\label{sm3eq37}
\e

Define $P$ to be the $\C$-scheme fibre product $P=R\t_{\de\t(M\ci
\ep),S\t\C^n,\de\t\ep}R$, with projections $\pi_1,\pi_2:P\ra R$.
Then $P$ is smooth and $\pi_1,\pi_2$ are \'etale, as $R,S\t\C^n$ are
smooth and $\de\t (M\ci\ep),\de\t\ep:R\ra S\t\C^n$ are \'etale. As
$r\in R$ with $(\de\t( M\ci\ep))(r)=(x,0)=(\de\t\ep)(r)$, we have a
point $p\in P$ with $\pi_1(p)=\pi_2(p)=r$. Define
$d=f\ci\ga\ci\pi_1:P\ra\C$ and $Z=\Crit(d)$. Then
\e
\begin{split}
d&=f\ci\ga\ci\pi_1=(e\boxplus z_1^2+\cdots+z_n^2)\ci(\de\t\ep)
\ci\pi_1\\
&=(e\boxplus z_1^2+\cdots+z_n^2)\ci(\de\t(M\ci\ep))
\ci\pi_1\\
&=(e\boxplus z_1^2+\cdots+z_n^2)\ci(\de\t\ep)
\ci\pi_2=f\ci\ga\ci\pi_2.
\end{split}
\label{sm3eq38}
\e

Consider the \'etale morphisms $\Phi\ci\ga\ci\pi_1,\Psi\ci\ga
\ci\pi_2:P\ra V$. Both map $p\mapsto \Phi(x)$, and satisfy
$g\ci(\Phi\ci\ga\ci\pi_1)=d=g\ci(\Psi\ci\ga\ci\pi_2)$ by
\eq{sm3eq38} and $g\ci\Phi=f=g\ci\Psi$. Taking derivatives at $p$ to
get linear maps $T_pP\ra T_{\Phi(x)}V$, we find that
\e
\begin{split}
\d(\Psi&\ci\ga\ci\pi_2)\vert_p=\d\Psi\vert_x \ci\d\ga\vert_r\ci
\d(\de\t\ep)\vert_r^{-1}\ci\d((\de\t\ep)\ci\pi_2)\vert_p\\
&=\d\Psi\vert_x\ci\d\ga\vert_r\ci \d(\de\t\ep)\vert_r^{-1}\ci
\d((\de\t (M\ci\ep))\ci\pi_1)\vert_p\\
&=\d\Psi\vert_x\ci\d\ga\vert_r\ci
\d(\de\t\ep)\vert_r^{-1}\ci\begin{pmatrix} \st \id & \st 0
\\ \st  0 & \st M\end{pmatrix}\ci
\d(\de\t\ep)\vert_r\ci
\d\pi_1\vert_p\\
&=\d\Psi\vert_x\ci \begin{pmatrix} \st \id & \st 0 \\ \st 0 & \st B
\end{pmatrix}\ci\d\ga\vert_r\ci\d(\de\t\ep)\vert_r^{-1}\ci
\d(\de\t\ep)\vert_r\ci\d\pi_1\vert_p\\
&= \d\Psi\vert_x\ci\begin{pmatrix} \st \id & \st -AB^{-1} \\ \st 0 &
\st \id
\end{pmatrix}\begin{pmatrix} \st \id & \st A \\
\st 0 & \st B
\end{pmatrix} \ci\d\ga\vert_r\ci
\d\pi_1\vert_p\\
&= \d\Psi\vert_x\ci\begin{pmatrix} \st \id & \st -AB^{-1} \\ \st 0 &
\st \id\end{pmatrix} \ci\d\Psi\vert^{-1}_x \ci
\d(\Phi\ci\ga\ci\pi_1)\vert_p,
\end{split}
\label{sm3eq39}
\e
using $(\de\t (M\ci\ep))\ci\pi_1=(\de\t\ep)\ci\pi_2$ in the second
step, \eq{sm3eq37} in the fourth, and \eq{sm3eq35} in the sixth.
Since $\bigl[\bigl(\begin{smallmatrix} \id &  -AB^{-1} \\ 0 &
\id\end{smallmatrix}\bigr)-\id\bigr]{}^2=0$, equation \eq{sm3eq39}
implies that $\bigl(\d(\Psi\ci\ga\ci\pi_2)\vert_p^{-1}\ci
\d(\Phi\ci\ga\ci\pi_1)\vert_p - \id_{T_pP}\bigr){}^2=0$, and thus
Proposition \ref{sm3prop2} gives a Zariski open neighbourhood $P'$
of $p$ in $P$ such that
\e
\PV_{\Phi\ci\ga\ci\pi_1}\vert_{P'}=\PV_{\Psi\ci\ga\ci\pi_2}
\vert_{P'}:\PV_{P,d}^\bu\vert_{P'}\longra
(\Phi\ci\ga\ci\pi_1)\vert_Z^*\bigl(\PV_{V,g}^\bu\bigr)\vert_{P'}.
\label{sm3eq40}
\e

Since $(\de\t(M\ci\ep))\ci\pi_1=(\de\t\ep)\ci\pi_2:P\ra S\t\C^n$ are
\'etale with
\begin{equation*}
(e\boxplus z_1^2+\cdots+z_n^2)\ci(\de\t(M\ci\ep))\ci\pi_1=d=
(e\boxplus z_1^2+\cdots+z_n^2)\ci(\de\t\ep)\ci\pi_2,
\end{equation*}
we see using \eq{sm2eq8} and \eq{sm2eq18} that
\ea
\pi_1\vert_Z^*&\bigl[\PV_\de\boxtL
\bigl(M\vert_{\{0\}}^*(\PV_\ep)\ci\PV_M\bigr)\bigr]\ci\PV_{\pi_1}
=\pi_1\vert_Z^*\bigl[\PV_\de\boxtL\PV_{M\ci\ep}\bigr] \ci\PV_{\pi_1}
\nonumber\\
&\cong\pi_1\vert_Z^*(\PV_{\de\t(M\ci\ep)})\ci\PV_{\pi_1}
=\PV_{(\de\t(M\ci\ep))\ci\pi_1}=\PV_{(\de\t\ep)\ci\pi_2}
\nonumber\\
&=\pi_2\vert_Z^*(\PV_{\de\t\ep})\ci\PV_{\pi_2}\cong
\pi_2\vert_Z^*\bigl[\PV_\de\boxtL \PV_{\ep}\bigr]\ci\PV_{\pi_2},
\label{sm3eq41}
\ea
where `$\cong$' are equalities after identifying both sides of
\eq{sm2eq8}. Since $\pi_1\vert_Z=\pi_2\vert_Z$, and
$M\vert_{\{0\}}=\id_{\{0\}}$, and Example \ref{sm2ex2} shows that
$\PV_M$ in \eq{sm2eq19} is multiplication by $\det M$, equation
\eq{sm3eq41} implies that
\begin{equation*}
\det M\cdot\pi_1\vert_Z^*\bigl[\PV_\de\boxtL\PV_{\ep}\bigr]\ci
\PV_{\pi_1}=\pi_1\vert_Z^*\bigl[\PV_\de\boxtL \PV_{\ep}\bigr]
\ci\PV_{\pi_2}.
\end{equation*}
As $\pi_1\vert_Z^*\bigl[\PV_\de\boxtL \PV_{\ep}\bigr]$ is an
isomorphism, this gives
\e
\det M\cdot\PV_{\pi_1}=\PV_{\pi_2}:\PV_{P,d}^\bu\longra
\pi_1\vert_Z^*\bigl(\PV_{R,e}^\bu\bigr).
\label{sm3eq42}
\e

Writing $Z'=Z\cap P'$, we now have
\e
\begin{split}
(\ga\ci\pi_1)&\vert_{Z'}^*(\PV_\Phi)\ci\PV_{\ga\ci\pi_1}\vert_{P'}=
\PV_{\Phi\ci\ga\ci\pi_1}\vert_{P'}=\PV_{\Psi\ci\ga\ci\pi_2}
\vert_{P'}\\
&=\pi_2\vert_{Z'}^*(\PV_{\Psi\ci\ga})\ci\PV_{\pi_2}\vert_{P'}\\
&=\det M\cdot
\pi_1\vert_{Z'}^*(\PV_{\Psi\ci\ga})\ci\PV_{\pi_1}\vert_{P'} =\det
M\cdot\PV_{\Psi\ci\ga\ci\pi_1}\vert_{P'} \\
&=\det M\cdot
(\ga\ci\pi_1)\vert_{Z'}^*(\PV_\Psi)\ci\PV_{\ga\ci\pi_1}\vert_{P'},
\end{split}
\label{sm3eq43}
\e
using \eq{sm3eq40} in the second step, \eq{sm3eq42} and
$\pi_1\vert_{Z'}=\pi_2\vert_{Z'}$ in the fourth, and \eq{sm2eq18} in
the rest. As $\PV_{\ga\ci\pi_1}\vert_{P'}$ is an isomorphism,
\eq{sm3eq43} implies that
\begin{equation*}
(\ga\ci\pi_1)\vert_{Z'}^*(\PV_\Phi)=\det M\cdot
(\ga\ci\pi_1)\vert_{Z'}^*(\PV_\Psi),
\end{equation*}
and by Theorem \ref{sm2thm3}(i) this implies that
\e
\PV_\Phi\vert_{X'}=\det M\cdot\PV_\Phi\vert_{X'},
\label{sm3eq44}
\e
where $X'=(\ga\ci\pi_1)(Z')$ is a Zariski open neighbourhood of $x$
in $X$, since $(\ga\ci\pi_1)\vert_{Z'}:Z'\ra X$ is \'etale with
$\ga\ci\pi_1(p)=x$. Now \eq{sm3eq35} and \eq{sm3eq37} give
$\det\bigl(\d\Psi\vert^{-1}_x \ci\d\Phi\vert_x\bigr)=\det\bigl(
\begin{smallmatrix} \id &  A \\ 0 & B \end{smallmatrix}\bigr)=\det
B=\det M$. So \eq{sm3eq44} proves that \eq{sm3eq1} holds near $x$ in
$X$. As this is true for all $x\in X$, Theorem \ref{sm3thm}(b)
follows.

\subsection{$\cD$-modules and mixed Hodge modules}
\label{sm34}

The proof of Proposition \ref{sm3prop1} 
applies verbatim also in the analytic context. Theorem
\ref{sm3thm}(a),(b) then follow from Proposition \ref{sm3prop1} and
the argument given above, using \S\ref{sm25}, including the Sheaf
Property (x) for morphisms. Hence all these results carry over to
our other contexts~\S\ref{sm26}--\S\ref{sm210}.

\section{Dependence of $\PV_{U,f}^\bu$ on $f$}
\label{sm4}

We will use the following notation:

\begin{dfn} Let $U$ be a smooth $\C$-scheme, $f:U\ra\C$ a regular
function, and $X=\Crit(f)$ as a closed $\C$-subscheme of $U$. Write
$I_X\subseteq \O_U$ for the sheaf of ideals of regular functions
$U\ra\C$ vanishing on $X$, so that $I_X=I_{\d f}$. For each
$k=1,2,\ldots,$ write $X^{(k)}$ for the $k^{\rm th}$ {\it order
thickening of\/ $X$ in\/} $U$, that is, $X^{(k)}$ is the closed
$\C$-subscheme of $U$ defined by the sheaf of
ideals $I_X^k$ in $\O_U$. Also write $X^\red$ for the reduced
$\C$-subscheme of $U$.

Then we have a chain of inclusions of closed $\C$-subschemes
\e
X^\red\subseteq X=X^{(1)}\subseteq X^{(2)}\subseteq X^{(3)}\subseteq
\cdots\subseteq U.
\label{sm4eq1}
\e
Write $f^{(k)}:=f\vert_{\smash{X^{(k)}}}:X^{(k)}\ra\C$, and
$f^\red:=f\vert_{\smash{X^\red}}:X^\red\ra\C$, so that
$f^{(k)},f^\red$ are regular functions on the $\C$-schemes
$X^{(k)},X^\red$. Note that
$f^\red:X^\red\ra\C$ is locally constant, since~$X=\Crit(f)$.

We also use the same notation for complex analytic spaces.
\label{sm4def1}
\end{dfn}

In \S\ref{sm24} we defined the perverse sheaf of vanishing cycles
$\PV_{U,f}^\bu$ in $\Perv(X)$. So we can ask: how much of the
sequence \eq{sm4eq1} does $\PV_{U,f}^\bu$ depend on? That is, is
$\PV_{U,f}^\bu$ (canonically?) determined by $(X^\red,f^\red)$, or
by $(X^{(k)},f^{(k)})$ for some $k\geq 1$, as well as by $(U,f)$? 
Our next theorem shows that
$\PV_{U,f}^\bu$ is determined up to canonical isomorphism by
$(X^{(3)},f^{(3)})$, and hence a fortiori also by
$(X^{(k)},f^{(k)})$ for $k>3$:

\begin{thm} Let\/ $U,V$ be smooth\/ $\C$-schemes, $f:U\ra\C,$
$g:V\ra\C$ be regular functions, and\/ $X=\Crit(f),$ $Y=\Crit(g)$ as
closed\/ $\C$-subschemes of\/ $U,V,$ so that\/ {\rm\S\ref{sm24}}
defines perverse sheaves $\PV_{U,f}^\bu,\PV_{V,g}^\bu$ on $X,Y$.
Define $X^{(3)},f^{(3)}$ and\/ $Y^{(3)},g^{(3)}$ as in Definition\/
{\rm\ref{sm4def1},} and suppose $\Phi:X^{(3)}\ra Y^{(3)}$ is an
isomorphism with\/ $g^{(3)}\ci\Phi=f^{(3)},$ so that\/
$\Phi\vert_X:X\ra Y\subseteq Y^{(3)}$ is an isomorphism.

Then there is a canonical isomorphism in $\Perv(X)$
\e
\Om_\Phi:\PV_{U,f}^\bu\longra \Phi\vert_X^*(\PV_{V,g}^\bu),
\label{sm4eq2}
\e
which is characterized by the property that if\/ $T$ is a smooth\/
$\C$-scheme and\/ $\pi_U:T\ra U,$ $\pi_V:T\ra V$ are \'etale
morphisms with\/ $e:=f\ci\pi_U=g\ci\pi_V:T\ra\C,$ so that\/
$\pi_U\vert_Q:Q\ra X,$ $\pi_V\vert_Q:Q\ra Y$ are \'etale for
$Q:=\Crit(e),$ and\/ $\Phi\ci\pi_U\vert_{Q^{(2)}}=\pi_V
\vert_{Q^{(2)}}:Q^{(2)}\ra Y^{(2)},$ then
\e
\pi_U\vert_Q^*(\Om_\Phi)\ci\PV_{\pi_U}=\PV_{\pi_V}:\PV_{T,e}^\bu\longra
\pi_V\vert_Q^*(\PV_{U,f}^\bu).
\label{sm4eq3}
\e

Also the following commute, where $\si_{U,f},\si_{V,g},\tau_{U,f},
\tau_{V,g}$ are as in\/~{\rm\eq{sm2eq6}--\eq{sm2eq7}:}
\ea
\begin{gathered}
\xymatrix@!0@C=140pt@R=35pt{ *+[r]{\PV_{U,f}^\bu}
\ar[rr]_{\si_{U,f}} \ar[d]^{\Om_\Phi} &&
*+[l]{\bD_X(\PV_{U,f}^\bu)} \\
*+[r]{\Phi\vert_X^*(\PV_{V,g}^\bu)} \ar[r]^{\Phi\vert_X^*(\si_{V,g})}
& {\Phi\vert_X^*\bigl(\bD_Y(\PV_{V,g}^\bu)\bigr)} \ar[r]^(0.37)\cong
& *+[l]{\bD_X\bigl(\Phi\vert_X^*(\PV_{V,g}^\bu)\bigr),\!\!{}}
\ar[u]^{\bD_X(\Om_\Phi)} }
\end{gathered}
\label{sm4eq4}\\
\begin{gathered}
\xymatrix@!0@C=280pt@R=35pt{ *+[r]{\PV_{U,f}^\bu}
\ar[r]_{\tau_{U,f}} \ar[d]^{\Om_\Phi} &
*+[l]{\PV_{U,f}^\bu} \ar[d]_{\Om_\Phi} \\
*+[r]{\Phi\vert_X^*(\PV_{V,g}^\bu)} \ar[r]^{\Phi\vert_X^*(\tau_{V,g})}
& *+[l]{\Phi\vert_X^*(\PV_{V,g}^\bu).\!\!{}} }
\end{gathered}
\label{sm4eq5}
\ea

If there exists an \'etale morphism $\Xi:U\ra V$ with\/
$g\ci\Xi=f:U\ra\C$ and\/ $\Xi\vert_{X^{(3)}}=\Phi:X^{(3)}\ra
Y^{(3)}$ then $\Om_\Phi=\PV_\Xi,$ for $\PV_\Xi$ as in~\eq{sm2eq14}.

If\/ $W$ is another smooth\/ $\C$-scheme, $h:W\ra\C$ is regular,
$Z=\Crit(h),$ and\/ $\Psi:Y^{(3)}\ra Z^{(3)}$ is an isomorphism
with\/ $h^{(3)}\ci\Psi=g^{(3)}$ then
\e
\Om_{\Psi\ci\Phi}=\Phi\vert_X^*(\Om_\Psi)\ci\Om_\Phi:\PV_{U,f}^\bu
\longra(\Psi\ci\Phi)\vert_X^*(\PV_{W,h}^\bu).
\label{sm4eq6}
\e
If\/ $U=V,$ $f=g,$ $X=Y$ and\/ $\Phi=\id_{X^{(3)}}$
then\/~$\Om_{\id_{X^{(3)}}}=\id_{\PV_{U,f}^\bu}$.

The analogues of all the above also hold with appropriate
modifications for $\cD$-modules on
$\C$-schemes, for perverse sheaves and $\cD$-modules on complex
analytic spaces, and for mixed Hodge modules on $\C$-schemes and
complex analytic spaces,
as in\/~{\rm\S\ref{sm26}--\S\ref{sm210}}.
\label{sm4thm}
\end{thm}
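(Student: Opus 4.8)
The plan is to build $\Om_\Phi$ \'etale-locally on $X$ out of common \'etale refinements of $(U,f)$ and $(V,g)$, glue the local pieces using the stack property of perverse sheaves (Theorem~\ref{sm2thm3}(i)), and then read off the characterising property \eq{sm4eq3} and the compatibilities \eq{sm4eq4}--\eq{sm4eq6} from the corresponding properties of the isomorphisms $\PV_{(-)}$ of Definition~\ref{sm2def6}. The third-order hypothesis on $\Phi$ will be used precisely once, in producing the local refinement, and its effect is to guarantee a \emph{second}-order matching on critical loci, which is what makes the gluing work.

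First I would establish the local structure. Fix $x\in X$. Since $V$ is smooth we may lift $\Phi$ to a morphism $\tilde\Phi\colon U'\to V$ on an \'etale-local neighbourhood of $x$ with $\tilde\Phi\vert_{X^{(3)}}=\Phi$; then $g^{(3)}\ci\Phi=f^{(3)}$ forces $f-g\ci\tilde\Phi\in I_X^3=I_{\d f}^3$. Running a correction argument in the style of the proof of Proposition~\ref{sm3prop1} --- expressing $f\ci\pi_U-g\ci\pi_V$ in terms of the functions cutting out $X$, introducing auxiliary variables to absorb the error, solving the resulting system, and checking smoothness by a transversality computation along a section $\io\colon\C\to W$ (or quoting the relevant local normal form from \cite{Joyc}) --- I would obtain a smooth $\C$-scheme $T$, a point $t\in T$ with $\pi_U(t)=x$, $\pi_V(t)=\Phi(x)$, and \'etale morphisms $\pi_U\colon T\to U$, $\pi_V\colon T\to V$ with $f\ci\pi_U=g\ci\pi_V=:e$ and $\Phi\ci\pi_U\vert_{Q^{(2)}}=\pi_V\vert_{Q^{(2)}}$ for $Q:=\Crit(e)$, so that $\pi_U\vert_Q\colon Q\to X$, $\pi_V\vert_Q\colon Q\to Y$ are \'etale onto Zariski-open $X'\subseteq X$, $Y'\subseteq Y$. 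On such a chart $\pi_V\vert_Q=\Phi\vert_X\ci\pi_U\vert_Q$, so by functoriality of pullback we may identify $\pi_V\vert_Q^*(\PV_{V,g}^\bu)$ with $\pi_U\vert_Q^*\bigl(\Phi\vert_X^*(\PV_{V,g}^\bu)\bigr)$, and then $\PV_{\pi_V}\ci\PV_{\pi_U}^{-1}$ is an isomorphism $\pi_U\vert_Q^*(\PV_{U,f}^\bu)\to\pi_U\vert_Q^*\bigl(\Phi\vert_X^*(\PV_{V,g}^\bu)\bigr)$; I define $\Om_\Phi\vert_{X'}$ to be its descent along the \'etale cover $\pi_U\vert_Q\colon Q\to X'$.

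The crux is that this descent is legitimate, i.e.\ that $\PV_{\pi_V}\ci\PV_{\pi_U}^{-1}$ agrees on the two projections $Q\t_{X'}Q\to Q$. Set $S:=T\t_{\pi_U,U,\pi_U}T$, with projections $q_1,q_2\colon S\to T$ and $\pi_U\ci q_1=\pi_U\ci q_2=:\pi_U^S$, so $S$ is smooth, $\pi_U^S$ is \'etale, $e^S:=e\ci q_1=e\ci q_2$ and $Q^S:=\Crit(e^S)\cong Q\t_{X'}Q$. The maps $\pi_V\ci q_1,\pi_V\ci q_2\colon S\to V$ are \'etale with $g\ci(\pi_V\ci q_i)=e^S$, they agree on $Q^S$ since $\pi_U\ci q_1=\pi_U\ci q_2$, and in fact they agree on $Q^{S,(2)}$ because pulling back $\Phi\ci\pi_U\vert_{Q^{(2)}}=\pi_V\vert_{Q^{(2)}}$ along the \'etale $q_i$ gives $\Phi\ci\pi_U^S\vert_{Q^{S,(2)}}=(\pi_V\ci q_i)\vert_{Q^{S,(2)}}$ for both $i$. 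Now second-order agreement on $Q^{S,(2)}$ forces the \'etale-local automorphism $\theta:=(\pi_V\ci q_2)^{-1}\ci(\pi_V\ci q_1)$ of $S$ to satisfy $\theta^*\phi-\phi\in I_{Q^S}^2$ for all $\phi$, whence $\d\theta\vert_{Q^{S,\red}}=\id$ on $TS\vert_{Q^{S,\red}}$ (this fails with only first-order agreement, which is exactly why $Q^{(2)}$, not $Q$, appears); therefore the sign $\det\bigl(\d(\pi_V\ci q_2)\vert_{Q^{S,\red}}^{-1}\ci\d(\pi_V\ci q_1)\vert_{Q^{S,\red}}\bigr)$ of Theorem~\ref{sm3thm}(b) equals $+1$ and $\PV_{\pi_V\ci q_1}=\PV_{\pi_V\ci q_2}$. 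Unwinding this via the functoriality \eq{sm2eq18} of $\PV_{(-)}$ (together with $\PV_{\pi_U\ci q_1}=\PV_{\pi_U\ci q_2}$, which is formal) yields precisely the cocycle identity on $Q^S$, so the descent exists by Theorem~\ref{sm2thm3}(i). The same common-refinement argument applied to two charts $(T_i,\pi_U^i,\pi_V^i)$ shows $\Om_\Phi\vert_{X'}$ is independent of choices, so the pieces glue to a global $\Om_\Phi\colon\PV_{U,f}^\bu\to\Phi\vert_X^*(\PV_{V,g}^\bu)$, and \eq{sm4eq3} holds for the charts used in the construction, hence for all charts by a further common-refinement comparison; uniqueness of $\Om_\Phi$ subject to \eq{sm4eq3} follows since two such isomorphisms agree after pullback to a chart with $\pi_U\vert_Q$ \'etale surjective, hence everywhere by Theorem~\ref{sm2thm3}(i).

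The remaining assertions are then formal. Pulling the squares \eq{sm4eq4} and \eq{sm4eq5} back along a chart $\pi_U$ reduces them to the already-established diagrams \eq{sm2eq16} and \eq{sm2eq17} for $\PV_{\pi_U}$ and $\PV_{\pi_V}$, and one descends the identities using Theorem~\ref{sm2thm3}(i); if an \'etale $\Xi\colon U\to V$ with $g\ci\Xi=f$ and $\Xi\vert_{X^{(3)}}=\Phi$ exists, taking $T=U$, $\pi_U=\id_U$, $\pi_V=\Xi$ (so $Q=X$, $Q^{(2)}=X^{(2)}\subseteq X^{(3)}$) in \eq{sm4eq3} gives $\Om_\Phi=\PV_\Xi$, and the case $\Xi=\id_U$ gives $\Om_{\id_{X^{(3)}}}=\id$; functoriality \eq{sm4eq6} under $\Psi\ci\Phi$ follows by choosing \'etale charts $U\leftarrow T\rightarrow V\leftarrow T'\rightarrow W$, dominating both by a common chart, and applying \eq{sm2eq18}. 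Finally every ingredient used --- smoothness of $V$, the correction construction, Theorems~\ref{sm2thm3}(i) and \ref{sm3thm}(b), and the functoriality and duality diagrams for $\PV_{(-)}$ --- is available in each of the settings of \S\ref{sm26}--\S\ref{sm210} by the list in \S\ref{sm25}, so the analogues hold there with no essential change. I expect the main obstacle to be the first step: extracting the common \'etale chart $(T,\pi_U,\pi_V)$ with the second-order matching $\Phi\ci\pi_U\vert_{Q^{(2)}}=\pi_V\vert_{Q^{(2)}}$ from only the third-order data $\Phi\colon X^{(3)}\to Y^{(3)}$, and verifying that this second-order matching is exactly what forces the sign in Theorem~\ref{sm3thm}(b) to be trivial in the descent step.
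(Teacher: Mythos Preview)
Your proposal is correct and follows essentially the same route as the paper. The paper packages your first step --- constructing the common \'etale chart $(T,\pi_U,\pi_V)$ with $\Phi\ci\pi_U\vert_{Q^{(2)}}=\pi_V\vert_{Q^{(2)}}$ from the third-order data --- as a separate Proposition~\ref{sm4prop1} (proved by exactly the kind of correction argument you describe, parallel to Proposition~\ref{sm3prop1}); after that the paper proceeds just as you do, defining $\Om^a=\PV_{\pi_V^a}\ci\PV_{\pi_U^a}^{-1}$ on an \'etale cover, checking overlap compatibility on $T^{ab}=T^a\t_UT^b$ via the second-order matching $\Rightarrow\det=+1\Rightarrow$ Theorem~\ref{sm3thm}(b), and gluing with Theorem~\ref{sm2thm3}(i). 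One cosmetic difference: the paper works directly with the \'etale cover $\{Q^a\to X\}$ and checks the overlap condition on $Q^{ab}$, rather than first descending each chart to a Zariski open and then gluing; but this amounts to the same thing, and indeed the paper notes it does not use the stronger Zariski-open conclusion of Proposition~\ref{sm4prop1}.
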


We will prove Theorem \ref{sm4thm} in \S\ref{sm42}--\S\ref{sm43}.
The proof for $\C$-schemes depends on the case $k=2$ of the
following proposition, proved in~\S\ref{sm41}:

\begin{prop} Let\/ $U,V$ be smooth\/ $\C$-schemes, $f:U\ra\C,$
$g:V\ra\C$ be regular functions, and\/ $X=\Crit(f)\subseteq U,$
$Y=\Crit(g)\subseteq V$. Using the notation of Definition\/
{\rm\ref{sm4def1},} suppose $\Phi:X^{(k+1)}\ra Y^{(k+1)}$ is an
isomorphism with\/ $g^{(k+1)}\ci\Phi=f^{(k+1)}$ for some $k\ge 2$.
Then for each\/ $x\in X$ we can choose a smooth\/ $\C$-scheme $T$
and \'etale morphisms $\pi_U:T\ra U,$ $\pi_V:T\ra V$ such that\/
\begin{itemize}
\setlength{\itemsep}{0pt}
\setlength{\parsep}{0pt}
\item[{\bf(a)}] $e:=f\ci\pi_U=g\ci\pi_V:T\ra\C;$
\item[{\bf(b)}] setting $Q=\Crit(e),$ then
$\pi_U\vert_{Q^{(k)}}:Q^{(k)}\ra X^{(k)}\subseteq U$ is an
isomorphism with a Zariski open neighbourhood\/ $\ti X^{(k)}$
of\/ $x$ in $X^{(k)};$ and
\item[{\bf(c)}] $\Phi\ci\pi_U\vert_{Q^{(k)}}=\pi_V
\vert_{Q^{(k)}}:Q^{(k)}\ra Y^{(k)}$.
\end{itemize}
\label{sm4prop1}
\end{prop}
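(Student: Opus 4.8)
The statement is \'etale-local near $x$, so first I would replace $U,V$ by affine Zariski neighbourhoods $U'$ of $x$ and $V'$ of $\Phi(x)$ carrying \'etale coordinates $(x_1,\ldots,x_m):U'\ra\C^m$, $(y_1,\ldots,y_m):V'\ra\C^m$ vanishing at $x,\Phi(x)$, shrinking $U'$ so that $\Phi$ maps $X^{(k+1)}\cap U'$ into $Y^{(k+1)}\cap V'$. Here $m=\dim U=\dim V$, since for any point $p$ of $X$ we have $I_X\subseteq\mathfrak m_p$, hence $I_X^{k+1}\subseteq\mathfrak m_p^2$, so the Zariski tangent space of $X^{(k+1)}$ at $p$ is all of $T_pU$ (and likewise on the $Y$-side), while $\Phi$ identifies $T_xX^{(k+1)}$ with $T_{\Phi(x)}Y^{(k+1)}$. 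As $X^{(k+1)}\cap U'\hookra U'$ is a closed immersion of affine schemes, I can lift $\Phi$ to coordinates: choose $\phi_1,\ldots,\phi_m\in\O(U')$ with $\phi_i\equiv\Phi^*(y_i)$ modulo $I_X^{k+1}$. Since $\Phi$ is an isomorphism and $T_xX^{(k+1)}=T_xU$, the differential of $\phi=(\phi_1,\ldots,\phi_m):U'\ra\C^m$ at $x$ is invertible, so after shrinking $U'$ the map $\phi$ is \'etale. I then set $T_0:=U'\t_{\phi,\C^m,y}V'$ with projections $p_0:T_0\ra U'$, $q_0:T_0\ra V'$, both \'etale (base changes of $y,\phi$), and $t_0:=(x,\Phi(x))\in T_0$. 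From here on everything takes place on successively smaller Zariski (and, where necessary, \'etale) neighbourhoods of $t_0$, which I suppress.

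Next I would prove the approximate identity $f\ci p_0-g\ci q_0\in I^{k+1}$, where $I\subseteq\O_{T_0}$ is the ideal of $p_0^{-1}(X)$. Since $p_0$ is \'etale, $\Crit(f\ci p_0)=p_0^{-1}(X)$ scheme-theoretically, so $p_0^{-1}(X^{(j)})$ has ideal $I^j$ and $I=I_{\Crit(f\ci p_0)}$. The morphism $z\mapsto(z,\Phi(z))$ is a section of the \'etale map $p_0^{-1}(X^{(k+1)}\cap U')\ra X^{(k+1)}\cap U'$, hence an open and closed immersion; after shrinking $T_0$ about $t_0$ it is an isomorphism onto $p_0^{-1}(X^{(k+1)}\cap U')$, on which therefore $q_0=\Phi\ci p_0$. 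Combined with $g^{(k+1)}\ci\Phi=f^{(k+1)}$ this gives $f\ci p_0=g\ci q_0$ on $p_0^{-1}(X^{(k+1)}\cap U')$, which is the claim. Running the same section argument with $k+1$ replaced by $1$ identifies $p_0^{-1}(X\cap U')$ near $t_0$ with $X\cap U'$ via $p_0$, shows that $\Crit(g\ci q_0)$ and $\Crit(f\ci p_0)$ have the same ideal modulo $I^k$, and hence, by Nakayama (as $I\subseteq\mathfrak m_{t_0}$), that $I=I_{\Crit(g\ci q_0)}=(\pd_{y_1}g\ci q_0,\ldots,\pd_{y_m}g\ci q_0)$ near $t_0$.

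The heart of the proof --- and the step I expect to be the main obstacle --- is to correct $q_0$ so that the two functions agree exactly while keeping the order-$k$ data fixed; here $k\ge 2$ is essential. Writing $\de:=f\ci p_0-g\ci q_0\in I^{k+1}$ and expanding $g(q_0+\eta)=g\ci q_0+\sum_i(\pd_{y_i}g\ci q_0)\eta_i+O(\eta^2)$, one solves $\sum_i(\pd_{y_i}g\ci q_0)\eta_i=\de$ with $\eta_i\in I^k$ (possible since $I^{k+1}=\sum_i(\pd_{y_i}g\ci q_0)\,I^k$); the new discrepancy then lies in $I^{2k}\subseteq I^{k+2}$, strictly deeper than before precisely because $k\ge2$. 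Iterating and passing to the $I$-adic limit produces a formal solution along $\Crit(f\ci p_0)$ of $g\ci q=f\ci p_0$ with $q\equiv q_0$ modulo $I^k$. I would then make this algebraic: either by Artin approximation applied to the polynomial system $g\bigl(q_0+\sum_\be c_\be\ga_\be\bigr)=f\ci p_0$ in the unknown coefficients $c_\be$ of a general section of $I^k$ (so that every solution automatically reduces to $q_0$ modulo $I^k$), producing a smooth scheme $T$ \'etale over $U'$ near $t_0$ and a morphism $q:T\ra V'$; or, as in the proof of Proposition \ref{sm3prop1} in \S\ref{sm31}, by cutting $T$ out of $T_0\t\C^N$ by an explicit system of equations assembled from the coefficients occurring above. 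Since $q\equiv q_0$ modulo $I^k\subseteq\mathfrak m_{t_0}^2$, the differential $\d q\vert_{t_0}=\d q_0\vert_{t_0}$ is invertible, so after shrinking $T$ the map $q$ is \'etale, and $\pi_U:T\ra U'\hookra U$, $\pi_V:T\ra V$ (composing $q$ with $V'\hookra V$) are \'etale with $e:=f\ci\pi_U=g\ci\pi_V$, which is (a).

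Finally I would read off (b) and (c). As $\pi_U$ is \'etale, $Q:=\Crit(e)=\pi_U^{-1}(X)$ and $Q^{(k)}=\pi_U^{-1}(X^{(k)})$ has ideal $I^k$; the section $z\mapsto(z,q(z))$ of the \'etale map $\pi_U\vert_{Q^{(k)}}:Q^{(k)}\ra X^{(k)}$ (well defined near $t_0$ because $q\equiv\Phi\ci p_0$ modulo $I^k$) exhibits $\pi_U\vert_{Q^{(k)}}$ as an isomorphism onto a Zariski open neighbourhood $\widetilde X^{(k)}$ of $x$ in $X^{(k)}$, which is (b). For (c), from $q\equiv q_0\pmod{I^k}$ and $q_0=\Phi\ci p_0$ on $p_0^{-1}(X^{(k)})$ one obtains $\pi_V\vert_{Q^{(k)}}=\Phi\ci\pi_U\vert_{Q^{(k)}}:Q^{(k)}\ra Y^{(k+1)}\subseteq V$ --- which in passing shows that $\Phi$ carries $\widetilde X^{(k)}$ isomorphically onto an open subscheme of $Y^{(k)}$. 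The whole argument is algebraic and uses only the properties listed in \S\ref{sm25}, so it applies equally to $\K$-schemes with $\mathop{\rm char}\K\ne2$ and to complex analytic spaces.
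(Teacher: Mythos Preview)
Your broad strategy --- lift $\Phi$ to an \'etale map via coordinates, form the fibre product $T_0=U'\t_{\C^m}V'$, show $f\ci p_0-g\ci q_0\in I^{k+1}$, then correct --- parallels the paper's closely. The paper also begins by choosing \'etale coordinates $y_i$ on $V'$, lifting $y_i\ci\Phi$ to functions $x_i$ on $U'$, and working on $U'\t V'$ with the ideal $(x_i'-y_i')$.

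The real divergence is in the correction step. You propose an iterative Newton-type scheme (solve $\sum_i(\pd_{y_i}g\ci q_0)\eta_i=\de$ with $\eta_i\in I^k$, gain from $I^{k+1}$ to $I^{2k}$, repeat) followed by Artin approximation. The paper instead does everything in one shot: it proves the exact identity
\[
f\ci\pi_{U'}-g\ci\pi_{V'}=\ts\sum_iA_i(x_i'-y_i')+\sum_{i,j}C_{ij}(x_i'-y_i')(x_j'-y_j')+\sum_{i,j}D_{ij}A_iA_j,
\]
with $(A_i)=\pi_{U'}^{-1}(I_X)$ and $D_{ij}\in\pi_{U'}^{-1}(I_X^{k-1})$, and then defines $T\subset U'\t V'\t\C^{n^2}$ by the $n+n^2$ equations $x_i-y_i=\sum_jz_{ij}A_j$ and $z_{ij}+\sum_{l,m}C_{lm}z_{li}z_{mj}+D_{ij}=0$. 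Substituting the first set into the identity and using the second gives $f\ci\pi_U=g\ci\pi_V$ exactly, with no iteration and no Artin approximation. Your aside ``or \ldots\ by cutting $T$ out of $T_0\t\C^N$ by an explicit system of equations'' points in this direction, but you have not derived the identity that makes a finite system suffice.

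Two points deserve care in your version. First, ``$g(q_0+\eta)$'' and the Artin system ``$g(q_0+\sum_\be c_\be\ga_\be)=f\ci p_0$'' do not literally make sense, since $V'$ is not $\C^m$; you should introduce an auxiliary $V'$-variable $v$ and work with the regular system $y_i(v)-y_i\ci q_0=\sum_\be c_{i\be}\ga_\be$, $g(v)=f\ci p_0$ on $T_0\t V'\t\bA^N$. Second, and more substantively, Artin approximation only produces $T$ \'etale over $T_0$, not Zariski-open, so you do not get a canonical section of $Q^{(k)}\ra X^{(k)}$; your proposed section ``$z\mapsto(z,q(z))$'' does not typecheck, since $q$ is defined on $T$, not on $X^{(k)}$. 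Thus your argument yields $(\mathrm{b}')$ ``$\pi_U\vert_{Q^{(k)}}$ is \'etale'' rather than the Zariski isomorphism claimed in (b). The paper's explicit construction gives the genuine Zariski section $z\mapsto(z,\Phi(z),0)$ --- this lies in $T$ precisely because $D_{ij}\vert_X=0$ --- and hence the full (b). As the paper itself notes in \S\ref{sm42}, only the \'etale version of (b) is used downstream, so your weaker conclusion would still suffice for Theorem~\ref{sm4thm}.
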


The proof of Proposition \ref{sm4prop1} is similar to that of
Proposition \ref{sm3prop2} in \S\ref{sm31}. One can also prove an
analogue of Proposition \ref{sm4prop1} when $k=1$, but in part (b)
$\pi_U\vert_{Q^{(1)}}:Q^{(1)}\ra X^{(1)}$ must be \'etale rather
than a Zariski open inclusion.

In Proposition \ref{sm4prop1}, we start with
$\Phi:X^{(k+1)}\,{\buildrel\cong\over\longra}\, Y^{(k+1)}$, but we
construct $T,\pi_U,\pi_V$ with $\Phi\ci\pi_U\vert_{Q^{(k)}}=\pi_V
\vert_{Q^{(k)}}$. One might expect to find $T,\pi_U,\pi_V$ with
$\Phi\ci\pi_U\vert_{Q^{(k+1)}}=\pi_V \vert_{Q^{(k+1)}}$, but the
next example shows this is not possible.

\begin{ex} Let $U,V$ be open neighbourhoods of 0 in $\C$, and
$f:U\ra\C$, $g:V\ra\C$ be regular functions given as power series by
$f(x)=x^{m+1}$ and $g(y)=y^{m+1}+Ay^{(k+1)m}+\cdots$, for $k,m\ge 2$
and $0\ne A\in\C$, where 0 is the only critical point of $g$. Then
$X:=\Crit(f)=\Spec\bigl(\C[x]/(x^m)\bigr)$ and
$Y:=\Crit(g)=\Spec\bigl(\C[y]/(y^m)\bigr)$, so
$X^{(k+1)}=\Spec\bigl(\C[x]/(x^{(k+1)m})\bigr)$,
$f^{(k+1)}=x^{m+1}+(x^{(k+1)m})$, and
$Y^{(k+1)}=\Spec\bigl(\C[y]/(y^{(k+1)m})\bigr)$,
$g^{(k+1)}=y^{m+1}+(y^{(k+1)m})$. Thus $\Phi:X^{(k+1)}\ra Y^{(k+1)}$
acing on functions by $y+(y^{(k+1)m})\mapsto x+(x^{(k+1)m})$ is an
isomorphism with~$f^{(k+1)}=g^{(k+1)}\ci\Phi$.

Suppose $T,\pi_U,\pi_V$ are as in Proposition \ref{sm4prop1}, and
use $w=x\ci\pi_U$ as a coordinate on $T$. Then $e(w)=w^{m+1}$, and
$Q=\Crit(e)=\Spec\bigl(\C[w]/(w^m)\bigr)$. We have $\pi_U(w)=w$, so
$\Phi\ci\pi_U(w)=w$, but $w^{m+1}=\pi_V(w)^{m+1}+
A\pi_V(w)^{(k+1)m}+\cdots,$ so that $\pi_V(w)=w-\frac{1}{m+1}
Aw^{km}+\cdots$. Thus, $\Phi\ci\pi_U:T\ra V$ and $\pi_V:T\ra V$
differ by $-\frac{1}{m+1}Aw^{km}+\cdots$, which is zero on $Q^{(k)}$
but not on $Q^{(k+1)}$. Hence in this example there do not exist
$T,\pi_U,\pi_V$ with~$\Phi\ci\pi_U\vert_{Q^{(k+1)}}=\pi_V
\vert_{Q^{(k+1)}}$.
\label{sm4ex1}
\end{ex}

\begin{rem} We can also ask: can we improve $(X^{(3)},f^{(3)})$ in
Theorem \ref{sm4thm} to $(X^{(2)},f^{(2)})$ or $(X^{(1)},f^{(1)})$
or $(X^\red,f^\red)$? Here are some thoughts on this.
\smallskip

\noindent{\bf(a)} The analogue of Proposition \ref{sm4prop1} for
$k=1$ mentioned above implies that \'etale or complex analytically
locally on $X$, $(U,f)$ and hence $\smash{\PV_{U,f}^\bu}$ are
determined up to {\it non-canonical\/} isomorphism by
$(X^{(2)},f^{(2)})$. Using the ideas of \S\ref{sm5}--\S\ref{sm6},
one can show that these non-canonical isomorphisms of
$\smash{\PV_{U,f}^\bu}$ are unique {\it up to sign}.
\smallskip

\noindent{\bf(b)} Consider the following example: let
$U=(\C\sm\{0\})\t\C=V$, and define $f:U\ra\C$ and $g:V\ra\C$ by
$f(x,y)=y^2$ and $g(x,y)=xy^2$. Then
$X:=\Crit(f)=\{y=0\}=\Crit(g)=:Y$, and $f^{(2)}=g^{(2)}=0$, so that
$(X^{(2)},f^{(2)})=(Y^{(2)},g^{(2)})$. However, as in Example
\ref{sm5ex} below, $\PV_{U,f}^\bu\not\cong\PV_{V,g}^\bu$. Thus,
globally, $\smash{\PV_{U,f}^\bu}$ is not determined up to
isomorphism by~$(X^{(2)},f^{(2)})$.
\smallskip

\noindent{\bf(c)} Suppose $U$ is a complex manifold and $f:U\ra\C$
is holomorphic, with $\Crit(f)$ a single (not necessarily reduced)
point $x$. The {\it Mather--Yau Theorem\/} \cite{MaYa} shows that
the germ of $(U,f)$ at $x$ is determined up to non-canonical
isomorphism by the complex analytic subspace $f^{(1)}=0$ in
$X^{(1)}$, and hence by the pair $(X^{(1)},f^{(1)})$. Therefore, for
{\it isolated\/} singularities, $\smash{\PV_{U,f}^\bu}$ is
determined up to non-canonical isomorphism by~$(X^{(1)},f^{(1)})$.
\smallskip

\noindent{\bf(d)} Define $f:U\ra\C$ by $U=\C$ and $f(z)=cz^n$ for
$0\ne c\in\C$ and $n>2$. This has an isolated singularity at $0$,
and $(X^{(1)},f^{(1)})$ is independent of $c$. By moving $c$ in a
circle round zero, we see that in this example
$\smash{\PV_{U,f}^\bu}$ is determined up to a $\Z/n\Z$ group of
automorphisms. So the non-canonical isomorphisms of
$\smash{\PV_{U,f}^\bu}$ are not unique up to sign, in contrast
to~{\bf(a)}.
\smallskip

\noindent{\bf(e)} Parts {\bf(a)}\rm--{\bf(d)} leave open the
question of whether $\smash{\PV_{U,f}^\bu}$ is determined locally up
to non-canonical isomorphism by $(X^{(1)},f^{(1)})$ for non-isolated
singularities. We do not have a counterexample to this.

However, Gaffney and Hauser \cite[\S 4]{GaHa} give examples of
complex manifolds $U$ and holomorphic $f:U\ra\C$ with $X=\Crit(f)$
non-isolated, such that the germ of $(U,f)$ at $x\in X$ is not
determined up to non-canonical isomorphism by the germ of
$(X^{(1)},f^{(1)})$ at $x$, in contrast to the Mather--Yau Theorem,
and continuous families of distinct germs $[U,f,x]$ can have the
same germ $[X^{(1)},f^{(1)},x]$. It seems likely that in examples of
this kind, the mixed Hodge module $\HV_{U,f}^\bu$ (which contains
continuous Hodge-theoretic information) is not locally determined up
to non-canonical isomorphism by~$(X^{(1)},f^{(1)})$.
\smallskip

\noindent{\bf(f)} For the example in {\bf(d)}, $\PV_{U,f}^\bu$
depends on $n=3,4,\ldots,$ but $(X^\red,f^\red)=(\{0\},0)$ is
independent of $n$. So $\smash{\PV_{U,f}^\bu}$ is not determined
even locally up to non-canonical isomorphism by~$(X^\red,f^\red)$.
\label{sm4rem}
\end{rem}

\subsection{Proof of Proposition \ref{sm4prop1}}
\label{sm41}

The $\C$-subscheme $X^{(k+1)}$ in $U$ is the zeroes of the ideal
$I_X^{k+1}\subset\O_U$, which vanishes to order $k+1\ge 2$ at $x\in
X\subseteq X^{(k+1)}\subseteq U$. Hence $T_xX^{(k+1)}=T_xU$. As
$\Phi:X^{(k+1)}\ra Y^{(k+1)}$ is an isomorphism, it follows that
\e
T_xU=T_xX^{(k+1)}\cong T_{\Phi(x)}Y^{(k+1)}=T_{\Phi(x)}V.
\label{sm4eq7}
\e
Therefore $n:=\dim U=\dim V$.

Choose a Zariski open neighbourhood $V'$ of $\Phi(x)$ in $V$ and
\'etale coordinates $(y_1,\ldots,y_n):V'\ra\C^n$ on $V'$. Write
$g'=g\vert_{V'}$ and $Y'=\Crit(g')=Y\cap V'$, so that
$Y^{\prime(k+1)}=Y^{(k+1)}\cap V'$. Then $y_a\ci\Phi$ are regular
functions on the open neighbourhood $\Phi^{-1}(V')\subseteq
X^{(k+1)}$ of $x$ in $X^{(k+1)}$, so they extend Zariski locally
from $X^{(k+1)}$ to $U$. Thus we can choose a Zariski open
neighbourhood $U'$ of $x$ in $U$ with $\Phi(X^{(k+1)}\cap U')
\subseteq Y^{(k+1)}\cap V'$, and regular functions $x_i:U'\ra\C$
with $x_i\vert_{X^{(k+1)}\cap U'}=y_i\ci\Phi\vert_{X^{(k+1)}\cap
U'}$ for~$i=1,\ldots,n$.

Write $f'=f\vert_{U'}$ and $X'=\Crit(f')=X\cap U'$, so that
$X^{\prime(k+1)}=X^{(k+1)}\cap X'$. Since $(y_1,\ldots,y_n)$ are
\'etale coordinates, $\d y_1\vert_{\Phi(x)}, \ldots,\d
y_n\vert_{\Phi(x)}$ are a basis for $T_{\Phi(x)}^*V$, so $\d
x_1\vert_x,\ldots,\d x_n\vert_x$ are a basis for $T_x^*X$ by
\eq{sm4eq7}. Hence by making $U'$ smaller, we can suppose
$(x_1,\ldots,x_n)$ are \'etale coordinates on~$U'$.

Consider the $\C$-scheme $U'\t V'$, with projections $\pi_{U'}:U'\t
V'\ra U'$, $\pi_{V'}:U'\t V'\ra V'$, and write $x_i'=x_i\ci\pi_{U'}:
U'\t V'\ra\C$, $y_i'=y_i\ci\pi_{V'}: U'\t V'\ra\C$, so that
$(x_1',\ldots,x_n',y_1',\ldots,y_n')$ are \'etale coordinates on
$U'\t V'$. We have a morphism $\id\t\Phi\vert_{X^{\prime(k+1)}}:
X^{\prime(k+1)}\ra U'\t V'$ which embeds $X^{\prime(k+1)}$ as a
closed $\C$-subscheme of $U'\t V'$. The image
$\bigl(\id\t\Phi\vert_{X^{\prime(k+1)}}\bigr)(X^{\prime(k+1)})$ is
locally the zeroes of the sheaf of ideals
\begin{equation*}
\bigl(x_i'-y_i',\;i=1,\ldots,n\bigr)+
\pi_{U'}^{-1}\bigl(I_X^{k+1}\bigr)\subset\O_{U'\t V'},
\end{equation*}
where $\bigl(x_i'-y_i',i=1,\ldots,n\bigr)$ denotes the ideal
generated by $x_i'-y_i':U'\t V'\ra\C$ for $i=1,\ldots,n$, and
$\pi_{U'}^{-1}\bigl(I_X^{k+1}\bigr)\subset \O_{U'\t V'}$ the
preimage ideal of $I_X^{k+1}\vert_{U'}\subset\O_{U'}$.

Now $\bigl(f\ci\pi_{U'}- g\ci\pi_{V'}\bigr)\vert_{(\id \t\Phi)
(X^{\prime(k+1)})}=0$ as $f^{(k+1)}=g^{(k+1)}\ci\Phi$. Hence
\e
f\ci\pi_{U'}-g\ci\pi_{V'}\in \bigl(x_i'-y_i',\;i=1,\ldots,n\bigr)+
\pi_{U'}^{-1}\bigl(I_X^{k+1}\bigr).
\label{sm4eq8}
\e
Lifting \eq{sm4eq8} from $\bigl(x_i'-y_i',\;i=1,\ldots,n\bigr)$ to
$\bigl(x_i'-y_i',\;i=1,\ldots,n\bigr)^2$, after making $U',V'$
smaller if necessary, we can choose regular functions $A_i:U'\t
V'\ra\C$ for $i=1,\ldots,n$ such that
\e
f\ci\pi_{U'}\!-\!g\ci\pi_{V'}\!-\!\ts\sum\limits_{i=1}^nA_i
\cdot(x_i'\!-\!y_i')\in
\bigl(x_i'\!-\!y_i',\;i=1,\ldots,n\bigr)^2\!+\!
\pi_{U'}^{-1}\bigl(I_X^{k+1}\bigr).
\label{sm4eq9}
\e
Apply $\frac{\pd}{\pd x_i'}$ to \eq{sm4eq9}, using the \'etale
coordinates $(x_1',\ldots,x_n',y_1',\ldots,y_n')$ on $U'\t V'$.
Since $\frac{\pd}{\pd x_i'}\bigl(f\ci\pi_{U'}\bigr)=\frac{\pd f}{\pd
x_i}\ci\pi_{U'}$ and $\frac{\pd}{\pd
x_i'}\bigl(g\ci\pi_{V'}\bigr)=0$, this gives
\e
\ts\frac{\pd f}{\pd x_i}\ci\pi_{U'}-A_i\in
\bigl(x_i'-y_i',\;i=1,\ldots,n\bigr)+
\pi_{U'}^{-1}\bigl(I_X^k\bigr).
\label{sm4eq10}
\e

Changing $A_i$ by an element of
$\bigl(x_i'-y_i',\;i=1,\ldots,n\bigr)$ can be absorbed in the ideal
$\bigl(x_i'-y_i',\;i=1,\ldots,n\bigr)^2$ in \eq{sm4eq9}, so we can
suppose $\frac{\pd f}{\pd x_i}\ci\pi_{U'}-A_i\in
\pi_{U'}^{-1}\bigl(I_X^k\bigr)$. As $I_X=\bigl(\frac{\pd f}{\pd
x_j},\;j=1,\ldots,n\bigr)$, after making $U',V'$ smaller we may
write
\e
A_i=\ts\frac{\pd f}{\pd x_i}\ci\pi_{U'}+\sum\limits_{j=1}^n
B_{ij}\cdot\frac{\pd f}{\pd x_j}\ci\pi_{U'},
\label{sm4eq11}
\e
with $B_{ij}\in\pi_{U'}^{-1}\bigl(I_X^{k-1}\bigr)$ for
$i,j=1,\ldots,n$. Consider the matrix of functions
$\bigl(\de_{ij}+B_{ij}\bigr){}_{i,j=1}^n$ on $U'\t V'$. At the point
$(x,\Phi(x))$ in $U'\t V'$ this matrix is the identity, since
$B_{ij}(x,\Phi(x))=0$ as $B_{ij}\in \pi_{U'}^{-1}(I_X^{k-1})$ with
$k\ge 2$, so $\bigl(\de_{ij}+B_{ij}\bigr){}_{i,j=1}^n$ is invertible
near $(x,\Phi(x))$, and making $U',V'$ smaller we can suppose
$\bigl(\de_{ij}+B_{ij}\bigr){}_{i,j=1}^n$ is invertible on $U'\t
V'$. But in matrix notation we have
\begin{equation*}
\bigl(A_i\bigr){}_{i=1}^n=\bigl(\de_{ij}+B_{ij}\bigr){}_{i,j=1}^n\bigl(
\ts\frac{\pd f}{\pd x_j}\ci\pi_{U'}\bigr){}_{j=1}^n.
\end{equation*}
Hence in ideals in $\O_{U'\t V'}$ we have
\e
\bigl(A_i,\;i=1,\ldots,n\bigr)=\bigl( \ts\frac{\pd f}{\pd
x_j}\ci\pi_{U'},\;j=1,\ldots,n\bigr)=\pi_{U'}^{-1}\bigl(I_X\bigr)
\subset\O_{U'\t V'}.
\label{sm4eq12}
\e

Now by \eq{sm4eq9}, after making $U',V'$ smaller if necessary, we
may write
\e
\begin{split}
f\ci\pi_{U'}&-g\ci\pi_{V'}=\ts\sum\limits_{i=1}^nA_i\cdot
\bigl(x_i'-y_i'\bigr)\\
&\quad+\ts\sum\limits_{i,j=1}^nC_{ij}\cdot\bigl(x_i'-y_i'\bigr)
\bigl(x_j'-y_j'\bigr)+\sum\limits_{i,j=1}^nD_{ij}\cdot A_i A_j,
\end{split}
\label{sm4eq13}
\e
for regular functions $C_{ij},D_{ij}:U'\t V'\ra\C$ with $D_{ij}\in
H^0\bigl(\pi_{U'}^{-1}(I_X^{k-1})\bigr)$ for $i,j=1,\ldots,n$, where
in the last term we have used \eq{sm4eq12} to write two factors of
$\pi_{U'}^{-1}(I_X)$ in terms of $A_1,\ldots,A_n$.

Write $(z_{ij})_{i,j=1}^n$ for the coordinates on $\C^{n^2}$. Let
$W$ be a Zariski open neighbourhood of
$\bigl(x,\Phi(x),(0)_{i,j=1}^n\bigr)$ in $U'\t V'\t\C^{n^2}$ to be
chosen shortly, and let $T$ be the $\C$-subscheme of $W$ defined by
\e
\begin{split}
T=\bigl\{&\bigl(u,v,(z_{ij})_{i,j=1}^n\bigr)\in W\subseteq U'\t
V'\t\C^{n^2}: \\
&x_i(u)-y_i(v)=\ts\sum\limits_{j=1}^nz_{ij}\cdot A_j(u,v),\quad
i=1,\ldots,n,
\\
&z_{ij}+\ts\sum\limits_{l,m=1}^nC_{lm}(u,v)\cdot
z_{li}z_{mj}+D_{ij}(u,v) =0,\quad i,j=1,\ldots,n\bigr\}.
\end{split}
\label{sm4eq14}
\e
Define $\C$-scheme morphisms $\pi_U:T\ra U$ by
$\pi_U:\bigl(u,v,(z_{ij})_{i,j=1}^n\bigr)\mapsto u$ and $\pi_V:T\ra
V$ by~$\pi_V:\bigl(u,v,(z_{ij})_{i,j=1}^n\bigr)\mapsto v$.

Now $W\subseteq U'\t V'\t\C^{n^2}$ is smooth of dimension $n+n+n^2$,
and in \eq{sm4eq14} we impose $n+n^2$ equations, so the expected
dimension of $T$ is $(2n+n^2)-(n+n^2)=n$. The linearizations of the
$n+n^2$ equations in \eq{sm4eq14} at $\bigl(u,v,
(z_{ij})_{i,j=1}^n\bigr)=\bigl(x,\Phi(x),(0)_{i,j=1}^n\bigr)$ are
\e
\begin{split}
\d x_i\vert_x(\de u)-\d y_i\vert_{\Phi(x)}(\de v)=0,\quad
i=1,\ldots,n,\\
\de z_{ij}+\d D_{ij}\vert_{(x,\Phi(x))}(\de u\op\de v)=0, \quad
i,j=1,\ldots,n,
\end{split}
\label{sm4eq15}
\e
for $\de u\in T_xU'$, $\de v\in T_{\Phi(x)}V'$, and $(\de
z_{ij})_{i,j=1}^n\in T_{(0)_{i,j=1}^n}\C^{n^2}$. As $\d
x_1\vert_x,\ldots,\d x_n\vert_x$ are a basis for $T_x^*U'$, the
equations \eq{sm4eq15} are transverse, so that $T$ is smooth of
dimension $n$ near $\bigl(x,\Phi(x),(0)_{i,j=1}^n\bigr)$.

The vector space of solutions $\bigl(\de u,\de v,(\de
z_{ij})_{i,j=1}^n\bigr)$ to \eq{sm4eq15} is $T_{(x,\Phi(x),(0))}T$,
where $\d\pi_U\vert_{(x,\Phi(x),(0))}: T_{(x,\Phi(x),(0))}T\ra T_xU$
maps $\bigl(\de u,\de v,(\de z_{ij})_{i,j=1}^n\bigr)\mapsto\de u$,
and $\d\pi_V\vert_{(x,\Phi(x),(0))}: T_{(x,\Phi(x),(0))}T\ra
T_{\Phi(x)}V$ maps $\bigl(\de u,\de v,(\de
z_{ij})_{i,j=1}^n\bigr)\mapsto\de v$. Clearly,
$\d\pi_U\vert_{(x,\Phi(x),(0))},\d\pi_V\vert_{(x,\Phi(x),(0))}$ are
isomorphisms, so as $T$ is smooth near $\bigl(x,\Phi(x),
(0)_{i,j=1}^n\bigr)$ and $U,V$ are smooth, we see that $\pi_U,\pi_V$
are \'etale near $\bigl(x,\Phi(x),(0)\bigr)$. Thus, by choosing the
open neighbourhood $\bigl(x,\Phi(x),(0)\bigr)\in W\subseteq U'\t
V'\t\C^{n^2}$ sufficiently small, we can suppose that $T$ is smooth
of dimension $n$ and $\pi_U:T\ra U$, $\pi_V:T\ra V$ are \'etale.

It remains to prove Proposition \ref{sm4prop1}(a)--(c). For (a), we
have
\begin{align*}
\bigl(f\ci&\pi_U\!-\!g\ci\pi_V\bigr)\bigl(u,v,(z_{ij})_{i,j=1}^n\bigr)
\!=\!f(u)\!-\!g(v)\!=\!(f\ci\pi_{U'}\!-\!g\ci\pi_{V'})(u,v)\\
&=\ts\sum\limits_{i=1}^nA_i(u,v)\cdot\bigl(x_i(u)\!-\!y_i(v)\bigr)
\!+\!\ts\sum\limits_{i,j=1}^n\begin{aligned}[t]
C_{ij}(u,v)\cdot\bigl(x_i(u)\!-\!y_i(v)\bigr)&\\
\bigl(x_j(u)\!-\!y_j(v)\bigr)&\end{aligned} \\[-7pt]
&\quad+\ts\sum\limits_{i,j=1}^nD_{ij}(u,v)\cdot A_i(u,v)A_j(u,v) \\
&= \ts\sum\limits_{i=1}^nA_i(u,v)\cdot\Bigl(\,
\sum\limits_{j=1}^nz_{ij}\cdot A_j(u,v)\Bigr)
\!+\!\ts\sum\limits_{i,j=1}^n\begin{aligned}[t]
C_{ij}(u,v)\cdot\Bigl(\,
\ts\sum\limits_{l=1}^nz_{il}\cdot A_l(u,v)\Bigr)&\\
\ts\Bigl(\,\sum\limits_{m=1}^nz_{jm}\cdot
A_m(u,v)\Bigr)&\end{aligned} \\[-15pt]
&\quad+\ts\sum\limits_{i,j=1}^nD_{ij}(u,v)\cdot A_i(u,v)A_j(u,v) \\
&=\ts\sum\limits_{i,j=1}^nA_i(u,v)A_j(u,v)\Bigl[
z_{ij}\!+\!\ts\sum\limits_{l,m=1}^nC_{lm}(u,v)\cdot
z_{li}z_{mj}\!+\!D_{ij}(u,v) \Bigr]=0,
\end{align*}
using \eq{sm4eq13} in the third step, the first equation of
\eq{sm4eq14} in the fourth, rearranging and exchanging labels $i,l$
and $j,m$ in the fifth, and the second equation of \eq{sm4eq14} in
the sixth. Hence $f\ci\pi_U-g\ci\pi_V=0:T\ra\C$, proving (a).

For (b), using the morphism $\id\t\Phi\vert_X\t(0):X\ra U\t
V\t\C^{n^2}\supseteq W$, define $\ti
X=\bigl(\id\t\Phi\vert_X\t(0)\bigr)^{-1}(W)$, so that $\ti X$ is a
Zariski open neighbourhood of $x$ in $X$. Then
$\bigl(\id\t\Phi\vert_X\t(0)\bigr)(\ti X)$ is a closed
$\C$-subscheme of $W$. We claim that:
\begin{itemize}
\setlength{\itemsep}{0pt}
\setlength{\parsep}{0pt}
\item[(i)] $\bigl(\id\t\Phi\vert_X\t(0)\bigr)(\ti X)$ is a closed
$\C$-subscheme of $T\subseteq W$; and
\item[(ii)] $\bigl(\id\t\Phi\vert_X\t(0)\bigr)(\ti X)$ is open
and closed in $Q:=\Crit(e)\subseteq T$,
\end{itemize}
where $e:=f\ci\pi_U=g\ci\pi_V:T\ra\C$. To prove (i), we have to show
that the equations of \eq{sm4eq14} hold on
$\bigl(\id\t\Phi\vert_X\t(0)\bigr)(\ti X)$, which is true as
$x_i\vert_{\ti X}=y_i\ci\Phi\vert_{\ti X}$, and $z_{ij}\ci(0)=0$,
and $D_{ij}\ci(\id\t\Phi\vert_{\ti X})=0$ as $D_{ij}\in
H^0\bigl(\pi_{U'}^{-1}(I_X^{k-1})\bigr)$ for~$k\ge 2$.

For (ii), as $\pi_U:T\ra U$ is \'etale with $e=f\ci\pi_U$, we see
that $\pi_U\vert_Q:Q\ra X$ is \'etale. But
$\pi_U\vert_Q\ci\bigl(\id\t\Phi\vert_X\t(0)\bigr)\vert_{\ti
X}=\id_{\ti X}$. Hence $\bigl(\id\t\Phi\vert_X\t(0)\bigr)(\ti X)$ is
open in $Q$, and is also closed in $Q$ as it is closed in $T$. Thus,
by making $W,T$ smaller to delete other components of $Q$, we can
suppose that $Q=\bigl(\id\t\Phi\vert_X\t(0)\bigr)(\ti X)$. Then
$\pi_U\vert_Q:Q\ra\ti X$ is an isomorphism with the Zariski open
neighbourhood $\ti X$ of $x$ in $X$. Since $\pi_U:T\ra U$ is \'etale
with $e=f\ci\pi_U$, this extends to the $k^{\rm th}$ order
thickenings, so $\pi_U\vert_{Q^{(k)}}:Q^{(k)}\ra\ti X^{(k)}$ is an
isomorphism, proving~(b).

For (c), first note that $Q=\bigl(\id\t\Phi\vert_X\t(0)\bigr)(\ti
X)$, so $\Phi\ci\pi_U\vert_Q=\pi_V\vert_Q$ is immediate. We have to
extend this to the thickening $Q^{(k)}$. Write $I_Q\subset\O_T$ for
the ideal of functions vanishing on $Q$. Then $I_Q=\pi_U^{-1}(I_X)$
as $\pi_U$ identifies $Q$ with $\ti X\subseteq X$. We have
\begin{equation*}
A_i\ci\pi_{U'\t V'}\in I_Q\quad\text{and}\quad  D_{ij}\ci\pi_{U'\t
V'}\in I_Q^{k-1},
\end{equation*}
as $A_i\in H^0\bigl(\pi_{U'}^{-1}(I_X)\bigr)$, $D_{ij}\in
H^0\bigl(\pi_{U'}^{-1}(I_X^{k-1})\bigr)$. The second equation of
\eq{sm4eq14} then shows that
\begin{equation*}
z_{ij}\ci\pi_{\C^{n^2}}\in I_Q^{k-1},
\end{equation*}
since $Q=\bigl(\id\t\Phi\vert_X\t(0)\bigr)(\ti X)$ implies that
$z_{ij}\ci\pi_{\C^{n^2}}=0$ on $Q$, so we can neglect the terms
$\sum_{l,m=1}^nC_{lm}(u,v)\cdot z_{li}z_{mj}$. Hence the first
equation of \eq{sm4eq14} gives
\begin{equation*}
x_i\ci\pi_U-y_i\ci\pi_V\in I_Q^k.
\end{equation*}
As $I_Q^k$ vanishes on $Q^{(k)}$, and
$x_i\vert_{X'}=y_i\ci\Phi\vert_{X'}$, this gives
\begin{equation*}
y_i\ci\bigl(\Phi\ci\pi_U\vert_{Q^{(k)}}\bigr)=x_i\ci
\pi_U\vert_{Q^{(k)}}=y_i\ci\bigl(\pi_V \vert_{Q^{(k)}}\bigr).
\end{equation*}
Thus $\Phi\ci\pi_U\vert_{Q^{(k)}}=\pi_V \vert_{Q^{(k)}}$ follows, as
$(y_1,\ldots,y_n)$ are \'etale coordinates on $V$ near $\pi_V(Q)$
and $\Phi\ci\pi_U\vert_Q=\pi_V\vert_Q$. This proves (c), and
Proposition~\ref{sm4prop1}.

\subsection[Proof of Theorem \ref{sm4thm} for perverse sheaves on
$\C$-schemes]{Proof of Theorem \ref{sm4thm} for $\C$-schemes}
\label{sm42}

Let $U,V,f,g,X,Y$ and $\Phi:X^{(3)}\ra Y^{(3)}$ be as in Theorem
\ref{sm4thm}. Pick $x\in X$, and apply Proposition \ref{sm4prop1}
with $k=2$. This gives a smooth $\C$-scheme $T$ and \'etale
morphisms $\pi_U:T\ra U$, $\pi_V:T\ra V$ with
$e:=f\ci\pi_U=g\ci\pi_V:T\ra\C$ and $Q:=\Crit(e)$, such that
$\pi_U\vert_{Q^{(2)}}:Q^{(2)}\ra X^{(2)}$ is an \'etale open
neighbourhood of $x$ in $X^{(2)}$, and
$\Phi\ci\pi_U\vert_{Q^{(2)}}=\pi_V \vert_{Q^{(2)}}:Q^{(2)}\ra
Y^{(2)}$. Actually Proposition \ref{sm4prop1} proves more, that
$\pi_U\vert_{Q^{(2)}}:Q^{(2)}\ra X^{(2)}$ is an isomorphism with a
Zariski open set $x\in\ti X^{(2)}\subseteq X^{(2)}$, but we will not
use this.

Thus, we can choose $\bigl\{(T^a,\pi_U^a,\pi_V^a,e^a,Q^a):a\in
A\bigr\}$, where $A$ is an indexing set, such that
$T^a,\pi_U^a,\pi_V^a,e^a,Q^a$ satisfy the conditions above for each
$a\in A$, and $\bigl\{\pi_U^a\vert_{Q^a}:Q^a\ra X\bigr\}{}_{a\in A}$
is an \'etale open cover of $X$. Then for each $a\in A$, by
Definition \ref{sm2def6} we have isomorphisms
\begin{align*}
\PV_{\pi_U^a}:\PV_{T^a,e^a}^\bu\longra \pi_U^a\vert_{Q^a}^*\bigl(
\PV_{U,f}^\bu\bigr),\quad
\PV_{\pi_V^a}:\PV_{T^a,e^a}^\bu\longra \pi_V^a\vert_{Q^a}^*\bigl(
\PV_{V,f}^\bu\bigr).
\end{align*}
Noting that $\pi_V^a\vert_{Q^a}=\Phi\vert_X\ci\pi_U^a\vert_{Q^a}$,
we may define an isomorphism
\e
\Om^a=\PV_{\pi_V^a}\ci\PV_{\pi_U^a}^{-1}: \pi_U^a\vert_{Q^a}^*\bigl(
\PV_{U,f}^\bu\bigr)\longra \pi_U^a\vert_{Q^a}^*\bigl(
\Phi\vert_X^*\bigl(\PV_{V,f}^\bu\bigr)\bigr).
\label{sm4eq16}
\e

For $a,b\in A$, define $T^{ab}=T^a\t_{\pi_U^a,U,\pi_U^b}T^b$ to be
the $\C$-scheme fibre product, so that $T^{ab}$ is a smooth
$\C$-scheme and the projections $\Pi_{T^a}:T^{ab}\ra T^a$,
$\Pi_{T^b}:T^{ab}\ra T^b$ are \'etale. Define
$e^{ab}=e^a\ci\Pi_{T^a}:T^a\ra\C$. Then
\e
\begin{split}
e^{ab}&=e^a\ci\Pi_{T^a}=g\ci\pi_V^a\ci\Pi_{T^a}=f\ci\pi_U^a
\ci\Pi_{T^a}\\
&=f\ci\pi_U^b \ci\Pi_{T^b}=g\ci\pi_V^b\ci\Pi_{T^b}= e^b\ci\Pi_{T^b}.
\end{split}
\label{sm4eq17}
\e
Write $Q^{ab}=\Crit(e^{ab})$. Then
$\Pi_{T^a}\vert_{Q^{ab}}:Q^{ab}\ra Q^a$ and
$\Pi_{T^b}\vert_{Q^{ab}}:Q^{ab}\ra Q^b$ are \'etale. Now
$\pi_U^a\ci\Pi_{T^a}=\pi_U^b\ci\Pi_{T^b}$ and
$\Phi\ci\pi_U^a\vert_{Q^{a\,(2)}}=\pi_V^a\vert_{Q^{a\,(2)}}$ imply
that
\e
\begin{split}
\bigl(\pi_V^a&\ci\Pi_{T^a}\bigr)\vert_{Q^{ab\,(2)}}=
\pi_V^a\vert_{Q^{a\,(2)}}\ci\Pi_{T^a}\vert_{Q^{ab\,(2)}}\\
&=\Phi\vert_{X^{(2)}}\ci\pi_U^a\vert_{Q^{a\,(2)}}\ci
\Pi_{T^a}\vert_{Q^{ab\,(2)}}
=\Phi\vert_{X^{(2)}}\ci(\pi_U^a\ci\Pi_{T^a})\vert_{Q^{ab\,(2)}}\\
&=\Phi\vert_{X^{(2)}}\ci(\pi_U^b\ci\Pi_{T^b})\vert_{Q^{ab\,(2)}}
=\Phi\vert_{X^{(2)}}\ci\pi_U^a\vert_{Q^{b\,(2)}}\ci
\Pi_{T^b}\vert_{Q^{ab\,(2)}}\\
&=\pi_V^a\vert_{Q^{b\,(2)}}\ci\Pi_{T^b}\vert_{Q^{ab\,(2)}}
=\bigl(\pi_V^b\ci\Pi_{T^b}\bigr)\vert_{Q^{ab\,(2)}}.
\end{split}
\label{sm4eq18}
\e
Hence $(\pi_V^a\ci\Pi_{T^a})\vert_{Q^{ab}}=
(\pi_V^b\ci\Pi_{T^b})\vert_{Q^{ab}}$. Moreover, as
$TQ^{ab\,(2)}\vert_{Q^{ab}}=T(T^{ab})\vert_{Q^{ab}}$, we see that
$\d(\pi_V^a\ci\Pi_{T^a})\vert_{Q^{ab}}=\d(\pi_V^b\ci\Pi_{T^b})
\vert_{Q^{ab}}$, so that
\begin{equation*}
\d(\pi_V^b\ci\Pi_{T^b}) \vert_{Q^{ab}}^{-1}\ci
\d(\pi_V^a\ci\Pi_{T^a})\vert_{Q^{ab}}=\id:T(T^{ab})\vert_{Q^{ab}}\longra
T(T^{ab})\vert_{Q^{ab}}.
\end{equation*}
So $\det\bigl(\d(\pi_V^b\ci\Pi_{T^b}) \vert_{Q^{ab}}^{-1}\ci
\d(\pi_V^a\ci\Pi_{T^a})\vert_{Q^{ab}}\bigr)=1$. Thus, applying
Theorem \ref{sm3thm} with $T^{ab},V,Q^{ab},\pi_V^a\ci\Pi_{T^a},
\pi_V^b\ci\Pi_{T^b},e^{ab},f$ in place of $V,W,X,\Phi,\Psi,f,g$
gives
\e
\PV_{\pi_V^a\ci\Pi_{T^a}}=\PV_{\pi_V^b\ci\Pi_{T^b}}:
\PV_{T^{ab},e^{ab}}^\bu\longra
(\pi_V^a\ci\Pi_{T^a})\vert_{Q^{ab}}^*\bigl(\PV_{V,g}^\bu\bigr).
\label{sm4eq19}
\e

Now
\e
\begin{split}
\Pi_{T^a}\vert_{Q^{ab}}^*&(\Om^a)=\Pi_{T^a}\vert_{Q^{ab}}^*
(\PV_{\pi_V^a})\ci\Pi_{T^a}\vert_{Q^{ab}}^*(\PV_{\pi_U^a})^{-1}\\
&=\bigl[\Pi_{T^a}\vert_{Q^{ab}}^*(\PV_{\pi_V^a})
\ci\PV_{\Pi_{T^a}}\bigr]\ci\bigl[\Pi_{T^a}\vert_{Q^{ab}}^*
(\PV_{\pi_U^a})\ci\PV_{\Pi_{T^a}}\bigr]^{-1}\\
&=\PV_{\pi_V^a\ci\Pi_{T^a}}\ci\PV_{\pi_U^a\ci\Pi_{T^a}}^{-1}
=\PV_{\pi_V^b\ci\Pi_{T^b}}\ci\PV_{\pi_U^b\ci\Pi_{T^b}}^{-1}\\
&=\bigl[\Pi_{T^b}\vert_{Q^{ab}}^*(\PV_{\pi_V^a})\ci
\PV_{\Pi_{T^b}}\bigr]\ci\bigl[\Pi_{T^b}\vert_{Q^{ab}}^*
(\PV_{\pi_U^b})\ci\PV_{\Pi_{T^b}}\bigr]^{-1}\\
&=\Pi_{T^b}\vert_{Q^{ab}}^*(\PV_{\pi_V^b})\ci
\Pi_{T^b}\vert_{Q^{ab}}^*(\PV_{\pi_U^b})^{-1}=
\Pi_{T^b}\vert_{Q^{ab}}^*(\Om^b),
\end{split}
\label{sm4eq20}
\e
using \eq{sm4eq16} in the first and seventh steps, \eq{sm2eq18} in
the third and fifth, and \eq{sm4eq19} and
$\pi_U^a\ci\Pi_{T^a}=\pi_U^b\ci\Pi_{T^b}$ in the fourth. Therefore
Theorem \ref{sm2thm3}(i) applied to the \'etale open cover
$\bigl\{\pi_U^a\vert_{Q^a}:Q^a\ra X\bigr\}{}_{a\in A}$ of $X$ shows
that there is a unique isomorphism $\Om_\Phi$ in \eq{sm4eq2} with
$\pi_U^a\vert_{Q^a}^*(\Om_\Phi)=\Om^a$ for all $a\in A$.

Suppose $\bigl\{(T^a,\ldots,Q^a):a\in A\bigr\}$ and
$\bigl\{(T^{\prime a},\ldots,Q^{\prime a}):a\in A'\bigr\}$ are
alternative choices above, yielding morphisms $\Om_\Phi$ and
$\Om_\Phi'$ in \eq{sm4eq2}. By running the same construction using
the family $\bigl\{(T^a,\ldots,Q^a):a\in
A\bigr\}\amalg\bigl\{(T^{\prime a},\ldots,Q^{\prime a}):a\in
A'\bigr\}$, we get a third morphism $\Om_\Phi''$ in \eq{sm4eq2},
such that $\pi_U^a\vert_{Q^a}^*
(\Om_\Phi)=\Om^a=\pi_U^a\vert_{Q^a}^*(\Om_\Phi'')$ for $a\in A$,
giving $\Om_\Phi=\Om_\Phi''$, and $\pi_U^{\prime a}\vert_{Q^{\prime
a}}^* (\Om_\Phi')=\Om^{\prime a}=\pi_U^{\prime a}\vert_{Q^{\prime
a}}^*(\Om_\Phi'')$ for $a\in A'$, which forces
$\Om_\Phi'=\Om_\Phi''$. Thus $\Om_\Phi=\Om_\Phi'$, so $\Om_\Phi$ is
independent of the choice of $\bigl\{(T^a,\ldots,Q^a):a\in A\bigr\}$
above.

Let $T,\pi_U,\pi_V,e,Q$ be as in Theorem \ref{sm4thm}. Applying the
argument above using the family $\bigl\{(T^a,\ldots,Q^a):a\in
A\bigr\}\amalg\bigl\{(T,\pi_U, \pi_V,e,Q)\bigr\}$ shows that
$\Om_\Phi$ satisfies $\pi_U\vert_Q^*(\Om_\Phi)=
\PV_{\pi_V}\ci\PV_{\pi_U}^{-1}$, by \eq{sm4eq16}. Thus \eq{sm4eq3}
holds.

To show that \eq{sm4eq4}--\eq{sm4eq5} commute, we can combine
equations \eq{sm2eq16}--\eq{sm2eq17}, \eq{sm4eq16} and
$\pi_U^a\vert_{Q^a}^*(\Om_\Phi)=\Om^a$ to show that
$\pi_U^a\vert_{Q^a}^*$ applied to \eq{sm4eq4}--\eq{sm4eq5} commute
in $\Perv(Q^a)$ for each $a\in A$, so \eq{sm4eq4}--\eq{sm4eq5}
commute by Theorem~\ref{sm2thm3}(i).

Suppose there exists an \'etale morphism $\Xi:U\ra V$ with
$f=g\ci\Xi:U\ra\C$ and $\Xi\vert_{X^{(3)}}=\Phi:X^{(3)}\ra Y^{(3)}$.
Then as we have to prove, we have
\begin{equation*}
\smash{\PV_\Xi=\id_X^*(\Om_\Phi)\ci\PV_{\id_U}=\Om_\Phi\ci
\id_{\PV_{U,f}^\bu}=\Om_\Phi,}
\end{equation*}
where in the first step we use \eq{sm4eq3} with $T=U$,
$\pi_U=\id_U$, $\pi_V=\Xi$, $e=f$, and $Q=X$, and in the second we
use $\PV_{\id_U}=\id_{\PV_{U,f}^\bu}$ from Definition~\ref{sm2def6}.

Suppose $W$ is another smooth $\C$-scheme, $h:W\ra\C$ is regular,
$Z=\Crit(h),$ and $\Psi:Y^{(3)}\ra Z^{(3)}$ is an isomorphism with
$h^{(3)}\ci\Psi=g^{(3)}$. Let $x\in X$, and set $y=\Phi(x)\in Y$.
Proposition \ref{sm4prop1} for $x,\Phi$ gives a smooth $T$ and
\'etale $\pi_U:T\ra U$, $\pi_V:T\ra V$ with $e:=f\ci\pi_U=g\ci\pi_V$
and $Q:=\Crit(e)$, such that $\pi_U\vert_{Q^{(2)}}:Q^{(2)}\ra
X^{(2)}$ is an \'etale open neighbourhood of $x$, and
$\Phi\ci\pi_U\vert_{Q^{(2)}}=\pi_V\vert_{Q^{(2)}}$. Proposition
\ref{sm4prop1} for $y,\Psi$ gives smooth $\ti T$ and \'etale
$\ti\pi_V:\ti T\ra V$, $\ti\pi_W:\ti T\ra W$ with $\ti
e:=g\ci\ti\pi_V=h\ci\ti\pi_W$ and $\ti Q:=\Crit(\ti e)$, such that
$\ti\pi_V\vert_{\smash{\ti Q{}^{(2)}}}:\ti Q{}^{(2)}\ra Y^{(2)}$ is
an \'etale open neighbourhood of $y$, and
$\Psi\ci\ti\pi_V\vert_{\smash{\ti Q{}^{(2)}}}=\ti\pi_W
\vert_{\smash{\ti Q{}^{(2)}}}$.

Define $\hat T=T\t_{\pi_V,V,\ti\pi_V}\ti T$ with projections
$\Pi_T:\hat T\ra T$, $\Pi_{\smash{\ti T}}:\hat T\ra\ti T$. Then
$\hat T$ is smooth and $\Pi_T,\Pi_{\smash{\ti T}}$ are \'etale, as
$T,\ti T,V$ are smooth and $\pi_V,\ti\pi_V$ \'etale. Define
$\hat\pi_U=\pi_U\ci\Pi_T:\hat T\ra U$ and
$\hat\pi_W=\ti\pi_W\ci\Pi_{\smash{\ti T}}:\hat T\ra W$. Then
$\hat\pi_U,\hat\pi_W$ are \'etale. Set $\hat e=f\ci\hat\pi_U:\hat
T\ra\C$, and write $\hat Q=\Crit(\hat e)$. Then
\begin{equation*}
\hat e\!=\!f\!\ci\!\hat\pi_U\!=\!f\!\ci\!\pi_U\!\ci\!\Pi_T\!=\!
g\!\ci\!\pi_V\!\ci\!\Pi_T\!=\!g\!\ci\!\ti\pi_V\!\ci\!\Pi_{\smash{\ti T}}
\!=\!h\!\ci\!\ti\pi_W\!\ci\!\Pi_{\smash{\ti T}}\!=\!h\!\ci\!\hat\pi_W.
\end{equation*}
Also $\hat\pi_U\vert_{\smash{\hat Q{}^{(2)}}}:\hat Q{}^{(2)}\ra
X^{(2)}$ is an \'etale open neighbourhood of $x$, and
\begin{align*}
(\Psi\ci\Phi)\ci\hat\pi_U\vert_{\smash{\hat Q{}^{(2)}}}&=
\Psi\ci\Phi\ci\pi_U\vert_{Q^{(2)}}\ci\Pi_T\vert_{\smash{\hat Q{}^{(2)}}}
=\Psi\ci\pi_V\vert_{Q^{(2)}}\ci\Pi_T\vert_{\smash{\hat Q{}^{(2)}}}\\
&=\Psi\ci\ti\pi_V\vert_{\smash{\ti Q{}^{(2)}}}\ci\Pi_{\smash{\ti T}}
\vert_{\smash{\hat Q{}^{(2)}}}=\ti\pi_W\vert_{\smash{\ti Q{}^{(2)}}}
\ci\Pi_{\smash{\ti T}}\vert_{\smash{\hat Q{}^{(2)}}}
=\hat\pi_W\vert_{\smash{\hat Q{}^{(2)}}}.
\end{align*}

Thus we may apply \eq{sm4eq3} for $\Om_\Phi$ with
$T,\pi_U,\pi_V,\ldots,Q$, and for $\Om_\Psi$ with $\ti
T,\ti\pi_V,\ti\pi_W,\ldots,\ti Q$, and for $\Om_{\Psi\ci\Phi}$ with
$\hat T,\hat\pi_U,\hat\pi_W,\ldots,\hat Q$. This yields
\e
\begin{gathered}
\pi_U\vert_Q^*(\Om_\Phi)=\PV_{\pi_V}\ci\PV_{\pi_U}^{-1},\quad
\ti\pi_V\vert_{\ti Q}^*(\Om_\Psi)= \PV_{\ti\pi_W}\ci
\PV_{\ti\pi_V}^{-1},\\
\hat\pi_U\vert_{\hat Q}^*(\Om_{\Psi\ci\Phi})=
\PV_{\hat\pi_W}\ci\PV_{\hat\pi_U}^{-1}.
\end{gathered}
\label{sm4eq21}
\e
Now
\begin{align*}
\hat\pi_U&\vert_{\hat Q}^*\bigl(
\Om_{\Psi\ci\Phi}\bigr)=\PV_{\hat\pi_W}\ci
\PV_{\hat\pi_U}^{-1} =\PV_{\ti\pi_W\ci\Pi_{\smash{\ti T}}}
\ci\PV_{\pi_U\ci\Pi_T}^{-1}\\
&=\bigl[\Pi_{\smash{\ti T}}\vert_{\hat Q}^*
(\PV_{\ti\pi_W})\ci\PV_{\Pi_{\smash{\ti T}}}\bigr]
\ci\bigl[\Pi_T\vert_{\hat
Q}^*(\PV_{\pi_U})\ci\PV_{\Pi_T}\bigr]^{-1} \\
&=\Pi_{\smash{\ti T}}\vert_{\hat Q}^* \bigl(\PV_{\ti\pi_W}
\!\ci\!\PV_{\ti\pi_V}^{-1}\bigr)\!\ci\!\Pi_{\smash{\ti T}}\vert_{\hat
Q}^*(\PV_{\ti\pi_V})\!\ci\!\PV_{\Pi_{\smash{\ti T}}}\ci\PV_{\Pi_T}^{-1}
\!\ci\!\Pi_T\vert_{\hat Q}^*(\PV_{\pi_U}^{-1})\\
&=\Pi_{\smash{\ti T}}\vert_{\hat Q}^* \bigl(
\ti\pi_V\vert_{\ti Q}^*(\Om_\Psi)\bigr)\ci
\PV_{\ti\pi_V\ci\Pi_{\smash{\ti T}}} \ci\PV_{\Pi_T}^{-1}
\ci\Pi_T\vert_{\hat Q}^*(\PV_{\pi_U}^{-1})\\
&=\bigl[\ti\pi_V\vert_{\ti Q}\ci\Pi_{\smash{\ti T}}\vert_{\hat
Q}\bigr]^*(\Om_\Psi)\ci \PV_{\pi_V\ci\Pi_T}
\ci\PV_{\Pi_T}^{-1}
\ci\Pi_T\vert_{\hat Q}^*(\PV_{\pi_U}^{-1})\\
&=\bigl[\pi_V\vert_Q\!\ci\!\Pi_T\vert_{\hat
Q}\bigr]^*(\Om_\Psi)\!\ci\! \Pi_T\vert_{\hat
Q}^*(\PV_{\pi_V})\!\ci\!\PV_{\Pi_T}
\!\ci\!\PV_{\Pi_T}^{-1}
\!\ci\!\Pi_T\vert_{\hat Q}^*(\PV_{\pi_U}^{-1})\\
&=\bigl[\Phi\vert_X\ci\pi_U\vert_Q\ci\Pi_T\vert_{\hat
Q}\bigr]^*(\Om_\Psi) \ci \Pi_T\vert_{\hat
Q}^*(\PV_{\pi_V}\ci\PV_{\pi_U}^{-1})\\
&=(\pi_U\ci\Pi_T)\vert_{\hat
Q}^*(\Phi\vert_X^*(\Om_\Psi)) \ci \Pi_T\vert_{\hat
Q}^*(\pi_U\vert_Q^*(\Om_\Phi))
=\hat\pi_U\vert_{\hat
Q}^*\bigl(\Phi\vert_X^*(\Om_\Psi)\ci\Om_\Phi\bigr),
\end{align*}
using \eq{sm4eq21} in the first, fifth and ninth steps, \eq{sm2eq18}
in the third, fifth and seventh, $\pi_V\ci\Pi_T=
\ti\pi_V\ci\Pi_{\smash{\ti T}}$ in the sixth and seventh, and
$\Phi\ci\pi_U\vert_Q=\pi_V\vert_Q$ in the eighth. Thus, for each
$x\in X$, we have constructed an \'etale open neighbourhood
$\hat\pi_U\vert_{\hat Q}:\hat Q\ra X$ such that
$\hat\pi_U\vert_{\vphantom{Q}\smash{\hat Q}}^*$ applied to
\eq{sm4eq6} holds. Equation \eq{sm4eq6} follows by Theorem
\ref{sm2thm3}(i). Finally, if $U=V$, $f=g$, $X=Y$ and
$\Phi=\id_{\smash{X^{(3)}}}$ then
$\Om_{\id_{\smash{X^{(3)}}}}=\id_{\PV_{U,f}^\bu}$ follows by taking
$\Xi=\id_U$ in the fourth paragraph of the theorem. This proves
Theorem \ref{sm4thm} for perverse sheaves on $\C$-schemes.

\subsection{$\cD$-modules and mixed Hodge modules}
\label{sm43}

Once again, the proof of Proposition \ref{sm4prop1} is completely
algebraic, so applies in the other contexts of
\S\ref{sm26}--\S\ref{sm210}. Theorem \ref{sm4thm} then follows for
our other contexts from that and the general framework
of~\S\ref{sm25}.

\section[Stabilizing perverse sheaves of vanishing
cycles]{Stabilizing vanishing cycles}
\label{sm5}

To set up notation for our main result, which is Theorem
\ref{sm5thm2} below, we need the following theorem, which is proved
in Joyce \cite[Prop.s 2.22, 2.23 \& 2.25]{Joyc}.

\begin{thm}[Joyce \cite{Joyc}] Let\/ $U,V$ be smooth $\C$-schemes,
$f:U\ra\C,$ $g:V\ra\C$ be regular, and\/ $X=\Crit(f),$ $Y=\Crit(g)$
as $\C$-subschemes of\/ $U,V$. Let\/ $\Phi:U\hookra V$ be a closed
embedding of\/ $\C$-schemes with\/ $f=g\ci\Phi:U\ra\C,$ and suppose
$\Phi\vert_X:X\ra V\supseteq Y$ is an isomorphism $\Phi\vert_X:X\ra
Y$. Then:
\smallskip

\noindent{\bf(i)} For each\/ $x\in X\subseteq U$ there exist
smooth\/ $\C$-schemes $U',V',$ a point\/ $x'\in U'$ and morphisms
$\io:U'\ra U,$ $\jmath:V'\ra V,$ $\Phi':U'\ra V',$ $\al:V'\ra U$ and
$\be:V'\ra\C^n,$ where $n=\dim V-\dim U,$ such that\/ $\io(x')=x,$
and\/ $\io,\jmath$ and\/ $\al\t\be:V\ra U\t\C^n$ are \'etale, and
the following diagram commutes
\e
\begin{gathered}
\xymatrix@C=85pt@R=15pt{ *+[r]{U} \ar[d]^\Phi & U' \ar[l]^(0.4)\io
\ar[r]_\io \ar[d]^{\Phi'}
& *+[l]{U} \ar[d]_{\id_U \t 0} \\
*+[r]{V} & V' \ar[l]_(0.4)\jmath \ar[r]^{\al\t\be} & *+[l]{U\t\C^n,\!{}} }
\end{gathered}
\label{sm5eq1}
\e
and\/ $g\ci\jmath=f\ci\al+(z_1^2+\cdots+z_n^2)\ci\be:V'\ra\C$. Thus,
setting $f':=f\ci\io:U'\ra\C,$ $g':=g\ci\jmath:V'\ra\C,$
$X':=\Crit(f')\subseteq U',$ and\/ $Y':=\Crit(g')\subseteq V',$ then
$f'=g'\ci\Phi':U'\ra\C,$ and\/ $\Phi'\vert_{X'}:X'\ra Y',$
$\io\vert_{X'}:X'\ra X,$ $\jmath\vert_{Y'}:Y'\ra Y,$
$\al\vert_{Y'}:Y'\ra X$ are \'etale. We also require
that\/~$\Phi\ci\al\vert_{Y'}=\jmath\vert_{Y'}:Y'\ra Y$.
\smallskip

\noindent{\bf(ii)} Write\/ $N_{\sst UV}$ for the normal bundle of\/
$\Phi(U)$ in $V,$ regarded as an algebraic vector bundle on $U$ in
the exact sequence of vector bundles on $U\!:$
\e
\xymatrix@C=20pt{ 0 \ar[r] & TU \ar[rr]^(0.4){\d\Phi} && \Phi^*(TU)
\ar[rr]^(0.55){\Pi_{\sst UV}} && N_{\sst UV} \ar[r] & 0.}
\label{sm5eq2}
\e
Then there exists a unique $q_{\sst UV}\in H^0(S^2N_{\sst
UV}^*\vert_X)$ which is a nondegenerate quadratic form on $N_{\sst
UV}\vert_X,$ such that whenever $U',V',\io,\jmath,\Phi',\be,n,X'$
are as in {\bf(i)\rm,} writing $\langle\d z_1,\ldots,\d
z_n\rangle_{U'}$ for the trivial vector bundle on $U'$ with basis
$\d z_1,\ldots,\d z_n,$ there is a natural isomorphism
$\hat\be:\langle\d z_1,\ldots,\d z_n\rangle_{U'}\ra\io^*(N_{\sst
UV}^*)$ making the following diagram commute:
\begin{gather}
\begin{gathered}
\xymatrix@C=130pt@R=15pt{
*+[r]{\io^*(N_{\sst UV}^*)} \ar[r]_(0.3){\io^*(\Pi_{\sst UV}^*)}
 & *+[l]{\io^*\ci\Phi^*(T^*V)=\Phi^{\prime
*}\ci
\jmath^*(T^*V)} \ar[d]_{\Phi^{\prime *}(\d\jmath^*)} \\
*+[r]{\langle\d z_1,\ldots,\d z_n\rangle_{U'}=\Phi^{\prime *}
\ci\be^*(T_0^*\C^n)} \ar[r]^(0.7){\Phi^{\prime *}(\d\be^*)}
\ar@{.>}[u]_{\hat\be} & *+[l]{\Phi^{\prime *}(T^*V'),}}
\end{gathered}
\label{sm5eq3}\\
\text{and\/}\qquad\io\vert_{X'}^*(q_{\sst UV})=(S^2\hat\be)
\vert_{X'}(\d z_1\ot\d z_1+\cdots+\d z_n\ot\d z_n).
\label{sm5eq4}
\end{gather}

\noindent{\bf(iii)} Now suppose $W$ is another smooth\/ $\C$-scheme,
$h:W\ra\C$ is regular, $Z=\Crit(h)$ as a $\C$-subscheme of\/ $W,$
and\/ $\Psi:V\hookra W$ is a closed embedding of\/ $\C$-schemes
with\/ $g=h\ci\Psi:V\ra\C$ and\/ $\Psi\vert_Y:Y\ra Z$ an
isomorphism. Define $N_{\sst VW},q_{\sst VW}$ and\/ $N_{\sst
UW},q_{\sst UW}$ using $\Psi:V\hookra W$ and\/ $\Psi\ci\Phi:U\hookra
W$ as in {\bf(ii)} above. Then there are unique morphisms $\ga_{\sst
UVW},\de_{\sst UVW}$ which make the following diagram of vector
bundles on $U$ commute, with straight lines exact:
\e
\begin{gathered}
\xymatrix@!0@C=19pt@R=9pt{ &&&&&&&&&&&&&&& 0 \ar[ddl] \\
&&&&&&&&&&&&&&&& 0 \ar[dll] \\
&&&&&& 0 \ar[dddrr] &&&&&&&& TU \ar[dddllllll]_{\d\Phi}
\ar[ddddddllll]^(0.4){\d(\Psi\ci\Phi)} \\ \\ \\
&&&&&&&& \Phi^*(TV) \ar[dddllllll]_{\Pi_{\sst UV}}
\ar[dddrr]^(0.6){\Phi^*(\d\Psi)} \\ \\
0 \ar[drr] \\
&& N_{\sst UV} \ar[dll]  \ar@{.>}[dddrrrrrr]_{\ga_{\sst UVW}}
&&&&&&&& (\Psi\ci\Phi)^*(TW) \ar[ddddddrrrr]^{\Phi^*(\Pi_{\sst VW})}
\ar[dddll]^(0.4){\Pi_{\sst UW}} \\
0 \\ \\
&&&&&&&& N_{\sst UW}  \ar[dddll] \ar@{.>}[dddrrrrrr]_{\de_{\sst
UVW}} \\ \\ \\
&&&&&& 0  &&&&&&&& \Phi^*(N_{\sst VW}) \ar[drr] \ar[ddr] \\
&&&&&&&&&&&&&&&& 0 \\
&&&&&&&&&&&&&&& 0.}\!\!\!\!\!\!\!\!\!\!\!\!\!\!\!\!\!\!\!\!\!{}
\end{gathered}
\label{sm5eq5}
\e

Restricting to $X$ gives an exact sequence of vector bundles:
\e
\xymatrix@C=14pt{ 0 \ar[r] & N_{\sst UV}\vert_X
\ar[rrr]^(0.48){\ga_{\sst UVW}\vert_X} &&& N_{\sst UW}\vert_X
\ar[rrr]^(0.45){\de_{\sst UVW}\vert_X} &&& \Phi\vert_X^*(N_{\sst
VW}) \ar[r] & 0.}
\label{sm5eq6}
\e
Then there is a natural isomorphism of vector bundles on $X$
\e
N_{\sst UW}\vert_X\cong N_{\sst UV}\vert_X\op \Phi\vert_X^*(N_{\sst
VW}),
\label{sm5eq7}
\e
compatible with the exact sequence {\rm\eq{sm5eq6},} which
identifies
\e
\begin{split}
q_{\sst UW}&\cong q_{\sst UV}\op \Phi\vert_X^*(q_{\sst VW})\op 0
\qquad\text{under
the splitting}\\
S^2N_{\sst UW}\vert_X^*&\cong S^2N_{\sst
UV}\vert_X^*\op\Phi\vert_X^*\bigl(S^2N_{\sst VW}^*\vert_Y\bigr) \op
\bigl(N_{\sst UV}^*\vert_X\ot \Phi\vert_X^*(N_{\sst VW}^*)\bigr).
\end{split}
\label{sm5eq8}
\e

\smallskip

\noindent{\bf(iv)} Analogues of\/ {\bf(i)--(iii)} hold for complex
analytic spaces, replacing the smo\-oth\/ $\C$-schemes $U,V,W$ by
complex manifolds, the regular functions $f,g,h$ by holomorphic
functions, the $\C$-schemes $X,Y,Z$ by complex analytic spaces, the
\'etale open sets $\io:U'\ra U,$ $\jmath:V'\ra V$ by complex
analytic open sets $U'\subseteq U,$ $V'\subseteq V,$ and with\/
$\al\t\be:V'\ra U\t\C^n$ a biholomorphism with a complex analytic
open neighbourhood of\/ $(x,0)$ in $U\t\C^n$.
\label{sm5thm1}
\end{thm}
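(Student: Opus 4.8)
\emph{Approach.} The plan is to prove (i) by a parametrised Morse--Bott splitting lemma in the algebraic (resp.\ holomorphic) category, then to deduce (ii) and (iii) from it, and finally to observe that the argument transfers to give (iv) and (v). Fix $x\in X$. Since $\Phi:U\hookra V$ is a closed embedding of smooth schemes, \'etale locally near $\Phi(x)$ I can choose \'etale coordinates $(u_1,\dots,u_m,w_1,\dots,w_n)$ on $V,$ with $m=\dim U$ and $n=\dim V-\dim U,$ so that $\Phi(U)=\{w_1=\cdots=w_n=0\}$ and $(u_1,\dots,u_m)$ restrict to \'etale coordinates on $U;$ then $g\vert_{\{w=0\}}=f$. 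Taylor-expanding in $w$ gives $g=f(u)+\sum_ia_i(u)w_i+\sum_{i,j}b_{ij}(u)w_iw_j+O(w^3)$. Because $\Phi\vert_X:X\ra Y=\Crit(g)$ is an isomorphism, $X$ is identified scheme-theoretically with $\Phi^{-1}(Y),$ and restricting the generators $\pd g/\pd w_i$ of the ideal of $Y$ to $\{w=0\}$ shows $a_i\in I_X=\bigl(\pd f/\pd u_j\bigr);$ writing $a_i=\sum_jc_{ij}\,\pd f/\pd u_j,$ the linear-in-$w$ term is absorbed into $f$ by the $w$-dependent coordinate change $\tilde u_j=u_j+\sum_ic_{ij}(u)w_i$, which is precisely why the morphism $\al:V'\ra U$ in \eq{sm5eq1} is not a projection. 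Next I would show $\bigl(b_{ij}(x)\bigr)$ (after this absorption) is invertible: a nonzero kernel vector $v,$ viewed as a normal vector at $\Phi(x),$ satisfies $\Hess_{\Phi(x)}g(v,-)=0,$ hence $v\in\Ker\Hess_{\Phi(x)}g=T_{\Phi(x)}\Crit(g)=\d\Phi\vert_x(T_xX)\subseteq\d\Phi\vert_x(T_xU),$ contradicting $v$ being normal to $\Phi(U)$. Then I complete the square in $w_1,\dots,w_n$ one variable at a time --- legitimate since the successive leading coefficients are units and one may divide by $2$ --- so that in new \'etale coordinates $(\tilde u_j)\cup(\ze_i)$ one has $g=\tilde f(\tilde u)+\ze_1^2+\cdots+\ze_n^2,$ and these data assemble into $\al=(\tilde u_j):V'\ra U$ and $\be=(\ze_i):V'\ra\C^n$. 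Rather than carry this out over the formal completion and invoke Artin approximation, I would make it effective exactly as in Propositions \ref{sm3prop1} and \ref{sm4prop1}: build $V'$ and $U'$ explicitly as $\C$-scheme fibre products cut out by the equations asserting $g\ci\jmath=f\ci\al+\sum_iz_i^2\ci\be$ together with the relations defining the $\ze_i,$ then verify smoothness of $V',U'$ and \'etaleness of $\jmath$ and $\al\t\be$ by a tangent-space transversality computation at $x$. Restricting all morphisms to critical loci, as in \S\ref{sm41}, yields the remaining claims of (i), namely that $\io\vert_{X'},\jmath\vert_{Y'},\al\vert_{Y'}$ are \'etale and $\Phi\ci\al\vert_{Y'}=\jmath\vert_{Y'}$.

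\emph{Part (ii).} In a local model from (i) the normal bundle of $U\t\{0\}$ in $U\t\C^n$ is canonically $\langle\d z_1,\dots,\d z_n\rangle_{U'}$ carrying the standard form $\sum_i\d z_i\ot\d z_i;$ transporting this along the isomorphism $\hat\be$ fixed by \eq{sm5eq3} produces a nondegenerate quadratic form on $\io\vert_{X'}^*(N_{\sst UV})$ satisfying \eq{sm5eq4}. To see these local sections glue to a single $q_{\sst UV}\in H^0(S^2N_{\sst UV}^*\vert_X),$ I would prove the uniqueness half of the splitting lemma: any two presentations $g=f\ci\al_1+\sum z_i^2\ci\be_1=f\ci\al_2+\sum z_i^2\ci\be_2$ differ on the $z$-variables by an ${\rm O}(n,\C)$-valued function defined near $X,$ which preserves $\sum_i\d z_i\ot\d z_i,$ so the induced forms on $N_{\sst UV}\vert_X$ coincide; descent of sections of a vector bundle then produces the global $q_{\sst UV},$ and nondegeneracy is read off in the model.

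\emph{Parts (iii)--(v).} For (iii), composing a local model for $\Phi$ near $x$ with one for $\Psi$ near $\Phi(x)$ exhibits the composite embedding $\Psi\ci\Phi$ in the normal form of (i), with normal variables $(z_1,\dots,z_n)\amalg(z_1',\dots,z_{n'}')$ and function $f(u)+\sum_iz_i^2+\sum_j(z_j')^2;$ reading off normal bundles gives the direct sum splitting \eq{sm5eq7} and the block shape \eq{sm5eq8} of $q_{\sst UW},$ the morphisms $\ga_{\sst UVW},\de_{\sst UVW}$ of \eq{sm5eq5} come from a diagram chase using the universal property of cokernels, and their independence of the model uses the same ${\rm O}(n,\C)$-rigidity as in (ii). Part (iv) is immediate, since every step above is algebraic and uses only that $\mathop{\rm char}\K\ne 2$ (to complete squares) and that $\K$ is algebraically closed (so that nondegenerate forms of a given rank are equivalent). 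Part (v) follows by running the same arguments with complex manifolds replacing smooth $\C$-schemes and open biholomorphic neighbourhoods replacing \'etale maps, via the holomorphic implicit function theorem.

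\emph{Main obstacle.} The technical core is turning the formal splitting into an honest \'etale (resp.\ analytic) coordinate change, i.e.\ the explicit fibre-product construction of $V',U'$ together with the tangent-space transversality verification, which is the same kind of bookkeeping as in Propositions \ref{sm3prop1} and \ref{sm4prop1}. A secondary difficulty is that $X=\Crit(f)$ may be non-reduced, so the absorption of the linear term, the invertibility of $\bigl(b_{ij}(x)\bigr),$ and the ${\rm O}(n,\C)$-uniqueness of the quadratic part needed for the gluing in (ii)--(iii) must all be argued with the ideal $I_X$ and scheme-theoretic tangent spaces rather than with points.
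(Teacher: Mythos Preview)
This theorem is not proved in the present paper: it is quoted from Joyce \cite{Joyc}, where it appears as Propositions 2.22, 2.23 and 2.25, so there is no in-paper proof to compare against. That said, your outline is the standard route to such a result and is almost certainly close to what \cite{Joyc} does: a parametrised Morse--Bott splitting lemma (absorb the linear-in-$w$ term using $a_i\in I_X$, then complete squares in the normal variables) to get (i); read off the normal quadratic form in the local model and check independence of choices to get (ii); and compose local models for (iii). Your remark that $\al:V'\to U$ is genuinely not a projection, because it records the absorption of the linear term, is exactly right and is the reason the diagram \eq{sm5eq1} has $\al$ rather than a first-factor projection.

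Two small comments. First, for (ii) you do not strictly need the ${\rm O}(n,\C)$-rigidity argument to glue: the form $q_{\sst UV}$ has an intrinsic description as the Hessian of $g$ restricted to normal directions along $X$ (this is well-defined precisely because the first normal derivatives $a_i$ lie in $I_X$), and then \eq{sm5eq4} is a compatibility check rather than a definition. Your gluing approach is also correct, just slightly less direct. Second, your nondegeneracy argument for $\bigl(b_{ij}(x)\bigr)$ is fine, but be careful to run it \emph{after} the absorption step so that the Hessian of $g$ at $\Phi(x)$ is block-diagonal in the $(\tilde u,w)$-splitting; otherwise the off-diagonal block $\partial a_i/\partial u_j\vert_x$ need not vanish and the identification $\Ker\Hess_{\Phi(x)}g=\d\Phi\vert_x(T_xX)$ does not immediately yield the conclusion for $b$ alone.
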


Following \cite[Def.s 2.26 \& 2.34]{Joyc}, we define:

\begin{dfn} Let $U,V$ be smooth $\C$-schemes, $f:U\ra\C$,
$g:V\ra\C$ be regular, and $X=\Crit(f)$, $Y=\Crit(g)$ as
$\C$-subschemes of $U,V$. Suppose $\Phi:U\hookra V$ is a closed
embedding of $\C$-schemes with $f=g\ci\Phi:U\ra\C$ and
$\Phi\vert_X:X\ra Y$ an isomorphism. Then Theorem \ref{sm5thm1}(ii)
defines the normal bundle $N_{\sst UV}$ of $U$ in $V$, a vector
bundle on $U$ of rank $n=\dim V-\dim U$, and a nondegenerate
quadratic form $q_{\sst UV}\in H^0(S^2N_{\sst UV}^*\vert_X)$. Taking
top exterior powers in the dual of \eq{sm5eq2} gives an isomorphism
of line bundles on $U$
\begin{equation*}
\rho_{\sst UV}:K_U\ot\La^nN_{\sst UV}^*
\,{\buildrel\cong\over\longra}\,\Phi^*(K_V),
\end{equation*}
where $K_U,K_V$ are the canonical bundles of $U,V$.

Write $X^\red$ for the reduced $\C$-subscheme of $X$. As $q_{\sst
UV}$ is a nondegenerate quadratic form on $N_{\sst VW}\vert_X,$ its
determinant $\det(q_{\sst VW})$ is a nonzero section of
$(\La^nN_{\sst VW}^*)\vert_X^{\ot^2}$. Define an isomorphism of line
bundles on~$X^\red$:
\e
J_\Phi=\rho_{\sst UV}^{\ot^2}\ci
\bigl(\id_{K_U^2\vert_{X^\red}}\ot\det(q_{\sst
UV})\vert_{X^\red}\bigr):K_U^{\ot^2}\big\vert_{X^\red}
\,{\buildrel\cong\over\longra}\,\Phi\vert_{X^\red}^*
\bigl(K_V^{\ot^2}\bigr).
\label{sm5eq9}
\e

Since principal $\Z/2\Z$-bundles $\pi:P\ra X$ in the sense of
Definition \ref{sm2def3} are an (\'etale or complex analytic)
topological notion, and $X^\red$ and $X$ have the same topological
space (even in the \'etale or complex analytic topology), principal
$\Z/2\Z$-bundles on $X^\red$ and on $X$ are equivalent. Define
$\pi_\Phi:P_\Phi\ra X$ to be the principal $\Z/2\Z$-bundle which
parametrizes square roots of $J_\Phi$ on $X^\red$. That is, (\'etale
or complex analytic) local sections $s_\al:X\ra P_\Phi$ of $P_\Phi$
correspond to local isomorphisms $\al: K_U\vert_{X^\red}
\ra\Phi\vert_{X^\red}^*(K_V)$ on $X^\red$ with~$\al\ot\al=J_\Phi$.

Now suppose $W$ is another smooth $\C$-scheme, $h:W\ra\C$ is
regular, $Z=\Crit(h)$ as a $\C$-subscheme of $W$, and $\Psi:V\hookra
W$ is a closed embedding of $\C$-schemes with $g=h\ci\Psi:V\ra\C$
and $\Psi\vert_Y:Y\ra Z$ an isomorphism. Then Theorem
\ref{sm5thm1}(iii) applies, and from \eq{sm5eq7}--\eq{sm5eq8} we can
deduce that
\e
\begin{split}
J_{\Psi\ci\Phi}=\Phi\vert_{X^\red}^*(J_\Psi)\ci J_\Phi:
K_U^{\ot^2}\big\vert_{X^\red}\,{\buildrel\cong\over\longra}\,
&(\Psi\ci\Phi)\vert_{X^\red}^*\bigl(K_W^{\ot^2}\bigr)\\
=&\,\Phi\vert_{X^\red}^*\bigl[\Psi\vert_{Y^\red}^*
\bigl(K_W^{\ot^2}\bigr)\bigr].
\end{split}
\label{sm5eq10}
\e
For the principal $\Z/2\Z$-bundles $\pi_\Phi:P_\Phi\ra X$,
$\pi_\Psi:P_\Psi\ra Y$, $\pi_{\Psi\ci\Phi}:P_{\Psi\ci\Phi}\ra X$,
equation \eq{sm5eq10} implies that there is a canonical isomorphism
\e
\Xi_{\Psi,\Phi}:P_{\Psi\ci\Phi}\,{\buildrel\cong\over\longra}\,
\Phi\vert_X^*(P_\Psi)\ot_{\Z/2\Z}P_\Phi.
\label{sm5eq11}
\e
It is also easy to see that these $\Xi_{\Psi,\Phi}$ have an
associativity property under triple compositions, that is, given
another smooth $\C$-scheme $T$, regular $e:T\ra\C$ with
$Q:=\Crit(e)$, and $\Up:T\hookra U$ a closed embedding with
$e=f\ci\Up:T\ra\C$ and $\Up\vert_Q:Q\ra X$ an isomorphism, then
\e
\begin{split}
\bigl(\id_{(\Phi\ci\Up)\vert_Q^*(P_\Psi)}&\ot\kern
.1em\Xi_{\Phi,\Up}\bigr)\ci \Xi_{\Psi,\Phi\ci\Up}=
\bigl(\Up\vert_Q^*(\Xi_{\Psi,\Phi})\ot\id_{P_\Up}\bigr)\ci
\Xi_{\Psi\ci\Phi,\Up}:\\
&P_{\Psi\ci\Phi\ci\Up}\longra
(\Phi\ci\Up)\vert_Q^*(P_\Psi)\ot_{\Z/2\Z}
\Up\vert_Q^*(P_\Phi)\ot_{\Z/2\Z}P_\Up.
\end{split}
\label{sm5eq12}
\e

Analogues of all the above also work for complex manifolds and complex
analytic spaces, as in Theorem~\ref{sm5thm1}(v).
\label{sm5def}
\end{dfn}

The reason for restricting to $X^\red$ above is the following
\cite[Prop.~2.27]{Joyc}, whose proof uses the fact that $X^\red$ is
reduced in an essential way.

\begin{lem} In Definition\/ {\rm\ref{sm5def},} the isomorphism
$J_\Phi$ in \eq{sm5eq9} and the principal\/ $\Z/2\Z$-bundle\/
$\pi_\Phi:P_\Phi\ra X$ depend only on $U,\ab V,\ab X,\ab Y,\ab f,g$
and\/ $\Phi\vert_X:X\ra Y$. That is, they do not depend on
$\Phi:U\ra V$ apart from\/~$\Phi\vert_X:X\ra Y$.
\label{sm5lem}
\end{lem}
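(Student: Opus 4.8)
The plan is to show directly that if $\Phi,\Phi':U\hookra V$ are two closed embeddings with $f=g\ci\Phi=g\ci\Phi'$ and with $\Phi\vert_X=\Phi'\vert_X:X\ra Y$ the same isomorphism, then $J_\Phi=J_{\Phi'}$ and $P_\Phi=P_{\Phi'}$ canonically. First I would reduce the statement about $P_\Phi$ to the one about $J_\Phi$: by Definition \ref{sm5def}, $P_\Phi$ is the $\Z/2\Z$-bundle of square roots of $J_\Phi$ on $X^\red$, and since $\Phi\vert_{X^\red}=\Phi'\vert_{X^\red}$ the targets $\Phi\vert_{X^\red}^*(K_V^{\ot^2})$ and $\Phi'\vert_{X^\red}^*(K_V^{\ot^2})$ are literally equal, so $J_\Phi=J_{\Phi'}$ forces $P_\Phi=P_{\Phi'}$ with no further work. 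Next, because $X^\red$ is reduced and of finite type over $\C$ (or $\K$, or a reduced complex analytic space), an isomorphism of line bundles on $X^\red$ is determined by its values at the closed points: two such isomorphisms differ by a unit in $\O_{X^\red}$, which is $1$ once it is $1$ at every closed point. Hence it suffices to prove $J_\Phi\vert_x=J_{\Phi'}\vert_x$ for each $x\in X$. \emph{This is the step that uses reducedness of $X^\red$ in an essential way}: scheme-theoretically on $X$ the two isomorphisms are not visibly equal, only pointwise.

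For the pointwise comparison I would first record an intrinsic description of $J_\Phi\vert_x$. Write $E=T_xU$, $F=T_{\Phi(x)}V$; then $\d\Phi\vert_x:E\hookra F$ is injective with cokernel $N_{\sst UV}\vert_x$. Since $\Phi\vert_X:X\ra Y$ is an isomorphism and $X=\Crit(f)$, $Y=\Crit(g)$, the point $\Phi(x)$ lies in $Y$, so $\Hess_{\Phi(x)}g$ is a well-defined symmetric form on $F$, with $\Ker(\Hess_{\Phi(x)}g)=T_{\Phi(x)}Y=\d\Phi\vert_x(T_xX)$; passing to $\bar F=F/\d\Phi\vert_x(T_xX)$ gives a nondegenerate form $\bar b$, the image $\bar P$ of $E$ is a nondegenerate subspace, $N_{\sst UV}\vert_x\cong\bar P^\perp$, and by \eq{sm5eq4} together with the local normal form of Theorem \ref{sm5thm1}(i) this identification carries $q_{\sst UV}\vert_x$ to $\bar b\vert_{\bar P^\perp}$ up to an overall scalar. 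Now choose a $q_{\sst UV}\vert_x$-orthonormal frame of $N_{\sst UV}\vert_x$ (possible since the base field has $\mathop{\rm char}\ne 2$ and is algebraically closed), lift it to vectors $f_1,\dots,f_n\in F$, and adjoin $\d\Phi\vert_x$ applied to a chosen basis of $E$. Then \eq{sm5eq9} unwinds to the statement that $J_\Phi\vert_x$ sends $\om^{\ot^2}$, for a generator $\om$ of $K_U\vert_x$, to the square of the element of $K_V\vert_{\Phi(x)}$ dual to this lifted frame of $F$.

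Finally I would compare the lifted frames built from $\Phi$ and from $\Phi'$. They agree on $T_xX$, because $\d\Phi\vert_x$ and $\d\Phi'\vert_x$ both restrict to $\d(\Phi\vert_X)\vert_x$, and $\d\Phi\vert_x(T_xX)=\d\Phi'\vert_x(T_xX)=T_{\Phi(x)}Y$; so the transition matrix between the two frames of $F$ is block triangular with an identity block along $T_xX$, and its determinant equals $\det N$, where $N$ is the transition matrix between the two resulting frames of $\bar F$. In the $\Phi$-frame the Gram matrix of $\bar b$ is block-diagonal, with a block $G$ equal to the Gram matrix of the nondegenerate form induced by $\Hess_xf$ on $T_xU/T_xX$ (a datum not involving $\Phi$) and a scalar block on the normal directions; in the $\Phi'$-frame it is the \emph{same} matrix, since $\d\Phi'\vert_x$ likewise induces an isometry of $(T_xU/T_xX,\Hess_xf)$ onto $\bar P'$ and the normal frame is again orthonormal for the same form. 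Hence $N^{\mathrm T}\mathcal G N=\mathcal G$ for this fixed invertible symmetric matrix $\mathcal G$, so $(\det N)^2=1$; squaring absorbs the sign and yields $J_\Phi\vert_x=J_{\Phi'}\vert_x$. The $\K$-scheme and complex analytic cases run identically, using Theorem \ref{sm5thm1}(iv),(v) and the identity principle on reduced analytic spaces. \textbf{The main obstacle} is this last paragraph: correctly matching $q_{\sst UV}\vert_x$ with the appropriate restriction of $\Hess_{\Phi(x)}g$ (so that ``orthonormal'' is the right normalization) and then extracting from the two Gram-matrix presentations that the transition determinant is $\pm 1$; the reductions in the first two paragraphs are formal.
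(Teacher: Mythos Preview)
The paper itself does not supply a proof of this lemma; it is quoted from \cite[Prop.~2.27]{Joyc}, with only the remark that the proof ``uses the fact that $X^\red$ is reduced in an essential way.'' So there is no in-paper argument to compare against, and your task is really to give a self-contained proof.

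Your strategy is sound and in the right spirit. The reduction of $P_\Phi$ to $J_\Phi$ is immediate, and the reduction to a pointwise check on closed points of $X^\red$ is exactly where reducedness enters, as you say. The linear-algebra description of $J_\Phi\vert_x$ via $\rho_{\sst UV}^{\ot 2}$ and a $q_{\sst UV}$-orthonormal frame of $N_{\sst UV}\vert_x$ is correct (signs from the exact-sequence determinant isomorphism disappear after squaring), and the observation that the transition matrix between the $\Phi$-frame and $\Phi'$-frame of $T_{\Phi(x)}V$ is block-triangular with identity on the $T_{\Phi(x)}Y$-block is right, since $\d\Phi\vert_x$ and $\d\Phi'\vert_x$ agree on $T_xX$ and both carry $T_xX$ onto $T_{\Phi(x)}Y$.

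There is one point you should tighten. You assert that under the identification $N_{\sst UV}\vert_x\cong\bar P^\perp$ one has $q_{\sst UV}\vert_x=\bar b\vert_{\bar P^\perp}$ ``up to an overall scalar,'' and then that the Gram matrix of $\bar b$ in the $\Phi'$-frame is ``the \emph{same} matrix'' as in the $\Phi$-frame. For this you need the scalar to be the \emph{same} constant for $\Phi$ and for $\Phi'$; otherwise the two Gram matrices would be $\mathrm{diag}(G,c^{-1}I_n)$ and $\mathrm{diag}(G,c'^{-1}I_n)$ and the conclusion $(\det N)^2=1$ would fail. This is true---in the local model of Theorem~\ref{sm5thm1}(i) one has $g'\!=\!f'\boxplus\sum z_j^2$, so the Hessian in the normal directions is $2I_n$ while $q_{\sst UV}$ is $\sum\d z_j^2$ by \eq{sm5eq4}, giving $q_{\sst UV}\vert_x=\tfrac12\,\bar b\vert_{\bar P^\perp}$ universally---but you should say so explicitly rather than leave it as ``up to an overall scalar.'' Once this is in place your $N^{\mathrm T}\mathcal G N=\mathcal G$ step is valid and the proof goes through.
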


Using the notation of Definition \ref{sm5def}, we can state our main
result:

\begin{thm}{\bf(a)} Let\/ $U,V$ be smooth $\C$-schemes, $f:U\ra\C,$
$g:V\ra\C$ be regular, and\/ $X=\Crit(f),$ $Y=\Crit(g)$ as
$\C$-subschemes of\/ $U,V$. Let\/ $\Phi:U\hookra V$ be a closed
embedding of\/ $\C$-schemes with\/ $f=g\ci\Phi:U\ra\C,$ and suppose
$\Phi\vert_X:X\ra V\supseteq Y$ is an isomorphism $\Phi\vert_X:X\ra
Y$. Then there is a natural isomorphism of perverse sheaves on
$X\!:$
\e
\Th_\Phi:\PV_{U,f}^\bu\longra\Phi\vert_X^*\bigl(\PV_{V,g}^\bu\bigr)
\ot_{\Z/2\Z}P_\Phi,
\label{sm5eq13}
\e
where\/ $\PV_{U,f}^\bu,\PV_{V,g}^\bu$ are the perverse sheaves of
vanishing cycles from\/ {\rm\S\ref{sm24},} and\/ $P_\Phi$ the
principal\/ $\Z/2\Z$-bundle from Definition\/ {\rm\ref{sm5def},} and
if\/ $\cQ^\bu$ is a perverse sheaf on\/ $X$ then
$\cQ^\bu\ot_{\Z/2\Z}P_\Phi$ is as in Definition\/
{\rm\ref{sm2def3}}. Also the following diagrams commute, where
$\si_{U,f},\si_{V,g},\tau_{U,f},\tau_{V,g}$ are as
in\/~{\rm\eq{sm2eq6}--\eq{sm2eq7}:}
\ea
\begin{gathered}
\xymatrix@!0@C=145pt@R=35pt{
*+[r]{\PV_{U,f}^\bu} \ar[r]_(0.25){\Th_\Phi}
\ar@<2ex>[d]^{\si_{U,f}} &
*+[l]{\Phi\vert_X^*\bigl(\PV_{V,g}^\bu\bigr)\ot_{\Z/2\Z}P_\Phi}
\ar[r]_(0.18){\raisebox{-11pt}{$\st\Phi\vert_X^*(\si_{V,g})\ot\id$}}
& *+[l]{\Phi\vert_X^*\bigl(\bD_Y(\PV_{V,g}^\bu)\bigr)\!\ot_{\Z/2\Z}
\!P_\Phi} \ar@<-2ex>[d]_\cong
\\
*+[r]{\bD_X(\PV_{U,f}^\bu)} &&
*+[l]{\bD_X\bigl(\Phi\vert_X^*(\PV_{V,g}^\bu)\!\ot_{\Z/2\Z}\!P_\Phi\bigr),}
\ar[ll]_(0.7){\bD_X(\Th_\Phi)} }\!\!\!\!\!{}
\end{gathered}
\label{sm5eq14}\\
\begin{gathered}
\xymatrix@!0@C=290pt@R=35pt{
*+[r]{\PV_{U,f}^\bu} \ar[r]_(0.45){\Th_\Phi}
\ar@<2ex>[d]^{\tau_{U,f}} &
*+[l]{\Phi\vert_X^*\bigl(\PV_{V,g}^\bu\bigr)\ot_{\Z/2\Z}P_\Phi}
\ar@<-2ex>[d]_{\Phi\vert_X^*(\tau_{V,g})\ot\id} \\
*+[r]{\PV_{U,f}^\bu} \ar[r]^(0.45){\Th_\Phi} &
*+[l]{\Phi\vert_X^*\bigl(\PV_{V,g}^\bu\bigr)\ot_{\Z/2\Z}P_\Phi.}
}\!\!\!\!\!{}
\end{gathered}
\label{sm5eq15}
\ea

If\/ $U=V,$ $f=g,$ $\Phi=\id_U$ then $\pi_\Phi:P_\Phi\ra X$ is
trivial, and\/ $\Th_\Phi$ corresponds to $\id_{\PV_{U,f}^\bu}$ under
the natural isomorphism $\id_X^*(\PV_{U,f}^\bu)
\ot_{\Z/2\Z}P_\Phi\cong\PV_{U,f}^\bu$.
\smallskip

\noindent{\bf(b)} The isomorphism $\Th_\Phi$ in \eq{sm5eq13} depends
only on $U,\ab V,\ab X,\ab Y,\ab f,g$ and\/ $\Phi\vert_X:X\ra Y$.
That is, if\/ $\ti\Phi:U\ra V$ is an alternative choice for $\Phi$
with\/ $\Phi\vert_X=\ti\Phi\vert_X:X\ra Y,$ then
$\Th_\Phi=\Th_{\smash{\ti \Phi}},$ noting that\/
$P_\Phi=P_{\smash{\ti\Phi}}$ by Lemma\/~{\rm\ref{sm5lem}}.
\smallskip

\noindent{\bf(c)} Now suppose $W$ is another smooth\/ $\C$-scheme,
$h:W\ra\C$ is regular, $Z=\Crit(h),$ and\/ $\Psi:V\hookra W$ is a
closed embedding with\/ $g=h\ci\Psi:V\ra\C$ and\/ $\Psi\vert_Y:Y\ra
Z$ an isomorphism. Then Definition\/ {\rm\ref{sm5def}} defines
principal\/ $\Z/2\Z$-bundles $\pi_\Phi:P_\Phi\ra X,$
$\pi_\Psi:P_\Psi\ra Y,$ $\pi_{\Psi\ci\Phi}:P_{\Psi\ci\Phi}\ra X$ and
an isomorphism $\Xi_{\Psi,\Phi}$ in {\rm\eq{sm5eq11},} and part\/
{\bf(a)} defines isomorphisms of perverse sheaves
$\Th_\Phi,\Th_{\Psi\ci\Phi}$ on $X$ and\/ $\Th_\Psi$ on $Y$. Then
the following commutes in $\Perv(X)\!:$
\e
\begin{gathered}
\xymatrix@C=145pt@R=17pt{*+[r]{\PV_{U,f}^\bu}
\ar[r]_(0.45){\Th_{\Psi\ci\Phi}} \ar@<2ex>[d]^{\Th_\Phi} &
*+[l]{(\Psi\ci\Phi)\vert_X^*\bigl(\PV_{W,h}^\bu\bigr)\ot_{\Z/2\Z}
P_{\Psi\ci\Phi}} \ar@<-2ex>[d]_{\id\ot \Xi_{\Psi,\Phi}} \\
*+[r]{\Phi\vert_X^*\bigl(\PV_{V,g}^\bu\bigr)\!\ot_{\Z/2\Z}\!P_\Phi}
\ar[r]^(0.33){\raisebox{6pt}{$\st\Phi\vert_X^*(\Th_\Psi)\ot\id$}} &
*+[l]{\Phi\vert_X^*\!\ci\!\Psi\vert_Y^*\bigl(\PV_{W,h}^\bu\bigr)
\!\ot_{\Z/2\Z}\!\Phi\vert_X^*(P_\Psi)\!\ot_{\Z/2\Z}\!P_\Phi.}
}\!\!\!\!\!{}
\end{gathered}
\label{sm5eq16}
\e

\noindent{\bf(d)} The analogues of\/ {\bf(a)--\bf(c)} also hold for
$\cD$-modules on $\C$-schemes, for perverse sheaves and
$\cD$-modules on complex analytic spaces, and for mixed Hodge
modules on $\C$-schemes and complex analytic spaces, as
in\/~{\rm\S\ref{sm26}--\S\ref{sm210}}.
\label{sm5thm2}
\end{thm}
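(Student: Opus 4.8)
The plan is to build $\Theta_\Phi$ \'etale locally on $X$ from the local normal forms of Theorem~\ref{sm5thm1}(i), and then glue by Theorem~\ref{sm2thm3}(i). Fix $x\in X$ and take $U',V',\io,\jmath,\Phi',\al,\be,n$ as in Theorem~\ref{sm5thm1}(i), so that $\al\t\be:V'\ra U\t\C^n$ is \'etale with $(\al\t\be)^*\bigl(f\boxplus(z_1^2+\cdots+z_n^2)\bigr)=g':=g\ci\jmath$, $(\al\t\be)\ci\Phi'=(\id_U\t 0)\ci\io$, and $\io|_{X'},\Phi'|_{X'},\al|_{Y'}$ are \'etale with $\io|_{X'}=\al|_{Y'}\ci\Phi'|_{X'}$ and $\jmath|_{Y'}=\Phi|_X\ci\al|_{Y'}$. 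First I would assemble over $X'$ a chain of isomorphisms: $\PV_\io$ identifies $\io|_{X'}^*(\PV_{U,f}^\bu)$ with $\PV_{U',f'}^\bu$; next, $\Phi'|_{X'}^*(\PV_{V',g'}^\bu)$ is identified, using $\PV_{\al\t\be}$ together with $(\al\t\be)\ci\Phi'=(\id_U\t 0)\ci\io$, with $\bigl((\id_U\t 0)\ci\io\bigr)|_{X'}^*\bigl(\PV_{U\t\C^n,f\boxplus(z_1^2+\cdots+z_n^2)}^\bu\bigr)$, and then the Thom--Sebastiani isomorphism $\TS_{U,f,\C^n,z_1^2+\cdots+z_n^2}$ of Theorem~\ref{sm2thm5} followed by the identification $\PV_{\C^n,z_1^2+\cdots+z_n^2}^\bu\cong A_{\{0\}}$ of \eq{sm2eq12} (after restricting the $\C^n$-factor along $0$) reduces this to $\io|_{X'}^*(\PV_{U,f}^\bu)$; finally $\PV_\jmath$ together with $\jmath|_{Y'}=\Phi|_X\ci\al|_{Y'}$ identifies $\Phi'|_{X'}^*(\PV_{V',g'}^\bu)$ with $\io|_{X'}^*\bigl(\Phi|_X^*(\PV_{V,g}^\bu)\bigr)$. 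The crucial point is that \eq{sm2eq12} is natural only up to sign --- the ambiguity being an orientation of ${\cal S}^{n-1}$, equivalently a square root of the determinant of $\d z_1^2+\cdots+\d z_n^2$ --- and by \eq{sm5eq4} this is exactly the datum parametrized by $\io|_{X'}^*(P_\Phi)$. So twisting by $P_\Phi$ converts the sign-ambiguous local comparison into a canonical isomorphism $\Theta^{X'}:\io|_{X'}^*(\PV_{U,f}^\bu)\ra\io|_{X'}^*\bigl(\Phi|_X^*(\PV_{V,g}^\bu)\ot_{\Z/2\Z}P_\Phi\bigr)$.

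Next I would show the $\Theta^{X'}$ agree on overlaps and glue. Given two choices yielding $X'$ and $\ti X'$, on the fibre product $X'\t_X\ti X'$ the two local models are related by an isomorphism that restricts to the identity on the critical locus; Theorem~\ref{sm3thm} then shows the induced automorphism of $\PV_{U,f}^\bu$, and of the $A_{\{0\}}$-factor, is multiplication by the determinant $\pm 1$ of the corresponding linear map in the normal directions, while the transition function of $P_\Phi$ is, by construction, the square root of the determinant transforming by exactly that sign, so the two contributions cancel and $\io|_{X'}^*$ and $\ti\io|_{\ti X'}^*$ of the glued map coincide on the overlap. Theorem~\ref{sm2thm3}(i) then produces a unique $\Theta_\Phi$ on $X$ with $\io|_{X'}^*(\Theta_\Phi)=\Theta^{X'}$ for all choices. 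Commutativity of \eq{sm5eq14}--\eq{sm5eq15} is checked after pulling back by $\io|_{X'}$, using \eq{sm2eq9}--\eq{sm2eq10} and \eq{sm2eq16}--\eq{sm2eq17} together with the self-duality \eq{sm2eq12} and the fact \eq{sm2eq13} that $\tau_{\C^n,z_1^2+\cdots+z_n^2}=\id$; the case $U=V$, $\Phi=\id_U$ has $n=0$, $P_\Phi$ trivial, and reduces to $\PV_{\id_U}=\id$.

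For part~(b) I would compare local data for $\Phi$ and for an alternative $\ti\Phi$ on a common \'etale refinement, where again the discrepancy is a linear map in the normal directions of determinant $\pm 1$ controlled by Theorem~\ref{sm3thm} and cancelled against $P_\Phi=P_{\smash{\ti\Phi}}$ of Lemma~\ref{sm5lem}, so $\Theta_\Phi=\Theta_{\smash{\ti\Phi}}$ by Theorem~\ref{sm2thm3}(i). For part~(c) I would, for each $x\in X$, choose local models for $\Phi$ (codimension $n$) and $\Psi$ (codimension $m$) and combine them into a local model for $\Psi\ci\Phi$ (codimension $n+m$); commutativity of \eq{sm5eq16} over $X'$ then follows from iterating Theorem~\ref{sm2thm5} (associativity of Thom--Sebastiani), the identifications \eq{sm5eq7}--\eq{sm5eq8} of normal bundles and quadratic forms and \eq{sm5eq11}--\eq{sm5eq12} of the principal $\Z/2\Z$-bundles, and the orientation bookkeeping $T^*{\cal S}^{n+m-1}$ versus $T^*{\cal S}^{n-1}\t T^*{\cal S}^{m-1}$ matching the splitting of $q_{\sst UW}$; Theorem~\ref{sm2thm3}(i) gives \eq{sm5eq16} on $X$. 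Finally part~(d): the argument uses only the properties of \S\ref{sm25}, so it transfers to $\cD$-modules, $l$-adic perverse sheaves, and the analytic settings; for mixed Hodge modules one replaces Theorem~\ref{sm2thm5} by Theorem~\ref{sm2thm9}, Theorem~\ref{sm2thm3} by Theorem~\ref{sm2thm8} (gluing strongly polarized objects), and $\PV_{\C^n,\ldots}^\bu\cong A_{\{0\}}$ by $\HV_{\C^n,\ldots}^\bu\cong\Q_{\{0\}}^H$ with its $(\ha n)$-twist and strong polarization. I expect the main obstacle to be the overlap/sign bookkeeping of the second paragraph: making precise that the sign ambiguity of \eq{sm2eq12} on overlaps is exactly the one encoded by the transition functions of $P_\Phi$, via the determinant formula of Theorem~\ref{sm3thm} and the identification \eq{sm5eq4}.
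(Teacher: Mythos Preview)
Your proposal is correct and follows essentially the same approach as the paper: build $\Theta_\Phi$ \'etale-locally from Theorem~\ref{sm5thm1}(i) via Thom--Sebastiani and \eq{sm2eq12}, absorb the sign ambiguity into $P_\Phi$ using \eq{sm5eq4}, and glue via Theorem~\ref{sm2thm3}(i) after checking overlap compatibility with Theorem~\ref{sm3thm}. The paper makes the overlap bookkeeping you flag as the ``main obstacle'' fully explicit in equations \eq{sm5eq20}--\eq{sm5eq25}, and parts~(b), (c), (d) proceed just as you outline.
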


\begin{ex} Let $U=\C\sm\{0\}$ and $V=(\C\sm\{0\})\t\C$ as smooth
$\C$-schemes, define regular $f:U\ra\C$ and $g:V\ra\C$ by $f(x)=0$
and $g(x,y)=x^ky^2$ for fixed $k\in\Z$, and define $\Phi:U\ra V$ by
$\Phi:x\mapsto (x,0)$, so that $f=g\ci\Phi:U\ra\C$. Then
$X:=\Crit(f)=U$, and $Y:=\Crit(g)=\bigl\{(x,y)\in
V:kx^{k-1}y^2=2x^ky=0\bigr\}=\bigl\{(x,y)\in V:y=0\bigr\}$, as $x\ne
0$. Thus $\Phi\vert_X:X\ra Y$ is an isomorphism.

In Theorem \ref{sm5thm1}(ii), $N_{\sst UV}^*$ is the trivial line
bundle on $U$ with basis $\d y$, and $q_{\sst UV}=x^k\d y\ot\d y$.
In Definition \ref{sm5def}, $K_U\vert_X$ and $\Phi\vert_X^*(K_V)$
are the trivial line bundles on $X=X^\red=U$ with bases $\d x$ and
$\d x\w\d y$, and $J_\Phi$ in \eq{sm5eq9} maps
\begin{equation*}
J_\Phi:\d x\ot\d x\longmapsto x^k\,(\d x\w\d y)\ot(\d x\w\d y).
\end{equation*}
The principal $\Z/2\Z$-bundle $\pi_\Phi:P_\Phi\ra X$ in Definition
\ref{sm5def} parametrizes $\al:K_U\vert_X\ra\Phi\vert_X^*(K_V)$ with
$\al\ot\al=J_\Phi$. Writing $\al:\d x\mapsto p\,\d x\w\d y$ for $p$
a local function on $X=\C\sm\{0\}$, $\al\ot\al=J_\Phi$ reduces to
$p^2=x^k$. Thus, $P_\Phi$ parametrizes (\'etale local) square roots
$p$ of~$x^k:\C\sm\{0\}\ra\C\sm\{0\}$.

If $k$ is even then $x^k$ has a global square root $p=x^{k/2}$, so
the principal $\Z/2\Z$-bundle $P_\Phi$ has a global section, and is
trivial. If $k$ is odd then $x^k$ has no global square root on
$X=\C\sm\{0\}$, so $P_\Phi$ has no global section, and is
nontrivial.

Thus, Theorem \ref{sm5thm2} implies that if $k$ is even then
$\PV_{V,g}^\bu\cong A_Y[1]$ is the constant perverse sheaf on $Y$,
but if $k$ is odd then $\PV_{V,g}^\bu$ is the twist of $A_Y[1]$ by
the unique nontrivial principal $\Z/2\Z$-bundle
on~$Y\cong\C\sm\{0\}$.
\label{sm5ex}
\end{ex}

\subsection{Theorem \ref{sm5thm2}(a): the isomorphism $\Th_\Phi$}
\label{sm51}

Let $U,V,f,g,X,Y,\Phi$ be as in Theorem \ref{sm5thm2}(a), and use
the notation $N_{\sst UV},q_{\sst UV}$ from Theorem
\ref{sm5thm1}(ii) and $J_\Phi,P_\Phi$ from Definition \ref{sm5def}.
We will show that there exists a unique perverse sheaf morphism
$\Th_\Phi$ in \eq{sm5eq13} which is characterized by the property
that whenever $U',V',\io,\jmath,\Phi',\al,\be,X',Y',f',g'$ are as in
Theorem \ref{sm5thm1}(i) then the following diagram of isomorphisms
in $\Perv(X')$ commutes:
\e
\begin{gathered}
\xymatrix@!0@C=270pt@R=50pt{
*+[r]{\io\vert_{X'}^*\bigl(\PV_{U,f}^\bu\bigr)}
\ar[r]_(0.25){\io\vert_{X'}^*(\ga)}
\ar[d]^(0.45){\io\vert_{X'}^*(\Th_\Phi)} &
*+[l]{\io\vert_{X'}^*\ci(\id_X\t 0)^*\bigl(\PV_{U,f}^\bu\boxtL
\PV_{\C^n,z_1^2+\cdots+z_n^2}^\bu\bigr)}
\ar[d]_(0.45){\io\vert_{X'}^*\ci(\id_X\t
0)^*(\TS_{U,f,\C^n,z_1^2+\cdots+z_n^2}^{-1})} \\
*+[r]{\begin{subarray}{l}
\ts\io\vert_{X'}^*\ci\Phi\vert_X^*\bigl(\PV_{V,g}^\bu\bigr)\\
\ts\ot_{\Z/2\Z}\io\vert_{X'}^*(P_\Phi)\end{subarray}}
\ar[d]^(0.45)\de & *+[l]{\begin{subarray}{l}\ts
\io\vert_{X'}^*\!\ci\!(\id_X\!\t
0)^*\bigl(\PV_{U\t\C^n,f\boxplus z_1^2+\cdots+z_n^2}^\bu\bigr)\!=\\
\ts\Phi'\vert_{X'}^*\!\ci\!(\al\!\t\!\be)\vert_{Y'}^*
\bigl(\PV_{U\t\C^n,f\boxplus z_1^2+\cdots+z_n^2}^\bu
\bigr)\end{subarray}\;\>{}}
\ar[d]_(0.55){\Phi'\vert_{X'}^*(\PV_{\al\t\be}^{-1})}
\\
*+[r]{\begin{subarray}{l}
\ts\io\vert_{X'}^*\ci\Phi\vert_X^*\bigl(\PV_{V,g}^\bu\bigr)=\\
\ts\Phi'\vert_{X'}^*\ci\jmath\vert_{Y'}^*\bigl(\PV_{V,g}^\bu\bigr)
\end{subarray}} &
*+[l]{\Phi'\vert_{X'}^*\bigl(\PV_{V',g'}^\bu\bigr),}
\ar[l]_{\Phi'\vert_{X'}^*(\PV_\jmath)}  }
\end{gathered}
\label{sm5eq17}
\e
where $\TS_{U,f,\C^n,z_1^2+\cdots+z_n^2}$ is as in \eq{sm2eq8}, and
$\ga,\de$ are defined as follows:
\begin{itemize}
\setlength{\itemsep}{0pt}
\setlength{\parsep}{0pt}
\item[(A)] $\ga:\PV_{U,f}^\bu\ra(\id_X\t 0)^*\bigl(\PV_{U,f}^\bu
\boxtL\PV_{\C^n,z_1^2+\cdots+z_n^2}^\bu\bigr)$ in $\Perv(X)$
comes from the isomorphism $\PV_{\C^n,z_1^2+\cdots+z_n^2}^\bu
\cong A_{\{0\}}$ in~\eq{sm2eq12}.
\item[(B)] The principal $\Z/2\Z$-bundle $P_\Phi\ra X$
comes from $(N_{\sst UV}\vert_{X^\red},\ab q_{\sst
UV}\vert_{X^\red})$, as the bundle of square roots of
$\det(q_{\sst UV}\vert_{X^\red})$. Thus, the pullback
$\io\vert_{X'}^*(P_\Phi)\ra X'$ comes from
$\bigl(\io\vert_{X^{\prime\red}}^*(N_{\sst
UV}),\io\vert_{X^{\prime\red}}^*(q_{\sst UV})\bigr)$. Now
Theorem \ref{sm5thm1}(ii) defines
$\hat\be\vert_{X^{\prime\red}}:\langle\d z_1,\ldots,\d
z_n\rangle_{X^{\prime\red}}\,{\buildrel\cong\over\longra}\,
\io\vert_{X^{\prime\red}}^*(N_{\sst UV}^*) $ identifying
$\sum_{j=1}^n\d z_j^2$ with $\io\vert_{X^{\prime\red}}^*(q_{\sst
UV})$. Thus, $\hat\be\vert_{X^{\prime\red}}$ induces a
trivialization of~$\io\vert_{X'}^*(P_\Phi)\ra X'$.

Then $\de:\io\vert_{X'}^*\ci\Phi\vert_X^*\bigl(\PV_{V,g}^\bu
\bigr)\ot_{\Z/2\Z}\io\vert_{X'}^*(P_\Phi)\ra\io\vert_{X'}^*\ci
\Phi\vert_X^*\bigl(\PV_{V,g}^\bu\bigr)$ in $\Perv(X')$ comes
from this trivialization of the principal $\Z/2\Z$-bundle
$\io\vert_{X'}^*(P_\Phi)\ra X'$.
\end{itemize}

Since Theorem \ref{sm5thm1}(i) holds for each $x\in X$, we may
choose a family $\bigl\{(U_a',\ab V_a',\ab\io_a,\ab\jmath_a,\ab
\Phi'_a, \al_a,\be_a,f'_a,g'_a,X'_a,Y'_a):a\in A\bigr\}$, such that
$U_a',V_a',\ldots,Y'_a$ satisfy Theorem \ref{sm5thm1}(i) for each
$a\in A$, and $\bigl\{\io_a'\vert_{\smash{X_a'}}:X_a'\ra
X\bigr\}{}_{a\in A}$ is an \'etale open cover of $X$. For each $a\in
A$, define an isomorphism
\begin{equation*}
\Th_a:\io_a\vert_{X'_a}^*\bigl(\PV_{U,f}^\bu\bigr)\longra
\io_a\vert_{X_a'}^*\ci\Phi\vert_X^*\bigl(\PV_{V,g}^\bu\bigr)
\end{equation*}
to make the following diagram of isomorphisms commute:
\e
\begin{gathered}
\xymatrix@!0@C=270pt@R=50pt{
*+[r]{\io_a\vert_{X'_a}^*\bigl(\PV_{U,f}^\bu\bigr)}
\ar[r]_(0.25){\io_a\vert_{X_a'}^*(\ga)} \ar[d]^(0.45){\Th_a} &
*+[l]{\io_a\vert_{X'_a}^*\ci(\id_X\t 0)^*\bigl(\PV_{U,f}^\bu\boxtL
\PV_{\C^n,z_1^2+\cdots+z_n^2}^\bu\bigr)}
\ar[d]_(0.45){\io_a\vert_{X_a'}^*\ci(\id_X\t
0)^*(\TS_{U,f,\C^n,z_1^2+\cdots+z_n^2}^{-1})} \\
*+[r]{\begin{subarray}{l}
\ts\io_a\vert_{X_a'}^*\ci\Phi\vert_X^*\bigl(\PV_{V,g}^\bu\bigr)\\
\ts\ot_{\Z/2\Z}\io_a\vert_{X_a'}^*(P_\Phi)\end{subarray}}
\ar[d]^(0.45){\de_a} & *+[l]{\begin{subarray}{l}\ts
\io_a\vert_{X_a'}^*\!\ci\!(\id_X\!\t
0)^*\bigl(\PV_{U\t\C^n,f\boxplus z_1^2+\cdots+z_n^2}^\bu\bigr)\!=\\
\ts\Phi_a'\vert_{X_a'}^*\!\ci\!(\al_a\!\t\!\be_a)\vert_{Y_a'}^*
\bigl(\PV_{U\t\C^n,f\boxplus z_1^2+\cdots+z_n^2}^\bu
\bigr)\end{subarray}\;\>{}}
\ar[d]_(0.55){\Phi_a'\vert_{X_a'}^*(\PV_{\al_a\t\be_a}^{-1})}
\\
*+[r]{\begin{subarray}{l}
\ts\io_a\vert_{X_a'}^*\ci\Phi\vert_X^*\bigl(\PV_{V,g}^\bu\bigr)=\\
\ts\Phi_a'\vert_{X_a'}^*\ci\jmath_a\vert_{Y_a'}^*\bigl(\PV_{V,g}^\bu\bigr)
\end{subarray}} &
*+[l]{\Phi_a'\vert_{X_a'}^*\bigl(\PV_{V_a',g_a'}^\bu\bigr),}
\ar[l]_{\Phi_a'\vert_{X_a'}^*(\PV_{\jmath_a})}  }
\end{gathered}
\label{sm5eq18}
\e
where $\ga$ is as in (A), and $\de_a$ defined as in (B) above.

For $a,b\in A$, define $U'_{ab}=U'_a\t_{\io_a,U,\io_b}U'_b$ and
$V'_{ab}=V'_a\t_{\jmath_a,V,\jmath_b}V'_b$, with projections
$\Pi_{\smash{U'_a}}:U'_{ab}\ra U'_a$, $\Pi_{\smash{U'_b}}:
U'_{ab}\ra U'_b$, $\Pi_{\smash{V'_a}}:V'_{ab}\ra V'_a$,
$\Pi_{\smash{V'_b}}:V'_{ab}\ra V'_b$. Then $U'_{ab},V'_{ab}$ are
smooth and $\Pi_{\smash{U'_a}},\Pi_{\smash{U'_b}},
\Pi_{\smash{V'_a}},\Pi_{\smash{V'_b}}$ \'etale. The universal
property of $V'_a\t_{\jmath_a,V,\jmath_b}V'_b$ gives a unique
morphism $\Phi'_{ab}:U'_{ab}\ra V'_{ab}$ with
\e
\Pi_{\smash{V_a'}}\ci\Phi_{ab}'=\Phi_a'\ci\Pi_{\smash{U_a'}}
\quad\text{and}\quad
\Pi_{\smash{V_b'}}\ci\Phi_{ab}'=\Phi_b'\ci\Pi_{\smash{U_b'}}.
\label{sm5eq19}
\e

Set $f'_{ab}=f_a'\ci\Pi_{U_a'}:U'_{ab}\ra\C$,
$g'_{ab}=g_a'\ci\Pi_{V_a'}:V'_{ab}\ra\C$ and
$X'_{ab}=\Crit(f'_{ab})\subseteq U'_{ab}$,
$Y'_{ab}=\Crit(g'_{ab})\subseteq V'_{ab}$. As for \eq{sm4eq17} we
have
\begin{align*}
f'_{ab}&=f_a'\ci\Pi_{\smash{U_a'}}=f\ci \io_a\ci\Pi_{\smash{U_a'}}=
f\ci\io_b\ci\Pi_{\smash{U_b'}}=f_b'\ci\Pi_{\smash{U_b'}},\\
g'_{ab}&=g_a'\ci\Pi_{\smash{V_a'}}=g\ci\jmath_a\ci\Pi_{\smash{V_a'}}=
g\ci \jmath_b\ci\Pi_{\smash{V_b'}}=g_b'\ci\Pi_{\smash{V_b'}}\\
&=(f\boxplus z_1^2+\cdots+z_n^2)\ci(\al_a\t\be_a)\ci
\Pi_{\smash{V_a'}}\\
&=(f\boxplus z_1^2+\cdots+z_n^2)\ci
(\al_b\t\be_b)\ci\Pi_{\smash{V_b'}}.
\end{align*}

Apply Theorem \ref{sm3thm} with $V_{ab}',U\t\C^n,(\al_a\t\be_a)
\ci\Pi_{\smash{V_a'}},(\al_b\t\be_b)\ci\Pi_{\smash{V_b'}},g'_{ab}$,
and $f\boxplus z_1^2+\cdots+z_n^2$ in place of $V,W,\Phi,\Psi,f,g$.
The analogue of $\Phi\vert_X=\Psi\vert_X$ is
\e
\begin{aligned}
(\al_a&\t\be_a) \ci\Pi_{\smash{V_a'}}\vert_{\smash{Y_{ab}'}}=
((\Phi\vert_X^{-1}\ci\Phi\vert_X\ci\al_a)\vert_{Y_a'}\t 0)
\ci\Pi_{\smash{V_a'}}\vert_{\smash{Y_{ab}'}}\\
&=(\Phi\vert_X^{-1}\ci\jmath_a\ci\Pi_{\smash{V_a'}}
\vert_{\smash{Y_{ab}'}})\t 0=(\Phi\vert_X^{-1}\ci\jmath_b
\ci\Pi_{\smash{V_b'}}\vert_{\smash{Y_{ab}'}})\t 0 \\
&=((\Phi\vert_X^{-1}\ci\Phi\vert_X\ci\al_b)\vert_{Y_b'}\t 0)
\ci\Pi_{\smash{V_b'}}\vert_{\smash{Y_{ab}'}}=
(\al_b\t\be_b) \ci\Pi_{\smash{V_b'}}\vert_{\smash{Y_{ab}'}},
\end{aligned}
\label{sm5eq20}
\e
using $\Phi\vert_X:X\ra Y$ an isomorphism and $\be_a\vert_{Y_a'}=0$
in the first step, $\jmath_a\vert_{Y_a'}=
\Phi\vert_X\ci\al_a\vert_{Y_a'}$ in the second,
$\jmath_a\ci\Pi_{\smash{V_a'}}= \jmath_b\ci\Pi_{\smash{V_b'}}$ in
the third, $\jmath_b\vert_{Y_b'}=\Phi\vert_X\ci\al_b\vert_{Y_b'}$ in
the fourth, and $\be_b\vert_{Y_b'}=0$ in the fifth. Thus Theorem
\ref{sm3thm} gives
\e
\begin{split}
\PV_{(\al_a\t\be_a)\ci\Pi_{\smash{V_a'}}}\!=\!
\det\bigl[\d\bigl((\al_b\!\t\!\be_b)\!\ci\!\Pi_{\smash{V_b'}}\bigr)
\vert_{Y_{ab}^{\prime\red}}^{-1}\!\ci\!\d\bigl((\al_a\!\t\!
\be_a)\!\ci\!\Pi_{\smash{V_a'}}\bigr)\vert_{Y_{ab}^{\prime\red}}
\bigr]\cdot \\
\PV_{(\al_b\t\be_b)\ci\Pi_{\smash{V_b'}}}:\PV_{V_{ab}',g_{ab}'}\!
\longra\!(\al_a\!\t\!\be_a)\!\ci\!\Pi_{\smash{V_a'}}
\vert_{\smash{Y_{ab}'}}^*\bigl(\PV_{U\t\C^n,f\boxplus
z_1^2+\cdots+z_n^2}^\bu\bigr)
\end{split}
\label{sm5eq21}
\e
in $\Perv(Y_{ab}')$, where $\det[\cdots]$ maps
$Y_{ab}^{\prime\red}\ra\{\pm 1\}$.

Consider the morphisms
\ea
\Pi_{U_a'}\vert_{X_{ab}'}^*(\de_a)&,\Pi_{U_b'}\vert_{X_{ab}'}^*(\de_b):
(\Phi\!\ci\!\io_a\!\ci\!\Pi_{U_a'})\vert_{X_{ab}'}^*\bigl(\PV_{V,g}^\bu
\bigr)\!\ot_{\Z/2\Z}\!(\io_a\!\ci\!\Pi_{U_a'})\vert_{X_{ab}'}^*(P_\Phi)
\nonumber\\
&\longra(\Phi\ci\io_a\ci\Pi_{U_a'})\vert_{X_{ab}'}^*
\bigl(\PV_{V,g}^\bu\bigr).
\label{sm5eq22}
\ea
As in (B) above, these are defined using two different
trivializations of the principal $\Z/2\Z$-bundle
$(\io_a\ci\Pi_{U_a'})\vert_{X_{ab}'}^*(P_\Phi)\ra X_{ab}'$, defined
using
\begin{equation*}
\Pi_{U_a'}\vert_{X_{ab}^{\prime\red}}^*(\hat\be_a),
\Pi_{U_b'}\vert_{X_{ab}^{\prime\red}}^*(\hat\be_b):\langle\d z_1,
\ldots,\d z_n\rangle{}_{X^{\prime\red}_{ab}}\longra (\io_a\ci\Pi_{U_a'})
\vert_{X_{ab}^{\prime\red}}^*\bigl(N_{\sst UV}^*\bigr),
\end{equation*}
which are isomorphisms of vector bundles on $X^{\prime\red}_{ab}$
identifying the nondegenerate quadratic forms $\sum_{j=1}^n\d z_j^2$
on $\langle\d z_1,\ldots,\d z_n\rangle_{X^{\prime\red}_{ab}}$ and
$(\io_a\ci\Pi_{U_a'}) \vert_{X_{ab}^{\prime\red}}^*(q_{\sst UV})$ on
$(\io_a\ci\Pi_{U_a'})\vert_{X_{ab}^{\prime\red}}^*\bigl(N_{\sst
UV}^*\bigr)$, for $\hat\be_a,\hat\be_b$ as in \eq{sm5eq3}. Thus we
see that
\e
\Pi_{U_a'}\vert_{X_{ab}'}^*(\de_a)=\det\bigl[\Pi_{U_a'}
\vert_{X_{ab}^{\prime\red}}^*(\hat\be_a)\ci
\Pi_{U_b'}\vert_{X_{ab}^{\prime\red}}^*(\hat\be_b)^{-1}\bigr]
\cdot\Pi_{U_b'}\vert_{X_{ab}'}^*(\de_b),
\label{sm5eq23}
\e
where $\det[\cdots]$ maps $X_{ab}^{\prime\red}\ra\{\pm 1\}$ since
both isomorphisms in \eq{sm5eq22} identify the same nondegenerate
quadratic forms.

We have an exact sequence of vector bundles on
$X_{ab}^{\prime\red}$:
\begin{equation*}
\xymatrix@C=20pt{0 \ar[r] & TU_{ab}'\vert_{X_{ab}^{\prime\red}} \ar[r] &
\Phi_{ab}'\vert_{X_{ab}^{\prime\red}}^*(TV'_{ab})
\ar[r] & (\io_a\!\ci\!\Pi_{U_a'})\vert_{X_{ab}^{\prime\red}}^*
(N_{\sst UV}) \ar[r] & 0.}
\end{equation*}
Choosing a local splitting of this sequence, we may identify
\begin{align*}
\Phi_{ab}'\vert_{X_{ab}^{\prime\red}}^*\bigl[\d\bigl(
(\al_b&\t\be_b)\ci\Pi_{\smash{V_b'}}\bigr)
\vert_{Y_{ab}^{\prime\red}}^{-1}\ci\d\bigl((\al_a\t
\be_a)\ci\Pi_{\smash{V_a'}}\bigr)\vert_{Y_{ab}^{\prime\red}}\bigr]\\
&\cong\begin{pmatrix} \id_{TU_{ab}'\vert_{X_{ab}^{\prime\red}}} & * \\
0 & \bigl(\Pi_{U_a'}\vert_{X_{ab}^{\prime\red}}^*(\hat\be_a)\ci
\Pi_{U_b'}\vert_{X_{ab}^{\prime\red}}^*(\hat\be_b)^{-1}\bigr){}^*
\end{pmatrix}.
\end{align*}
Therefore
\e
\begin{split}
\Phi_{ab}'&\vert_{X_{ab}^{\prime\red}}^*\bigl(
\det\bigl[\d\bigl((\al_b\t\be_b)\ci\Pi_{\smash{V_b'}}\bigr)
\vert_{Y_{ab}^{\prime\red}}^{-1}\ci\d\bigl((\al_a\t
\be_a)\ci\Pi_{\smash{V_a'}}\bigr)\vert_{Y_{ab}^{\prime\red}}\bigr]\bigr)\\
&=\det\bigl[\Pi_{U_a'}\vert_{X_{ab}^{\prime\red}}^*(\hat\be_a)\ci\Pi_{U_b'}
\vert_{X_{ab}^{\prime\red}}^*(\hat\be_b)^{-1}\bigr]:
X_{ab}^{\prime\red}\longra\{\pm 1\}.
\end{split}
\label{sm5eq24}
\e

Now
\e
\text{\begin{footnotesize}$\displaystyle
\begin{aligned}
&\Pi_{U_a'}\vert_{X_{ab}'}^*(\Th_a)=\Pi_{U_a'}\vert_{X_{ab}'}^*
(\de_a^{-1})\ci(\Phi_a'\ci\Pi_{U_a'})\vert_{X_{ab}'}^*
(\PV_{\jmath_a})\ci(\Phi_a'\ci\Pi_{U_a'})\vert_{X_{ab}'}^*
(\PV_{\al_a\t\be_a}^{-1})\\
&\;\>\ci((\id_X\t
0)\ci\io_a\ci\Pi_{U_a'})\vert_{X_{ab}'}^*
(\TS_{U,f,\C^n,\Si_jz_j^2}^{-1})\ci
(\io_a\ci\Pi_{U_a'})\vert_{X_{ab}'}^*(\ga)\\
&=\Pi_{U_a'}\vert_{X_{ab}'}^*(\de_a^{-1})\ci
(\Pi_{\smash{V_a'}}\ci\Phi_{ab}')\vert_{X_{ab}'}^*(\PV_{\jmath_a})
\ci\Phi_{ab}'\vert_{X_{ab}'}^*(\PV_{\Pi_{\smash{V_a'}}})\ci
\Phi_{ab}'\vert_{X_{ab}'}^*(\PV_{\Pi_{\smash{V_a'}}}^{-1})\\
&\;\>\ci\!(\Pi_{\smash{V_a'}}\!\ci\!\Phi_{ab}')\vert_{X_{ab}'}^*
(\PV_{\al_a\t\be_a}^{-1})
\!\ci\! ((\id_X\!\t 0)\!\ci\!\io_a\!\ci\!\Pi_{U_a'})\vert_{X_{ab}'}^*
(\TS_{U,f,\C^n,\Si_jz_j^2}^{-1})\!\ci\!
(\io_a\!\ci\!\Pi_{U_a'})\vert_{X_{ab}'}^*(\ga)\\
&=\Pi_{U_a'}\vert_{X_{ab}'}^*(\de_a^{-1})
\ci\Phi_{ab}'\vert_{X_{ab}'}^*(\PV_{\jmath_a\ci\Pi_{\smash{V_a'}}})
\ci\Phi_{ab}'\vert_{X_{ab}'}^*(\PV_{(\al_a\t\be_a)\ci
\Pi_{\smash{V_a'}}}^{-1})\\
&\;\>\ci ((\id_X\t 0)\ci\io_a\ci\Pi_{U_a'})\vert_{X_{ab}'}^*
(\TS_{U,f,\C^n,\Si_jz_j^2}^{-1})\ci
(\io_a\ci\Pi_{U_a'})\vert_{X_{ab}'}^*(\ga)\\
&=\det\bigl[\Pi_{U_a'} \vert_{X_{ab}^{\prime\red}}^*(\hat\be_a)\ci
\Pi_{U_b'}\vert_{X_{ab}^{\prime\red}}^*(\hat\be_b)^{-1}\bigr]{}^{-1}
\cdot{}\\
&\;\>\Phi_{ab}'\vert_{X_{ab}^{\prime\red}}^*\bigl(
\det\bigl[\d\bigl((\al_b\t\be_b)\ci\Pi_{\smash{V_b'}}\bigr)
\vert_{Y_{ab}^{\prime\red}}^{-1}\ci\d\bigl((\al_a\t\be_a)
\ci\Pi_{\smash{V_a'}}\bigr)\vert_{Y_{ab}^{\prime\red}}\bigr]\bigr)\cdot{}\\
&\;\>\,\Pi_{U_b'}\vert_{X_{ab}'}^*(\de_b^{-1})
\ci\Phi_{ab}'\vert_{X_{ab}'}^*(\PV_{\jmath_b\ci\Pi_{\smash{V_b'}}})
\ci\Phi_{ab}'\vert_{X_{ab}'}^*(\PV_{(\al_b\t\be_b)\ci
\Pi_{\smash{V_b'}}}^{-1})\\
&\;\>\ci ((\id_X\t 0)\ci\io_b\ci\Pi_{U_b'})\vert_{X_{ab}'}^*
(\TS_{U,f,\C^n,\Si_jz_j^2}^{-1})\ci
(\io_b\ci\Pi_{U_b'})\vert_{X_{ab}'}^*(\ga)\\
&=\Pi_{U_b'}\vert_{X_{ab}'}^*(\de_b^{-1})
\ci(\Pi_{\smash{V_b'}}\ci\Phi_{ab}')\vert_{X_{ab}'}^*(\PV_{\jmath_b})\ci
\Phi_{ab}'\vert_{X_{ab}'}^*(\PV_{\Pi_{\smash{V_b'}}})\ci
\Phi_{ab}'\vert_{X_{ab}'}^*(\PV_{\Pi_{\smash{V_b'}}}^{-1})\\
&\;\>\ci\!(\Pi_{\smash{V_b'}}\!\ci\!\Phi_{ab}')\vert_{X_{ab}'}^*
(\PV_{\al_b\t\be_b}^{-1})\!\ci\!((\id_X\!\t
0)\!\ci\!\io_b\!\ci\!\Pi_{U_b'})\vert_{X_{ab}'}^*
(\TS_{U,f,\C^n,\Si_jz_j^2}^{-1})\!\ci\!
(\io_b\!\ci\!\Pi_{U_b'})\vert_{X_{ab}'}^*(\ga)\\
&=\Pi_{U_b'}\vert_{X_{ab}'}^*(\de_b^{-1})
\ci(\Phi_b'\ci\Pi_{U_b'})\vert_{X_{ab}'}^*(\PV_{\jmath_b})\ci
(\Phi_b'\ci\Pi_{U_b'})\vert_{X_{ab}'}^*(\PV_{\al_b\t\be_b}^{-1})\\
&\;\>\ci ((\id_X\t 0)\ci\io_b\ci\Pi_{U_b'})\vert_{X_{ab}'}^*
(\TS_{U,f,\C^n,\Si_jz_j^2}^{-1})\ci
(\io_b\ci\Pi_{U_b'})\vert_{X_{ab}'}^*(\ga)
=\Pi_{U_b'}\vert_{X_{ab}'}^*(\Th_b),
\end{aligned}$\end{footnotesize}}
\!\!\!\!\!\!\!\!\!\!\!\!\!\!\!\!\!\!\!\!\!\!{}
\label{sm5eq25}
\e
using \eq{sm5eq18} in the first and seventh steps, \eq{sm5eq19} in
the second and sixth, \eq{sm2eq18} in the third, \eq{sm5eq21},
\eq{sm5eq23}, $\io_a\ci\Pi_{U_a'}=\io_b\ci\Pi_{U_b'}$ and
$\jmath_a\ci\Pi_{\smash{V_a'}}=\jmath_b\ci\Pi_{\smash{V_b'}}$ in the
fourth, and \eq{sm2eq18} and \eq{sm5eq24} in the fifth. Therefore
Theorem \ref{sm2thm3}(i) applied to the \'etale open cover
$\bigl\{\io_a\vert_{X_a'}:X_a'\ra X\bigr\}{}_{a\in A}$ of $X$ shows
that there is a unique isomorphism $\Th_\Phi$ in \eq{sm5eq13} with
$\io_a\vert_{X_a'}^*(\Th_\Phi)=\Th_a$ for all~$a\in A$.

Suppose $\bigl\{(U_a',\ldots,Y_a'):a\in A\bigr\}$ and $\bigl\{(\ti
U_a',\ldots,\ti Y_a'):a\in\ti A\bigr\}$ are alternative choices
above, yielding morphisms $\Th_\Phi$ and $\ti\Th_\Phi$ in
\eq{sm5eq13}. By running the same construction using the family
$\bigl\{(U_a',\ldots,Y_a'):a\in A\bigr\}\amalg\bigl\{(\ti
U_a',\ldots,\ti Y_a'):a\in\ti A\bigr\}$, we can show that
$\Th_\Phi=\ti\Th_\Phi$, so $\Th_\Phi$ is independent of the choice
of $\bigl\{(U_a',\ldots,Y_a'):a\in A\bigr\}$ above. Let
$U',V',\io,\jmath,\Phi',\al,\be,X',Y',f',g'$ be as in Theorem
\ref{sm5thm1}(i). Constructing $\Th_\Phi$ using
$\bigl\{(U_a',\ldots,Y_a'):a\in A\bigr\}\amalg\bigl\{(U',\ldots,
Y')\bigr\}$, we see from \eq{sm5eq18} that \eq{sm5eq17} commutes.
This completes the construction of~$\Th_\Phi$.

To see that \eq{sm5eq14}--\eq{sm5eq15} commute, in the situation of
\eq{sm5eq17} we show that Verdier duality and monodromy operators
commute with each morphism in \eq{sm5eq17}. Going clockwise from the
top left corner, $\io\vert_{X'}^*(\ga)$ is compatible with Verdier
duality and monodromy because of the commutative diagrams
\begin{equation*}
\xymatrix@C=80pt@R=11pt{ *+[r]{A_{\{0\}}} \ar[d]^\Ga \ar[r]_\cong &
*+[l]{\bD_{\{0\}}\bigl(A_{\{0\}}\bigr)} \\
*+[r]{\PV_{\C^n,\Si_jz_j^2}^\bu}
\ar[r]^(0.35){\si_{\C^n,\Si_jz_j^2}} &
*+[l]{\bD_{\{0\}}\bigl(\PV_{\C^n,\Si_jz_j^2}^\bu\bigr),} \ar[u]^{\bD(\Ga)} }
\quad
\xymatrix@C=80pt@R=11pt{ *+[r]{A_{\{0\}}} \ar[d]^\Ga \ar[r]_\id &
*+[l]{A_{\{0\}}} \ar[d]_\Ga \\
*+[r]{\PV_{\C^n,\Si_jz_j^2}^\bu}
\ar[r]^(0.5){\tau_{\C^n,\Si_jz_j^2}} &
 *+[l]{\PV_{\C^n,\Si_jz_j^2}^\bu,} }
\end{equation*}
where $\Ga:A_{\{0\}}\ra\PV_{\C^n,\Si_jz_j^2}^\bu$ is the isomorphism
used to define $\ga$ in (A) above. Equations
\eq{sm2eq9}--\eq{sm2eq10} imply that $\io\vert_{X'}^*\ci(\id_X\t
0)^*(\TS_{U,f,\C^n,\Si_jz_j^2}^{-1})$ is compatible with Verdier
duality and monodromy, and \eq{sm2eq16}--\eq{sm2eq17} imply that
$\Phi'\vert_{X'}^*(\PV_\jmath), \Phi'\vert_{X'}^*
(\PV_{\al\t\be}^{-1})$ are. Also $\de$ is compatible with Verdier
duality and monodromy, since these do not affect the trivialization
of $\io\vert_{X'}^*(P_\Phi)\ra X'$ used to define $\de$ in (B)
above.

Thus by \eq{sm5eq17} we see that $\io\vert_{X'}^*(\Th_\Phi)$ is
compatible with Verdier duality and monodromy, that is,
$\io\vert_{X'}^*$ applied to \eq{sm5eq14}--\eq{sm5eq15} commute.
Since we can form an \'etale open cover of $X$ by such
$\io\vert_{X'}:X'\ra X$, Theorem \ref{sm2thm3}(i) implies that
\eq{sm5eq14}--\eq{sm5eq15} commute.

Finally, if $U=V$, $f=g$ and $\Phi=\id_U$ then $J_\Phi=\id:K_U^2
\vert_{X^\red}\ra K_U^2\vert_{X^\red}$ in \eq{sm5eq9}, which has a
natural square root $\al=\id:K_U\vert_{X^\red}\ra
K_U\vert_{X^\red}$, so $\pi_\Phi:P_\Phi\ra X$ is trivial in
Definition \ref{sm5def}. In \eq{sm5eq17} we may put $U'=V'=U$,
$\io=\jmath=\al=\id_U$, $n=0$, $\be=0$, $X'=Y'=X$, $f'=g'=f$, and
then each morphism in \eq{sm5eq17} is essentially the identity on
$\PV_{U,f}^\bu$, so $\Th_\Phi=\id_X^*(\Th_\Phi)=
\id_{\PV_{U,f}^\bu}$. This proves Theorem~\ref{sm5thm2}(a).

\subsection{Theorem \ref{sm5thm2}(b): $\Th_\Phi$ depends only
on $\Phi\vert_X:X\ra Y$}
\label{sm52}

Suppose $\Phi,\ti\Phi:U\ra V$ are alternative choices in Theorem
\ref{sm5thm2}(a) with $\Phi\vert_X=\ti\Phi\vert_X:X\ra Y$, so that
$P_\Phi=P_{\smash{\ti\Phi}}$ by Lemma \ref{sm5lem}. Fix $x\in X$,
let $a\ne b$ be labels, and let $U'_a,V'_a,\io_a,\jmath_a,
\Phi'_a,\al_a,\be_a,X_a',Y_a',f_a',g_a'$ be as in Theorem
\ref{sm5thm1}(i) for $x,\Phi$ and $U'_b,V'_b,\ldots,g_b'$ as in
Theorem \ref{sm5thm1}(i) for $x,\ti\Phi$. As in \S\ref{sm51}, define
$\Th_a,\Th_b$ and $U'_{ab},V'_{ab},
\Pi_{\smash{U'_a}},\Pi_{\smash{U'_b}},\Pi_{\smash{V'_a}},
\Pi_{\smash{V'_b}},\Phi'_{ab},f'_{ab},g'_{ab}, X'_{ab},Y'_{ab}$, and
follow the proof in \S\ref{sm51} from \eq{sm5eq19} as far
as~\eq{sm5eq25}.

This proof does not actually need $U'_a,\ldots,g_a,\Th_a$ and
$U'_b,\ldots,g_b,\Th_b$ to be defined using the same $\Phi:U\ra V$,
it only uses in \eq{sm5eq20}--\eq{sm5eq22} that $\Phi\vert_X:X\ra Y$
is the same for $U'_a,\ldots,\Th_a$ and $U'_b,\ldots,\Th_b$. Thus we
can apply it with $U'_a,\ldots,\Th_a$ defined using $\Phi$, and
$U'_b,\ldots,\Th_b$ defined using $\ti\Phi$. Hence
\begin{align*}
(\io_a\ci\Pi_{U_a'})\vert_{X_{ab}'}^*(\Th_\Phi)&=
\Pi_{U_a'}\vert_{X_{ab}'}^*(\Th_a)=\Pi_{U_b'}\vert_{X_{ab}'}^*(\Th_b)\\
&=(\io_b\ci\Pi_{U_b'})\vert_{X_{ab}'}^*(\Th_{\smash{\ti\Phi}})
=(\io_a\ci\Pi_{U_a'})\vert_{X_{ab}'}^*(\Th_{\smash{\ti\Phi}}),
\end{align*}
using $\io_a\vert_{X_a'}^*(\Th_\Phi)=\Th_a$ in the first step,
\eq{sm5eq25} in the second, $\io_b\vert_{X_b'}^*
(\Th_{\smash{\ti\Phi}})=\Th_b$ in the third, and
$\io_a\ci\Pi_{U_a'}=\io_b\ci\Pi_{U_b'}$ in the fourth. As such
$\io_a\ci\Pi_{U_a'}\vert_{X_{ab}'}:X_{ab}'\ra X$ form an \'etale
open cover of $X$, this implies that $\Th_\Phi=
\Th_{\smash{\ti\Phi}}$ by Theorem~\ref{sm2thm3}(i).

\subsection{Theorem \ref{sm5thm2}(c): composition of the $\Th_\Phi$}
\label{sm53}

Let $U,V,W,f,g,h,X,Y,Z,\Phi,\Psi$ be as in Theorem \ref{sm5thm2}(c).
Let $x\in X$, and set $y=\Phi(x)\in Y$. Apply Theorem
\ref{sm5thm1}(i) to $U,V,f,g,X,Y,\Phi,x$ to get $\C$-schemes
$U',V'$, a point $x'\in U'$, morphisms $\io:U'\ra U,$ $\jmath:V'\ra
V,$ $\Phi':U'\ra V',$ $\al:V'\ra U$ and $\be:V'\ra\C^m$ where
$m=\dim V-\dim U$, and $f':=f\ci\io:U'\ra\C$, $g':=g\ci\jmath
:V'\ra\C$, $X':=\Crit(f')\subseteq U'$, $Y':=\Crit(g')\subseteq V'$,
satisfying conditions including $\io,\jmath,\al\t\be$ \'etale,
\eq{sm5eq1} commutes, and~$\io(x')=x$.

Similarly, apply Theorem \ref{sm5thm1}(i) to $V,W,g,h,Y,Z,\Psi,y$ to
get $\C$-schemes $\ti V,\ti W$, a point $\ti y\in\ti V$, morphisms
$\ti\io:\ti V\ra V,$ $\ti\jmath:\ti W\ra W,$ $\ti\Psi:\ti V\ra\ti
W$, $\ti\al:\ti W\ra V$ and $\ti\be:\ti W\ra\C^n$ where $n=\dim
W-\dim V$, and $\ti g:=g\ci\ti\io:\ti V\ra\C$, $\ti
h:=h\ci\ti\jmath:\ti W\ra\C$, $\ti Y:=\Crit(\ti g)\subseteq\ti V$,
$\ti Z:=\Crit(\ti h)\subseteq\ti W$, satisfying conditions.

Define $\hat U=U'\t_{\Phi\ci\io,V,\ti\io}\ti V$ and $\hat
W=V'\t_{\jmath,V,\ti\al}\ti W$, with projections $\Pi_{U'}:\hat U\ra
U'$, $\Pi_{\smash{\ti V}}:\hat U\ra\ti V$, $\Pi_{V'}:\hat W\ra V'$,
$\Pi_{\smash{\ti W}}:\hat W\ra\ti W$. As $x'\in U'$ and $\ti y\in\ti
V$ with $\Phi\ci\io(x')=y=\ti\io(\ti y)$, there exists $\hat
x\in\hat U$ with $\Pi_{U'}(\hat x)=x'$ and $\Pi_{\ti V}(\hat x)=\ti
y$. Set $\hat f:=f'\ci\Pi_{U'}: \hat U\ra\C$ and $\hat h:=\ti h\ci
\Pi_{\smash{\ti W}}:\hat W\ra\C$, and $\hat X:=\Crit(\hat
f)\subseteq\hat U$, $\hat Z:=\Crit(\hat h)\subseteq\hat W$. The
morphisms $\Phi'\ci\Pi_{U'}:\hat U\ra V'$,
$\ti\Psi\ci\Pi_{\smash{\ti V}}:\hat U\ra\ti W$ satisfy
\begin{equation*}
\jmath\ci(\Phi'\ci\Pi_{U'})=\Phi\ci\io\ci\Pi_{U'}=\ti\io
\ci\Pi_{\smash{\ti V}}=\ti\al\ci(\ti\Psi\ci\Pi_{\smash{\ti V}}).
\end{equation*}

Hence there exists a unique morphism
${}\,\,\,\,\widehat{\!\!\!\!\Psi\ci\Phi\!\!\!\!}\,\,\,\,:\hat
U\ra\hat W$ such that
$\Pi_{V'}\ci\,\,\,\,\widehat{\!\!\!\!\Psi\ci\Phi\!\!\!\!}\,\,\,\,=
\Phi'\ci\Pi_{U'}$ and $\Pi_{\smash{\ti
W}}\ci\,\,\,\,\widehat{\!\!\!\!\Psi\ci\Phi\!\!\!\!}\,\,\,\,=
\ti\Psi\ci\Pi_{\smash{\ti V}}$. Then the following diagram
\begin{equation*}
\xymatrix@C=43pt@R=15pt{ *+[r]{U} \ar[d]^{\Psi\ci\Phi} && \hat U
\ar[ll]^(0.4){\hat\io=\io\ci\Pi_{U'}}
\ar[rrr]_{\hat\io=\io\ci\Pi_{U'}}
\ar[d]_{{}\,\,\,\,\widehat{\!\!\!\!\Psi\ci\Phi\!\!\!\!}\,\,\,\,}
&&& *+[l]{U} \ar[d]_{\id_U \t 0\t 0} \\
*+[r]{W} && \hat W \ar[ll]_(0.4){\hat\jmath=\ti\jmath\ci\Pi_{\smash{\ti W}}}
\ar[rrr]^(0.35){\begin{subarray}{l}\hat\al\t\hat\be=\\
(\al\ci\Pi_{V'})\t(\be\ci\Pi_{V'})\t(\ti\be\ci \Pi_{\smash{\ti
W}})\end{subarray}} &&& *+[l]{U\!\t\!(\C^m\!\t\!\C^n)} }
\end{equation*}
is the analogue of \eq{sm5eq1} for $U,W,f,h,X,Z,\Psi\ci\Phi,x$, and
the conclusions of Theorem \ref{sm5thm1}(i) hold. Thus \eq{sm5eq17}
holds for $\Th_\Phi$ using $U',V',X',Y'\io,\jmath,\Phi',\al,\be,m$,
and for $\Th_\Psi$ using $\ti V,\ti W,\ti Y,\ti
Z,\ti\io,\ti\jmath,\ti\Psi,\ti\al,\ti\be,n$, and for
$\Th_{\Psi\ci\Phi}$ using $\hat U,\hat W,\hat X,\hat
Z,\hat\io,\ab\hat\jmath,\ab\,\,\,\,\widehat{\!\!\!\!\Psi\ci\Phi\!\!\!\!}
\,\,\,\,,\ab\hat\al,\ab\hat\be,m+n$.

We have a commutative diagram in~$\Perv(\hat X)$:
\e
\text{\begin{small}$\displaystyle \xymatrix@!0@C=51pt@R=55pt{
*+[r]{{}\!\!\!\!\!\hat\io\vert_{\hat X}^*\!\ci\!(\Psi\!\ci\!\Phi)\vert_X^*
\bigl(\PV_{W,h}^\bu\bigr)}
&&&& {\vphantom{\biggl[}}  &&
*+[l]{\hat\io\vert_{\hat X}^*\!\ci\!(\id_X\!\t
0)^*\bigl(\PV_{U\t\C^{m+n},f\boxplus \Si_jy_j^2+\Si_jz_j^2}^\bu\bigr)}
\ar[llllll]^(0.63){\,\,\,\,\widehat{\!\!\!\!\Psi\ci\Phi\!\!\!\!}
\,\,\,\,\vert_{\hat X}^*(\PV_{\hat\jmath}\ci\PV_{\hat\al\t\hat\be}^{-1})}
\\
*+[r]{\hat\io\vert_{\hat X}^*\!\ci\!\Phi\vert_X^*
\!\ci\!(\id_Y\!\t
0)^*\bigl(\PV_{V\t\C^n,g\boxplus \Si_jz_j^2}^\bu\bigr)}
\ar[u]_{(\Psi'\ci\Pi_{\smash{\ti V}})\vert_{\hat X}^*
(\PV_{\ti\jmath}\ci\PV_{\ti\al\t\ti\be}^{-1})} &&&&&&
*+[l]{\begin{subarray}{l}\ts
\hat\io\vert_{\hat X}^*\ci(\id_X\t 0)^*\bigl(\PV_{U,f}^\bu\\
\ts\boxtL\PV_{\C^{m+n},\Si_jy_j^2+\Si_jz_j^2}^\bu\bigr)
\end{subarray}}
\ar[u]^(0.6){\substack{\hat\io\vert_{\hat X}^*\ci(\id_X\t
0)^*\\ (\TS_{U,f,\C^{m+n},\Si_jy_j^2+\Si_jz_j^2}^{-1})}}
\\
&& {{}\hskip -20pt\begin{subarray}{l}
\ts \hat\io\vert_{\hat X}^*\ci(\id_X\!\t 0\!\t 0)^*\\
\ts\bigl(\PV_{U\t\C^m,f\boxplus\Si_jy_j^2}^\bu \\[-4pt]
\ts\;\>\boxtL\PV_{\C^n,\Si_jz_j^2}^\bu\bigr)\end{subarray}}
\ar[dll]_(0.55){
\substack{(\Phi'\ci\Pi_{U'})\vert_{\hat X}^*\\
{}(\PV_\jmath\ci\PV_{\al\t\be}^{-1}) \\
\boxtL\id}} \ar@<-1ex>[uurr]_(0.45){{}\!\!\!\!\!\!\!\!\!\!\!\substack{
\hat\io\vert_{\hat X}^*\ci(\id_X\t
0)^*\\ (\TS_{U\t\C^m,f\boxplus\Si_jy_j^2,\C^n,\Si_jz_j^2}^{-1})}} &&
*+[r]{\raisebox{-45pt}{$\begin{subarray}{l}
\ts \hat\io\vert_{\hat X}^*\ci(\id_X\!\t 0\!\t 0)^*\\
\ts\bigl(\PV_{U,f}^\bu
\!\boxtL\!\PV_{\C^m,\Si_jy_j^2}^\bu\!\!\!\!\!\!{} \\[-4pt]
\ts\qquad\boxtL \PV_{\C^n,\Si_jz_j^2}^\bu\bigr)\end{subarray}$}}
\ar@<1ex>[ll]^(0.5){\begin{subarray}{l}
\hat\io\vert_{\hat X}^*\ci(\id_X\t
0)^*\\ (\TS_{U,f,\C^m,\Si_jy_j^2}^{-1})\\ {}\qquad\boxtL\id
\end{subarray}} \ar[urr]_{{}\,\,\,\,\,\,\,\,\,\begin{subarray}{l}
\hat\io\vert_{\hat X}^*\ci(\id_X\t
0)^*\\ (\id\boxtL \\
\TS_{\begin{subarray}{l}\C^m,\Si_jy_j^2,\\
\C^n,\Si_jz_j^2\end{subarray}}^{-1})\end{subarray}}
\\
*+[r]{\begin{subarray}{l}\ts
\hat\io\vert_{\hat X}^*\!\ci\!\Phi\vert_X^* \!\ci\!(\id_Y\!\t 0)^* \\
\ts\bigl(\PV_{V,g}^\bu\boxtL \PV_{\C^n,\Si_jz_j^2}^\bu\bigr)
\end{subarray}}
\ar[uu]_(0.8){\hat\io\vert_{\hat
X}^*\ci\Phi\vert_X^*\ci(\id_Y\t 0)^*
(\TS_{V,g,\C^n,\Si_jz_j^2}^{-1})} &&&&&&
*+[l]{\hat\io\vert_{\hat X}^*\bigl(\PV_{U,f}^\bu\bigr)}
\ar[uu]^(0.2){\hat\io\vert_{\hat X}^*(\hat\ga)} \ar[ull]
\ar[d]_(0.45){\hat\io\vert_{\hat X}^*(\ga)}
\\
*+[r]{\hat\io\vert_{\hat X}^*\!\ci\!\Phi\vert_X^*
\bigl(\PV_{V,g}^\bu\bigr)} \ar[u]_(0.4){\hat\io\vert_{\hat
X}^*\ci\Phi\vert_X^*(\ti\ga)} &&&
{\begin{subarray}{l}\ts{}\quad\hat\io\vert_{\hat X}^*\!\ci\!(\id_X\!\t
0)^*\\
\ts\bigl(\PV_{U\t\C^m,f\boxplus\Si_jy_j^2}^\bu\bigr)\end{subarray}}
\ar[lll]_(0.42){\substack{(\Phi'\ci\Pi_{U'})\vert_{\hat X}^*\\
(\PV_\jmath\ci\PV_{\al\t\be}^{-1})}} \ar[uul] &&&
*+[l]{\begin{subarray}{l}\ts{}\quad\qquad
\hat\io\vert_{\hat X}^*\ci(\id_X\!\t 0)^*\\
\ts\bigl(\PV_{U,f}^\bu\boxtL\PV_{\C^m,\Si_jy_j^2}^\bu\bigr),\end{subarray}}
\ar[lll]_(0.58){\substack{\hat\io\vert_{\hat X}^*\ci(\id_X\t
0)^*\\ (\TS_{U,f,\C^m,\Si_jy_j^2}^{-1})}}}$\end{small}}
\label{sm5eq26}
\e
where the top right quadrilateral commutes because of associativity
in the Thom--Sebastiani Theorem for $\PV_{V,f}^\bu$, Theorem
\ref{sm2thm5}. Also we have
\e
\begin{gathered}
\xymatrix@!0@C=275pt@R=50pt{
*+[r]{(\Psi\!\ci\!\Phi\!\ci\!\hat\io)\vert_{\hat X}^*
\bigl(\PV_{W,h}^\bu\bigr)}  &
*+[l]{(\Psi\!\ci\!\Phi\!\ci\!\hat\io)\vert_{\hat X}^*
\bigl(\PV_{W,h}^\bu\bigr) \ot_{\Z/2\Z}\hat\io\vert_{\hat
X}^*(P_{\Psi\ci\Phi})} \ar[l]^(0.62){\hat\de}
\ar[d]_{\id\ot\hat\io\vert_{\hat
X}^*(\Xi_{\Psi,\Phi})} \\
*+[r]{\begin{subarray}{l}\ts
(\Psi\!\ci\!\Phi\!\ci\!\hat\io)\vert_{\hat X}^*\bigl(\PV_{W,h}^\bu\bigr)\\
\ts\ot_{\Z/2\Z}(\ti\io\!\ci\!\Pi_{\smash{\ti V}})\vert_{\hat
X}^*(P_\Psi)\end{subarray}} \ar[u]_{ \Pi_{\smash{\ti V}}\vert_{\hat
X}^*(\ti\de)} &
*+[l]{\begin{subarray}{l}\ts(\Psi\!\ci\!\Phi\!\ci\!\hat\io)\vert_{\hat X}^*
\bigl(\PV_{W,h}^\bu\bigr)\\
\ts\ot_{\Z/2\Z}(\ti\io\!\ci\!\Pi_{\smash{\ti V}})\vert_{\hat
X}^*(P_\Psi) \ot_{\Z/2\Z}\hat\io\vert_{\hat
X}^*(P_\Phi)\end{subarray} } \ar[d]_{(\Phi\ci\hat\io)\vert_{\hat
X}^*(\Th_\Psi^{-1})\ot\id} \\
*+[r]{(\Phi\!\ci\!\hat\io)\vert_{\hat X}^*\bigl(\PV_{V,g}^\bu\bigr)}
\ar[u]_{(\Phi\ci\hat\io)\vert_{\hat X}^*(\Th_\Psi)} &
*+[l]{(\Phi\!\ci\!\hat\io)\vert_{\hat
X}^*\bigl(\PV_{V,g}^\bu\bigr) \ts\ot_{\Z/2\Z}\hat\io\vert_{\hat
X}^*(P_\Phi),} \ar[l]_(0.55){ \Pi_{U'}\vert_{\hat X}^*(\de)} }
\end{gathered}
\label{sm5eq27}
\e
which commutes because the trivializations of
$\io\vert_{X'}(P_\Phi),\ti\io\vert_{\ti X}(P_\Psi),
\hat\io\vert_{\hat X}(P_{\Psi\ci\Phi})$ used to define
$\de,\ti\de,\hat\de$ are compatible with $\Xi_{\Psi,\Phi}$.

Combining \eq{sm5eq26} and \eq{sm5eq27} with $\Pi_{U'}\vert_{\hat
X}^*$ applied to \eq{sm5eq17} for $\Th_\Phi$, and $\Pi_{\smash{\ti
V}}\vert_{\hat X}^*$ applied to \eq{sm5eq17} for $\Th_\Psi$, and
\eq{sm5eq17} for $\Th_{\Psi\ci\Phi}$, we can show that the following
diagram commutes in $\Perv(\hat X)$:
\begin{equation*}
\xymatrix@C=163pt@R=21pt{
*+[r]{\hat\io\vert_{\hat X}^*\bigl(\PV_{U,f}^\bu\bigr)}
\ar[r]_(0.35){\hat\io\vert_{\hat X}^*(\Th_{\Psi\ci\Phi})}
\ar@<2ex>[d]^{\hat\io\vert_{\hat X}^*(\Th_\Phi)} &
*+[l]{(\Psi\!\ci\!\Phi\!\ci\!\hat\io)\vert_{\hat X}^*
\bigl(\PV_{W,h}^\bu\bigr) \ot_{\Z/2\Z}\hat\io\vert_{\hat
X}^*(P_{\Psi\ci\Phi})}
\ar@<-2ex>[d]_{\id\ot \hat\io\vert_{\hat X}^*(\Xi_{\Psi,\Phi})} \\
*+[r]{(\Phi\!\ci\!\hat\io)\vert_{\hat X}^*\bigl(\PV_{V,g}^\bu\bigr)
\ts\ot_{\Z/2\Z}\hat\io\vert_{\hat X}^*(P_\Phi)}
\ar[r]^(0.51){\raisebox{6pt}{$\st(\Phi\ci\hat\io)\vert_{\hat
X}^*(\Th_\Psi)\ot\id$}} &
*+[l]{\begin{subarray}{l}\ts{}\qquad (\Psi\!\ci\!\Phi\!\ci\!\hat\io)
\vert_{\hat X}^*\bigl(\PV_{W,h}^\bu\bigr)\\
\ts\ot_{\Z/2\Z}(\ti\io\!\ci\!\Pi_{\smash{\ti V}})\vert_{\hat
X}^*(P_\Psi) \ot_{\Z/2\Z}\hat\io\vert_{\hat X}^*(P_\Phi),
\end{subarray}} }
\end{equation*}
which is $\hat\io\vert_{\hat X}^*$ applied to \eq{sm5eq16}. Since
such $\hat\io\vert_{\hat X}:\hat X\ra X$ form an \'etale cover of
$X$, equation \eq{sm5eq16} commutes by Theorem \ref{sm2thm3}(i).
This proves Theorem~\ref{sm5thm2}(c).

\subsection{$\cD$-modules and mixed Hodge modules}
\label{sm54}

By Theorem \ref{sm5thm1}(iv),(v), the earlier parts of that result
hold for our other contexts in \S\ref{sm26}--\S\ref{sm210}. Once
again, the proofs of Theorem \ref{sm5thm2}(a)--(c) then carry over
to the other contexts using the general framework of \S\ref{sm25},
now also making use of property~(vii).

\section{Perverse sheaves on oriented d-critical loci}
\label{sm6}

\subsection{Background material on d-critical loci}
\label{sm61}

Here are some of the main definitions and results on d-critical
loci, from Joyce \cite[Th.s 2.1, 2.20, 2.28 \& Def.s 2.5, 2.18,
2.31]{Joyc}. For the algebraic case we work with $\C$-schemes.

\begin{thm} Let\/ $X$ be a $\C$-scheme. Then there exists a sheaf\/
$\cS_X$ of\/ $\C$-vector spaces on $X,$ unique up to canonical
isomorphism, which is uniquely characterized by the following two
properties:
\begin{itemize}
\setlength{\itemsep}{0pt}
\setlength{\parsep}{0pt}
\item[{\bf(i)}] Suppose $R\subseteq X$ is Zariski open, $U$ is a
smooth\/ $\C$-scheme, and\/ $i:R\hookra U$ is a closed
embedding. Then we have an exact sequence of sheaves of\/
$\C$-vector spaces on $R\!:$
\begin{equation*}
\smash{\xymatrix@C=30pt{ 0 \ar[r] & I_{R,U} \ar[r] &
i^{-1}(\O_U) \ar[r]^{i^\sharp} & \O_X\vert_R \ar[r] & 0, }}
\end{equation*}
where $\O_X,\O_U$ are the sheaves of regular functions on $X,U,$
and\/ $i^\sharp$ is the morphism of sheaves of\/ $\C$-algebras
on $R$ induced by $i$.

There is an exact sequence of sheaves of\/ $\C$-vector spaces on
$R\!:$
\begin{equation*}
\xymatrix@C=20pt{ 0 \ar[r] & \cS_X\vert_R
\ar[rr]^(0.4){\io_{R,U}} &&
\displaystyle\frac{i^{-1}(\O_U)}{I_{R,U}^2} \ar[rr]^(0.4)\d &&
\displaystyle\frac{i^{-1}(T^*U)}{I_{R,U}\cdot i^{-1}(T^*U)}\,, }
\end{equation*}
where $\d$ maps $f+I_{R,U}^2\mapsto \d f+I_{R,U}\cdot
i^{-1}(T^*U)$.
\item[{\bf(ii)}] Let\/ $R\subseteq S\subseteq X$ be Zariski open,
$U,V$ be smooth\/ $\C$-schemes, $i:R\hookra U,$ $j:S\hookra V$
closed embeddings, and\/ $\Phi:U\ra V$ a morphism with\/
$\Phi\ci i=j\vert_R:R\ra V$. Then the following diagram of
sheaves on $R$ commutes:
\e
\begin{gathered}
\xymatrix@C=12pt{ 0 \ar[r] & \cS_X\vert_R \ar[d]^\id
\ar[rrr]^(0.4){\io_{S,V}\vert_R} &&&
\displaystyle\frac{j^{-1}(\O_V)}{I_{S,V}^2}\Big\vert_R
\ar@<-2ex>[d]^{i^{-1}(\Phi^\sharp)} \ar[rr]^(0.4)\d &&
\displaystyle\frac{j^{-1}(T^*V)}{I_{S,V}\cdot
j^{-1}(T^*V)}\Big\vert_R \ar@<-2ex>[d]^{i^{-1}(\d\Phi)} \\
 0 \ar[r] & \cS_X\vert_R \ar[rrr]^(0.4){\io_{R,U}} &&&
\displaystyle\frac{i^{-1}(\O_U)}{I_{R,U}^2} \ar[rr]^(0.4)\d &&
\displaystyle\frac{i^{-1}(T^*U)}{I_{R,U}\cdot i^{-1}(T^*U)}\,.
}\!\!\!\!\!\!\!{}
\end{gathered}
\label{sm6eq1}
\e
Here $\Phi:U\ra V$ induces $\Phi^\sharp: \Phi^{-1}(\O_V)\ra\O_U$
on $U,$ so we have
\e
i^{-1}(\Phi^\sharp):j^{-1}(\O_V)\vert_R=i^{-1}\ci
\Phi^{-1}(\O_V)\longra i^{-1}(\O_U),
\label{sm6eq2}
\e
a morphism of sheaves of\/ $\C$-algebras on $R$. As $\Phi\ci
i=j\vert_R,$ equation \eq{sm6eq2} maps $I_{S,V}\vert_R\ra
I_{R,U},$ and so maps $I_{S,V}^2\vert_R\ra I_{R,U}^2$. Thus
\eq{sm6eq2} induces the morphism in the second column of\/
\eq{sm6eq1}. Similarly, $\d\Phi:\Phi^{-1}(T^*V)\ra T^*U$ induces
the third column of\/~\eq{sm6eq1}.
\end{itemize}

There is a natural decomposition\/ $\cS_X=\cSz_X\op\C_X,$ where\/
$\C_X$ is the constant sheaf on $X$ with fibre $\C,$ and\/
$\cSz_X\subset\cS_X$ is the kernel of the composition
\begin{equation*}
\xymatrix@C=40pt{ \cS_X \ar[r]^{\be_X} & \O_X
\ar[r]^(0.47){i_X^\sharp} & \O_{X^\red},  }
\end{equation*}
with\/ $X^\red$ the reduced\/ $\C$-subscheme of\/ $X,$ and\/
$i_X:X^\red\hookra X$ the inclusion.

The analogue of all the above also holds for complex analytic
spaces.
\label{sm6thm1}
\end{thm}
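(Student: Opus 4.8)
The plan is to construct $\cS_X$ by gluing explicit local models, and then to read off uniqueness from properties (i) and (ii). First I would fix, for every Zariski open $R\subseteq X$ equipped with a closed embedding $i\colon R\hookra U$ into a smooth $\K$-scheme $U$, the sheaf $\cS_{R,U,i}$ of $\K$-vector spaces on $R$ defined as the kernel of the morphism $\d\colon i^{-1}(\O_U)/I_{R,U}^2\ra i^{-1}(T^*U)/\bigl(I_{R,U}\cdot i^{-1}(T^*U)\bigr)$ of (i), together with its tautological inclusion $\io_{R,U}$. Such pairs $(R,U,i)$ cover $X$: every point has an affine open neighbourhood, and an affine $\K$-scheme of finite type embeds as a closed subscheme of some $\bA^N$, which is smooth.

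Next I would produce the comparison isomorphisms. Given $i\colon R\hookra U$ and $j\colon S\hookra V$ with $R\subseteq S$, any morphism $\Phi\colon U\ra V$ with $\Phi\ci i=j\vert_R$ induces, via $i^{-1}(\Phi^\sharp)$ on functions and $i^{-1}(\d\Phi)$ on one-forms as in the diagram (ii), a morphism $\th_\Phi\colon\cS_{S,V,j}\vert_R\ra\cS_{R,U,i}$; this is an isomorphism, clearly so when $\Phi$ is \'etale, and in general by comparing both $\cS_{R,U,i}$ and $\cS_{R,V,j}$ with $\cS_{R,U\t V,(i,j)}$ via the smooth projections $U\t V\ra U$, $U\t V\ra V$. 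Such $\Phi$ exist Zariski-locally: after shrinking we may take $U,V$ open in affine spaces, and then the coordinate functions of $j\vert_R$ extend from $R$ to $U$, yielding $\Phi$ with $\Phi\ci i=j\vert_R$. Granting that $\th_\Phi$ is independent of $\Phi$ (below), the cocycle condition for the $\th$'s follows from functoriality of $(-)^\sharp$ and $\d$ under a third ambient $W$ with morphisms $U\ra V\ra W$, so the $\cS_{R,U,i}$ glue to a sheaf $\cS_X$ on $X$ with compatible $\io_{R,U}$; it satisfies (i) and (ii) by construction. Uniqueness is then formal: property (i) realizes any sheaf satisfying the hypotheses as the kernel $\cS_{R,U,i}$ on each $R$, and (ii) pins down the gluing maps as the $\th$'s, so any two such sheaves are canonically isomorphic.

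The main obstacle is showing $\th_\Phi$ depends only on $(R,U,i)$ and $(S,V,j)$, not on the chosen $\Phi$. If $\Phi,\Phi'\colon U\ra V$ both restrict to $j\vert_R$ on $R$, I must show $i^{-1}(\Phi^\sharp)$ and $i^{-1}(\Phi'^\sharp)$ agree on the image of $\io_{S,V}\vert_R$, i.e.\ on classes $g+I_{S,V}^2$ with $\d g\in I_{S,V}\cdot j^{-1}(T^*V)$. Choosing local coordinates $(y_1,\dots,y_m)$ on $V$ and expanding the difference, one obtains $\Phi^\sharp(g)-\Phi'^\sharp(g)=\sum_k\bigl(\Phi^\sharp(y_k)-\Phi'^\sharp(y_k)\bigr)\,h_k$, where $h_k$ is congruent to $i^{-1}(\Phi^\sharp)(\pd g/\pd y_k)$ modulo the ideal generated by the elements $\Phi^\sharp(y_k)-\Phi'^\sharp(y_k)$. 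Now each $\Phi^\sharp(y_k)-\Phi'^\sharp(y_k)$ lies in $I_{R,U}$ because $\Phi\ci i=\Phi'\ci i$, and each $\pd g/\pd y_k$ lies in $I_{S,V}$ because $\d g\in I_{S,V}\cdot j^{-1}(T^*V)$, so $h_k\in I_{R,U}$ and the difference lies in $I_{R,U}^2$, hence vanishes in $i^{-1}(\O_U)/I_{R,U}^2$. This is the computation that makes the gluing well defined, and it is the step I expect to require the most care.

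Finally, for the splitting: the natural surjections $i^{-1}(\O_U)/I_{R,U}^2\ra\O_X\vert_R$ are compatible with the $\th$'s, so they globalize to $\be_X\colon\cS_X\ra\O_X$; constant functions define $\K_X\hookra\cS_X$, and $i_X^\sharp\ci\be_X$ carries $\K_X$ isomorphically onto the constant subsheaf of $\O_{X^\red}$. For a local section $s$ of $\cS_X$, the condition $\d s=0$ forces $\d\bigl(i_X^\sharp\be_X(s)\bigr)=0$ in $\Omega_{X^\red}$, so $i_X^\sharp\be_X(s)$ is locally constant (here one uses that $X^\red$ is reduced, over $\C$ or in characteristic zero); subtracting off the corresponding section of $\K_X$ lands in $\cSz_X=\ker(i_X^\sharp\ci\be_X)$, giving $\cS_X=\cSz_X\op\K_X$. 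The complex analytic statement is proved identically, embedding analytic spaces locally as closed analytic subspaces of $\C^N$.
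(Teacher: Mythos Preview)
The paper does not prove this theorem: it is quoted as background from Joyce~\cite{Joyc}, where the sheaf $\cS_X$ is constructed. So there is no proof in the paper to compare against.

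That said, your sketch is the standard construction and is essentially the one in~\cite{Joyc}: define the local models $\cS_{R,U,i}=\ker\d$, produce transition maps $\th_\Phi$ from morphisms $\Phi:U\ra V$ compatible with the embeddings, check independence of $\Phi$, and glue. A couple of places where your write-up is thinner than it should be:
\begin{itemize}
\item You assert $\th_\Phi$ is an isomorphism ``clearly so when $\Phi$ is \'etale, and in general by comparing with $\cS_{R,U\t V,(i,j)}$ via the smooth projections.'' The product trick is right, but the projections $U\t V\ra U$ and $U\t V\ra V$ are smooth, not \'etale, so you still owe the reader the verification that $\th_{\pi_U},\th_{\pi_V}$ are isomorphisms. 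This is a genuine (if straightforward) local computation: choose \'etale coordinates so that the extra factor looks like $\bA^n$ and check directly that adding transverse free variables does not change the kernel of $\d$ on $i^{-1}(\O)/I^2$.
\item In the splitting argument you invoke ``$\d f=0$ on $X^\red$ implies $f$ locally constant.'' This is where reducedness and finite type over $\K$ are used (restrict to the dense smooth locus); it would be worth saying so explicitly rather than parenthetically.
\end{itemize}
Your key computation --- that $\Phi^\sharp(g)-\Phi'^\sharp(g)\in I_{R,U}^2$ whenever $\d g\in I_{S,V}\cdot j^{-1}(T^*V)$ --- is correct and is indeed the heart of the construction.
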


\begin{dfn} An {\it algebraic d-critical locus\/} over $\C$ is a
pair $(X,s)$, where $X$ is a $\C$-scheme, and $s\in H^0(\cSz_X)$ for
$\cSz_X$ as in Theorem \ref{sm6thm1}, satisfying the condition that
for each $x\in X$, there exists a Zariski open neighbourhood $R$ of
$x$ in $X$, a smooth $\C$-scheme $U$, a regular function
$f:U\ra\bA^1=\C$, and a closed embedding $i:R\hookra U$, such that
$i(R)=\Crit(f)$ as $\C$-subschemes of $U$,
and~$\io_{R,U}(s\vert_R)=i^{-1}(f)+I_{R,U}^2$.

Similarly, a {\it complex analytic d-critical locus\/} is a pair
$(X,s)$, where $X$ is a complex analytic space, and $s\in
H^0(\cSz_X)$ for $\cS_X$ as in Theorem \ref{sm6thm1}, such that each
$x\in X$ has an open neighbourhood $R\subset X$ with a closed
embedding $i:R\hookra U$ into a complex manifold $U$ and a
holomorphic function $f:U\ra\C$, such that $i(R)=\Crit(f)$,
and~$\io_{R,U}(s\vert_R)=i^{-1}(f)+I_{R,U}^2$.

In both cases we call the quadruple $(R,U,f,i)$ a {\it critical
chart\/} on~$(X,s)$.

Let $(X,s)$ be a d-critical locus (either algebraic or complex
analytic), and $(R,U,f,i)$ be a critical chart on $(X,s)$. Let
$U'\subseteq U$ be (Zariski) open, and set $R'=i^{-1}(U')\subseteq
R$, $i'=i\vert_{R'}:R'\hookra U'$, and $f'=f\vert_{U'}$. Then
$(R',U',f',i')$ is also a critical chart on $(X,s)$, and we call it
a {\it subchart\/} of $(R,U,f,i)$. As a shorthand we
write~$(R',U',f',i')\subseteq (R,U,f,i)$.

Let $(R,U,f,i),(S,V,g,j)$ be critical charts on $(X,s)$, with
$R\subseteq S\subseteq X$. An {\it embedding\/} of $(R,U,f,i)$ in
$(S,V,g,j)$ is a locally closed embedding $\Phi:U\hookra V$ such
that $\Phi\ci i=j\vert_R$ and $f=g\ci\Phi$. As a shorthand we write
$\Phi: (R,U,f,i)\hookra(S,V,g,j)$. If $\Phi:(R,U,f,i)\hookra
(S,V,g,j)$ and $\Psi:(S,V,g,j)\hookra(T,W,h,k)$ are embeddings, then
$\Psi\ci\Phi:(R,U,f,i)\hookra(T,W,h,k)$ is also an embedding.
\label{sm6def1}
\end{dfn}

\begin{thm} Let\/ $(X,s)$ be a d-critical locus (either algebraic
or complex analytic), and\/ $(R,U,f,i),(S,V,g,j)$ be critical charts
on $(X,s)$. Then for each\/ $x\in R\cap S\subseteq X$ there exist
subcharts $(R',U',f',i')\subseteq(R,U,f,i),$ $(S',V',g',j')\subseteq
(S,V,g,j)$ with\/ $x\in R'\cap S'\subseteq X,$ a critical chart\/
$(T,W,h,k)$ on $(X,s),$ and embeddings $\Phi:(R',U',f',i')\hookra
(T,W,h,k),$ $\Psi:(S',V',g',j')\hookra(T,W,h,k)$.
\label{sm6thm2}
\end{thm}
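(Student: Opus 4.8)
The plan is to build the common chart $(T,W,h,k)$ explicitly, realising $R'\cap S'$ as the critical locus of a single function on a cut-down subscheme of a product, following the same pattern as the proofs of Propositions \ref{sm3prop1} and \ref{sm4prop1} and of Theorem \ref{sm5thm1}(i). Everything is local around $x$, so I may freely shrink $R,S,U,V$ and pass to subcharts. Since $i,j$ are closed embeddings, $\kappa:=(i,j)\vert_{R\cap S}:R\cap S\hookra U\t V$ is a closed embedding, and its image lies in $\Crit(f)\t\Crit(g)=\Crit(f\boxplus g)$, where $\pi_U,\pi_V$ denote the two projections of $U\t V$.

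First I would extract the consequences of the d-critical condition. Both $(R,U,f,i)$ and $(S,V,g,j)$ present $s$, so by Definition \ref{sm6def1} and the compatibility \eqref{sm6eq1} of Theorem \ref{sm6thm1}, along $\kappa(R\cap S)$ the pullbacks $f\ci\pi_U$ and $g\ci\pi_V$ to $U\t V$ both reduce, modulo the square of the ideal of $\kappa(R\cap S)$, to the section $s$; in particular, in suitable \'etale coordinates near $x$ the Hessians of $f$ and of $g$ on the directions normal to $X$ agree, under the natural identification of those normal spaces. This second-order agreement is the input that allows one to ``complete the square'': exactly as in \eqref{sm3eq13}--\eqref{sm3eq14} and \eqref{sm4eq9}--\eqref{sm4eq13}, one writes $\ha(f\boxplus g)$ near $\kappa(x)$ as (a linear combination of the normal equations) plus (a quadratic form in those equations with nondegenerate leading part), and then introduces auxiliary coordinates $(z_{ab})$ on a $\K^N$-factor to carve out, inside $U\t V\t\K^N$, a smooth locally closed subscheme $W\ni w_0:=(i(x),j(x),0)$ on which the quadratic tail is absorbed --- the analogues of \eqref{sm3eq29} and \eqref{sm4eq14}. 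Taking $h$ to be the restriction to $W$ of the pullback of $\ha(f\boxplus g)$ --- the factor $\ha$ being essential so that $h$ presents $s$ and not $2s$ --- one checks, by the transversality computation for the linearised equations (as in \eqref{sm3eq31} and \eqref{sm4eq15}), that $W$ is smooth, that $\Crit(h)=\kappa(R'\cap S')$ for suitable subcharts, yielding a closed embedding $k:R'\cap S'\hookra W$ onto $\Crit(h)$, and that $(T,W,h,k)$ with $T=R'\cap S'$ is a critical chart on $(X,s)$; the last point again follows from \eqref{sm6eq1}, applied now to the projections $\pi_U\vert_W,\pi_V\vert_W$.

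It then remains to produce the embeddings. I would extend the partial morphism $R'\cap S'\ra V\t\K^N$, $p\mapsto(j(p),0)$, to a morphism $\phi:U'\ra V\t\K^N$ on a neighbourhood $U'$ of $i(x)$ (possible since $R'\cap S'$ is closed in $U'$ and $V\t\K^N$ is smooth), and set $\Phi=(\id_{U'},\phi):U'\ra U\t V\t\K^N$. The defining equations of $W$ are to be arranged so that $\Phi(U')\subseteq W$ and $\Phi\ci i'=k\vert_{R'}$; the point requiring the specific shape of $h$ and of those equations is the functional identity $h\ci\Phi=f'$. Symmetrically one constructs $\Psi:V'\hookra W$ with $\Psi\ci j'=k\vert_{S'}$ and $h\ci\Psi=g'$. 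Then $(R',U',f',i')\subseteq(R,U,f,i)$ and $(S',V',g',j')\subseteq(S,V,g,j)$ are the required subcharts and $\Phi,\Psi$ the required embeddings.

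The main obstacle is the middle step: one must exhibit a single function on the auxiliary space whose restriction to $W$ has critical locus exactly the embedded $R'\cap S'$ \emph{and} presents $s$ (rather than $0$ or $2s$), with no spurious critical branch surviving through $w_0$, and this forces both the $\K^N$-coordinates and the passage to proper subcharts --- as Example \ref{sm4ex1} shows, one cannot in general keep $R'\cap S'$ equal to all of $R\cap S$. The bookkeeping of the defining equations and of the transversality of their linearisations is the technical heart; the reductions, the verification of the chart condition via \eqref{sm6eq1}, and the functional identities $h\ci\Phi=f'$, $h\ci\Psi=g'$ are then routine. The complex analytic case is identical, with \'etale neighbourhoods replaced by complex analytic ones.
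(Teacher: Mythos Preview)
The paper does not prove this theorem; it is quoted without proof from Joyce \cite{Joyc} as background in \S\ref{sm61} (it is one of ``Th.s 2.1, 2.20, 2.28'' cited there). So there is no proof in this paper to compare your attempt against.

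Assessing your proposal on its own merits, there is a genuine gap. The analogy with Propositions~\ref{sm3prop1} and~\ref{sm4prop1} is misleading: both of those start from data that forces $\dim U=\dim V$ (an isomorphism $X^{(k+1)}\cong Y^{(k+1)}$, or two \'etale maps $U\to V$), and they construct a $W$ with \emph{\'etale} projections $\pi_U,\pi_V$ whose defining equations absorb the \emph{difference} $f\ci\pi_U-g\ci\pi_V$, yielding $f\ci\pi_U=g\ci\pi_V$ on $W$. Here $\dim U$ and $\dim V$ can differ, so that mechanism cannot run unchanged; and on $U\t V$ one has $\Crit\bigl(\ha(f\boxplus g)\bigr)=\Crit(f)\t\Crit(g)$, which properly contains $\kappa(R\cap S)$ whenever the critical loci are positive-dimensional. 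Passing to your $W$ must therefore kill a large excess of critical locus, and the ``transversality computation'' you invoke (modelled on \eqref{sm3eq31}, \eqref{sm4eq15}) does not obviously do this once the projections are no longer \'etale.

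More seriously, the step you yourself flag as ``the technical heart'' is not carried out and does not work as stated. With $h=\ha(f\boxplus g)\vert_W$ and $\Phi=(\id_{U'},\phi_V,\phi_N)$, the required identity $h\ci\Phi=f'$ unpacks to $g\ci\phi_V=f'$ on all of $U'$: you would need a locally closed embedding $\phi_V:U'\hookra V$ with $f'=g\ci\phi_V$ and $\phi_V\ci i'=j\vert_{R'}$. But the existence of such a $\phi_V$ is exactly the hypothesis (not the conclusion) of Theorem~\ref{sm5thm1}, and is essentially the special case $\dim U\le\dim V$ of the theorem you are trying to prove. The extension of $p\mapsto(j(p),0)$ to $U'$ that you obtain from smoothness of $V\t\K^N$ has no reason to satisfy $g\ci\phi_V=f'$ away from $i(R'\cap S')$; the d-critical condition only controls things modulo $I_{R,U}^2$, which is far weaker. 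So the argument, as written, is circular at the key point.
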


\begin{thm} Let\/ $(X,s)$ be a d-critical locus (either algebraic
or complex analytic), and\/ $X^\red\subseteq X$ the associated
reduced\/ $\C$-scheme or reduced complex analytic space. Then there
exists an (algebraic or holomorphic) line bundle $K_{X,s}$ on
$X^\red$ which we call the \begin{bfseries}canonical
bundle\end{bfseries} of\/ $(X,s),$ which is natural up to canonical
isomorphism, and is characterized by the following properties:
\begin{itemize}
\setlength{\itemsep}{0pt}
\setlength{\parsep}{0pt}
\item[{\bf(i)}] If\/ $(R,U,f,i)$ is a critical chart on
$(X,s),$ there is a natural isomorphism
\e
\io_{R,U,f,i}:K_{X,s}\vert_{R^\red}\longra
i^*\bigl(K_U^{\ot^2}\bigr)\vert_{R^\red},
\label{sm6eq3}
\e
where $K_U=\La^{\dim U}T^*U$ is the canonical bundle of\/ $U$ in
the usual sense.
\item[{\bf(ii)}] Let\/ $\Phi:(R,U,f,i)\hookra(S,V,g,j)$ be an
embedding of critical charts on $(X,s)$. Then \eq{sm5eq9}
defines an isomorphism of line bundles on $\Crit(f)^\red:$
\begin{equation*}
J_\Phi:K_U^{\ot^2}\vert_{\Crit(f)^\red}\,{\buildrel\cong\over\longra}\,
\Phi\vert_{\Crit(f)^\red}^*\bigl(K_V^{\ot^2}\bigr).
\end{equation*}
Since $i:R\ra\Crit(f)$ is an isomorphism with\/ $\Phi\ci
i=j\vert_R,$ this gives
\begin{equation*}
i\vert_{R^\red}^*(J_\Phi):i\vert_{R^\red}^*\bigl(K_U^{\ot^2}
\bigr)\,{\buildrel\cong\over\longra}\,
j\vert_{R^\red}^*\bigl(K_V^{\ot^2}\bigr),
\end{equation*}
and we must have
\e
\io_{S,V,g,j}\vert_{R^\red}=i\vert_{R^\red}^*(J_\Phi)\ci
\io_{R,U,f,i}:K_{X,s}\vert_{R^\red}\longra j^*
\bigl(K_V^{\ot^2}\bigr)\big\vert_{R^\red}.
\label{sm6eq4}
\e
\end{itemize}
\label{sm6thm3}
\end{thm}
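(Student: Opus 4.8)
The plan is to construct $K_{X,s}$ on $X^\red$ by a gluing argument whose local models are the line bundles $i^*(K_U^{\ot^2})\vert_{R^\red}$ attached to critical charts $(R,U,f,i)$, and whose transition data are the isomorphisms $J_\Phi$ of Definition~\ref{sm5def} attached to embeddings $\Phi$ of charts; properties (i), (ii) and the uniqueness statement are then extracted directly from the construction. The essential reason everything must live on $X^\red$ rather than on $X$ is Lemma~\ref{sm5lem}: over the reduced scheme $J_\Phi$ depends only on $\Phi\vert_{\Crit(f)}$, so the transition data will depend only on the charts, not on the auxiliary embeddings used to compare them.

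First I would fix, using the local structure in Definition~\ref{sm6def1}, an open cover $\{(R_a,U_a,f_a,i_a)\}_{a\in A}$ of $X$ by critical charts, and set $L_a:=i_a^*(K_{U_a}^{\ot^2})\vert_{R_a^\red}$. To define a transition isomorphism $\la_{ab}:L_a\vert_{R_{ab}^\red}\to L_b\vert_{R_{ab}^\red}$ over $R_{ab}:=R_a\cap R_b$, I would work near an arbitrary $x\in R_{ab}$: Theorem~\ref{sm6thm2} supplies subcharts $(R_a',U_a',f_a',i_a')\subseteq(R_a,\ldots)$ and $(R_b',\ldots)\subseteq(R_b,\ldots)$ containing $x$, a further critical chart $(T,W,h,k)$ — a \emph{bridge} between them — and embeddings $\Phi:(R_a',\ldots)\hookra(T,W,h,k)$, $\Psi:(R_b',\ldots)\hookra(T,W,h,k)$. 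Since $\Phi\ci i_a'=k\vert_{R_a'}$ and $\Psi\ci i_b'=k\vert_{R_b'}$, pulling the isomorphisms $J_\Phi,J_\Psi$ of \eq{sm5eq9} back under $i_a',i_b'$ yields isomorphisms $\mu_a:L_a\to M$ and $\mu_b:L_b\to M$, over the relevant reduced subscheme, with one and the same line bundle $M$ obtained by pulling $K_W^{\ot^2}$ back to $X^\red$; I then set $\la_{ab}:=\mu_b^{-1}\ci\mu_a$.

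The main obstacle is to verify that $\la_{ab}$ is independent of the chosen subcharts, bridge $(T,W,h,k)$ and embeddings $\Phi,\Psi$, and that the resulting $\la_{ab}$ satisfy the cocycle identity $\la_{bc}\ci\la_{ab}=\la_{ac}$ on triple overlaps. Independence of $\Phi$ and $\Psi$ is immediate from Lemma~\ref{sm5lem}. For independence of the bridge, given two bridges $(T_1,W_1,\ldots)$ and $(T_2,W_2,\ldots)$ I would apply Theorem~\ref{sm6thm2} to them to obtain a common critical chart $(T_3,W_3,\ldots)$ with embeddings of subcharts of both; then the composition law \eq{sm5eq10} for $J$ under composed embeddings, together with Lemma~\ref{sm5lem} applied to the two induced embeddings $(R_a',\ldots)\hookra(T_3,\ldots)$ — which agree on $\Crit(f_a')$ and hence give the same pulled-back $J$ — lets me factor both descriptions of $\la_{ab}$ through $K_{W_3}^{\ot^2}$ and cancel. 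For the cocycle identity I would, near $x\in R_a\cap R_b\cap R_c$, iterate Theorem~\ref{sm6thm2} so as to obtain a single bridge $(T,W,h,k)$ receiving embeddings of suitable subcharts of all three of $(R_a,\ldots)$, $(R_b,\ldots)$, $(R_c,\ldots)$; writing $\mu_a,\mu_b,\mu_c$ for the resulting isomorphisms of $L_a,L_b,L_c$ with the pullback $M$ of $K_W^{\ot^2}$ to the reduced subscheme, bridge‑independence gives $\la_{ab}=\mu_b^{-1}\ci\mu_a$, $\la_{bc}=\mu_c^{-1}\ci\mu_b$ and $\la_{ac}=\mu_c^{-1}\ci\mu_a$, so that $\la_{bc}\ci\la_{ab}=\mu_c^{-1}\ci\mu_a=\la_{ac}$. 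Bridge‑independence also shows the various locally defined pieces of $\la_{ab}$ agree where they overlap, so they glue to a genuine transition isomorphism on $R_{ab}^\red$.

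Granting this, the standard construction of a line bundle from a cocycle of transition isomorphisms produces $K_{X,s}$ on $X^\red$, unique up to canonical isomorphism, together with trivialisations $\io_a:K_{X,s}\vert_{R_a^\red}\to L_a$ satisfying $\la_{ab}\ci\io_a=\io_b$. To obtain the stated characterisation: for an arbitrary critical chart $(R,U,f,i)$ I would define $\io_{R,U,f,i}$ of \eq{sm6eq3} locally, by bridging $(R,U,f,i)$ with the charts $(R_a,\ldots)$ exactly as above, its well‑definedness coming once more from Lemma~\ref{sm5lem}; this yields (i). Property (ii), equation~\eq{sm6eq4}, then follows by comparing $\io_{R,U,f,i}$ and $\io_{S,V,g,j}$ through a common bridge dominating both charts and applying \eq{sm5eq10}; the special case in which $\Phi$ is a subchart inclusion, where $N_{UV}=0$ and hence $J_\Phi=\id$, shows that $\io$ restricts correctly to subcharts, a fact used implicitly throughout. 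Finally, if $(K',\io'_{R,U,f,i})$ is any other pair satisfying (i) and (ii), then applying (ii) to embeddings of subcharts into common bridges forces $\la_{ab}\ci\io'_a=\io'_b$, whence the $\io'_a$ glue to a canonical isomorphism $K'\to K_{X,s}$; this is the asserted naturality up to canonical isomorphism. The complex analytic statement is proved in the same way, replacing Zariski open sets by complex analytic open sets and using the complex analytic versions of Theorems~\ref{sm5thm1} and~\ref{sm6thm2}.
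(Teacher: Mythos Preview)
The paper does not contain its own proof of this theorem. Theorem~\ref{sm6thm3} is quoted as background from Joyce~\cite[Th.~2.28]{Joyc}; see the opening sentence of \S\ref{sm61}, which attributes Theorems~\ref{sm6thm1}, \ref{sm6thm2}, \ref{sm6thm3} and Definitions~\ref{sm6def1}, \ref{sm6def2} to \cite{Joyc}. So there is nothing in this paper to compare your argument against.

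That said, your proposed argument is the natural one and is essentially what is carried out in \cite{Joyc}: take local models $i_a^*(K_{U_a}^{\ot^2})\vert_{R_a^\red}$, manufacture transition isomorphisms by bridging pairs of charts via Theorem~\ref{sm6thm2} and pulling back the $J_\Phi$ of \eq{sm5eq9}, use Lemma~\ref{sm5lem} and the composition law \eq{sm5eq10} to show these are independent of the bridge and satisfy the cocycle condition, and glue. Your handling of the bridge-independence step (bridging the bridges and invoking \eq{sm5eq10} together with Lemma~\ref{sm5lem}) and of the cocycle step (a single common bridge for three charts) is correct. The only point worth flagging is that your ``single bridge for three charts'' requires two applications of Theorem~\ref{sm6thm2} and a composition of embeddings, exactly as in the proof of \eq{sm6eq16} in \S\ref{sm63}; you say ``iterate Theorem~\ref{sm6thm2}'', which is right, but be explicit that you then use \eq{sm5eq10} to reduce the composed $J$'s to a single one.
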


\begin{dfn} Let $(X,s)$ be a d-critical locus (either algebraic or
complex analytic), and $K_{X,s}$ its canonical bundle from Theorem
\ref{sm6thm3}. An {\it orientation\/} on $(X,s)$ is a choice of
square root line bundle $K_{X,s}^{1/2}$ for $K_{X,s}$ on $X^\red$.
That is, an orientation is an (algebraic or holomorphic) line bundle
$L$ on $X^\red$, together with an isomorphism $L^{\ot^2}=L\ot L\cong
K_{X,s}$. A d-critical locus with an orientation will be called an
{\it oriented d-critical locus}.
\label{sm6def2}
\end{dfn}

In \cite[Th.~6.6]{BBJ} we show that algebraic d-critical loci are
classical truncations of objects in derived algebraic geometry known
as $-1$-{\it shifted symplectic derived schemes}, introduced by
Pantev, To\"en, Vaqui\'e and Vezzosi \cite{PTVV}.

\begin{thm}[Bussi, Brav and Joyce \cite{BBJ}] Suppose\/ $(\bs
X,\om)$ is a $-1$-shifted symplectic derived scheme in the sense of
Pantev et al.\ {\rm\cite{PTVV}} over $\C$, and let\/ $X=t_0(\bs X)$ be the
associated classical\/ $\C$-scheme of\/ ${\bs X}$. Then $X$ extends
naturally to an algebraic d-critical locus\/ $(X,s)$. The canonical
bundle $K_{X,s}$ from Theorem\/ {\rm\ref{sm6thm3}} is naturally
isomorphic to the determinant line bundle $\det(\bL_{\bs
X})\vert_{X^\red}$ of the cotangent complex\/ $\bL_{\bs X}$
of\/~$\bs X$.
\label{sm6thm4}
\end{thm}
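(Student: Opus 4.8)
The plan is to reduce everything to a \emph{Darboux-type theorem} for $-1$-shifted symplectic derived schemes, proved in \cite{BBJ}, which is the technical core of the argument: every $-1$-shifted symplectic derived $\K$-scheme $(\bs X,\om)$ over an algebraically closed field of characteristic zero is Zariski-locally equivalent, as a $-1$-shifted symplectic derived scheme, to the derived critical locus of a regular function $f:U\ra\bA^1$ on a smooth $\K$-scheme $U$ with its standard $-1$-shifted symplectic structure; moreover, any two such local presentations are related over overlaps by embeddings of the kind in Theorem \ref{sm5thm1}(i), up to the equivalences they induce. Since the classical truncation of such a derived critical locus is $\Crit(f)$, each local equivalence provides, for a given $x\in X$, a Zariski open $R\ni x$ in $X$, a smooth $\K$-scheme $U$, a regular $f:U\ra\bA^1$ and a closed embedding $i:R\hookra U$ with $i(R)=\Crit(f)$ --- that is, a \emph{critical chart} $(R,U,f,i)$ in the sense of Definition \ref{sm6def1} --- and after subtracting from $f$ the locally constant function $f\vert_{\Crit(f)^\red}$ (which changes neither $\d f$ nor the derived critical locus) we may assume $f\vert_{\Crit(f)^\red}=0$.

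First I would build the section $s$. On each chart set $\io_{R,U}(s\vert_R):=i^{-1}(f)+I_{R,U}^2$; this makes sense because $\Crit(f)=i(R)$ forces $\d f\in I_{R,U}\cdot i^{-1}(T^*U)$, so $i^{-1}(f)+I_{R,U}^2$ lies in $\ker(\d)=\Im(\io_{R,U})$ in the exact sequence of Theorem \ref{sm6thm1}(i), defining $s\vert_R\in H^0(\cS_X\vert_R)$ uniquely; the normalization $f\vert_{\Crit(f)^\red}=0$ places $s\vert_R$ in $H^0(\cSz_X\vert_R)$ by the description of $\cSz_X$ in Theorem \ref{sm6thm1}. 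To see that the $s\vert_R$ agree on overlaps, given two charts and $x$ in their intersection, the comparison part of the Darboux theorem --- refined by Theorem \ref{sm6thm2} --- yields subcharts and a common chart $(T,W,h,k)$ together with embeddings $\Phi,\Psi$ into it; since an embedding satisfies $f=h\ci\Phi$, the commuting diagram \eq{sm6eq1} of Theorem \ref{sm6thm1}(ii) identifies the two restrictions of $s$. As $\cSz_X$ is a sheaf the local sections glue to $s\in H^0(\cSz_X)$, each $(R,U,f,i)$ is by construction a critical chart for $(X,s)$, so $(X,s)$ is an algebraic d-critical locus, and it is natural in $(\bs X,\om)$ since the Darboux charts are.

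Next I would identify the canonical bundles. In a Darboux chart the $-1$-shifted symplectic form identifies $\bL_{\bs X}$ with its shifted dual, which over $X=\Crit(f)$ presents $\bL_{\bs X}\vert_X$ by the self-dual two-term complex $\bigl[\,TU\vert_X\xrightarrow{\,\Hess f\,}T^*U\vert_X\,\bigr]$ in degrees $-1,0$; taking determinants gives a canonical isomorphism $\det(\bL_{\bs X})\vert_X\cong(\det TU\vert_X)^{-1}\ot\det T^*U\vert_X\cong K_U^{\ot^2}\vert_X$, which after restriction to $X^\red$ matches the trivialization $\io_{R,U,f,i}$ of $K_{X,s}$ in \eq{sm6eq3}. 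It then remains to check that these local isomorphisms are compatible with the gluing on both sides: for an embedding $\Phi:(R,U,f,i)\hookra(S,V,g,j)$ coming from a comparison of Darboux charts, the quasi-isomorphism relating the two local models of $\bL_{\bs X}$ induces on determinants precisely the transition isomorphism $J_\Phi$ of \eq{sm5eq9}, built from $\rho_{\sst UV}$ and the determinant $\det(q_{\sst UV})$ of the Hessian quadratic form on the normal bundle $N_{\sst UV}$. Granting this, both $K_{X,s}$ and $\det(\bL_{\bs X})\vert_{X^\red}$ are assembled from the same local pieces $i^*(K_U^{\ot^2})\vert_{R^\red}$ along the same cocycle $\bigl\{i\vert_{R^\red}^*(J_\Phi)\bigr\}$ --- the latter by \eq{sm6eq4} --- so they are canonically isomorphic, and the same argument works in the complex analytic setting.

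The main obstacle is the Darboux theorem itself, together with its comparison statement: this is established in \cite{BBJ} by a degree-by-degree ``formal Darboux'' induction --- trivializing $\om$ order by order using a Poincar\'e lemma and acyclicity of the relevant graded piece of the de Rham complex, then algebraizing the formal normal form --- and is decidedly nontrivial; the characteristic-zero hypothesis enters here. A secondary, more computational difficulty is the determinant-of-complexes bookkeeping needed to show that the combinatorially defined $J_\Phi$ of \eq{sm5eq9} is exactly the transition induced on $\det\bL_{\bs X}$; this is where the precise normalizations of $q_{\sst UV}$, $\rho_{\sst UV}$ and the exact sequence \eq{sm5eq2} from Theorem \ref{sm5thm1}(ii) get used.
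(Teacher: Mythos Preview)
The paper does not itself prove this theorem: it is quoted verbatim from \cite[Th.~6.6]{BBJ} as background, with no argument given here beyond the citation. So there is no in-paper proof to compare your proposal against.

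That said, your outline is a faithful sketch of the strategy in \cite{BBJ}: one uses the Darboux theorem for $-1$-shifted symplectic derived schemes (the main result of \cite{BBJ}) to produce Zariski-local critical charts $(R,U,f,i)$, defines $s\vert_R$ via $\io_{R,U}(s\vert_R)=i^{-1}(f)+I_{R,U}^2$, checks compatibility on overlaps using the comparison of Darboux charts (which in the d-critical language becomes embeddings into a common chart), and identifies $K_{X,s}$ with $\det(\bL_{\bs X})\vert_{X^\red}$ by computing $\bL_{\bs X}$ in a chart as the two-term Hessian complex $TU\vert_X\to T^*U\vert_X$ and matching the transition maps with $J_\Phi$. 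Your identification of the two hard points --- the Darboux theorem itself, and the determinant bookkeeping matching $J_\Phi$ --- is accurate.
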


Now Pantev et al.\ \cite{PTVV} show that derived moduli schemes of
coherent sheaves, or complexes of coherent sheaves, on a Calabi--Yau
3-fold $Y$ have $-1$-shifted symplectic structures. Using this, in
\cite[Cor.~6.7]{BBJ} we deduce:

\begin{cor} Suppose $Y$ is a Calabi--Yau\/ $3$-fold over $\C$, and\/
$\cM$ is a classical moduli\/ $\C$-scheme of simple coherent sheaves
in $\coh(Y),$ or simple complexes of coherent sheaves in
$D^b\coh(Y),$ with (symmetric) obstruction theory\/
$\phi:\cE^\bu\ra\bL_\cM$ as in Behrend\/ {\rm\cite{Behr},} Thomas\/
{\rm\cite{Thom},} or Huybrechts and Thomas\/ {\rm\cite{HuTh}}. Then
$\cM$ extends naturally to an algebraic d-critical locus $(\cM,s)$.
The canonical bundle $K_{\cM,s}$ from Theorem\/ {\rm\ref{sm6thm3}}
is naturally isomorphic to $\det(\cE^\bu)\vert_{\cM^\red}$.
\label{sm6cor1}
\end{cor}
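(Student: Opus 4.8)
\textbf{Proof proposal for Corollary \ref{sm6cor1}.}

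The plan is to deduce this as a direct application of Theorem \ref{sm6thm4} together with the $-1$-shifted symplectic structures on derived moduli schemes constructed by Pantev et al.\ \cite{PTVV}. First I would recall the relevant input from \cite{PTVV}: if $Y$ is a Calabi--Yau $3$-fold over $\K$ (so $K_Y\cong\O_Y$, and the trivialization is part of the data), then the derived moduli stack $\bs\cM$ of perfect complexes on $Y$ carries a canonical $-1$-shifted symplectic structure, obtained by transgressing (integrating over $Y$) the natural symplectic pairing coming from Serre duality in degree $3-3=0$; more precisely $\bs\cM$ is $(-1)$-shifted symplectic because $Y$ is a $(-3)$-shifted symplectic point in the sense of their ``mapping space'' construction $\mathbf{Map}(Y_{\mathrm{dR}},\ldots)$ applied with the CY structure. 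Restricting to the open derived substack parametrizing \emph{simple} sheaves or \emph{simple} complexes (those with $\Hom=\K\cdot\id$), which is a derived scheme $\bs\cM$ whose classical truncation $t_0(\bs\cM)=\cM$ is the moduli scheme in the statement, we obtain a $-1$-shifted symplectic derived scheme $(\bs\cM,\om)$ in the sense of Theorem \ref{sm6thm4}.

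Next I would invoke Theorem \ref{sm6thm4} verbatim: since $\K$ has characteristic zero, $(\bs\cM,\om)$ determines an algebraic d-critical locus structure $(\cM,s)$ on $\cM=t_0(\bs\cM)$, with canonical bundle $K_{\cM,s}$ canonically isomorphic to $\det(\bL_{\bs\cM})\vert_{\cM^\red}$, the restriction of the determinant of the cotangent complex. The remaining point is to identify $\det(\bL_{\bs\cM})\vert_{\cM^\red}$ with $\det(\cE^\bu)\vert_{\cM^\red}$, where $\phi:\cE^\bu\ra\bL_\cM$ is the (symmetric) obstruction theory on the classical moduli scheme from Behrend \cite{Behr}, Thomas \cite{Thom} or Huybrechts--Thomas \cite{HuTh}. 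For this I would use the standard fact that the obstruction theory $\cE^\bu$ arising in these references is precisely (the truncation to amplitude $[-1,0]$, or more precisely the relevant perfect complex representing $\bL_{\bs\cM}\vert_\cM$) obtained from the cotangent complex of the derived enhancement: in all three constructions $\cE^\bu$ is $(\bL_{\bs\cM})\vert_\cM$ up to quasi-isomorphism (in the symmetric/self-dual case it is a perfect complex of amplitude $[-1,0]$ self-dual up to a shift by $1$), and determinant line bundles are insensitive to passing between a perfect complex and its restriction to $\cM^\red$, and are compatible with quasi-isomorphism. Hence $\det(\cE^\bu)\vert_{\cM^\red}\cong\det(\bL_{\bs\cM})\vert_{\cM^\red}\cong K_{\cM,s}$, as required.

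The main obstacle I anticipate is purely bookkeeping rather than conceptual: namely, matching conventions between the several sources for the obstruction theory (Behrend's virtual-class framework, Thomas's and Huybrechts--Thomas's deformation-theoretic constructions for sheaves and complexes respectively) and verifying that in each case $\cE^\bu$ genuinely represents the restriction of the cotangent complex of the $-1$-shifted symplectic derived moduli scheme of \cite{PTVV}, including the compatibility of the self-duality of $\cE^\bu$ with the $(-1)$-shifted symplectic form. This comparison is known to experts --- it is essentially the statement that the derived moduli schemes of \cite{PTVV} recover the classical symmetric obstruction theories --- so I would cite it and the functoriality of determinant line bundles, rather than reprove it. With that identification in hand, the Corollary follows immediately from Theorem \ref{sm6thm4} applied to $(\bs\cM,\om)$, with no further argument needed; the naturality ``up to canonical isomorphism'' of $(\cM,s)$ and of the isomorphism $K_{\cM,s}\cong\det(\cE^\bu)\vert_{\cM^\red}$ is inherited directly from the corresponding naturality assertions in Theorems \ref{sm6thm3} and \ref{sm6thm4}.
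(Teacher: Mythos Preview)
Your proposal is correct and follows exactly the approach indicated in the paper: the corollary is not proved in this paper but is quoted from \cite[Cor.~6.7]{BBJ}, and the paper's one-sentence justification is precisely what you wrote --- Pantev et al.\ \cite{PTVV} provide the $-1$-shifted symplectic structure on the derived moduli scheme, and then Theorem \ref{sm6thm4} yields the d-critical locus structure with $K_{\cM,s}\cong\det(\bL_{\bs\cM})\vert_{\cM^\red}$, which one identifies with $\det(\cE^\bu)\vert_{\cM^\red}$ via the standard comparison of the obstruction theory with the cotangent complex of the derived enhancement. Your anticipated ``bookkeeping'' obstacle is exactly the content deferred to \cite{BBJ}.
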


Here we call $F\in\coh(Y)$ {\it simple\/} if $\Hom(F,F)=\C$, and $F^\bu\in
D^b\coh(Y)$ {\it simple\/} if $\Hom(F^\bu,F^\bu)=\C$ and
$\mathop{\rm Ext}^{<0}(F^\bu,F^\bu)=0$. Thus, d-critical loci will have applications in Donaldson--Thomas
theory for Calabi--Yau 3-folds \cite{JoSo,KoSo1,KoSo2,Thom}.
Orientations on $(\cM,s)$ are closely related to {\it orientation
data\/} in the work of Kontsevich and Soibelman~\cite{KoSo1,KoSo2}.

Pantev et al.\ \cite{PTVV} also show that derived intersections
$L\cap M$ of algebraic Lagrangians $L,M$ in an algebraic symplectic
manifold $(S,\om)$ have $-1$-shifted symplectic structures, so that
Theorem \ref{sm6thm4} gives them the structure of algebraic
d-critical loci. Bussi \cite[\S 3]{Buss} will prove a complex
analytic version of this:

\begin{thm}[Bussi \cite{Buss}] Suppose $(S,\om)$ is a complex
symplectic manifold, and\/ $L,M$ are complex Lagrangian submanifolds
in $S$. Then the intersection $X=L\cap M,$ as a complex analytic
subspace of\/ $S,$ extends naturally to a complex analytic
d-critical locus\/ $(X,s)$. The canonical bundle $K_{X,s}$ from
Theorem\/ {\rm\ref{sm6thm3}} is naturally isomorphic
to\/~$K_L\vert_{X^\red}\ot K_M\vert_{X^\red}$.
\label{sm6thm5}
\end{thm}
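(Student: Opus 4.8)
The plan is to produce the section $s$ and identify $K_{X,s}$ from a purely symplectic local normal form for the pair $(L,M)$. First I would fix $x\in X=L\cap M$ and note that $T_xL,T_xM$ are Lagrangian subspaces of $(T_xS,\om)$; by the elementary fact that any two Lagrangian subspaces of a symplectic vector space admit a common Lagrangian complement, choose $\La\subset T_xS$ Lagrangian and transverse to both. Feeding this choice of ``fibre direction at $x$'' into the holomorphic Weinstein Lagrangian neighbourhood theorem, I obtain a biholomorphism from an open neighbourhood of $x$ in $S$ onto an open neighbourhood of the zero section in $T^*U$, where $U$ is an open neighbourhood of $x$ in $L$, carrying $\om$ to the canonical symplectic form and $L$ to the zero section. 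Since $T_xM$ meets the fibre at $x$ trivially, after shrinking $M$ is transverse to the projection $\pi\colon T^*U\to U$, hence (shrinking $U$ to a polydisc and using that a closed holomorphic $1$-form is exact) $M=\Ga_{\d f}$ for some $f\colon U\to\C$ with $\d f(x)=0$, normalised so $f(x)=0$. The analytic subspace $L\cap M$ is then cut out inside $U=\{p=0\}$ by $\d f=0$, i.e.\ equals $\Crit(f)$ (shrink $U$ so this is exact), so $(R,U,f,i)$ with $R=L\cap M\cap(\text{nbhd})$ is a critical chart in the sense of Definition \ref{sm6def1}.

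Next I would set $s_R:=\io_{R,U}^{-1}\bigl(i^{-1}(f)+I_{R,U}^2\bigr)$: this is well defined and lies in $H^0(\cSz_X\vert_R)$ since $\d(i^{-1}f)\equiv 0$ modulo $I_{R,U}\cdot i^{-1}(T^*U)$ (because $\Crit(f)=i(R)$) and $f$ vanishes on $R^\red$ (it is locally constant on $\Crit(f)$ with value $f(x)=0$), and it does not depend on the additive normalisation of $f$, hence only on the Weinstein identification. To glue the $s_R$ into a global $s\in H^0(\cSz_X)$ I would prove, directly and in place of the (not yet available) Theorem \ref{sm6thm2}, that any two such charts near a common point can be dominated by a third: two Weinstein neighbourhoods of $L$ differ by a biholomorphism of open sets in $T^*U$ fixing the zero section and intertwining canonical forms, which is an embedding of critical charts, and a sufficiently small Darboux chart of $S$ produces a common chart. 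The naturality diagram \eq{sm6eq1} of Theorem \ref{sm6thm1}(ii), applied to such embeddings, forces the $s_R$ to agree on overlaps, so $s$ is globally defined and $(X,s)$ is a complex analytic d-critical locus essentially by construction; its independence of all choices is exactly this gluing statement.

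For the canonical bundle, Theorem \ref{sm6thm3}(i) gives $K_{X,s}\vert_{R^\red}\cong i^*(K_U^{\ot 2})\vert_{R^\red}$ in each chart. In our charts $U$ is an open subset of $L$ via the zero section, so $\pi\vert_L$ induces a canonical isomorphism $a\colon K_L\vert_{R^\red}\to i^*(K_U)\vert_{R^\red}$; and since on $R^\red$ the embedding $X\hookra S\cong T^*U$ lands in $M=\Ga_{\d f}$ and composes with $\pi$ to $i$, the biholomorphism $\pi\vert_M\colon M\to U$ induces a second isomorphism $b\colon K_M\vert_{R^\red}\to i^*(K_U)\vert_{R^\red}$. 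Put $\la_{R,U,f,i}=a\ot b\colon (K_L\ot K_M)\vert_{R^\red}\to i^*(K_U^{\ot 2})\vert_{R^\red}$. The remaining point is that across an embedding $\Phi\colon(R,U,f,i)\hookra(S,V,g,j)$ of charts in our atlas one has $\la_{S,V,g,j}\vert_{R^\red}=i\vert_{R^\red}^*(J_\Phi)\ci\la_{R,U,f,i}$, for $J_\Phi$ as in \eq{sm5eq9}; granting this, descent along the chart cover (the transition maps of $K_L\ot K_M$ being the identity) together with the characterisation of $K_{X,s}$ in Theorem \ref{sm6thm3} yields $K_{X,s}\cong K_L\vert_{X^\red}\ot K_M\vert_{X^\red}$.

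The hard part is this last compatibility. The $K_L$ factor is immediate, since the zero-section identifications are visibly compatible with $\d\Phi$ in \eq{sm5eq2}; but the $K_M$ factor requires comparing the two trivialisations of $K_M$ coming from $\pi_U$ and from $\pi_V$. Under a stabilisation $V\simeq U\t\C^n$, $g=f\boxplus(z_1^2+\cdots+z_n^2)$, the Lagrangian $M$ is $\Ga_{\d g}\subset T^*V$, and its tangent spaces along $\Crit(g)$ differ from those coming from $U$ precisely by the Hessian of $z_1^2+\cdots+z_n^2$ transverse to $U$ --- that is, by $\det(q_{\sst UV})$ for $q_{\sst UV}$ the Hessian-induced nondegenerate quadratic form on the normal bundle $N_{\sst UV}$ of Theorem \ref{sm5thm1}(ii), which is exactly the factor occurring in $J_\Phi$. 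Carrying out this bookkeeping, using that $q_{\sst UV}$ is $\ha\Hess g$ restricted to $N_{\sst UV}\vert_X$, is where the real work of the proof lies.
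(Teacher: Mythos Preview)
This theorem is not proved in the present paper: it is stated here as a result of Bussi \cite{Buss}, with the proof deferred to that companion paper. So there is no proof in this paper to compare your proposal against.

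That said, your outline follows the expected strategy and is broadly sound: use a holomorphic Weinstein/Darboux normal form to write $L$ as the zero section in $T^*U$ and $M$ as $\Ga_{\d f}$, so that $L\cap M=\Crit(f)$ locally, giving critical charts; glue the resulting local sections $s_R\in H^0(\cSz_X\vert_R)$ using Theorem~\ref{sm6thm1}(ii); and identify $K_{X,s}$ via $\io_{R,U,f,i}$ together with the projections $\pi\vert_L,\pi\vert_M$. One small correction: Theorem~\ref{sm6thm2} \emph{is} available to you (it is quoted from \cite{Joyc}), so you need not reprove it ``directly''; you can simply invoke it to compare critical charts. The genuinely delicate step, as you correctly identify, is checking that the chartwise isomorphisms $(K_L\ot K_M)\vert_{R^\red}\to i^*(K_U^{\ot 2})\vert_{R^\red}$ transform by $i^*(J_\Phi)$ under embeddings, which reduces to a Hessian/normal-bundle computation; your sketch of this is plausible but would need to be made precise, in particular the claim that the discrepancy in the $K_M$ factor is exactly $\det(q_{\sst UV})$.
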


\subsection{The main result, and applications}
\label{sm62}

Here is our main result, which will be proved
in~\S\ref{sm63}--\S\ref{sm64}.

\begin{thm} Let\/ $(X,s)$ be an oriented algebraic d-critical locus
over $\C,$ with orientation $K_{X,s}^{1/2}$. Then for any
well-behaved base ring $A,$ such as $\Z,\Q$ or $\C,$ there exists a
perverse sheaf\/ $P_{X,s}^\bu$ in $\Perv(X)$ over $A,$ which is
natural up to canonical isomorphism, and Verdier duality and
monodromy isomorphisms
\e
\Si_{X,s}:P_{X,s}^\bu\longra \bD_X\bigl(P_{X,s}^\bu\bigr),\qquad
\Tau_{X,s}:P_{X,s}^\bu\longra P_{X,s}^\bu,
\label{sm6eq5}
\e
which are characterized by the following properties:
\begin{itemize}
\setlength{\itemsep}{0pt}
\setlength{\parsep}{0pt}
\item[{\bf(i)}] If\/ $(R,U,f,i)$ is a critical chart on
$(X,s),$ there is a natural isomorphism
\e
\om_{R,U,f,i}:P_{X,s}^\bu\vert_R\longra
i^*\bigl(\PV_{U,f}^\bu\bigr)\ot_{\Z/2\Z} Q_{R,U,f,i},
\label{sm6eq6}
\e
where $\pi_{R,U,f,i}:Q_{R,U,f,i}\ra R$ is the principal\/
$\Z/2\Z$-bundle parametrizing local isomorphisms
$\al:K_{X,s}^{1/2}\ra i^*(K_U)\vert_{R^\red}$ with\/ $\al\ot\al=
\io_{R,U,f,i},$ for $\io_{R,U,f,i}$ as in\/ \eq{sm6eq3}.
Furthermore the following commute in $\Perv(R)\!:$
\ea
&\begin{gathered} \xymatrix@!0@C=260pt@R=40pt{
*+[r]{P_{X,s}^\bu\vert_R} \ar[d]^{\Si_{X,s}\vert_R}
\ar[r]_(0.35){\om_{R,U,f,i}} &
*+[l]{i^*\bigl(\PV_{U,f}^\bu\bigr)\ot_{\Z/2\Z} Q_{R,U,f,i}}
\ar[d]_(0.4){i^*(\si_{U,f})\ot\id_{Q_{R,U,f,i}}} \\
*+[r]{\bD_R\bigl(P_{X,s}^\bu\vert_R\bigr)} &
*+[l]{\begin{aligned}[b]
\ts i^*\bigl(\bD_{\Crit(f)}(\PV_{U,f}^\bu)\bigr)\ot_{\Z/2\Z}
Q_{R,U,f,i}&\\
\ts \cong\bD_R\bigl(i^*(\PV_{U,f}^\bu)\ot_{\Z/2\Z}
Q_{R,U,f,i}\bigr)&,\end{aligned}}
\ar[l]_(0.65){\bD_R(\om_{R,U,f,i})}}
\end{gathered}
\label{sm6eq7}\\
&\begin{gathered} \xymatrix@!0@C=260pt@R=35pt{
*+[r]{P_{X,s}^\bu\vert_R} \ar[d]^{\Tau_{X,s}\vert_R}
\ar[r]_(0.35){\om_{R,U,f,i}} &
*+[l]{i^*\bigl(\PV_{U,f}^\bu\bigr)\ot_{\Z/2\Z} Q_{R,U,f,i}}
\ar[d]_{i^*(\tau_{U,f})\ot\id_{Q_{R,U,f,i}}} \\
*+[r]{P_{X,s}^\bu\vert_R} \ar[r]^(0.35){\om_{R,U,f,i}} &
*+[l]{i^*\bigl(\PV_{U,f}^\bu\bigr)\ot_{\Z/2\Z} Q_{R,U,f,i}.} }
\end{gathered}
\label{sm6eq8}
\ea

\item[{\bf(ii)}] Let\/ $\Phi:(R,U,f,i)\hookra(S,V,g,j)$ be an
embedding of critical charts on $(X,s)$. Then there is a natural
isomorphism of principal\/ $\Z/2\Z$-bundles
\e
\La_\Phi:Q_{S,V,g,j}\vert_R\,{\buildrel\cong\over\longra}\,
i^*(P_\Phi)\ot_{\Z/2\Z} Q_{R,U,f,i}
\label{sm6eq9}
\e
on $R,$ for $P_\Phi$ as in Definition\/ {\rm\ref{sm5def},}
defined as follows: local isomorphisms
\begin{gather*}
\al:K_{X,s}^{1/2}\vert_{R^\red}\longra
i^*(K_U)\vert_{R^\red},\quad
\be:K_{X,s}^{1/2}\vert_{R^\red}\longra j^*(K_V)\vert_{R^\red},\\
\text{and\/}\qquad\ga:i^*(K_U)\vert_{R^\red}\longra
j^*(K_V)\vert_{R^\red}
\end{gather*}
with $\al\ot\al=\io_{R,U,f,i},$
$\be\ot\be=\io_{S,V,g,j}\vert_{R^\red},$
$\ga\ot\ga=i\vert_{R^\red}^*(J_\Phi)$ correspond to local
sections $s_\al:R\ra Q_{R,U,f,i},$ $s_\be:R\ra
Q_{S,V,g,j}\vert_R,$ $s_\ga:R\ra i^*(P_\Phi)$. Equation
\eq{sm6eq4} shows that\/ $\be=\ga\ci\al$ is a possible solution
for $\be,$ and we define $\La_\Phi$ in \eq{sm6eq9} such that\/
$\La_\Phi(s_\be)=s_\ga\ot_{\Z/2\Z}s_\al$ if and only
if\/~$\be=\ga\ci\al$.

Then the following diagram commutes in $\Perv(R),$ for
$\Th_\Phi$ as in\/~{\rm\eq{sm5eq13}:}
\e
\begin{gathered}
\xymatrix@C=140pt@R=20pt{ *+[r]{P_{X,s}^\bu\vert_R}
\ar[r]_(0.38){\om_{R,U,f,i}} \ar[d]^(0.4){\om_{S,V,g,j}\vert_R}
& *+[l]{i^*\bigl(\PV_{U,f}^\bu\bigr)\ot_{\Z/2\Z} Q_{R,U,f,i}}
\ar[d]_(0.4){i^*(\Th_\Phi)\ot\id_{Q_{R,U,f,i}}} \\
*+[r]{\begin{subarray}{l}\ts j^*\bigl(\PV_{V,g}^\bu\bigr)\vert_R\\
\ts \ot_{\Z/2\Z}Q_{S,V,g,j}\vert_R\end{subarray}}
\ar[r]^(0.38){\id_{j^*(\PV_{V,g}^\bu)}\ot\La_\Phi} &
*+[l]{\begin{subarray}{l}\ts i^*\bigl(\Phi^*(\PV_{V,g}^\bu)
\ot_{\Z/2\Z}P_\Phi\bigr)\\
\ts\ot_{\Z/2\Z} Q_{R,U,f,i}.\end{subarray}} }
\end{gathered}
\label{sm6eq10}
\e
\end{itemize}

The analogues of all the above also hold for $\cD$-modules on
oriented algebraic d-critical loci over\/ $\C,$ for perverse sheaves
and\/ $\cD$-modules on oriented complex analytic d-critical loci,
and for mixed Hodge modules
on oriented algebraic d-critical loci over\/ $\C$ and oriented
complex analytic d-critical loci, as
in\/~{\rm\S\ref{sm26}--\S\ref{sm210}}.

\label{sm6thm6}
\end{thm}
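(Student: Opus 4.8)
The plan is to build $P_{X,s}^\bu$ by gluing the local models $i^*(\PV_{U,f}^\bu)\ot_{\Z/2\Z}Q_{R,U,f,i}$ over a cover of $X$ by critical charts, using Theorem \ref{sm2thm3}(ii) (the stack property of perverse sheaves in the Zariski topology suffices, as noted in the excerpt). First I would recall from Theorem \ref{sm6thm2} that any two critical charts $(R,U,f,i)$, $(S,V,g,j)$ can be compared, after passing to subcharts, by embeddings into a common chart $(T,W,h,k)$. So the gluing data will be: on each chart $(R_a,U_a,f_a,i_a)$ the perverse sheaf $\cP_a^\bu:=i_a^*(\PV_{U_a,f_a}^\bu)\ot_{\Z/2\Z}Q_{R_a,U_a,f_a,i_a}$, and on overlaps $R_a\cap R_b$ an isomorphism $\al_{ab}$ obtained by choosing (locally) a common chart $(T,W,h,k)$ with embeddings $\Phi_a,\Phi_b$ and composing the instances of $\Th_{\Phi_a}^{-1}$, $\Th_{\Phi_b}$ from Theorem \ref{sm5thm2}(a) together with the $\Z/2\Z$-bundle isomorphisms $\La_{\Phi_a}$, $\La_{\Phi_b}$ of \eq{sm6eq9} and the global square root $K_{X,s}^{1/2}$ (which identifies $Q_{R_a,U_a,f_a,i_a}\cong i_a^*(P_{\Phi_a})$-twists consistently). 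The orientation enters precisely here: without $K_{X,s}^{1/2}$ the $Q$-bundles on different charts differ by $P_\Phi$-torsors with no canonical trivialization, whereas a global square root of $K_{X,s}$ pins down a consistent choice. The key input making $\al_{ab}$ well-defined independent of the auxiliary chart $(T,W,h,k)$ is Theorem \ref{sm5thm2}(b) ($\Th_\Phi$ depends only on $\Phi\vert_X$) together with Lemma \ref{sm5lem}.

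Next I would verify the cocycle condition $\al_{bc}\ci\al_{ab}=\al_{ac}$ on triple overlaps $R_a\cap R_b\cap R_c$. This is where the functoriality statement Theorem \ref{sm5thm2}(c) — the commuting square \eq{sm5eq16} relating $\Th_{\Psi\ci\Phi}$, $\Th_\Phi$, $\Th_\Psi$ and the $\Z/2\Z$-bundle isomorphism $\Xi_{\Psi,\Phi}$ of \eq{sm5eq11} — does the real work, combined with the associativity \eq{sm5eq12} of the $\Xi$'s and the analogous associativity of the $\La_\Phi$'s (which follows from \eq{sm6eq4} in Theorem \ref{sm6thm3}(ii)). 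Concretely: given three charts, choose via Theorem \ref{sm6thm2} applied iteratively a single chart dominating all three (or a zig-zag of embeddings), reduce each $\al$ to compositions of $\Th$'s, and invoke \eq{sm5eq16} to collapse the triangle. Once the cocycle condition holds, Theorem \ref{sm2thm3}(ii) produces $P_{X,s}^\bu\in\Perv(X)$, unique up to canonical isomorphism, with canonical $\om_{R_a,U_a,f_a,i_a}$ satisfying \eq{sm6eq6}; the compatibility \eq{sm6eq10} for a general embedding $\Phi$ then follows from the construction, again using \eq{sm5eq16}. The Verdier duality and monodromy isomorphisms $\Si_{X,s}$, $\Tau_{X,s}$ of \eq{sm6eq5} are glued the same way from the local $\si_{U_a,f_a}$, $\tau_{U_a,f_a}$ (twisted suitably), using that \eq{sm5eq14}–\eq{sm5eq15} show $\Th_\Phi$ intertwines these; the commuting squares \eq{sm6eq7}–\eq{sm6eq8} are then immediate, and uniqueness of $P_{X,s}^\bu$, $\Si_{X,s}$, $\Tau_{X,s}$ characterized by (i),(ii) follows from the gluing uniqueness in Theorem \ref{sm2thm3} plus Theorem \ref{sm2thm3}(i) for morphisms.

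For the other eight settings, the argument is identical: it uses only the stack property (x), the vanishing-cycle package (viii), the functoriality in Theorems \ref{sm5thm1}–\ref{sm5thm2}, and the $\Z/2\Z$-twisting (vii) from the list in \S\ref{sm25}, all of which hold in settings (b)–(h); for mixed Hodge modules one works in $\MHM(X;T_s,N)$ with strong polarizations and uses the gluing result Theorem \ref{sm2thm8}(ii) in place of Theorem \ref{sm2thm3}(ii).

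The main obstacle I expect is the cocycle verification: reducing the triple-overlap identity to \eq{sm5eq16} requires carefully choosing compatible dominating charts and keeping track of the interplay between the perverse-sheaf isomorphisms $\Th_\Phi$, the $\Z/2\Z$-bundle isomorphisms $\La_\Phi$, $\Xi_{\Psi,\Phi}$, and the square root $K_{X,s}^{1/2}$ — a bookkeeping problem with several layers of twisting where signs could in principle fail to cancel. The conceptual safeguard is Theorem \ref{sm5thm2}(b): because every $\Th_\Phi$ depends only on data on $X$ (not on the ambient $U,V$), the auxiliary choices of charts drop out, so the cocycle identity is forced once the relevant diagrams over a single dominating chart are shown to commute.
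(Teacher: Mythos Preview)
Your proposal is correct and follows essentially the same approach as the paper: choose a Zariski cover by critical charts, put the local models $i_a^*(\PV_{U_a,f_a}^\bu)\ot_{\Z/2\Z}Q_{R_a,U_a,f_a,i_a}$ on each, build transition isomorphisms $\al_{ab}$ via Theorem~\ref{sm6thm2} and the $\Th_\Phi$, $\La_\Phi$, verify the cocycle, and glue using Theorem~\ref{sm2thm3}(ii); then glue $\Si_{X,s}$, $\Tau_{X,s}$ using \eq{sm5eq14}--\eq{sm5eq15}. One small organizational difference: in the paper, Theorem~\ref{sm5thm2}(c) (the composition law \eq{sm5eq16}) together with Theorem~\ref{sm5thm2}(b) is what makes $\al_{ab}$ \emph{well-defined}, independent of the chosen dominating chart, via a diagram chase comparing two zig-zags through different $(T,W,h,k)$; whereas for the \emph{cocycle} on $R_a\cap R_b\cap R_c$ the paper applies Theorem~\ref{sm6thm2} twice to produce a \emph{single} chart $(S,V,g,j)$ dominating all three, so that each $\al_{ab},\al_{bc},\al_{ac}$ factors through the same intermediate object and the cocycle becomes a trivial cancellation, with no further appeal to \eq{sm5eq16} needed at that step.
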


\begin{rem} This sheaf-theoretic result is compatible with the motivic result of
Bussi, Joyce and Meinhardt in \cite{BJM}. Given $(X,s)$ an oriented algebraic d-critical locus
over $\C$, \cite{BJM} proves the existence of a natural 
motivic element $MF_{X,s}\in \overline{\mathcal M}_X^{\hat\mu}$ in a version of the 
relative Grothendieck ring of varieties over~$X$, equivariant with respect to suitable
actions of the group ${\hat\mu}$ of all roots of unity (for detailed definitions,
see \cite{BJM}). Since the mixed Hodge module realization factorizes over the additional relation
one has to impose in \cite{BJM} on the Grothendieck group, 
the ring ${\mathcal M}_X^{\hat\mu}$ has a map to $K_0(\MHM(X; T_s))$, the $K$-group 
of algebraic mixed Hodge modules on~$X$ with a finite order automorphism
(note that the Grothendieck group only sees the semisimple part
$T_s$ of the monodromy and not the nilpotent part $N$).
By a \v Cech-type argument using the corresponding comparison
result of \cite[Prop.~3.17]{GLM}, the image of $MF_{X,s}$ in 
$K_0(\MHM(X; T_s))$ agrees with the image of the mixed Hodge module realization of
$P_{X,s}^\bu$, since both sides are Zariski locally modelled by the 
same vanishing cycles. Thus, for example, they give the same
weight polynomial for global cohomology with compact support.
\label{sm6rem1}
\end{rem}

From Theorem \ref{sm6thm4}, Corollary \ref{sm6cor1} and Theorem
\ref{sm6thm5} we deduce:

\begin{cor} Let\/ $(\bs X,\om)$ be a $-1$-shifted symplectic
derived scheme over\/ $\C$ in the sense of Pantev et al.\
{\rm\cite{PTVV},} and\/ $X=t_0(\bs X)$ the associated classical\/
$\C$-scheme. Suppose we are given a square root\/
$\smash{\det(\bL_{\bs X})\vert_X^{1/2}}$ for $\det(\bL_{\bs
X})\vert_X$. Then we may define $P_{\bs X,\om}^\bu\in\Perv(X),$
uniquely up to canonical isomorphism, and isomorphisms $\Si_{\bs
X,\om}:P_{\bs X,\om}^\bu\ra \bD_X(P_{\bs X,\om}^\bu),$ $\Tau_{\bs
X,\om}:P_{\bs X,\om}^\bu\ra P_{\bs X,\om}^\bu$.

The same applies for $\cD$-modules and mixed Hodge modules on $X$.
\label{sm6cor2}
\end{cor}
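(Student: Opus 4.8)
The plan is to deduce Corollary \ref{sm6cor2} formally from Theorem \ref{sm6thm4} and Theorem \ref{sm6thm6}, so there is essentially nothing to compute: the real work has already been done in \cite{BBJ,PTVV} and in \S\ref{sm6}. First I would invoke Theorem \ref{sm6thm4}, whose hypotheses are met since $\bs X$ is a $-1$-shifted symplectic derived scheme over an algebraically closed field $\K$ with $\mathop{\rm char}\K=0$ (the characteristic zero assumption being exactly what the results of \cite{BBJ,PTVV} require): this upgrades $X=t_0(\bs X)$ to an algebraic d-critical locus $(X,s)$ and supplies a canonical isomorphism $K_{X,s}\cong\det(\bL_{\bs X})\vert_{X^\red}$ of line bundles on $X^\red$.

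Next I would produce an orientation on $(X,s)$ from the given square root $\det(\bL_{\bs X})\vert_X^{1/2}$. Writing $L$ for this line bundle on $X$ with its isomorphism $L^{\ot^2}\cong\det(\bL_{\bs X})\vert_X$, I restrict to the reduced subscheme $X^\red$ and compose with the canonical isomorphism $\det(\bL_{\bs X})\vert_{X^\red}\cong K_{X,s}$ from Theorem \ref{sm6thm4}; this exhibits $L\vert_{X^\red}$ as a square root $K_{X,s}^{1/2}$ of $K_{X,s}$ in the sense of Definition \ref{sm6def2}, so that $(X,s,K_{X,s}^{1/2})$ is an oriented algebraic d-critical locus over $\K$ (over $\C$ when $\K=\C$). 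The key point for the uniqueness assertion is that this orientation depends only on $(\bs X,\om)$ and the chosen square root, because the isomorphism $\det(\bL_{\bs X})\vert_{X^\red}\cong K_{X,s}$ of Theorem \ref{sm6thm4} is canonical.

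Finally I would apply Theorem \ref{sm6thm6} (for $\K=\C$, and perverse sheaves over a well-behaved base ring $A$) to $(X,s,K_{X,s}^{1/2})$, obtaining $P_{X,s}^\bu\in\Perv(X)$, natural up to canonical isomorphism, together with isomorphisms $\Si_{X,s},\Tau_{X,s}$ as in \eq{sm6eq5}, and set $P_{\bs X,\om}^\bu:=P_{X,s}^\bu$, $\Si_{\bs X,\om}:=\Si_{X,s}$, $\Tau_{\bs X,\om}:=\Tau_{X,s}$. Naturality of $P_{X,s}^\bu$ in the oriented d-critical locus, combined with the canonicity established in the previous step, then yields that $P_{\bs X,\om}^\bu$ is unique up to canonical isomorphism. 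For $\cD$-modules and mixed Hodge modules on $X$, and — noting that $\mathop{\rm char}\K=0$ forces $\mathop{\rm char}\K\ne 2$, as required in \S\ref{sm28} — for $l$-adic perverse sheaves and $\cD$-modules on $X$ when $\bs X$ is over $\K$ with $\mathop{\rm char}\K=0$, I would repeat the argument verbatim using the analogues of Theorems \ref{sm6thm4} and \ref{sm6thm6} stated in those settings.

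I do not expect a genuine obstacle here: the only substantive input is the construction of the d-critical structure $(X,s)$ on $t_0(\bs X)$ in Theorem \ref{sm6thm4}, and given that, the corollary is a bookkeeping exercise. The one thing to be careful about is tracking the restriction to $X^\red$ throughout and confirming that every isomorphism used — the one from Theorem \ref{sm6thm4}, the given square root, and the naturality isomorphisms of Theorem \ref{sm6thm6} — is canonical, so that the resulting $P_{\bs X,\om}^\bu$ is genuinely well-defined up to canonical isomorphism rather than merely up to (non-canonical) isomorphism.
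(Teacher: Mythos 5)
Your proposal is correct and is exactly the paper's route: the paper deduces Corollary \ref{sm6cor2} by combining Theorem \ref{sm6thm4} (which makes $X=t_0(\bs X)$ into an algebraic d-critical locus $(X,s)$ with $K_{X,s}\cong\det(\bL_{\bs X})\vert_{X^\red}$ canonically, so the given square root restricts to an orientation) with Theorem \ref{sm6thm6} applied to the resulting oriented d-critical locus, in each of the listed contexts. Your attention to restricting to $X^\red$ and to canonicity of the isomorphisms matches the intended argument, and there is nothing to add.
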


\begin{cor} Let\/ $Y$ be a Calabi--Yau\/ $3$-fold over\/ $\C,$
and\/ $\cM$ a classical moduli\/ $\C$-scheme of simple coherent
sheaves in $\coh(Y),$ or simple complexes of coherent sheaves in
$D^b\coh(Y),$ with natural (symmetric) obstruction theory\/
$\phi:\cE^\bu\ra\bL_\cM$ as in Behrend\/ {\rm\cite{Behr},} Thomas\/
{\rm\cite{Thom},} or Huybrechts and Thomas\/ {\rm\cite{HuTh}}.
Suppose we are given a square root\/ $\det(\cE^\bu)^{1/2}$ for
$\det(\cE^\bu)$. Then we may define $P_\cM^\bu\in\Perv(\cM),$
uniquely up to canonical isomorphism, and isomorphisms
$\Si_\cM:P_\cM^\bu\ra \bD_\cM(P_\cM^\bu),$ $\Tau_\cM:P_\cM^\bu\ra
P_\cM^\bu$.

The same applies for $\cD$-modules and mixed Hodge modules on $\cM$.
\label{sm6cor3}
\end{cor}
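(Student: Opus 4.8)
The plan is to obtain Corollary~\ref{sm6cor3} as a purely formal consequence of Corollary~\ref{sm6cor1}, which puts a natural algebraic d-critical structure on the moduli scheme $\cM$, and of Theorem~\ref{sm6thm6}, which attaches perverse sheaves (and $\cD$-modules, mixed Hodge modules, $l$-adic perverse sheaves) to oriented algebraic d-critical loci. No further geometry is needed; the only content is to check that the given square root $\det(\cE^\bu)^{1/2}$ supplies an orientation in the sense of Definition~\ref{sm6def2}.

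First I would invoke Corollary~\ref{sm6cor1}: under the hypotheses on $Y$ and $\cM$, the scheme $\cM$ extends naturally to an algebraic d-critical locus $(\cM,s)$, and Theorem~\ref{sm6thm3} provides a canonical bundle $K_{\cM,s}$ on $\cM^\red$ together with a natural isomorphism $\lambda:K_{\cM,s}\,{\buildrel\cong\over\longra}\,\det(\cE^\bu)\vert_{\cM^\red}$. Next I would turn the hypothesis into an orientation: by assumption we are given an algebraic line bundle $\det(\cE^\bu)^{1/2}$ on $\cM$ with an isomorphism $\bigl(\det(\cE^\bu)^{1/2}\bigr){}^{\ot^2}\cong\det(\cE^\bu)$. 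Restricting to $\cM^\red$ and composing with $\lambda^{-1}$ yields a line bundle $K_{\cM,s}^{1/2}:=\det(\cE^\bu)^{1/2}\vert_{\cM^\red}$ on $\cM^\red$ together with an isomorphism $\bigl(K_{\cM,s}^{1/2}\bigr){}^{\ot^2}\cong K_{\cM,s}$; that is, $(\cM,s,K_{\cM,s}^{1/2})$ is an oriented algebraic d-critical locus over~$\C$.

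Then I would apply Theorem~\ref{sm6thm6} to this oriented d-critical locus: it produces $P_{\cM,s}^\bu\in\Perv(\cM)$, natural up to canonical isomorphism, together with isomorphisms $\Si_{\cM,s}:P_{\cM,s}^\bu\ra\bD_\cM(P_{\cM,s}^\bu)$ and $\Tau_{\cM,s}:P_{\cM,s}^\bu\ra P_{\cM,s}^\bu$. Setting $P_\cM^\bu:=P_{\cM,s}^\bu$, $\Si_\cM:=\Si_{\cM,s}$, $\Tau_\cM:=\Tau_{\cM,s}$ gives the assertion for $\C$-perverse sheaves, the characterizing local properties \ref{sm6thm6}(i),(ii) being inherited verbatim. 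For the $\cD$-module, mixed Hodge module and $l$-adic statements I would repeat the argument using the corresponding clauses of Theorem~\ref{sm6thm6} and the relevant version of Corollary~\ref{sm6cor1}; in the $l$-adic and $\cD$-module cases over a general algebraically closed base field $\K$, the standing assumption $\mathop{\rm char}\K=0$ is exactly what is required both for Corollary~\ref{sm6cor1} (via Theorem~\ref{sm6thm4}) and for the $\K$-scheme forms of the material of \S\ref{sm61}--\S\ref{sm62}.

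Since all of this is formal, there is no serious obstacle; the one place where I would be slightly careful is the compatibility of orientations --- checking that a square root of $\det(\cE^\bu)$ on all of $\cM$ does restrict, via the \emph{natural} isomorphism of Corollary~\ref{sm6cor1}, to a genuine square root of $K_{\cM,s}$ on $\cM^\red$ --- together with the remark that $K_{X,s}$, orientations, and all the data produced by Theorem~\ref{sm6thm6} are topological in nature on $X^\red$, which has the same underlying space as $X$, so nothing is lost in passing between $\cM$ and $\cM^\red$.
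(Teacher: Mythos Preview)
Your proposal is correct and takes essentially the same approach as the paper: the paper states that Corollary~\ref{sm6cor3} is deduced directly from Corollary~\ref{sm6cor1} and Theorem~\ref{sm6thm6}, with no further argument given. Your write-up actually spells out the one step the paper leaves implicit --- that the given square root $\det(\cE^\bu)^{1/2}$ restricted to $\cM^\red$ furnishes the required orientation via the natural isomorphism $K_{\cM,s}\cong\det(\cE^\bu)\vert_{\cM^\red}$ --- which is exactly the intended logic.
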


\begin{cor} Let\/ $(S,\om)$ be a complex symplectic manifold and\/
$L,M$ complex Lagrangian submanifolds in $S,$ and write $X=L\cap M,$
as a complex analytic subspace of\/ $S$. Suppose we are given square
roots\/ $K_L^{1/2},K_M^{1/2}$ for $K_L,K_M$. Then we may define
$P_{L,M}^\bu\in\Perv(X),$ uniquely up to canonical isomorphism, and
isomorphisms $\Si_{L,M}:P_{L,M}^\bu\ra \bD_X(P_{L,M}^\bu),$
$\Tau_{L,M}:P_{L,M}^\bu\ra P_{L,M}^\bu$.

The same applies for $\cD$-modules and mixed Hodge modules on\/~$X$.
\label{sm6cor4}
\end{cor}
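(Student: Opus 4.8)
The plan is to deduce Corollary \ref{sm6cor4} by combining Theorem \ref{sm6thm5} with Theorem \ref{sm6thm6}. First I would invoke Bussi's result, Theorem \ref{sm6thm5}: given the complex symplectic manifold $(S,\om)$ and complex Lagrangians $L,M\subseteq S$, the intersection $X=L\cap M$ (as a complex analytic subspace of $S$) carries a natural complex analytic d-critical locus structure $(X,s)$, with canonical bundle $K_{X,s}\cong K_L\vert_{X^\red}\ot K_M\vert_{X^\red}$. Next, given the square roots $K_L^{1/2},K_M^{1/2}$ on $L,M$, I would form the line bundle $K_L^{1/2}\vert_{X^\red}\ot K_M^{1/2}\vert_{X^\red}$ on $X^\red$; since $(K_L^{1/2}\ot K_M^{1/2})^{\ot^2}\cong K_L\ot K_M\cong K_{X,s}$ on $X^\red$, this is an orientation on $(X,s)$ in the sense of Definition \ref{sm6def2}. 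Thus $(X,s)$ is an oriented complex analytic d-critical locus.

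Then I would apply the complex analytic version of Theorem \ref{sm6thm6} (asserted in that theorem's final paragraph) to the oriented complex analytic d-critical locus $(X,s)$ with orientation $K_{X,s}^{1/2}=K_L^{1/2}\vert_{X^\red}\ot K_M^{1/2}\vert_{X^\red}$. This produces the perverse sheaf $P_{X,s}^\bu\in\Perv(X)$, natural up to canonical isomorphism, together with the Verdier duality and monodromy isomorphisms $\Si_{X,s},\Tau_{X,s}$ of \eq{sm6eq5}. I would then set $P_{L,M}^\bu:=P_{X,s}^\bu$, $\Si_{L,M}:=\Si_{X,s}$, and $\Tau_{L,M}:=\Tau_{X,s}$, which gives exactly the stated conclusion. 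The analogous statements for $\cD$-modules and mixed Hodge modules on $X$ follow in the same way, since Theorem \ref{sm6thm6} includes the complex analytic $\cD$-module and mixed Hodge module versions in its last paragraph.

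There is essentially no obstacle here: Corollary \ref{sm6cor4} is a direct specialization of Theorem \ref{sm6thm6}, with Theorem \ref{sm6thm5} supplying the input d-critical structure. The only point requiring a word of care is that an orientation of $(X,s)$ is a square root of $K_{X,s}$ on the \emph{reduced} space $X^\red$, whereas $K_L^{1/2},K_M^{1/2}$ are given on $L,M$; but restricting to $X^\red$ and using the canonical isomorphism $K_{X,s}\cong K_L\vert_{X^\red}\ot K_M\vert_{X^\red}$ of Theorem \ref{sm6thm5} makes $K_L^{1/2}\vert_{X^\red}\ot K_M^{1/2}\vert_{X^\red}$ a valid orientation. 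This is the step I would state explicitly; the rest is a citation of the two theorems.
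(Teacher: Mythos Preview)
Your proposal is correct and matches the paper's approach exactly: the paper simply states that Corollary~\ref{sm6cor4} is deduced from Theorem~\ref{sm6thm5} together with Theorem~\ref{sm6thm6}, and your write-up spells out precisely this deduction, including the identification of the orientation $K_{X,s}^{1/2}=K_L^{1/2}\vert_{X^\red}\ot K_M^{1/2}\vert_{X^\red}$ via the canonical isomorphism $K_{X,s}\cong K_L\vert_{X^\red}\ot K_M\vert_{X^\red}$.
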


The next two remarks discuss applications of Corollaries
\ref{sm6cor3} and \ref{sm6cor4} to Donaldson--Thomas theory, and to
Lagrangian Floer cohomology.

\begin{rem} If $Y$ is a Calabi--Yau 3-fold over $\C$ and
$\tau$ a suitable stability condition on coherent sheaves on $Y$,
the {\it Donaldson--Thomas invariants\/} $DT^\al(\tau)$ are integers
which `count' the moduli schemes $\cM_{\rm st}^\al(\tau)$ of
$\tau$-stable coherent sheaves on $Y$ with Chern character $\al\in
H^{\rm even}(Y;\Q)$, provided there are no strictly
$\tau$-semistable sheaves in class $\al$ on $Y$. They were defined
by Thomas \cite{Thom}, who showed they are unchanged under
deformations of $Y$, following a suggestion of Donaldson and
Thomas~\cite{DoTh}.

Behrend \cite{Behr} showed that $DT^\al(\tau)$ may be written as a
weighted Euler characteristic $\chi(\cM_{\rm st}^\al(\tau),\nu)$,
where $\nu:\cM_{\rm st}^\al(\tau)\ra\Z$ is a certain constructible
function called the {\it Behrend function}. Joyce and Song
\cite{JoSo} extended the definition of $DT^\al(\tau)$ to classes
$\al$ including $\tau$-semistable sheaves (with
$DT^\al(\tau)\in\Q$), and proved a wall-crossing formula for
$DT^\al(\tau)$ under change of stability condition $\tau$.
Kontsevich and Soibelman \cite{KoSo1} gave a (partly conjectural)
motivic generalization of Donaldson--Thomas invariants, also with a
wall-crossing formula.

Corollary \ref{sm6cor3} is relevant to the {\it categorification\/}
of Donaldson--Thomas theory. As in \cite[\S 1.2]{Behr}, the perverse
sheaf $P_{\cM_{\rm st}^\al(\tau)}^\bu$ has pointwise Euler
characteristic $\chi\bigl(P_{\cM_{\rm st}^\al(\tau)}^\bu\bigr)=\nu$.
This implies that when $A$ is a field, say $A=\Q$, the
(compactly-supported) hypercohomologies $\bH^*\bigl(P_{\cM_{\rm
st}^\al(\tau)}^\bu\bigr), \bH^*_\compact\bigl(P_{\cM_{\rm
st}^\al(\tau)}^\bu\bigr)$ from \eq{sm2eq1} satisfy
\begin{align*}
\ts\sum\limits_{k\in\Z}(-1)^k\dim \bH^k\bigl(P_{\cM_{\rm
st}^\al(\tau)}^\bu\bigr)&= \ts\sum\limits_{k\in\Z}(-1)^k\dim
\bH^k_\compact\bigl(P_{\cM_{\rm st}^\al(\tau)}^\bu\bigr)\\
&=\chi\bigl(\cM_{\rm st}^\al(\tau),\nu\bigr)=DT^\al(\tau),
\end{align*}
where $\bH^k\bigl(P_{\cM_{\rm st}^\al(\tau)}^\bu\bigr) \cong
\bH^{-k}_\compact\bigl(P_{\cM_{\rm st}^\al(\tau)}^\bu\bigr){}^*$ by
Verdier duality. That is, we have produced a natural graded
$\Q$-vector space $\bH^*\bigl(P_{\cM_{\rm st}^\al(\tau)}^\bu\bigr)$,
thought of as some kind of generalized cohomology of $\cM_{\rm
st}^\al(\tau)$, whose graded dimension is $DT^\al(\tau)$. This gives
a new interpretation of the Donaldson--Thomas
invariant~$DT^\al(\tau)$.

In fact, as discussed at length in \cite[\S 3]{Szen}, the first
natural ``refinement'' or ``quantization'' direction of a
Donaldson--Thomas invariant $DT^\al(\tau)\in\Z$ is not the
Poincar\'e polynomial of this cohomology, but its weight polynomial
\begin{equation*}
w\bigl(\bH^*(P_{\cM_{\rm st}^\al(\tau)}^\bu), t\bigr)
\in\Z\bigl[t^{\pm\frac{1}{2}}\bigr],
\end{equation*}
defined using the mixed Hodge structure on the cohomology of the
mixed Hodge module version of $P_{\cM_{\rm st}^\al(\tau)}^\bu$
(which exists assuming that $\cM_{\rm st}^\al(\tau)$ is projective,
for example, see Remark~\ref{sm2rem5}).

The material above is related to work by other authors. The idea of
categorifying Donaldson--Thomas invariants using perverse sheaves or
$\cD$-modules is probably first due to Behrend \cite{Behr}, and for
Hilbert schemes $\mathop{\rm Hilb}^n(Y)$ of a Calabi--Yau 3-fold $Y$
is discussed by Dimca and Szendr\H oi \cite{DiSz} and Behrend, Bryan
and Szendr\H oi \cite[\S 3.4]{BBS}, using mixed Hodge modules.
Corollary \ref{sm6cor3} answers a question of Joyce and
Song~\cite[Question~5.7(a)]{JoSo}.

As in \cite{JoSo,KoSo1} representations of {\it quivers with
superpotentials\/} $(Q,W)$ give 3-Calabi--Yau triangulated
categories, and one can define Donaldson--Thomas type invariants
$DT^\al_{Q,W}(\tau)$ `counting' such representations, which are
simple algebraic `toy models' for Donaldson--Thomas invariants of
Calabi--Yau 3-folds. Kontsevich and Soibelman \cite{KoSo2} explain
how to categorify these quiver invariants $DT^\al_{Q,W}(\tau)$, and
define an associative multiplication on the categorification to make
a {\it Cohomological Hall Algebra}. This paper was motivated by the
aim of extending \cite{KoSo2} to define Cohomological Hall Algebras
for Calabi--Yau 3-folds.

The square root $\det(\cE^\bu)^{1/2}$ required in Corollary
\ref{sm6cor3} corresponds roughly to {\it orientation data\/} in the
work of Kontsevich and Soibelman \cite[\S 5]{KoSo1}, \cite{KoSo2}.

In a paper written independently of our programme
\cite{BBJ,BJM,Joyc}, Kiem and Li \cite{KiLi} have recently proved an
analogue of Corollary \ref{sm6cor3} by complex analytic methods,
beginning from Joyce and Song's result \cite[Th.~5.4]{JoSo}, proved
using gauge theory, that $\cM_{\rm st}^\al(\tau)$ is locally
isomorphic to $\Crit(f)$ as a complex analytic space, for $V$ a
complex manifold and $f:V\ra\C$ holomorphic.
\label{sm6rem2}
\end{rem}

\begin{rem} In the situation of Corollary \ref{sm6cor4}, with
$\dim_\C S=2n$, we claim that there ought morally to be some kind of
approximate comparison
\e
\bH^k(P_{L,M}^\bu)\approx HF^{k+n}(L,M),
\label{sm6eq11}
\e
where $HF^*(L,M)$ is the {\it Lagrangian Floer cohomology\/} of
Fukaya, Oh, Ohta and Ono \cite{FOOO}. We can compare and contrast
the two sides of \eq{sm6eq11} as follows:
\begin{itemize}
\setlength{\itemsep}{0pt}
\setlength{\parsep}{0pt}
\item[(a)] $\bH^*(P_{L,M}^\bu)$ is defined over any
well-behaved base ring $A$, e.g. $A=\Z$ or $\Q$, but $HF^*(L,M)$
is defined over a Novikov ring of power series $\La_{\rm nov}$.
\item[(b)] $\bH^*(P_{L,M}^\bu)$ has extra structure not visible
in $HF^*(L,M)$, from Verdier duality and monodromy operators
$\Si_{L,M},\Tau_{L,M}$, plus the mixed Hodge module version has
a mixed Hodge structure.
\item[(c)] $\bH^*(P_{L,M}^\bu)$ is defined for arbitrary complex
Lagrangians $L,M$, not necessarily compact or closed in $S$, but
$HF^*(L,M)$ is only defined for $L,M$ compact, or at least for
$L,M$ closed and well-behaved at infinity.
\item[(d)] To define $HF^*(L,M)$ one generally assumes $L,M$
intersect transversely, or at least cleanly. But
$\bH^*(P_{L,M}^\bu)$ is defined when $L\cap M$ is arbitrarily
singular, and the construction is only really interesting for
singular $L\cap M$.
\item[(e)] To define $HF^*(L,M)$ we need $L,M$ to be oriented and
spin, to orient moduli spaces of $J$-holomorphic curves. When
$L,M$ are complex Lagrangians they are automatically oriented,
and spin structures on $L,M$ correspond to choices of square
roots $K_L^{1/2},K_M^{1/2}$, as used in Corollary~\ref{sm6cor4}.
\end{itemize}

Some of the authors are working on defining a `Fukaya category' of
complex Lagrangians in a complex symplectic manifold, using
$\bH^*(P_{L,M}^\bu)$ as morphisms.

We now discuss related work. Nadler and Zaslow \cite{Nadl,NaZa} show
that if $X$ is a real analytic manifold (for instance, a complex
manifold), then the derived category $D^b_c(X)$ of constructible
sheaves on $X$ is equivalent to a certain derived Fukaya category
$D^b\cF(T^*X)$ of exact Lagrangians in~$T^*X$.

Let $L,M$ be complex Lagrangians in a complex symplectic manifold
$(S,\om)$. Regarding $\O_L,\O_M$ as coherent sheaves on $S$, Behrend
and Fantechi \cite[Th.s 4.3 \& 5.2]{BeFa} claim to construct
canonical $\C$-linear (not $\O_S$-linear) differentials
\begin{equation*}
\d:\cE xt^i_{\O_S}(\O_L,\O_M)\longra \cE xt^{i+1}_{\O_S}(\O_L,\O_M)
\end{equation*}
with $\d^2=0$, such that $\bigl(\cE xt^*_{\O_S}(\O_L,\O_M),\d\bigr)$
is a constructible complex. There is a mistake in the proof of
\cite[Th.~4.3]{BeFa}. To fix this one should instead work with $\cE
xt^*_{\O_S}(K_L^{1/2},K_M^{1/2})$ for square roots
$\smash{K_L^{1/2},K_M^{1/2}}$ as in Corollary \ref{sm6cor4}. Also
the proof of the constructibility of $\bigl(\cE xt^*_{\O_S}
(K_L^{1/2},K_M^{1/2}),\d\bigr)$ in \cite[Th.~5.2]{BeFa} depended on
a result of Kapranov, which later turned out to be false.

Our $P_{L,M}^\bu$ over $A=\C$ should be the natural perverse sheaf
on $L\cap M$ conjectured by Behrend and Fantechi
\cite[Conj.~5.16]{BeFa}, who also suggest there should be a spectral
sequence from $\bigl(\cE xt^*_{\O_S}(K_L^{1/2},K_M^{1/2}),
\d\bigr)[n]$ to $P_{L,M}^\bu$. (See Sabbah \cite[Th.~1.1]{Sabb} for
a related result.) In \cite[\S 5.3]{BeFa}, Behrend and Fantechi
discuss how to define a `Fukaya category' using their ideas.

Kashiwara and Schapira \cite{KaSc3} develop a theory of {\it
deformation quantization modules}, or {\it DQ-modules}, on a complex
symplectic manifold $(S,\om)$, which roughly may be regarded as
symplectic versions of $\cD$-modules. {\it Holonomic\/} DQ-modules
${\cal D}^\bu$ are supported on (possibly singular) complex
Lagrangians $L$ in $S$. If $L$ is a smooth, closed, complex
Lagrangian in $S$ and $K_L^{1/2}$ a square root of $K_L$, D'Agnolo
and Schapira \cite{DASc} show that there exists a simple holonomic
DQ-module ${\cal D}^\bu$ supported on~$L$.

If ${\cal D}^\bu,\cE^\bu$ are simple holonomic DQ-modules on $S$
supported on smooth Lagrangians $L,M$, then Kashiwara and Schapira
\cite{KaSc2} show that $R{\scr H}om({\cal D}^\bu,\cE^\bu)[n]$ is a
perverse sheaf on $S$ over the field $\C((\hbar))$, supported on
$X=L\cap M$. Pierre Schapira explained to the authors how to prove
that $R{\scr H}om({\cal D}^\bu,\cE^\bu)[n]\cong P_{L,M}^\bu$, when
$P_{L,M}^\bu$ is defined over the base ring~$A=\C((\hbar))$.

Now let $L,M,N$ be Lagrangians in $S$, with square roots
$K_L^{1/2},K_M^{1/2},K_N^{1/2}$. We have a product $HF^k(L,M)\t
HF^l(M,N)\ra HF^{k+l}(L,N)$ from composition of morphisms in
$D^b\cF(S)$. So \eq{sm6eq11} suggests there should be a
product
\e
\bH^k(P_{L,M}^\bu)\t \bH^l(P_{M,N}^\bu)\longra
\bH^{k+l+n}(P_{L,N}^\bu),
\label{sm6eq12}
\e
which would naturally be induced by a morphism in $D^b_c(S)$
\e
\mu_{L,M,N}:P_{L,M}^\bu\otL P_{M,N}^\bu \longra P_{L,N}^\bu[n].
\label{sm6eq13}
\e

Observe that the work of Behrend--Fantechi and Kashiwara--Schapira
cited above supports the existence of \eq{sm6eq12}--\eq{sm6eq13}:
there are natural products
\begin{align*}
\cE xt^k_{\O_S}( K_L^{1/2},K_M^{1/2}) \ot_{\O_S}
\cE xt^l_{\O_S}( K_M^{1/2},K_N^{1/2}) &\longra\cE
xt^{k+l}_{\O_S}( K_L^{1/2},K_N^{1/2}),\\
R{\scr H}om({\cal D}^\bu,\cE^\bu)\otL R{\scr
H}om(\cE^\bu,\cF^\bu)&\longra R{\scr H}om({\cal D}^\bu,\cF^\bu).
\end{align*}
But since \eq{sm6eq13} is a morphism of complexes, not of perverse
sheaves, Theorem \ref{sm2thm3}(i) does not apply, so we cannot
construct $\mu_{L,M,N}$ by na\"\i vely gluing data on an open cover,
as we have been doing in~\S\ref{sm3}--\S\ref{sm6}.
\label{sm6rem3}
\end{rem}

\subsection[Proof of Theorem \ref{sm6thm6} for perverse sheaves on
$\C$-schemes]{Proof of Theorem \ref{sm6thm6} for $\C$-schemes}
\label{sm63}

Let $(X,s)$ be an oriented algebraic d-critical locus over $\C,$
with orientation $K_{X,s}^{1/2}$. By Definition \ref{sm6def1} we may
choose a family $\bigl\{(R_a,U_a,f_a,i_a):a\in A\bigr\}$ of critical
charts $(R_a,U_a,f_a,i_a)$ on $(X,s)$ such that $\{R_a:a\in A\}$ is
a Zariski open cover of the $\C$-scheme $X$. Then for each $a\in A$
we have a perverse sheaf
\e
i_a^*\bigl(\PV_{U_a,f_a}^\bu\bigr)\ot_{\Z/2\Z}
Q_{R_a,U_a,f_a,i_a}\in\Perv(R_a),
\label{sm6eq14}
\e
for $Q_{R_a,U_a,f_a,i_a}$ as in Theorem \ref{sm6thm6}(i). The idea
of the proof is to use Theorem \ref{sm2thm3}(ii) to glue the
perverse sheaves \eq{sm6eq14} on the Zariski open cover $\{R_a:a\in
A\}$ to get a global perverse sheaf $P_{X,s}^\bu$ on $X$. Note that
Theorem \ref{sm2thm3}(ii) is written for \'etale open covers, but
this immediately implies the simpler Zariski version.

To do this, for all $a,b\in A$ we have to construct isomorphisms
\e
\begin{split}
\al_{ab}:\bigl[i_a^*\bigl(\PV_{U_a,f_a}^\bu\bigr)\ot_{\Z/2\Z}
Q_{R_a,U_a,f_a,i_a}\bigr]&\big\vert_{R_a\cap R_b}\longra\\
\bigl[i_b^*\bigl(\PV_{U_b,f_b}^\bu\bigr)\ot_{\Z/2\Z}
Q_{R_b,U_b,f_b,i_b}\bigr]&\big\vert_{R_a\cap R_b} \in\Perv(R_a\cap
R_b),
\end{split}
\label{sm6eq15}
\e
satisfying $\al_{aa}=\id$ for all $a\in A$ and
\e
\al_{bc}\vert_{R_a\cap R_b\cap R_c}\ci \al_{ab}\vert_{R_a\cap
R_b\cap R_c}=\al_{ac}\vert_{R_a\cap R_b\cap R_c}\quad\text{for all
$a,b,c\in A$.}
\label{sm6eq16}
\e

Fix $a,b\in A$. By applying Theorem \ref{sm6thm2} to the critical
charts $(R_a,U_a,f_a,i_a)$, $(R_b,U_b,f_b,i_b)$ at each $x\in
R_a\cap R_b$, we can choose an indexing set $D_{ab}$ and for each
$d\in D_{ab}$ subcharts $(R_a^{\prime d},U_a^{\prime d},f_a^{\prime
d},i_a^{\prime d})\subseteq (R_a,U_a,f_a,i_a)$, $(R_b^{\prime
d},U_b^{\prime d},f_b^{\prime d},i_b^{\prime d})\subseteq
(R_b,U_b,f_b,i_b)$, a critical chart $(S^d,V^d,g^d,j^d)$ on $(X,s)$,
and embeddings $\Phi^d:(R_a^{\prime d},U_a^{\prime d},f_a^{\prime
d},i_a^{\prime d})\hookra(S^d,V^d,g^d,j^d)$, $\Psi^d:(R_b^{\prime
d},U_b^{\prime d},f_b^{\prime d},i_b^{\prime
d})\hookra(S^d,V^d,g^d,j^d)$, such that $\bigl\{R_a^{\prime d}\cap
R_b^{\prime d}:d\in D_{ab}\bigr\}$ is a Zariski open cover of
$R_a\cap R_b$.

For each $d\in D_{ab}$, define an isomorphism
\begin{align*}
\al_{ab}^d:&\bigl[i_a^*\bigl(\PV_{U_a,f_a}^\bu\bigr)\ot_{\Z/2\Z}
Q_{R_a,U_a,f_a,i_a}\bigr]\big\vert_{R_a^{\prime d}\cap R_b^{\prime
d}}\longra\\
&\qquad\qquad\bigl[i_b^*\bigl(\PV_{U_b,f_b}^\bu\bigr)\ot_{\Z/2\Z}
Q_{R_b,U_b,f_b,i_b}\bigr]\big\vert_{R_a^{\prime d}\cap R_b^{\prime
d}}
\end{align*}
by the commutative diagram
\e
\begin{gathered}
\xymatrix@!0@C=275pt@R=50pt{
*+[r]{\begin{subarray}{l}\ts\bigl[i_a^*\bigl(\PV_{U_a,f_a}^\bu\bigr)
\ot_{\Z/2\Z} \\ \ts Q_{R_a,U_a,f_a,i_a}\bigr]\big\vert_{R_a^{\prime
d}\cap R_b^{\prime d}}\end{subarray}} \ar[dd]^{\al_{ab}^d}
\ar[r]_(0.41){\raisebox{-17pt}{$\st\begin{subarray}{l}
i_a\vert_{R_a^{\prime d}\cap R_b^{\prime
d}}^*(\Th_{\Phi^d})\\
\ot\id_{Q_{R_a,U_a,f_a,i_a}}\end{subarray}$}} &
*+[l]{\begin{subarray}{l}\ts
j^d\vert_{R_a^{\prime d}\cap R_b^{\prime d}}^*
\bigl(\PV_{V^d,g^d}^\bu\bigr)\!\ot_{\Z/2\Z}\!i_a\vert_{R_a^{\prime
d}\cap R_b^{\prime d}}^*\bigl(P_{\Phi^d}\bigr)\\
\ts {}\qquad\qquad\qquad\quad \ot_{\Z/2\Z}
Q_{R_a,U_a,f_a,i_a}\vert_{R_a^{\prime d}\cap R_b^{\prime
d}}\end{subarray}} \ar[d]_(0.55){\id\ot
\La_{\Phi^d}\vert_{R_a^{\prime d}\cap R_b^{\prime d}}^{-1}}
\\
& *+[l]{\bigl[(j^d)^*\bigl(\PV_{V^d,g^d}^\bu\bigr)
\ot_{\Z/2\Z}Q_{S^d,V^d,g^d,j^d}\bigr]\vert_{R_a^{\prime d}\cap
R_b^{\prime d}}} \ar[d]_(0.45){\id\ot \La_{\Psi^d}\vert_{R_a^{\prime
d}\cap R_b^{\prime d}}}
\\
*+[r]{\begin{subarray}{l}\ts\bigl[i_b^*\bigl(\PV_{U_b,f_b}^\bu\bigr)
\ot_{\Z/2\Z} \\ \ts Q_{R_b,U_b,f_b,i_b}\bigr]\big\vert_{R_a^{\prime
d}\cap R_b^{\prime d}}\end{subarray}} &
*+[l]{\begin{subarray}{l}\ts {}\qquad\qquad\qquad
j^d\vert_{R_a^{\prime d}\cap R_b^{\prime d}}^*
\bigl(\PV_{V^d,g^d}^\bu\bigr) \ot_{\Z/2\Z}\\
\ts i_b\vert_{R_a^{\prime d}\cap R_b^{\prime
d}}^*\bigl(P_{\Psi^d}\bigr)\!\ot_{\Z/2\Z}\!
Q_{R_b,U_b,f_b,i_b}\vert_{R_a^{\prime d}\cap R_b^{\prime
d}},\end{subarray}} \ar[l]_(0.6){\begin{subarray}{l}
i_b\vert_{R_a^{\prime d}\cap R_b^{\prime
d}}^*(\Th_{\Psi^d}^{-1})\\
\ot\id_{Q_{R_b,U_b,f_b,i_b}}\end{subarray}}  }\!\!\!{}
\end{gathered}
\label{sm6eq17}
\e
where $\Th_{\Phi^d},\Th_{\Psi^d}$ are as in Theorem \ref{sm5thm2},
and $\La_{\Phi^d},\La_{\Psi^d}$ as in \eq{sm6eq9}.

We claim that for all $d,e\in D_{ab}$ we have
\e
\al_{ab}^d\vert_{R_a^{\prime d}\cap R_b^{\prime d}\cap R_a^{\prime
e}\cap R_b^{\prime e}}=\al_{ab}^e\vert_{R_a^{\prime d}\cap
R_b^{\prime d}\cap R_a^{\prime e}\cap R_b^{\prime e}}.
\label{sm6eq18}
\e
To see this, let $x\in R_a^{\prime d}\cap R_b^{\prime d}\cap
R_a^{\prime e}\cap R_b^{\prime e}$, and apply Theorem \ref{sm6thm2}
to the critical charts $(S^d,V^d,g^d,j^d)$, $(S^e,V^e,g^e,j^e)$ and
point $x\in S^d\cap S^e$. This gives subcharts $(S^{\prime
d},V^{\prime d},g^{\prime d},j^{\prime d})\subseteq
(S^d,V^d,g^d,j^d)$, $(S^{\prime e},V^{\prime e},g^{\prime
e},j^{\prime e})\subseteq (S^e,V^e,g^e,j^e)$ with $x\in S^{\prime
d}\cap S^{\prime e}$, a critical chart $(T,W,h,k)$ on $(X,s)$, and
embeddings $\Om:(S^{\prime d},V^{\prime d},g^{\prime d},j^{\prime
d})\hookra(T,W,h,k)$, $\Up:(S^{\prime e},V^{\prime e},g^{\prime
e},j^{\prime e})\hookra(T,W,h,k)$.

Set $R^{de}=R_a^{\prime d}\cap R_b^{\prime d}\cap R_a^{\prime e}\cap
R_b^{\prime e}\cap S^{\prime d}\cap S^{\prime e}$, and consider the
diagram:
\e
\begin{gathered}
\text{\begin{small}$\displaystyle\xymatrix@!0@C=140pt@R=65pt{
*+[r]{\begin{subarray}{l}\ts\bigl[i_a^*\bigl(\PV_{U_a,f_a}^\bu\bigr)
\ot_{\Z/2\Z} \\ \ts{}\quad
Q_{R_a,U_a,f_a,i_a}\bigr]\big\vert_{R^{de}}\end{subarray}}
\ar[dr]^(0.6){\begin{subarray}{l} (\id\ci\La_{\Om\ci\Phi^d}^{-1})\ci  \\
(i_a^*(\Th_{\Om\ci\Phi^d})\ot\id)\vert_{R^{de}}\end{subarray}
}_(0.5){\begin{subarray}{l} =(\id\ci\La_{\Up\ci\Phi^e}^{-1})\ci  \\
(i_a^*(\Th_{\Up\ci\Phi^e})\ot\id)\vert_{R^{de}}\end{subarray}}
\ar@<1ex>[rr]_{(\id\ot \La_{\Phi^d}^{-1})\ci (i_a^*(\Th_{\Phi^d})
\ot\id)\vert_{R^{de}}}
\ar@<-1ex>[dd]^(0.6){\begin{subarray}{l} (\id\ot \La_{\Phi^e}^{-1})\ci\\
(i_a^*(\Th_{\Phi^e}) \ot\id)\vert_{R^{de}}\end{subarray}} &&
*+[l]{\begin{subarray}{l}\ts\bigl[(j^d)^*\bigl(\PV_{V^d,g^d}^\bu\bigr)
\ot_{\Z/2\Z}\\ \ts\quad Q_{S^d,V^d,g^d,j^d}\bigr]\vert_{R^{de}}
\end{subarray}}
\ar@<1ex>[dd]_(0.4){\begin{subarray}{l}
(i_b^*(\Th_{\Psi^d}^{-1})\ot\id)\ci \\
(\id\ot\La_{\Psi^d})\vert_{R^{de}}
\end{subarray} }
\ar[dl]_(0.6){\begin{subarray}{l} (\id\ot \La_\Om^{-1})\ci\\
((j^d)^*(\Th_\Om) \ot\id)\vert_{R^{de}}\end{subarray}} \\
&
{\begin{subarray}{l}\ts\bigl[k^*\bigl(\PV_{W,h}^\bu\bigr)\ot_{\Z/2\Z}\\
\ts\quad Q_{T,W,h,k}\bigr]\vert_{R^{de}}
\end{subarray}}
\ar[dr]^{\begin{subarray}{l}
(i_b^*(\Th_{\Om\ci\Psi^d}^{-1})\ot\id)\ci \\
(\id\ot\La_{\Om\ci\Psi^d})\vert_{R^{de}}
\end{subarray} }_(0.4){\begin{subarray}{l}
=(i_b^*(\Th_{\Up\ci\Psi^e}^{-1})\ot\id)\ci \\
(\id\ot\La_{\Up\ci\Psi^e})\vert_{R^{de}}
\end{subarray}} \\
*+[r]{\begin{subarray}{l}\ts\bigl[(j^e)^*\bigl(\PV_{V^e,g^e}^\bu\bigr)
\ot_{\Z/2\Z}\\ \ts\quad Q_{S^e,V^e,g^e,j^e}\bigr]\vert_{R^{de}}
\end{subarray}} \ar@<-1ex>[rr]^{
(i_b^*(\Th_{\Psi^e}^{-1})\ot\id)\ci
(\id\ot\La_{\Psi^e})\vert_{R^{de}}}
\ar[ur]_(0.6){\begin{subarray}{l} (\id\ot \La_\Up^{-1})\ci\\
((j^e)^*(\Th_\Up) \ot\id)\vert_{R^{de}}\end{subarray}} &&
*+[l]{\begin{subarray}{l}\ts\bigl[i_b^*\bigl(
\PV_{U_b,f_b}^\bu\bigr) \ot_{\Z/2\Z} \\ \ts\quad
Q_{R_b,U_b,f_b,i_b}\bigr]\big\vert_{R^{de}}.
\end{subarray}}}$\end{small}}\!\!\!\!{}
\end{gathered}
\label{sm6eq19}
\e

Here we have given two expressions for the top left diagonal
morphism in \eq{sm6eq19}. To see these are equal, set $R_a^{\prime
de}= R_a^{\prime d}\cap R_a^{\prime e} \cap S^{\prime d}\cap
S^{\prime e}$, $U_a^{\prime de}=(\Phi^d)^{-1}(V^{\prime d})\cap
(\Phi^e)^{-1}(V^{\prime e})$, $f_a^{\prime de}=f_a\vert_{U_a^{\prime
de}}$, and $i_a^{\prime de}=i_a\vert_{R_a^{\prime de}}$. Then
$(R_a^{\prime de},\ab U_a^{\prime de},\ab f_a^{\prime de},\ab
i_a^{\prime de})\subseteq (R_a,U_a,f_a,i_a)$ is a subchart and
\begin{equation*}
\Om\ci\Phi^d\vert_{U_a^{\prime de}},\Up\ci\Phi^e
\vert_{U_a^{\prime de}}:(R_a^{\prime de},U_a^{\prime de},
f_a^{\prime de},i_a^{\prime de})\longra(T,W,h,k)
\end{equation*}
are embeddings. As $\Om\ci\Phi^d\ci i_a^{\prime
de}=k\vert_{R_a^{\prime de}}=\Up\ci\Phi^e\ci i_a^{\prime de}$,
Theorem \ref{sm5thm2}(b) gives $\Th_{\Om\ci\Phi^d}
\vert_{i_a(R_a^{\prime de})}=\Th_{\Up\ci\Phi^e}
\vert_{i_a(R_a^{\prime de})}$, so that
$i_a\vert_{R^{de}}^*(\Th_{\Om\ci\Phi^d})=i_a\vert_{R^{de}}^*
(\Th_{\Up\ci\Phi^e})$ as $R^{de}\subseteq R_a^{\prime de}$. Also
$\La_{\Om\ci\Phi^d}=\La_{\Up\ci\Phi^e}$ as these are defined in
Theorem \ref{sm6thm6}(ii) using $J_{\Om\ci\Phi^d},J_{\Up\ci\Phi^e}$,
which are equal by Lemma \ref{sm5lem}. So the two expressions are
equal, and similarly for the bottom right diagonal morphism.

The upper triangle in \eq{sm6eq19} commutes because \eq{sm5eq16}
gives
\begin{equation*}
(\id\ot \Xi_{\Om,\Phi^d})\ci\Th_{\Om\ci\Phi^d}\vert_{i_a(R^{de})}
=(\Phi^d\vert_{i_a(R^{de})}^*
(\Th_\Om)\ot\id)\ci\Th_{\Phi^d}\vert_{i_a(R^{de})},
\end{equation*}
and the definitions of $\Xi_{\Om,\Phi^d}$ in \eq{sm5eq11} and
$\La_\Om,\La_{\Phi^d},\La_{\Om\ci\Phi^d}$ in \eq{sm6eq9} imply that
\begin{align*}
&\bigl(i_a\vert_{R^{de}}^*(\Xi_{\Om,\Phi^d})\ot\id\bigr)\ci
\La_{\Om\ci\Phi^d}\vert_{R^{de}}=(\id\ot\La_{\Phi^d})\ci
\La_\Om\vert_{R^{de}}:\\
&Q_{T,W,h,k}\vert_{R^{de}}\longra j^d\vert_{R^{de}}^*(P_\Om)\ot_{\Z/2\Z}
i_a\vert_{R^{de}}^*(P_{\Phi^d})
\ot_{\Z/2\Z}Q_{R_a,U_a,f_a,i_a}\vert_{R^{de}}.
\end{align*}
Similarly, the other three triangles in \eq{sm6eq19} commute, so
\eq{sm6eq19} commutes.

By \eq{sm6eq17}, the two routes round the outside of \eq{sm6eq19}
are $\al_{ab}^d\vert_{R^{de}}$ and $\al_{ab}^e\vert_{R^{de}}$, which
are equal as \eq{sm6eq19} commutes. As we can cover $R_a^{\prime
d}\cap R_b^{\prime d}\cap R_a^{\prime e}\cap R_b^{\prime e}$ by such
Zariski open $R^{de}$, equation \eq{sm6eq18} follows. Therefore by
the Zariski open cover version of Theorem \ref{sm2thm3}(i), there is
a unique isomorphism $\al_{ab}$ in \eq{sm6eq15} such that
$\al_{ab}\vert_{R_a^{\prime d}\cap R_b^{\prime d}}=\al_{ab}^d$ for
all~$d\in D_{ab}$.

If $D_{ab},R_a^{\prime d},\ldots,\Phi^d,\Psi^d$ are used to define
$\al_{ab}$ and $\ti D_{ab},\ti R_a^{\prime d},\ldots,\ti
\Phi^d,\ti\Psi^d$ are alternative choices yielding $\ti\al_{ab}$,
then by our usual argument using $D_{ab}\amalg\ti D_{ab}$ and both
sets of data we see that $\al_{ab}=\ti\al_{ab}$, so $\al_{ab}$ is
independent of choices.

Because the $\Th_{\Phi^d},\Th_{\Psi^d}$ used to define $\al_{ab}$
are compatible with Verdier duality and monodromy by
\eq{sm5eq14}--\eq{sm5eq15}, and the $\La_{\Phi^d},\La_{\Psi^d}$
affect only the principal $\Z/2\Z$-bundles rather than the perverse
sheaves, we can show $\al_{ab}$ is compatible with Verdier duality
and monodromy, in that the following commute:
\ea
\begin{gathered}
{}\!\!\!\!\!\!\!\!\!\!\!\!\!\!\!\!\!\!\!\!\!\!\!
\xymatrix@!0@C=270pt@R=70pt{
*+[r]{\begin{subarray}{l}\ts \bigl[i_a^*\bigl(\PV_{U_a,f_a}^\bu\bigr)
\ot_{\Z/2\Z}\\
\ts Q_{R_a,U_a,f_a,i_a}\bigr]\big\vert_{R_a\cap R_b}\end{subarray}}
\ar@<-2ex>[d]^(0.39){i_a^*(\si_{U_a,f_a})\ot\id_{Q_{R_a,U_a,f_a,i_a}}
\vert_{R_a\cap R_b}} \ar@<2ex>[r]_{\al_{ab}} &
*+[l]{\begin{subarray}{l}\ts\bigl[i_b^*\bigl(\PV_{U_b,f_b}^\bu\bigr)
\ot_{\Z/2\Z}\\ \ts Q_{R_b,U_b,f_b,i_b}\bigr]\big\vert_{R_a\cap
R_b}\end{subarray}}
\ar@<2ex>[d]_(0.39){i_b^*(\si_{U_b,f_b})\ot\id_{Q_{R_b,U_b,f_b,i_b}}
\vert_{R_a\cap R_b}}
\\
*+[r]{{}\quad\,\,\,\begin{subarray}{l}
\ts \bigl[i_a^*(
\bD_{\Crit(f_a)}(\PV_{U_a,f_a}^\bu))\\
\ts \ot_{\Z/2\Z}Q_{R_a,U_a,f_a,i_a}\bigr]\big\vert_{R_a\cap R_b}\bigr)\\
\ts \cong\bD_{R_a\cap R_b}\bigl([i_a^*(
\PV_{U_a,f_a}^\bu)\\
\ts \ot_{\Z/2\Z}Q_{R_a,U_a,f_a,i_a}]\vert_{R_a\cap
R_b}\bigr)\end{subarray}} &
*+[l]{\begin{subarray}{l}
\ts \bigl[i_b^*(
\bD_{\Crit(f_b)}(\PV_{U_b,f_b}^\bu))\\
\ts \ot_{\Z/2\Z}Q_{R_b,U_b,f_b,i_b}\bigr]\big\vert_{R_a\cap R_b}\bigr)\\
\ts \cong\bD_{R_a\cap R_b}\bigl([i_b^*(
\PV_{U_b,f_b}^\bu)\\
\ts \ot_{\Z/2\Z}Q_{R_b,U_b,f_b,i_b}]\vert_{R_a\cap
R_b}\bigr),\end{subarray}\quad{}} \ar@<4ex>[l]_{\bD_{R_a\cap
R_b}(\al_{ab})} }\!\!\!\!\!\!{}
\end{gathered}
\label{sm6eq20}\\
\begin{gathered}
\xymatrix@!0@C=270pt@R=50pt{
*+[r]{\begin{subarray}{l}\ts \bigl[i_a^*\bigl(\PV_{U_a,f_a}^\bu\bigr)
\ot_{\Z/2\Z}\\
\ts Q_{R_a,U_a,f_a,i_a}\bigr]\big\vert_{R_a\cap R_b}\end{subarray}}
\ar@<-2ex>[d]^{i_a^*(\tau_{U_a,f_a})\ot\id_{Q_{R_a,U_a,f_a,i_a}}\vert_{R_a\cap
R_b}} \ar@<2ex>[r]_{\al_{ab}} &
*+[l]{\begin{subarray}{l}\ts\bigl[i_b^*\bigl(\PV_{U_b,f_b}^\bu\bigr)
\ot_{\Z/2\Z}\\ \ts Q_{R_b,U_b,f_b,i_b}\bigr]\big\vert_{R_a\cap
R_b}\end{subarray}}
\ar@<2ex>[d]_{i_b^*(\tau_{U_b,f_b})\ot\id_{Q_{R_b,U_b,f_b,i_b}}\vert_{R_a\cap
R_b}}
\\
*+[r]{\begin{subarray}{l}\ts \bigl[i_a^*\bigl(\PV_{U_a,f_a}^\bu\bigr)
\ot_{\Z/2\Z}\\
\ts Q_{R_a,U_a,f_a,i_a}\bigr]\big\vert_{R_a\cap R_b}\end{subarray}}
\ar@<-2ex>[r]^{\al_{ab}} &
*+[l]{\begin{subarray}{l}\ts\bigl[i_b^*\bigl(\PV_{U_b,f_b}^\bu\bigr)
\ot_{\Z/2\Z}\\ \ts Q_{R_b,U_b,f_b,i_b}\bigr]\big\vert_{R_a\cap
R_b}.\end{subarray}} }
\end{gathered}
\label{sm6eq21}
\ea

When $a=b$ we can take $\Psi^d=\Phi^d$, so \eq{sm6eq17} gives
$\al_{aa}^d=\id$, and $\al_{aa}=\id$.

To prove \eq{sm6eq16}, let $a,b,c\in A$, and $x\in R_a\cap R_b\cap
R_c$. Applying Theorem \ref{sm6thm2} twice and composing the
embeddings, we can construct subcharts
$(R_a',U_a',f_a',i_a')\subseteq (R_a,U_a,f_a,i_a)$,
$(R_b',U_b',f_b',i_b')\subseteq (R_b,U_b,f_b,i_b)$,
$(R_c',U_c',f_c',i_c')\ab\subseteq (R_c,U_c,f_c,i_c)$ with $x\in
R_a'\cap R_b'\cap R_c'$, a critical chart $(S,V,g,j)$ on $(X,s)$,
and embeddings $\Phi:(R_a',U_a',f_a',i_a')\hookra(S,V,g,j)$,
$\Psi:(R_b',U_b',f_b',i_b')\hookra(S,V,g,j)$,
$\Up:(R_c',U_c',f_c',i_c')\hookra(S,V,g,j)$. Then the construction
of $\al_{ab}$ above yields
\begin{align*}
\al_{ab}\vert_{R_a'\cap R_b'\cap R_c'}&=\!
\bigl((i_b^*(\Th_\Psi^{-1})\!\ot\!\id)\!\ci\!
(\id\!\ot\La_\Psi)\!\ci\!(\id\!\ot\La_\Phi^{-1})\!\ci\!
(i_a^*(\Th_\Phi)\!\ot\!\id)\bigr)\vert_{R_a'\cap R_b'\cap R_c'},\\
\al_{bc}\vert_{R_a'\cap R_b'\cap R_c'}&=\!
\bigl((i_c^*(\Th_\Up^{-1})\!\ot\!\id)\!\ci\!
(\id\!\ot\La_\Up)\!\ci\!(\id\!\ot\La_\Psi^{-1})\!\ci\!
(i_b^*(\Th_\Psi)\!\ot\!\id)\bigr)\vert_{R_a'\cap R_b'\cap R_c'},\\
\al_{ac}\vert_{R_a'\cap R_b'\cap R_c'}&=\!
\bigl((i_c^*(\Th_\Up^{-1})\!\ot\!\id)\!\ci\!
(\id\!\ot\La_\Up)\!\ci\!(\id\!\ot\La_\Phi^{-1})\!\ci\!
(i_a^*(\Th_\Phi)\!\ot\!\id)\bigr)\vert_{R_a'\cap R_b'\cap R_c'},
\end{align*}
so that $\al_{bc}\vert_{R_a'\cap R_b'\cap R_c'}\ci
\al_{ab}\vert_{R_a'\cap R_b'\cap R_c'}=\al_{ac}\vert_{R_a'\cap
R_b'\cap R_c'}$. As we can cover $R_a\cap R_b\cap R_c$ by such
Zariski open $R_a'\cap R_b'\cap R_c'$, equation \eq{sm6eq16} follows
by Theorem~\ref{sm2thm3}(i).

The Zariski open cover version of Theorem \ref{sm2thm3}(ii) now
implies that there exists $P_{X,s}^\bu$ in $\Perv(X)$, unique up to
canonical isomorphism, with isomorphisms
\begin{equation*}
\om_{R_a,U_a,f_a,i_a}:P_{X,s}^\bu\vert_{R_a}\longra
i_a^*\bigl(\PV_{U_a,f_a}^\bu\bigr)\ot_{\Z/2\Z} Q_{R_a,U_a,f_a,i_a}
\end{equation*}
as in \eq{sm6eq6} for each $a\in A$, with
$\al_{ab}\ci\om_{R_a,U_a,f_a,i_a}\vert_{R_a\cap R_b}=
\om_{R_b,U_b,f_b,i_b}\vert_{R_a\cap R_b}$ for all $a,b\in A$. Also,
\eq{sm6eq7}--\eq{sm6eq8} with $(R_a,U_a,f_a,i_a)$ in place of
$(R,U,f,i)$ define isomorphisms $\Si_{X,s}\vert_{R_a}$,
$\Tau_{X,s}\vert_{R_a}$ for each $a\in A$. Equations
\eq{sm6eq20}--\eq{sm6eq21} imply that the prescribed values for
$\Si_{X,s}\vert_{R_a},\Tau_{X,s}\vert_{R_a}$ and
$\Si_{X,s}\vert_{R_b},\Tau_{X,s}\vert_{R_b}$ agree when restricted
to $R_a\cap R_b$ for all $a,b\in A$. Hence, Theorem \ref{sm2thm3}(i)
gives unique isomorphisms $\Si_{X,s},\Tau_{X,s}$ in \eq{sm6eq5} such
that \eq{sm6eq7}--\eq{sm6eq8} commute with $(R_a,U_a,f_a,i_a)$ in
place of $(R,U,f,i)$ for all~$a\in A$.

Suppose $\bigl\{(R_a,U_a,f_a,i_a):a\in A\bigr\}$ and $\bigl\{(\ti
R_a,\ti U_a,\ti f_a,\ti\imath_a):a\in\ti A\bigr\}$ are alternative
choices above, yielding $P_{X,s}^\bu,\Si_{X,s},\Tau_{X,s}$ and $\ti
P_{X,s}^\bu,\ti\Si_{X,s},\ti\Tau_{X,s}$. Then applying the same
construction to the family $\bigl\{(R_a,U_a,f_a,i_a):a\in
A\bigr\}\amalg\bigl\{(\ti R_a,\ti U_a,\ti f_a,\ti\imath_a):a\in\ti
A\bigr\}$ to get $\hat P_{X,s}^\bu$, we have canonical isomorphisms
$P_{X,s}^\bu\cong\hat P_{X,s}^\bu\cong\ti P_{X,s}^\bu$, which
identify $\Si_{X,s},\Tau_{X,s}$ with $\ti\Si_{X,s},\ti\Tau_{X,s}$.
Thus $P_{X,s}^\bu,\Si_{X,s},\Tau_{X,s}$ are independent of choices
up to canonical isomorphism.

Now fix $\bigl\{(R_a,U_a,f_a,i_a):a\in A\bigr\}$,
$P_{X,s}^\bu,\Si_{X,s},\Tau_{X,s}$ and $\om_{R_a,U_a,f_a,i_a}$ for
$a\in A$ above for the rest of the proof. Suppose $(R,U,f,i)$ is a
critical chart on $(X,s)$. Running the construction above with the
family $\bigl\{(R_a,U_a,f_a,i_a):a\in
A\bigr\}\amalg\bigl\{(R,U,f,i)\bigr\}$, we can suppose it yields the
same (not just isomorphic) $P_{X,s}^\bu,\Si_{X,s},\Tau_{X,s}$ and
$\om_{R_a,U_a,f_a,i_a}$, but it also yields a unique $\om_{R,U,f,i}$
in \eq{sm6eq6} which makes \eq{sm6eq7}--\eq{sm6eq8} commute. This
proves Theorem~\ref{sm6thm6}(i).

Let $\Phi:(R,U,f,i)\hookra(S,V,g,j)$ be an embedding of critical
charts on $(X,s)$. The definition of $\La_\Phi$ in Theorem
\ref{sm6thm6}(ii) is immediate. Run the construction above using the
family $\bigl\{(R_a,U_a,f_a,i_a):a\in A\bigr\}\amalg
\bigl\{(R,U,f,i),(S,V,g,j)\bigr\}$, and follow the definition of
$\al_{ab}$ with $(R,U,f,i),(S,V,g,j)$ in place of
$(R_a,U_a,f_a,i_a),\ab(R_b,U_b,f_b,i_b)$. We can take
$D_{ab}=\{d\}$, $(R_a^{\prime d},U_a^{\prime d},f_a^{\prime
d},i_a^{\prime d})=(R,U,f,i)$, $(R_b^{\prime d},\ab U_b^{\prime
d},\ab f_b^{\prime d},\ab i_b^{\prime
d})=(S^d,V^d,g^d,j^d)=(S,V,g,j)$, $\Phi^d=\Phi$ and $\Psi^d=\id_V$.
Then \eq{sm6eq17} gives
$\al_{ab}=\al_{ab}^d=(\id\ot\La_\Phi^{-1})\ci(i^*(\Th_\Phi)\ot\id)$.
Thus, $\al_{ab}\ci\om_{R_a,U_a,f_a,i_a}\vert_{R_a\cap R_b}=
\om_{R_b,U_b,f_b,i_b}\vert_{R_a\cap R_b}$ implies that \eq{sm6eq10}
commutes, proving Theorem~\ref{sm6thm6}(ii).

\subsection{$\cD$-modules and mixed Hodge modules}
\label{sm64}

Once again, the proof of Theorem \ref{sm6thm6} carries over to our
other contexts in \S\ref{sm26}--\S\ref{sm210} using the general
framework of \S\ref{sm25}, now also making use of the Stack Property
(x) for objects. For the case of mixed Hodge modules, we use Theorem
\ref{sm2thm8}(ii) to glue the
$i_a^*\bigl(\HV_{U_a,f_a}^\bu\bigr)\ot_{\Z/2\Z} Q_{R_a,U_a,f_a,i_a}$
on $R_a\subseteq X$ for $a\in A$ {\it with their natural strong
polarizations\/} \eq{sm2eq25}, which are preserved by the
isomorphisms $\al_{ab}$ in \S\ref{sm63} on overlaps~$R_a\cap R_b$.

\appendix

\section{Compatibility results, by J\"org Sch\"urmann}
\label{smA}

In the main body of the paper, when comparing results for mixed Hodge modules to those 
involving perverse sheaves, we rely on the compatibility
between duality and Thom--Sebastiani type isomorphisms of perverse sheaves and mixed
Hodge modules. These compatibility statements cannot easily be read off from the existing literature, so we provide proofs here. 

\begin{prop} If\/ $X$ is a $\C$-scheme and\/ $f:X\ra\C$ is regular, then Massey's natural isomorphisms from\/ {\rm\cite{Mass3}} quoted as Theorem {\rm\ref{sm2thm4}(iv)} coincide with the image under the realization functor of Saito's analogous isomorphisms {\rm\cite{Sait1}} between functors on mixed Hodge modules. There is also an analogous compatibility result for $X$ a complex analytic space equipped with an analytic function~$f$.
\label{smAprop1}
\end{prop}

\begin{proof} Massey's construction in \cite{Mass3} of the duality isomorphisms uses the definition of the vanishing cycle functor in terms of the local cohomology of suitable real half-spaces,
compare also \cite{Schu}. Using their notation, the compatibility comes down to compatibility of the
diagram
\e
\begin{gathered}
\xymatrix@=130pt@R=15pt{
*+[r]{\phi_f^p\ci\bD_X} \ar[r]_\sim \ar[d]^\cong & *+[l]{\bD_{X_0}\ci\phi_f^p} \\
*+[r]{(R\Ga_{\{\Re(f)\geq 0\}}(-)|_{X_0})\ci\bD_X} \ar[r]^\sim &
*+[l]{\bD_{X_0}\ci(R\Ga_{\{\Re(f)\leq 0\}}(-)|_{X_0}).\!} \ar[u]^\cong }
\end{gathered}
\label{smAeq1}
\e

Here the upper, respectively lower horizontal isomorphisms are the ones of
Saito, respectively Massey, and the vertical isomorphisms follow for example from
\cite[Lem.~1.3.2, p.~69]{Schu}. Saito deduces his duality isomorphism in \cite[Lem.~5.2.4, p.~965]{Sait1} from a pairing on nearby cycles induced by a pairing
$F \otimes G\ra a^!_X A$ for $F,G\in D^b_c(X)$, with $a_X:X\ra {\rm pt}$ the
constant map and $A\subset\C$ a coefficient field. But Massey's duality isomorphism can be also be induced from such a pairing fitting into a commutative diagram, with $L_0=\{\Re(f) = 0\}$
and $j:\{\Re(f) = 0, f \neq 0\}\ra L_0$ the open inclusion:
\e
\begin{gathered}
\xymatrix@C=140pt@R=15pt{
*+[r]{(R\Ga_{\{\Re(f)< 0\}}(F)|_{X_0}) \otimes (R\Ga_{\{\Re(f)> 0\}}(G)|_{X_0})} \ar[d] &
*+[l]{\psi_f(F)\otimes \psi_f(G)} \ar[d] \ar[l]^(0.265)\sim \\
*+[r]{(Rj_*j^*(a^!_{L_0}A|_{L_0}))|_{X_0}} \ar[d]^\cong & *+[l]{\psi_f(a^!_X A)} \ar[l] \ar[d] \\
*+[r]{(Rj_*j^*(a^!_{L_0}A))|_{X_0}[1]} \ar[r] & *+[l]{a_{X_0}^!A[2].\!} }
\end{gathered}
\label{smAeq2}
\e
Here the isomorphism $j^*(a^!_X A|_{L_0}) \cong j^*(a^!_{L_0}A)[1]$ comes from the fact
that $\Re(f)$ has no critical points (in a stratified sense) in $X\setminus X_0$, 
locally near $X_0$. But then the commutativity of \eq{smAeq2} implies by
\cite[Lem.~5.2.4, p.~965]{Sait1} the commutativity of \eq{smAeq1}, concluding the proof.
\end{proof}

A similar compatibility question arises for the Thom--Sebastiani isomorphism. Here the precise
statement is the following. 

\begin{prop} Let\/ $f_i: Y_i\ra\C$ be regular functions on smooth\/ $\C$-schemes, for $i=1,2$. Let\/ $f=f_1\boxplus f_2: Y_1\t Y_2\ra\C$ be as in Theorem {\rm\ref{sm2thm5}}. Then the isomorphism \eq{sm2eq8} of Massey\/ {\rm\cite{Mass1,Schu}} coincides with the image under the realization functor of Saito's analogous isomorphism \eq{sm2eq26} of\/ {\rm\cite{Sait5}} for mixed Hodge modules.
\label{smAprop2}
\end{prop}

\begin{proof} The Thom--Sebastiani isomorphism \eq{sm2eq26} is constructed by Saito \cite[Th.~2.6]{Sait5} based on the Verdier specialization \cite{Verd}. First, let $f: Y\ra \C$ be a regular function, with $X=f^{-1}(0)$ of codimension one, 
so that the normal cone $C_XY = X \t \C$ becomes a trivial line bundle with $f': C_XY\ra\C$ given by the projection. Let $p: D_XY \ra\C$ be the deformation to the normal cone with $C_XY = p^{-1}(0)\subset D_XY$, with $q : D_XY \ra Y$ the natural map. Then $f'$ extends to a function $g: D_XY\ra\C$, with $g=f/s$ on ${p\neq 0} = Y\t\C^∗$ for $s$ the usual coordinate on $\C$. For $F\in D^b_c(Y)$, we get a commutative diagram
\begin{equation*}
\xymatrix@C=120pt@R=15pt{
*+[r]{\phi_{f'}^p(sp_XF)} \ar[d]^\cong & *+[l]{\phi_f^p(F)} \ar[l]^\sim \ar[d]_\cong \\
*+[r]{R\Ga_{\{\Re(f')\geq 0\}}(sp_XF)|_{X}} & *+[l]{R\Ga_{\{\Re(f)\geq 0\}}(F)|_{X}.\!} \ar[l] }
\end{equation*}
Here the monodromical sheaf complex $sp_XF\in D^b_c(C_XY)_{\rm mon}$ is the
Verdier specialization of $F$ as in \cite{Verd, Sait5}. The upper horizontal
isomorphism is the one of \cite[Lem.~2.2]{Sait5}, whereas the vertical
isomorphisms are those of \cite[Lem.~1.3.2, p.~69]{Schu}. The lower
horizontal map is defined by the natural base change morphism
\begin{equation*}
R\Ga_{\{\Re(f')\geq 0\}}(sp_XF)|_{X} \longleftarrow \psi_p (R\Ga_{\{\Re(g)\geq 0\}}(q^*F))|_{X} 
\cong R\Ga_{\{\Re(f)\geq 0\}}(F)|_{X},
\end{equation*}
where the last isomorphism follows as in \cite[Lem.~1.3.3, p.~70-71]{Schu}. 

Consider now the situation in the proposition, with 
\begin{equation*}
f=f_1\boxplus f_2: Y=Y_1\t Y_2\ra\C,
\end{equation*}
also $X_i=f_i^{-1}(0)$ and $X=f^{-1}(0)$; finally let $\mu_{f_i}=R\Ga_{\{\Re(f_i)\geq 0\}}$ to shorten the notation. Let
\begin{equation*}
\pi : C_{X_1}Y_1\t C_{X_2}(Y_2) = (X_1\t X_2) \t \C^2 \ra (X_1\t X_2) \t \C\subset C_X(Y)
\end{equation*}
be the map induced by addition in the fibres. Then, for $F_i\in D^b_c(Y_i)$, one gets a commutative diagram
\begin{equation*}
\xymatrix@C=160pt@R=15pt{ *+[r]{\phi_f^p(F_1\boxtimes F_2)|_{X_1\t X_2}} \ar[d]^\cong & *+[l]{\phi^p_{f_i}(F_1)\otimes \phi^p_{f_2}(F_2)} \ar[l]^\sim \ar[d]_\cong \\
*+[r]{\mu_{f'}(\pi_*(sp_{X_1}F_1\boxtimes sp_{X_2}F_2))|_{X_1\t X_2}}  & *+[l]{\mu_{f'_1}(sp_{X_1}F_1)|_{X_1}\otimes\mu_{f'_2}(sp_{X_2}F_2)|_{X_2}} \ar[l] \\
*+[r]{\mu_f(F_1\boxtimes F_2)|_{X_1\t X_2}} \ar[u] & *+[l]{\mu_{f_1}(F_1)|_{X_1}\otimes \mu_{f_2}(F_2)|_{X_2}.\!} \ar[l]_\sim \ar[u]^\cong }
\end{equation*}
The upper horizontal and left vertical isomorphisms form the Thom--Sebas\-tiani
isomorphism \eq{sm2eq26} of \cite{Sait5}, whereas the lower horizontal isomorphism is the
Thom--Sebastiani isomorphism \eq{sm2eq8} of \cite{Mass1,Schu}. This concludes the proof. 
\end{proof}

\medskip

\noindent{\small\sc Address for Christopher Brav:

\noindent Faculty of Mathematics, Higher School of Economics, 7 Vavilova Str., Moscow, Russia

\noindent E-mail: {\tt chris.i.brav@gmail.com}.

\noindent Address for Vittoria Bussi:

\noindent ICTP, Strada Costiera 11, Trieste, Italy

\noindent E-mail: {\tt vbussi@ictp.it}.

\noindent Address for Dominic Joyce and Bal\'azs Szendr\H oi:

\noindent The Mathematical Institute, Woodstock Road, Oxford, UK

\noindent E-mails: {\tt joyce@maths.ox.ac.uk, szendroi@maths.ox.ac.uk}.

\noindent Address for J\"org Sch\"urmann:

\noindent Mathematische Institut, Universit\"at M\"unster, Einsteinstrasse 62,
48149 M\"unster, Germany.

\noindent E-mail: {\tt jschuerm@math.uni-muenster.de}.
}

\end{document}